\def\setliststart#1{\setcounter{\@listctr}{#1}%
  \addtocounter{\@listctr}{-1}}
 \newtheorem{The}{Theorem}[section]
 \newtheorem{Cor}[The]{Corollary}
 \newtheorem{Lem}[The]{Lemma}
 \newtheorem{Pro}[The]{Proposition}
 \theoremstyle{definition}
 \newtheorem{defn}[The]{Definition}
 \newtheorem{Rem}[The]{Remark}
 \numberwithin{equation}{section}
\newcounter{Mr}
\newtheorem{Result}[Mr]{\textbf{Main Result}}
\newcommand{\R}{\mathbb{R}}
\newcommand{\N}{\mathbb{N}}
\newcommand{\SING}{\mbox{\rm Sing}\,}
\title[Variational construction of singular characteristics]{Variational construction of singular characteristics and propagation of singularities}
\author{Piermarco Cannarsa, Wei Cheng, Jiahui Hong and Kaizhi Wang}
\address{Dipartimento di Matematica, Universit\`a di Roma ``Tor Vergata'', Via della Ricerca Scientifica 1, 00133 Roma, Italy}%, https://orcid.org/0000-0003-2576-2004 (corresponding author)}
\email{cannarsa@mat.uniroma2.it}
\address{School of Mathematics, Nanjing University, Nanjing 210093, China}
\email{chengwei@nju.edu.cn}
\address{School of Mathematics, Nanjing University of Aeronautics and Astronautics, Nanjing 211106, China}
\email{jiahui.hong.nju@gmail.com}
\address{School of Mathematical Sciences, Shanghai Jiao Tong University, Shanghai 200240, China}
\email{kzwang@sjtu.edu.cn}
\date{\today}
\subjclass[2010]{35F21, 49L25, 37J50}
\keywords{maximal slope curve, Lax-Oleinik commutator, Hamilton-Jacobi equation, propagation of singularities, mass transport}
\begin{document}
\maketitle

\begin{abstract}
On a smooth closed manifold $M$, we introduce a novel theory of maximal slope curves for any pair $(\phi,H)$ with $\phi$ a semiconcave function and $H$ a Hamiltonian.

By using the notion of maximal slope curve from gradient flow theory, the intrinsic singular characteristics constructed in [Cannarsa, P.; Cheng, W., \textit{Generalized characteristics and Lax-Oleinik operators: global theory}. Calc. Var. Partial Differential Equations 56 (2017), no. 5, 56:12], the smooth approximation method developed in [Cannarsa, P.; Yu, Y. \textit{Singular dynamics for semiconcave functions}. J. Eur. Math. Soc. 11 (2009), no. 5, 999--1024], and the broken characteristics studied in [Khanin, K.; Sobolevski, A., \textit{On dynamics of Lagrangian trajectories for Hamilton-Jacobi equations}. Arch. Ration. Mech. Anal. 219 (2016), no. 2, 861--885], we prove the existence and stability of such maximal slope curves and discuss certain new weak KAM features. We also prove that maximal slope curves for any pair $(\phi,H)$ are exactly broken characteristics which have right derivatives everywhere.

Applying this theory, we establish a global variational construction of strict singular characteristics and broken characteristics. Moreover, we prove a result on the global propagation of cut points along generalized characteristics, as well as a result on the propagation of singular points along strict singular characteristics, for weak KAM solutions. We also obtain the continuity equation along strict singular characteristics which clarifies the mass transport nature in the problem of propagation of singularities.
\end{abstract}

\tableofcontents

\section{Introduction}

The motivation for this paper is twofold. Firstly, we aim to develop a novel characteristic theory for arbitrary pairs $(\phi,H)$, where $\phi$ is a semiconcave function and $H$ is a Hamiltonian. Secondly, using the concept of ``forward characteristics'', we intend to investigate the propagation of singularities for general semiconcave functions and viscosity solutions to Hamilton-Jacobi equations. The interplay between these two areas lies in the development of the theory of propagation of singularities over the past two decades, as well as new perspectives emerging from the theory of abstract gradient flows. This insight also allows us to handle these two problems independently.

Let $M$ be a smooth connected and compact manifold without boundary and $TM$ and $T^*M$ its tangent and cotangent bundle respectively. We suppose $H:T^*M\to\R$ satisfies the following conditions:
\begin{enumerate}[(H1)]
	\item $H$ is locally Lipschitz;
	\item $p\mapsto H(x,p)$ is differentiable and $(x,p)\to H_p(x,p)$ is continuous;
	\item $p\mapsto H(x,p)$ is strictly convex, and $\lim_{|p|_x\to\infty}H(x,p)/|p|=+\infty$ uniformly.
\end{enumerate}
In this paper, for any semiconcave function $\phi$ with arbitrary modulus and a Hamiltonian satisfies conditions (H1)-(H3), we introduce a new theory of maximal slope curve for a pair $(\phi,H)$. This theory provides some new observation even in classical weak KAM theory, and also a natural variational construction of certain important objects in the study of propagation of singularities. We also study some mass transport aspect of propagation of singularities along singular characteristics. 

\subsection{Maximal slope curve for a pair $(\phi,H)$}

We borrow some idea from the abstract theory of gradient flow, the readers can refer to \cite{Ambrosio_Brue_Semola_book2021,Ambrosio_GigliNicola_Savare_book2008} for more information and the references therein. 

Given a semiconcave function $\phi$ on $M$ and a Hamiltonian $H$ satisfying conditions (H1)-(H3). Let $L:TM\to\R$ be the associated Lagrangian for $H$. Given an interval $I$ which can be the whole real line, we call a locally absolutely continuous curve $\gamma:I\to M$ an \emph{maximal slope curve for a pair $(\phi,H)$} and a Borel measurable selection $\mathbf{p}(x)$ of the Dini superdifferential $D^+\phi(x)$, if $\gamma$ is a solution of the EDI (Energy Dissipation Inequality)-type variational inequality 
\begin{equation}\label{eq:msc_H_phi}\tag{EDI}
	\phi(\gamma(t_2))-\phi(\gamma(t_1))\leqslant\int^{t_2}_{t_1}\Big\{L(\gamma(s),\dot{\gamma}(s))+H(\gamma(s),\mathbf{p}(\gamma(s)))\Big\} ds,\qquad\forall t_1,t_2\in I, t_1\leqslant t_2,
\end{equation}
i.e., the inequality above is indeed an equality for such a curve $\gamma$.
% Equivalently, $\gamma$ satisfies 
%\begin{align*}
%	\frac d{dt}\phi\circ\gamma(t)= L(\gamma(t),\dot{\gamma}(t))+H(\gamma(t),\mathbf{p}(\gamma(t))),\quad a.e.\ t\in\R.
%\end{align*}
%By Fenchel-Young inequality, we observe that any maximal slope curve for a pair $(\phi,H)$ of the Borel measurable selection $\mathbf{p}^{\#}(x)$ on $[0,+\infty)$ is exactly a solution of \eqref{eq:SGC1} and it is a strict singular characteristic restricted on $[0,\infty)$.
The most important Borel measurable selection of superdifferential $D^+\phi(x)$ is the minimal energy selection
\begin{align*}
	\mathbf{p}^\#_{\phi,H}(x)=\arg\min\{H(x,p): p\in D^+\phi(x)\},\qquad x\in M.
\end{align*}
By the condition of equality in Fenchel-Young inequality $\gamma$ is a maximal slope curve for a pair $(\phi,H)$ and the minimal energy selection $\mathbf{p}^{\#}_{\phi,H}$ if and only if $\gamma$ satisfies the following equation
\begin{equation}\label{eq:SGC1}\tag{SC}
	\dot{\gamma}(t)=H_p(\gamma(t),\mathbf{p}^{\#}_{\phi,H}(\gamma(t))),\qquad a.e.\ t\in I,
\end{equation}
We call each solution of \eqref{eq:SGC1} a \emph{strict singular characteristic}. It is worth noting that any maximal slope curve for the pair $(\phi,H)$ is exactly a strict singular characteristic (Proposition \ref{pro:msc is sc}).

\begin{Result}
	\hfill
	\begin{enumerate}[(1)]
		\item (Existence) For any $x\in M$, there exists a strict singular characteristic $\gamma:\R\to M$ with $\gamma(0)=x$ for the pair $(\phi,H)$. (Theorem \ref{thm:msc exs} and Theorem \ref{thm:pos neg})
		\item (Stability) Let $\{H_k\}$ be a sequence of Hamiltonians satisfying \text{\rm (H1)-(H3)}, $\{\phi_k\}$ be a sequence of $\omega$-semiconcave functions on $M$, and $\gamma_k:\R\to M$, $k\in\N$ be a sequence of strict singular characteristics for the pair $(\phi_k,H_k)$. We suppose the following condition:
		\begin{enumerate}[\rm (i)]
			\item $\phi_k$ converges to $\phi$ uniformly on $M$,
			\item $H_k$ converges to a Hamiltonian $H$ satisfying \text{\rm (H1)-(H3)} uniformly on compact subsets,
			\item $\gamma_k$ converges to $\gamma:\R\to M$ uniformly on compact subsets.
		\end{enumerate}
		Then $\gamma$ is a strict singular characteristic for the pair $(\phi,H)$. (Theorem \ref{thm:stability})
	\end{enumerate}
\end{Result}

The wellposedness of \eqref{eq:SGC1} is well understood only when $H$ is quadratic in $p$-variable because of uniqueness (see, \cite{Cannarsa_Yu2009,ACNS2013}), and in the 2D case because of the special topological property of surfaces (\cite{Cannarsa_Cheng2021b}), until now. 

For the existence issue of strict singular characteristic, we provide two proofs under different conditions.
\begin{enumerate}[--]
	\item For any general semiconcave function $\phi$ and Hamiltonian $H$ satisfying conditions (H1)-(H3), we prove by extending the approximation method in \cite{Yu2006,Cannarsa_Yu2009} together with the stability property of strict singular characteristics.
	\item If $\phi$ is semiconcave with linear modulus and $H$ is a Tonelli Hamiltonian, an alternative proof is based on some observations on intrinsic singular characteristics introduced in \cite{Cannarsa_Cheng3}, a theorem by Marie-Claude Arnaud (\cite{Arnaud2011}) and the property of the commutators of Lax-Oleinik operators discussed in \cite{Cannarsa_Cheng_Hong2023}. This appraoch also provides some insight on the connection of this theory of maximal slope curves and relevant topics in symplectic geometry. 
\end{enumerate}
We emphasize that both analytic and geometric approaches of this theory are interesting in their own right.

\subsection{New weak KAM aspect of maximal slope curve}

In the nineties, Albert Fathi developed a theory for Hamilton-Jacobi equations, called weak KAM theory, which has deep connection with Mather theory of Lagrangian dynamics of time-periodic Tonelli Lagrangian system. For any Tonelli Hamiltonian $H(x,p)$ with $L(x,v)$ the associated Tonelli Lagrangian, let
\begin{align*}
	A_t(x,y)=\inf_{\xi\in\Gamma^t_{x,y}}\int^t_0L(\xi,\dot{\xi})\ ds,\qquad t>0,\ x,y\in M,
\end{align*}
with $\Gamma^t_{x,y}=\{\xi\in W^{1,1}([0,t],M): \xi(0)=x, \xi(t)=y\}$. We call $A_t(x,y)$ the fundamental solution (w.r.t. the associated Hamilton-Jacobi equation). For any continuous function $\phi$ on $M$, we introduce the Lax-Oleinik semigroup $\{T^-_t\}_{t\geqslant0}$ and $\{T^+_t\}_{t\geqslant0}$ as
\begin{align*}
	T^-_t\phi(x)=\inf_{y\in M}\{\phi(y)+A_t(y,x)\},\qquad T^+_t\phi(x)=\sup_{y\in M}\{\phi(y)-A_t(x,y)\},\qquad t>0,x\in M,
\end{align*}
and set $T^{\pm}_0=id$. We call $\{T^-_t\}_{t\geqslant0}$ and $\{T^+_t\}_{t\geqslant0}$ the negative and positive Lax-Oleinik evolutions respectively. Consider the Hamilton-Jacobi equation 
\begin{equation}\label{eq:intro_WKM}\tag{HJs}
	H(x,D\phi(x))=0,\qquad x\in M,
\end{equation}
where $0$ on the right-side of \eqref{eq:intro_WKM} is Ma\~n\'e's critical value. We call $\phi$ a weak KAM solution of \eqref{eq:intro_WKM} if $T^-_t\phi=\phi$ for all $t\geqslant 0$. For more information on weak KAM theory see \cite{Fathi_book}. 

Let $\phi$ be a weak KAM solution of \eqref{eq:intro_WKM}. Then $\phi\in\text{\rm SCL}\,(M)$, where $\text{\rm SCL}\,(M)$ is the set of semiconcave functions on $M$ with linear modulus. The following fact is basic and important, see \cite{Cannarsa_Sinestrari_book,Rifford2008}. For $x\in M$ and $p\in D^*\phi(x)$, the set of reachable differentials of $\phi$ at $x$, there is a unique $C^1$ curve $\gamma:(-\infty,0]\to M$ with $\gamma(0)=x$ such that $L_v(x,\dot{\gamma}(0))=p$ and
\begin{align*}
	\phi(\gamma(0))-\phi(\gamma(-t))=\int^0_{-t}L(\gamma,\dot{\gamma})\ ds,\qquad\forall t\geqslant0.
\end{align*}
Such a curve $\gamma$ is called a $(\phi,H)$-calibrated curve on $(-\infty,0]$ in weak KAM theory, and $\phi(\cdot)$ is differentiable at $\gamma(s)$ for all $s\in(-\infty,0)$. It follows that $H(\gamma(s),\mathbf{p}^{\#}_{\phi,H}(\gamma(s)))\equiv0$ for all $s\in(-\infty,0)$. This implies that $\gamma$ is indeed a maximal slope curve from $x$ in the negative direction for the pair $(\phi,H)$.  One can regard the energy dissipation term $H(x,\mathbf{p}^{\#}_{\phi,H}(x))$ as an extension of Ma\~n\'e's critical value.

Now, for any Tonelli Hamiltonian $H$ with associated Lagrangian $L$ and $\phi\in\text{\rm SCL}\,(M)$, we introduce a new Lagrangian
\begin{align*}
	L^{\#}_{\phi}(x,v):=L(x,v)+H(x,\mathbf{p}^{\#}_{\phi,H}(x)),\qquad (x,v)\in TM
\end{align*}
with $H^\#_\phi$ the associated Hamiltonian. The fundamental solution with respect to $L^{\#}$ is
\begin{align*}
	A^{\#}_t(x,y)=\inf_{\xi\in\Gamma^t_{x,y}}\int^t_0L^{\#}_{\phi}(\xi(s),\dot{\xi}(s))\ ds,\qquad t>0,\ x,y\in M.
\end{align*}
We define new Lax-Oleinik operators: for any continuous function $u$ on $M$
\begin{align*}
	T^{\#}_t u(x)=\inf_{y\in M}\{u(y)+A^{\#}_t(y,x)\},\qquad
	\breve{T}^{\#}_t u(x)=\sup_{y\in M}\{u(y)-A^{\#}_t(x,y)\},\qquad t>0,x\in M.
\end{align*}
We call $u$ a weak KAM solution of negative (resp. positive) type of the Hamilton-Jacobi equation
\begin{equation}\label{eq:nwkam intro}
	H^\#_\phi(x,Du(x))=0,\qquad x\in M,
\end{equation}
if $T^{\#}_tu=u$ for any $t\geqslant0$ (resp. $\breve{T}^\#_tu=u$ for any $t\geqslant0$). 

Main Result 1 implies that, for any such pair $(\phi,H)$, 
%$\phi$ is weak KAM solution of both negative and positive type (associated with $H^\#_\phi$),
%. If $\phi$ is a weak KAM solution of the Hamilton-Jacobi equation $H(x,D\phi(x))=0$ with Ma\~n\'e critical value $0$, then 
$\phi$ is both a weak KAM solution of \eqref{eq:nwkam intro} of negative and positive type. However, if $\text{Sing}\,(\phi)\not=\varnothing$, then $\phi$ is not a viscosity subsolution of \eqref{eq:nwkam intro} (Theorem \ref{thm:new_WKM}). Here we use the classical notion of viscosity (sub)solutions for discontinuous Hamiltonians.

\subsection{Various generalized characteristics}

To explain the application of this new theory to the problem of propagation of singularities, we should recall various type singular characteristics in the literature. 

The notion of generalized characteristics, which plays an important role to describe the evolution of singularities of viscosity solutions, was first introduced in \cite{Albano_Cannarsa2002} in the frame of Hamilton-Jacobi equations ($M=\R^n$). We restrict our interests to the case when $H$ is a Tonelli Hamiltonian. For any $\phi\in\text{\rm SCL}\,(M)$, a Lipschitz curve $\gamma:[0,\infty)\to\R^n$ satisfying the following differential inclusion is called a \emph{generalized characteristic} from $x\in\R^n$:
\begin{equation}\label{eq:GC}\tag{GC}
	\begin{cases}
		\dot{\gamma}(t)\in\text{co}\,H_p(\gamma(t),D^+\phi(\gamma(t))),\quad a.e.\  t\in[0,+\infty)\\
		\gamma(0)=x.
	\end{cases}
\end{equation}
In \cite{Albano_Cannarsa2002}, the authors proved if $\phi$ is a viscosity solution of the Hamilton-Jacobi equation
\begin{align*}
	H(x,D\phi(x))=0,\qquad x\in\Omega,
\end{align*}
where $\Omega\subset\R^n$ is an open set, then there exists a Lipschitz continuous solution $\mathbf{x}$ of \eqref{eq:GC} and $\delta>0$ such that $\gamma(t)\in\text{Sing}\,(\phi)$ for $t\in[0,\delta]$ provided $x\in \text{Sing}\,(\phi)$. Here, $\text{Sing}\,(\phi)$ is the set of the points of non-differentiability of $\phi$ and it is called the \emph{singular set} of $\phi$. %A global result by Albano (\cite{Albano2014_1}) shows, if the initial point $x$ is a point in the $C^1$-singular support of $\phi$ then $\mathbf{x}(t)$ always stays in the $C^1$-singular support of $\phi$, if $\phi$ is a viscosity solution. For mechanical system, some global propagation result were then obtained and extended in \cite{Albano2016_1,Cannarsa_Mazzola_Sinestrari2015}) for Cauchy problem, and in \cite{Cannarsa_Cheng_Zhang2014} in the context of weak KAM theory and Mather theory.
Using an approximation method developed in \cite{Yu2006,Cannarsa_Yu2009}, this result has been further extended to the case where $H$ is of class $C^1$ and strictly convex in $p$-variable, and $\phi$ is an  arbitrary semiconcave function. 
%For any such pair $(\phi,H)$, \eqref{eq:GC} admits a solution propagating singularities locally. Moreover, such a generalized characteristic $\mathbf{x}$ satisfies nice property on the initial point: $\dot{x}^+(0)$ exists and
%\begin{equation}\label{eq:initial_condition}
%	\dot{\mathbf{x}}^+(0)=H_p(x,\mathbf{p}^{\#}(x))
%\end{equation}
%and $\dot{x}$ is right-continuous at $t=0$, where
%\begin{align*}
%	\mathbf{p}^{\#}(x)=\arg\min\{H(x,p): p\in D^+\phi(x)\}.
%\end{align*}
From the control theory point of view, the generalized characteristic differential inclusion comes from some relaxation problem, the readers can understand this point from the original proof in \cite{Albano_Cannarsa2002}, and also in the current paper (Section 4). Due to the presence of the convex hull in the right-side of \eqref{eq:GC}, no evidence ensures the uniqueness of the solution of \eqref{eq:GC}.

%In the case of mechanical system with Lagrangian $L=\frac 12\langle A(x)v,v\rangle+V(x)$, where $A(x)$ is a positive definite symmetric $n\times n$ matrix smooth depending on $x$ and $V:\R^n\to\R$ is a smooth function, \eqref{eq:GC} becomes the generalized gradient system
%\begin{equation}\label{eq:GG1}
%	\dot{\mathbf{x}}(t)\in A^{-1}(\mathbf{x})D^+\phi(\mathbf{x}),\qquad a.e., t\geqslant0.
%\end{equation}
%The mechanical system is the only known case such that \eqref{eq:GG1} having well-posedness:  existence, uniqueness and stability, until now (\cite{ACNS2013}). 
%In the context of generalized characteristics, this well-posedness issue was established in \cite{ACNS2013}. We will explain later, this can be understood also by the EVI formalism of the general theory of gradient flow developed in optimal transport.

Invoking a celebrated paper by Khanin and Sobolevski (\cite{Khanin_Sobolevski2016}, see also \cite{Stromberg_Ahmadzadeh2014}), we consider the equation
\begin{equation}\label{eq:SGC2}\tag{BC}
	\begin{cases}
		\dot{\gamma}^+(t)=H_p(\gamma(t),\mathbf{p}^{\#}_{\phi,H}(\gamma(t))),\qquad\forall t\in[0,+\infty),\\
		\gamma(0)=x.
	\end{cases}
\end{equation}
A solution of \eqref{eq:SGC2} is called a \emph{broken characteristic} in the literature. In \cite{Cannarsa_Cheng2021b} we introduce the notion of strict singular characteristic, which is strictly related to \eqref{eq:SGC2}.

For the local existence of solutions for \eqref{eq:SGC2} and \eqref{eq:SGC1}, there are several proofs (\cite{Khanin_Sobolevski2016,Cannarsa_Cheng2021a,Cheng_Hong2022a,Cheng_Hong2023}). The original one \cite{Khanin_Sobolevski2016} used vanishing viscosity approximation methods in case $\phi$ is a viscosity solution of an evolutionary Hamilton-Jacobi equation to prove the existence result for \eqref{eq:SGC2}, and \cite{Cannarsa_Cheng2021a} used a standard approximation by convolution for the static equation. In \cite{Cheng_Hong2022a} and \cite{Cheng_Hong2023}, some local existence results for \eqref{eq:SGC2} for viscosity solutions to evolutionary Hamilton-Jacobi equations with smooth initial data and weak KAM solution in 2D were obtained, using an intrinsic approach based on the analysis of the underlying characteristic systems.

 %An important feature of the curve $\mathbf{y}_x(\cdot)$ is the fact that $\mathbf{y}_x(t)\in\text{sing}\,(\phi)$

%\subsection{Applications to propagation of singularities}

Applying our method of maximal slope curves for strict singular characteristic, we can also give a clear explanation of the notion of broken characteristics. We prove that the two notions, broken characteristic and strict singular characteristic, coincide for any pair $(\phi,H)$.

\begin{Result}
Suppose $\phi$ is a semiconcave function on $M$, $H$ is a Hamiltonian satisfying conditions {\rm (H1)-(H3)}, and $\gamma:\R\to M$ is a strict singular characteristic for the pair $(\phi,H)$.
\begin{enumerate}[\rm (1)]
	\item  The right derivative $\dot{\gamma}^+(t)$ exists for all $t\in\R$ and $\gamma$ satisfies the broken characteristic equation \eqref{eq:SGC2}. (Theorem \ref{thm:right_derivative})
	\item The one-dimensional families of measures determined by the curves $(\gamma,\mathbf{p}^\#_{\phi,H}(\gamma))$ and $(\gamma,\dot{\gamma}^+)$, respectively, are weakly right continuous. (Theorem \ref{thm:measure_H} and Theorem \ref{thm:measure_L})
	\item Moreover, if $\phi\in\text{\rm SCL}\,(M)$ and $H$ is a Tonelli Hamiltonian, then for any $x\in M$ there exists a strict singular characteristic $\gamma:\R\to M$ with $\gamma(0)=x$, for the pair $(\phi,H)$ such that the right derivative $\dot{\gamma}^+(t)$ is right continuous for all $t\in\R$. (Theorem \ref{thm:right_continuous2})
\end{enumerate}
\end{Result}

We emphasize that, in the case when $H$ is quadratic in the $p$-variable, item (1) and (2) above are obvious in the EVI frame of gradient flow theory for every strict singular characteristic (see, for instance, \cite{Muratori_Savare2020}). This problem for non-quadratic Hamiltonian is open in the literature.

As Khanin and Sobolevski pointed out, the notion of broken characteristic is also closely related to some problem for certain stochastic Hamiltonian dynamical systems and viscous Hamilton-Jacobi equation. See \cite{Bogaevsky2002,Bogaevski2006,Khanin_Sobolevski2010,Khanin_Sobolevski2016} for more on these topics.

\subsection{Intrinsic construction of strict singular characteristics}

The notion of \emph{intrinsic singular characteristic} was introduced first in \cite{Cannarsa_Cheng3} and then developed in \cite{Cannarsa_Cheng_Fathi2017,Cannarsa_Cheng_Fathi2021} with some important topological applications to weak KAM theory and geometry. Suppose $\phi\in\text{\rm SCL}\,(M)$ and $H$ is a Tonelli Hamiltonian. For any $x\in\R^n$ we define a curve $\mathbf{y}_x:[0,\tau(\phi)]\to\R^n$ by
\begin{equation}\label{eq:curve_y_x_intro}
	\mathbf{y}_x(t):=
	\begin{cases}
		x,& t=0,\\
		\arg\max\{\phi(y)-A_t(x,y): y\in\R^n\},& t\in(0,\tau(\phi)],
	\end{cases}
\end{equation}
where $\tau(\phi)>0$ is a constant (see the beginning of Section 4). We call $\mathbf{y}_x$ an \emph{intrinsic singular characteristic} from $x$. One can prove that $\mathbf{y}_x$ can be extended to $[0,+\infty)$ since $\tau(\phi)$ is independent of $x$. By a theorem of Marie-claude Arnaud (\cite{Arnaud2011}), for any $t\in(0,\tau(\phi)]$, if $\xi_t\in\Gamma^t_{x,\mathbf{y}_x(t)}$ is the minimal curve for $A_t(x,\mathbf{y}_x(t))$, then $\xi_t$ satisfies the differential equation (Proposition \ref{pro:curve_y})
\begin{equation}\label{eq:vector_field1}
	\begin{cases}
		\dot{\xi}_t(s)=H_p(\xi_t(s),DT^+_{t-s}\phi(\xi_t(s))),& s\in[0,t),\\
		\xi_t(0)=x.
	\end{cases}
\end{equation}
Thus, it is natural to introduce a time-dependent vector field on $[-T,T]\times\R^n$ as follows: fix $T>0$ and let
\begin{align*}
	\Delta: -T=\tau_0<\tau_1<\cdots<\tau_{N-1}<\tau_N=T
\end{align*}
be a partition of the interval $[-T,T]$ with $|\Delta|=\max\{\tau_i-\tau_{i-1}: 1\leqslant i\leqslant N\}\leqslant\tau(\phi)$. For $t\in[-T,T)$, let $\tau_{\Delta}^{+}(t)=\inf\{\tau_i|\,\tau_i>t\}$. We define the vector field
\begin{align*}
	W_{\Delta}(t,x)=H_p(x,\nabla T^+_{\tau_{\Delta}^{+}(t)-t}\phi(x)),\qquad t\in[-T,T), x\in M.
\end{align*}
Notice that, for any $x\in M$, the differential equation
\begin{equation}\label{eq:ODE_W_Delta_intro}
	\begin{cases}
		\dot{\gamma}(t)=W_{\Delta}(t,\gamma(t)),\qquad t\in[-T,T],\\
		\gamma(0)=x,
	\end{cases}
\end{equation}
admits a unique piecewise $C^1$ solution $\gamma$. We prove that the vector field $W_{\Delta}(t,x)$ converges to $H_p(x,\mathbf{p}^{\#}_{\phi,H}(x))$ as $|\Delta|\to 0$, as well as the solution curves\footnote{During the annual conference of Chinese Mathematical Society in Wuhan, February 2023, Prof. Bangxian Han made a comment to the talk by Wei Cheng on the early results of this intrinsic approach. He pointed out the connection between our construction and the theory of minimizing movements initiated by De Giorgi.}.

\begin{Result}
\hfill
\begin{enumerate}[\rm (1)]
	\item For all partitions $\{\Delta\}$ of the interval $[-T,T]$, we have (Theorem \ref{thm:v limit})
	\begin{align*}
		\lim_{|\Delta|\to 0} W_{\Delta}(t,x)=H_p(x,\mathbf{p}^{\#}_{\phi,H}(x)),\qquad \forall t\in[-T,T),\ x\in M.
	\end{align*}
	\item Suppose $\{\Delta_k\}$ is a sequence of partitions of $[-T,T]$, and each $\gamma_k:[-T,T]\to M$ is a solution of the equation \eqref{eq:ODE_W_Delta_intro} with $\Delta=\Delta_k$. If $\lim_{k\to\infty}|\Delta_k|=0$ and $\gamma_k$ converges uniformly to $\gamma:[-T,T]\to M$, then $\gamma$ is a strict singular characteristic for $(\phi,H)$. (Theorem \ref{thm:solution limit})
\end{enumerate}
\end{Result}

We can conclude from this result that any limiting curve of intrinsic singular characteristic $\mathbf{y}_x(t)$ is a strict singular characteristic for the pair $(\phi,H)$ (Theorem \ref{thm:in sc}).

%Pick up any sequence of partitions $\{\Delta_m\}$ such that $\lim_{m\to\infty}|\Delta_m|=0$. For any $m\in\N$, we denote by $\mathbf{x}_m$ the solution of \eqref{eq:ODE_W_Delta_intro} with respect to the partition $\Delta_m$. We proved any limiting curve of $\mathbf{x}$ of the sequence $\{\mathbf{x}_m\}$ is a generalized characteristic satisfying \eqref{eq:GC} (Theorem \ref{thm:Young}), by introducing a family of Young measure $\{\nu_t\}_{t\geqslant0}$ supported on $D^+\phi(\mathbf{x}(t))$\footnote{During the annual conference of Chinese Mathematical Society in Wuhan, February 2023, Prof. Bangxian Han gave a comment after my talk on this result. He pointed out the connection between our construction and the theory of minimizing movements initiated by De Giorgi.}. Even though such a weak convergence method is not a right way to extract the convex hull in \eqref{eq:GC}, this intrinsic approach is still essential for the application to the problem of propagation of singularities. 

\subsection{Propagation of singularities}

Recall that for every weak KAM solution $\phi$ of \eqref{eq:intro_WKM}, we have that $\text{\rm Sing}\,(\phi)\subset\text{\rm Cut}\,(\phi)\subset\overline{\text{\rm Sing}\,(\phi)}$, where $\text{\rm Sing}\,(\phi)$ denotes the set of singular points of $\phi$ and $\text{\rm Cut}\,(\phi)$ the cut locus of $\phi$. Using the key observation that $M\setminus\text{Cut}\,(\phi)$ has a certain local $C^{1,1}$ property (Theorem \ref{thm:C11}), we prove a global propagation result for the cut locus even along generalized characteristics. 

\begin{Result}
Suppose $H$ is a Tonelli Hamiltonian, $\phi$ is a weak KAM solution of \eqref{eq:intro_WKM}.
\begin{enumerate}[(1)]
	\item For any $x\in M\setminus\text{\rm Cut}\,(\phi)$, let $\gamma_x:(-\infty,\tau_\phi(x)]\to M$ be the unique $(\phi,H)$-calibrated curve with $\gamma_x(0)=x$, where $\tau_\phi(x)>0$ is the cut time of $\phi$ at $x$. Then $\gamma_x$ is the unique solution of the differential inclusion \eqref{eq:GC} with $\gamma_x(0)=x$ on $(-\infty,\tau_\phi(x)]$. (Theorem \ref{thm:propagation_cut})
	\item If $\gamma:[0,+\infty)\to M$ satisfies \eqref{eq:GC} and $\gamma(0)\in \text{\rm Cut}\,(\phi)$, then $\gamma(t)\in\text{\rm Cut}\,(\phi)$ for all $t\geqslant0$. (Theorem \ref{thm:propagation_cut})
\end{enumerate}
\end{Result}

Applying the aforementioned theory of maximal slope curves, we obtain  local and global propagation results for singular points along strict singular characteristics.

\begin{Result}
Suppose $H$ is a Tonelli Hamiltonian, $\phi$ is a weak KAM solution of \eqref{eq:intro_WKM}.
\begin{enumerate}[\rm (1)]
	\item For any $x\in\text{\rm Sing}\,(\phi)$, there exists a strict singular characteristic $\gamma:[0,+\infty)\to M$ with $\gamma(0)=x$ for the pair $(\phi,H)$, such that for some $\delta>0$ we have (Theorem \ref{thm:local_prop})
	\begin{align*}
		\gamma(t)\in\text{\rm Sing}\,(\phi),\qquad \forall t\in[0,\delta].
	\end{align*}
    \item If $\gamma:[0,+\infty)\to M$ is a strict singular characteristic for $(\phi,H)$ and $\gamma(0)\in\text{\rm Cut}\,(\phi)$, then $\text{\rm supp}\,(\chi_{\text{\rm Sing}\,(\phi)}(\gamma)\mathscr{L}^1)=[0,+\infty)$, where $\chi_{\text{\rm Sing}\,(\phi)}$ is the indicator for the set $\text{\rm Sing}\,(\phi)$, and $\mathscr{L}^1$ stands for the Lebesgue measure on $[0,+\infty)$. (Theorem \ref{thm:global pro})
    \item For any $x\in\text{\rm Cut}\,(\phi)$, there exists a strict singular characteristic $\gamma:[0,+\infty)\to M$ with $\gamma(0)=x$ for $(\phi,H)$, such that  $\text{\rm int}\,(\{t\in[0,+\infty):\gamma(t)\in\text{\rm Sing}\,(\phi)\})$ is dense in $[0,+\infty)$, where $\text{\rm int}\,(A)$ stands for the interior point of $A\subset\R$. (Theorem \ref{thm:global pro})
\end{enumerate}
\end{Result} 

We remark that Albano proved in \cite{Albano2016_1} that under our assumption in Main Result 5 with the Hamiltonian $H(x,p)$ quadratic in $p$-variable, the unique strict singular characteristic propagates singularities of $\phi$ globally provided the initial point is a singular point of $\phi$. However, the problem if such global propagation result holds for general Tonelli Hamiltonian is still open.

\subsection{Mass transport}

The vector field $H_p(x,\mathbf{p}^\#_{\phi,H}(x))$ discussed in this paper does not determine a \emph{Regular Lagrangian Flow} (see, for instance,  \cite{Ambrosio_Trevisan2014,Ambrosio_Trevisan2017}) since the collision and focus of forward characteristics. One cannot use the classical DiPerna-Lions theory to deal with mass transport along strict singular characteristics. On the other hand, no evidence shows that the vector field $H_p(x,\mathbf{p}^\#_{\phi,H}(x))$ ensures the well-posedness of the associated equation $\dot{x}=H_p(x,\mathbf{p}^\#_{\phi,H}(x))$ in the forward direction. However, we can use a lifting method to consider a dynamical system on the set of strict singular characteristics even without uniqueness. 

\begin{Result}
Suppose $\phi$ is a semiconcave function on $M$, $H$ is a Hamiltonian satisfying \text{\rm (H1)-(H3)}. For any $T>0$, let $\mathscr{S}_T$ be the family of strict singular characteristics $\gamma:[0,T]\to M$ for $(\phi,H)$. Then the following holds true:
\begin{enumerate}[(1)]
	\item There exists a measurable map $\gamma:M\to\mathscr{S}_T$, $x\mapsto\gamma(x,\cdot)$ such that $\gamma(x,0)=x$ for all $x\in M$ (Proposition \ref{pro:measuable}). 

	\item Let $\Phi_\gamma^t(x)=\gamma(x,t)$ for $(t,x)\in[0,T]\times M$. For any $\bar{\mu}\in\mathscr{P}(M)$, the set of Borel probability measures on $M$, let $\mu_t=(\Phi^t_\gamma)_{\#}\bar{\mu}$ for $t\in[0,T]$. Then the curve $\mu_t$ in $\mathscr{P}(M)$ satisfies the continuity equation (Theorem \ref{thm:CE})
		\begin{equation}\label{eq:intro_CE}\tag{CE}
			\begin{cases}
				\frac d{dt}\mu+\text{\rm div}(H_p(x,\mathbf{p}^{\#}_{\phi,H}(x))\cdot\mu)=0,\\\mu_0=\bar{\mu}.
			\end{cases}
		\end{equation}
	\item For any solution,$\mu_t$ of  \eqref{eq:intro_CE}  given by the construction in point (2) above, $\frac{d^+}{dt}\mu_t$ exists for all $t>0$ in a weak sense (Theorem \ref{thm:CE}).
	\item Iif $H$ is a Tonelli Hamiltonian and $\phi$ is a weak KAM solution of \eqref{eq:intro_WKM}, then we have (Theorem \ref{thm:mass sing})
	\begin{align*}
		\mu_{t_1}(\text{\rm Cut}\,(\phi))\leqslant\mu_{t_2}(\text{\rm Cut}\,(\phi)),\qquad \mu_{t_1}(\overline{\text{\rm Sing}\,(\phi)})\leqslant\mu_{t_2}(\overline{\text{\rm Sing}\,(\phi)}),\qquad\forall 0\leqslant t_1\leqslant t_2\leqslant T.
	\end{align*}
\end{enumerate}
\end{Result}

For the propagation of singularities in the frame of optimal transport along \eqref{eq:intro_CE}, we refer to a recent paper \cite{CCSW2024}.

\medskip

Finally, we emphasize that the idea of maximal slope curves can be also adapted to time-dependent functions $\phi:\R\times M\to\R$. We explain this point in Appendix A.

\medskip

The paper is organized as follows. In Section 2, we have collected some basic facts from weak KAM theory and semiconcave functions. In Section 3, we first introduce the notion of maximal slope curve for a pair $(\phi,H)$ and prove the existence and stability property. Then, we prove that broken characteristics and strict singular characteristics coincide. We also prove the existence of broken characteristics from any initial points, for which the right-derivative is right continuous everywhere. We also build a new weak KAM setting using the Hamiltonian/Lagrangian with an extra energy dissipation term $H(x,\mathbf{p}^\#_{\phi,H}(x))$. In Section 4, for any Tonelli Hamiltonian $H$ and $\phi\in\text{\rm SCL}\,(M)$, we construct a sequence of piecewise Lipschitz vector fields $W_k(t,x)$ using intrinsic singular characteristics. We prove that such vector fields $W_k(t,x)$ converge to the strict singular characteristics system in the aspect of both vector field and solution curve. Using this intrinsic approach, we obtain an alternative proof of the existence of maximal slope curves, with quite natural geometric intuition. In Section 5, we study global propagation of singularities  along strict singular characteristics of a given weak KAM solution. We also obtain a new  result on the global propagation of cut points along any generalized characteristics. In Section 6, we introduce the continuity equation for strict singular characteristics and its relation with the cut locus and $C^{1,1}$-singular support of weak KAM solutions. There is also an appendix on the extension of our main results of this paper to the time-dependent case. %This is a task of our future study.

\medskip

\noindent\textbf{Acknowledgements.} Piermarco Cannarsa was supported by the PRIN 2022 PNRR Project P20225SP98 ``Some mathematical approaches to climate change and its impacts'  (funded by the European Community-Next Generation EU), CUP E53D2301791 0001, by the INdAM  (Istituto Nazionale di Alta Matematica) Group for Mathematical Analysis, Probability and Applications, and by the MUR Excellence Department Project awarded to the Department of Mathematics, University of Rome Tor Vergata, CUP E83C23000330006. Wei Cheng was partly supported by National Natural Science Foundation of China (Grant No. 12231010). Kaizhi Wang was partly supported by National Natural Science Foundation of China (Grant No. 12171315). Wei Cheng thanks Konstantin Khanin and Jianlu Zhang for their patience when talking on the early version of this paper in the Chinese Academy of Sciences in April 2023. Wei Cheng is also obliged to Marie-Claude Arnaud for the exchange of ideas on the results of this paper and her suggestion to connect these results to symplectic geometry, during his visit to Paris in April 2024. The authors are also grateful to Qinbo Chen, Shiyong Chen, Tianqi Shi, Wenxue Wei and Zhixiang Zhu for helpful discussions.

\section{Preliminaries}

\begin{table}[h]
	\caption{Notation}
	\begin{tabularx}{\textwidth}{p{0.22\textwidth}X}
		\toprule
		$M$ & compact and connected smooth manifold without boundary\\
		$TM/T^*M$ & tangent/cotangent bundle of $M$\\
		$\text{SCL}\,(M)$ & the set of semiconcave functions with linear modulus on $M$\\
		$C^{1,1}(M)$ & the set of $C^1$ functions on $M$ with Lipschitz continuous differentials\\
		$D^{+}\phi(x)$ & the superdifferential of a function $\phi$\\
		$D^*\phi(x)$ & the set of reachable gradients of a function $\phi$\\
		$\mathbf{p}^{\#}_{\phi,H}(x)$ & the minimal-energy element of $D^{+}\phi(x)$ w. r. t. the Hamiltonian $H$\\
		$\mbox{\rm co}\,A$ & the convex hull of $A$ in some linear space\\
		$\text{Sing}\,(\phi)$ & the set of non-differentiability points of a function $\phi$\\
		$\text{Cut}\,(\phi)$ & the cut locus of a weak KAM solution $\phi$\\
		$\tau_{\phi}(x)$ & the cut time function of a given weak KAM solution $\phi$\\
		$\Phi^t_H$ & the Hamiltonian flow associated to the Hamiltonian $H$\\
		\bottomrule
	\end{tabularx}
\end{table}

\subsection{Semiconcave functions}
Let $M$ be a compact smooth manifold without boundary. Usually, $M$ is endowed with a Riemannian metric $g$, with $d$ the associated Riemannian distance. A function $\phi:M\to\R$ is called semiconcave with semiconcavity modulus $\omega$ (with respect to the metric $g$) or $\omega$-semiconcave if
\begin{equation}\label{eq:semiconcave}
	\lambda\phi(x)+(1-\lambda)\phi(y)-\phi(\gamma(\lambda))\leqslant \lambda(1-\lambda)d(x,y)\cdot\omega(d(x,y))
\end{equation}
for any $x,y\in M$, $\lambda\in[0,1]$ and a geodesic $\gamma:[0,1]\to M$ with $\gamma(0)=x$ and $\gamma(1)=y$, where $\omega:[0,\infty)\to[0,\infty)$ is a nondecreasing upper semicontinuous function such that $\lim_{r\to0^+}\omega(r)=0$. If $\omega(r)=Cr$ for some constant $C>0$, we call such a function a semiconcave function with linear modulus and $C$ is a semiconcavity constant for $\phi$. We denote by $\text{\rm SCL}\,(M)$ the family of all semiconcave functions with linear modulus on $M$. We call $x\in M$ a singular point of $\phi$ if $\phi$ is not differentiable at $x$ and denote by $\SING(\phi)$ the set of singular points of $\phi$. The superdifferential of a semiconcave function $\phi$ is defined as
\begin{align*}
	D^{+}\phi(x)=\{D\psi(x):r>0,\ \psi\in C^1(B_r(x)),\ \psi\geqslant\phi,\ \psi(x)=\phi(x)\},\quad x\in M.
\end{align*}
and the set of reachable gradients of $\phi$ is defined as
\begin{align*}
	D^{*}\phi(x)=\{\lim_{k\to\infty}D\phi(x_k):x_k\to x,\ x_k\notin\SING(\phi)\},\quad x\in M.
\end{align*}

\begin{Pro}[\cite{Cannarsa_Sinestrari_book} Theorem 2.1.7, Proposition 3.1.5, Proposition 3.3.4, Theorem 3.3.6]\label{pro:scsv}
Let $\phi$ be a semiconcave function on a compact manifold $M$. Then we have
\begin{enumerate}[\rm (1)]
	\item $\phi$ is Lipschitz continuous on $M$.
	\item for every $x\in M$, $D^{+}\phi(x)$ is a nonempty, convex, compact subset of $T^{*}_{x}M$.
	\item $D^{+}\phi(x)=\mbox{\rm co}\,D^{*}\phi(x)$ for all $x\in M$.
    \item the directional derivative of $\phi$ satisfies
    \begin{align*}
    	\partial\phi(x,v)=\min_{p\in D^+\phi(x)}\langle p,v\rangle,\quad \forall x\in M,\ v\in T_{x}M.
    \end{align*}
\end{enumerate}
\end{Pro}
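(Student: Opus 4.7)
My plan is to reduce each assertion to a local computation in a coordinate chart, where the semiconcavity inequality \eqref{eq:semiconcave} takes a purely Euclidean form, and then to globalize using compactness of $M$. In such a chart any $\omega$-semiconcave $\phi$ satisfies
\begin{equation*}
\lambda\phi(x)+(1-\lambda)\phi(y)-\phi(\lambda x+(1-\lambda)y)\leqslant \lambda(1-\lambda)|x-y|\omega(|x-y|)
\end{equation*}
up to a controllable metric perturbation, and this is essentially the only inequality from which all four statements flow.

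For (1), I would extract local Lipschitz continuity by bounding the one-sided difference quotient $(\phi(x+sv)-\phi(x))/s$ from above using the semiconcavity inequality with endpoints $x+v$ and $x$, and from below by the symmetric trick of writing $x$ as a convex combination of $x+sv$ and a fixed point in a compact subneighborhood. A finite subcover converts the local bounds into a global Lipschitz constant. For (2), convexity of $D^+\phi(x)$ is immediate since $\lambda\psi_1+(1-\lambda)\psi_2$ is a valid test function whenever $\psi_1,\psi_2$ are; boundedness follows from (1); and closedness comes from the equivalent characterization
\begin{equation*}
p\in D^+\phi(x)\iff \phi(y)\leqslant \phi(x)+\langle p,y-x\rangle + |y-x|\omega(|y-x|)\text{ for small }y-x,
\end{equation*}
which passes to the limit in $p$. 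Nonemptiness I would obtain by applying Hahn-Banach to the sublinear upper Dini derivative $v\mapsto\partial\phi(x,v)$, producing a supporting linear functional that automatically satisfies the characterization above.

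For (3), the inclusion $D^*\phi(x)\subset D^+\phi(x)$ follows from the stability of the characterization in (2) under limits $x_k\to x$ with $D\phi(x_k)\to p$, since each $D\phi(x_k)\in D^+\phi(x_k)$ at any differentiability point; convexity of $D^+\phi(x)$ then yields $\mbox{\rm co}\,D^*\phi(x)\subset D^+\phi(x)$. The reverse inclusion is the main obstacle. I would prove it by showing every extreme point $p$ of $D^+\phi(x)$ lies in $D^*\phi(x)$: passing to $\tilde\phi(y):=\phi(y)-\langle p,y-x\rangle$, which is still $\omega$-semiconcave and has $x$ as an approximate maximum, I would use Rademacher applied to the Lipschitz $\phi$ to locate a sequence of differentiability points $x_k\to x$ whose gradients converge, and exploit the extremality of $p$ together with the convex structure of each $D^+\phi(x_k)$ to force this limit to be $p$. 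Carathéodory's theorem then writes an arbitrary $p\in D^+\phi(x)$ as a convex combination of such extreme points.

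Finally, for (4), the bound $\partial\phi(x,v)\leqslant\min_{p\in D^+\phi(x)}\langle p,v\rangle$ follows by dividing the superdifferential inequality at $y=x+sv$ by $s>0$ and letting $s\to 0^+$. For the reverse inequality I would note that $v\mapsto\partial\phi(x,v)$ is sublinear (positive homogeneity is trivial, subadditivity comes from semiconcavity), so Hahn-Banach yields, for any prescribed $v_0$, a linear functional $p$ with $\langle p,v_0\rangle=\partial\phi(x,v_0)$ and $\langle p,v\rangle\geqslant\partial\phi(x,v)$ for every $v$; such a $p$ lies in $D^+\phi(x)$ by the characterization from (2), closing the loop. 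The recurring delicate point in all four steps is the careful handling of the modulus remainder $|y-x|\omega(|y-x|)$, which must be shown to vanish faster than the first-order quantities in every limiting argument, especially since $\omega$ is not assumed linear.
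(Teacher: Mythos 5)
This proposition is quoted from Cannarsa--Sinestrari (the paper gives no proof of its own), so the only meaningful comparison is with the standard textbook arguments, which your outline largely reconstructs; however, there is one step that, as written, would fail. In the nonemptiness part of (2) and in the reverse inequality of (4) you claim that $v\mapsto\partial\phi(x,v)$ is \emph{sublinear} and that Hahn--Banach then produces a linear functional $p$ with $\langle p,v\rangle\geqslant\partial\phi(x,v)$ for all $v$ and equality at $v_0$. Both halves are off. For a semiconcave function the directional derivative is \emph{superadditive}, i.e.\ concave and positively homogeneous: applying \eqref{eq:semiconcave} with $\lambda=\tfrac12$ to the points $x+2sv$ and $x+2sw$ gives $\partial\phi(x,v+w)\geqslant\partial\phi(x,v)+\partial\phi(x,w)$, which is consistent with the formula $\partial\phi(x,v)=\min_{p\in D^+\phi(x)}\langle p,v\rangle$ you are trying to prove (a minimum of linear maps is concave, not convex). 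Moreover, Hahn--Banach with a sublinear majorant yields a functional \emph{dominated by} it, not dominating it; a genuinely sublinear nonlinear function such as $v\mapsto|v|$ admits no dominating linear functional at all, so the statement you invoke is internally inconsistent. The argument is rescued by flipping the orientation: apply Hahn--Banach to the sublinear function $v\mapsto-\partial\phi(x,-v)$ (equivalently, use the supporting-functional statement for concave positively homogeneous functions) to obtain $\langle p,v\rangle\geqslant\partial\phi(x,v)$ for all $v$ with equality at $v_0$; then the semiconcavity estimate $\phi(x+sv)\leqslant\phi(x)+s\,\partial\phi(x,v)+s|v|\,\tilde\omega(s|v|)$, which is uniform in the direction $v$, upgrades such a $p$ to an element of $D^{+}\phi(x)$, exactly the "modulus remainder" point you flag at the end.

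A smaller caveat concerns (3): extremality of $p$ alone does not obviously "force" the limit of gradients to be $p$. The textbook proof works with \emph{exposed} points, where the exposing direction tells you along which rays to pick the differentiability points $x_k\to x$, and a statement like Lemma \ref{lem:usc dir} identifies the limiting gradient as the minimizer of $\langle\cdot,v\rangle$ on $D^+\phi(x)$, hence as the exposed point; one then passes from exposed to extreme points via Straszewicz's theorem and closedness of $D^*\phi(x)$. Also, the final reconstruction of an arbitrary $p\in D^+\phi(x)$ uses Minkowski's theorem (a compact convex set is the convex hull of its extreme points); Carath\'eodory only bounds the number of terms. With these repairs your scheme coincides with the standard proof.
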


The following lemma shows the map $(\phi,x)\mapsto D^{+}\phi(x)$ is upper semicontinuous.

\begin{Lem}\label{lem:usc2}
Let $\{\phi_k\}$ be a sequence of $\omega$-semiconcave functions on $M$, $\{x_k\}\subset M$ and $p_k\in D^+\phi_k(x_k)$ for each $k\in\N$. If $\phi_k$ converges uniformly to $\phi$ on $M$ as $k\to\infty$ and $\lim_{k\to\infty}x_k=x$, then $\phi$ is also $\omega$-semiconcave and $\lim_{k\to\infty}d_{D^+\phi(x)}(p_k)=0$.
\end{Lem}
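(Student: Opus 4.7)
The plan is to handle the two assertions separately: first, verify that $\omega$-semiconcavity is preserved under uniform limits by passing to the limit in the defining inequality; second, establish the upper semicontinuity of the superdifferential by a boundedness-plus-contradiction argument combined with the standard pointwise characterization of $D^+$ for $\omega$-semiconcave functions.

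For the first claim, I would fix $x,y\in M$, $\lambda\in[0,1]$ and a geodesic $\gamma:[0,1]\to M$ connecting them, then apply \eqref{eq:semiconcave} to each $\phi_k$ to get
\begin{equation*}
\lambda\phi_k(x)+(1-\lambda)\phi_k(y)-\phi_k(\gamma(\lambda))\leqslant \lambda(1-\lambda)d(x,y)\omega(d(x,y)).
\end{equation*}
Uniform convergence $\phi_k\to\phi$ lets the left-hand side pass to the limit, so $\phi$ inherits the same modulus.

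For the second claim, note first that all $\phi_k$ are Lipschitz with a common constant depending only on $\omega$ and $\text{diam}(M)$, hence the covectors $p_k$ are norm-bounded (in any fixed trivialization of $T^*M$ near $x$). I would work in a chart $(U,\psi)$ around $x$; for $k$ large enough $x_k\in U$, and transplanting everything to $\psi(U)\subset\R^n$, the characterization of the superdifferential for an $\omega$-semiconcave function yields a pointwise inequality of the form
\begin{equation*}
\phi_k(y)-\phi_k(x_k)\leqslant \langle p_k,\,\psi(y)-\psi(x_k)\rangle + d(y,x_k)\,\tilde\omega(d(y,x_k)),\qquad y\in U,
\end{equation*}
where the modulus $\tilde\omega$ absorbs the bounded chart distortion. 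Assume by contradiction that $d_{D^+\phi(x)}(p_k)\not\to 0$; then along a subsequence $d_{D^+\phi(x)}(p_{k_j})\geqslant\epsilon>0$. Boundedness allows extraction of a further subsequence with $p_{k_j}\to p\in T^*_xM$. Passing to the limit in the displayed inequality, using $x_k\to x$ and uniform convergence of $\phi_k$, produces
\begin{equation*}
\phi(y)-\phi(x)\leqslant \langle p,\,\psi(y)-\psi(x)\rangle + d(y,x)\,\tilde\omega(d(y,x)),
\end{equation*}
which (since $\phi$ is $\omega$-semiconcave by the first part) forces $p\in D^+\phi(x)$, contradicting $d_{D^+\phi(x)}(p_{k_j})\geqslant\epsilon$.

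The main subtlety, rather than an obstacle, is that the $p_k$ live in different cotangent fibers $T^*_{x_k}M$, so their ``convergence'' must be understood in a local trivialization; once this bookkeeping is fixed the argument is essentially a soft compactness argument using only Proposition \ref{pro:scsv} and the elementary characterization of $D^+$ for semiconcave functions.
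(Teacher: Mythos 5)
Your proposal is correct and follows essentially the same route as the paper: semiconcavity of the limit is obtained by passing to the limit in the defining inequality, and membership $p\in D^+\phi(x)$ of any subsequential limit is obtained by letting $k\to\infty$ in the pointwise inequality $\phi_k(y)\leqslant\phi_k(x_k)+\langle p_k,y-x_k\rangle+|y-x_k|\,\omega(|y-x_k|)$ in a local chart, exactly as the paper does via Proposition 3.3.1 of Cannarsa--Sinestrari. The only difference is that you make explicit the equi-Lipschitz bound giving compactness of $\{p_k\}$ (strictly speaking this constant also depends on the uniform bound on $\|\phi_k\|_{\infty}$, not only on $\omega$ and the diameter of $M$, but uniform convergence supplies that), a step the paper leaves implicit in its reduction to convergent subsequences.
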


\begin{proof}
It is clear that $\phi$ is still $\omega$-semiconcave by definition \eqref{eq:semiconcave} and the uniform convergence. To show $\lim_{k\to\infty}d_{D^+\phi(x)}(p_k)=0$ it is sufficient to show that if $\{p_{k_i}\}$ is subsequence such that $\lim_{i\to\infty}p_{k_i}=p$ then $p\in D^+\phi(x)$. Because of the local nature we work on Euclidean space. Indeed, since $p_{k_i}\in D^+\phi_k(x_{k_i})$ we have (see \cite[Proposition 3.3.1]{Cannarsa_Sinestrari_book})
\begin{align*}
	\phi_{k_i}(y)\leqslant\phi_{k_i}(x_{k_i})+\langle p_{k_i},y-x_{k_i}\rangle+|y-x_{k_i}|\omega(|y-x_{k_i}|),\qquad \forall i\in\N, y\in \R^n.
\end{align*}
Taking $i\to\infty$ we have $p\in D^+\phi(x)$.
\end{proof}

\begin{Lem}[\cite{Cannarsa_Sinestrari_book} Proposition 3.3.15]\label{lem:usc dir}
Let $\phi:\R^n\to\R$ be semiconcave and $x\in\R^n$, $p_0\in\R^n$. Suppose that there exists sequences $\{x_k\}\subset(\R^n\setminus\{x\})$ and $\{p_k\}$ such that
\begin{align*}
	\lim_{k\to\infty}x_k=x,\quad \lim_{k\to\infty}\frac{x_k-x}{|x_k-x|}=v,\quad p_k\in D^+\phi(x_k),\quad \lim_{k\to\infty}p_k=p_0
\end{align*}
for some unit vector $v\in\R^n$. Then we have $p_0\in D^+\phi(x)$ and
\begin{align*}
	\langle p_0,v\rangle=\min_{p\in D^+\phi(x)}\langle p,v\rangle.
\end{align*}
\end{Lem}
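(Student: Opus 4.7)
The plan is to deduce both conclusions from the two-sided bounds on the incremental quotient $(\phi(x_k)-\phi(x))/|x_k-x|$ that one obtains from the defining inequality of $D^+\phi$ for an $\omega$-semiconcave function.

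First I would dispose of the claim $p_0\in D^+\phi(x)$. This is essentially Lemma \ref{lem:usc2} applied with the constant sequence $\phi_k\equiv\phi$: upper semicontinuity of the set-valued map $(y,\phi)\mapsto D^+\phi(y)$ forces the limit $p_0$ of any selection $p_k\in D^+\phi(x_k)$ with $x_k\to x$ to lie in $D^+\phi(x)$. The only input is the pointwise inequality $\phi(y)\leqslant\phi(x_k)+\langle p_k,y-x_k\rangle+|y-x_k|\omega(|y-x_k|)$, passed to the limit in $k$.

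For the identity, set $t_k:=|x_k-x|$ and $v_k:=(x_k-x)/t_k$, so $t_k\to 0^+$ and $v_k\to v$. I use the characterization of $D^+$ twice, once anchored at $x_k$ and once anchored at $x$. Anchoring at $x_k$ with the vector $p_k$ to bound $\phi(x)$ from above gives
\begin{align*}
\phi(x)\leqslant\phi(x_k)+\langle p_k,x-x_k\rangle+t_k\omega(t_k),
\end{align*}
hence $(\phi(x_k)-\phi(x))/t_k\geqslant\langle p_k,v_k\rangle-\omega(t_k)$, and letting $k\to\infty$ yields
\begin{align*}
\liminf_{k\to\infty}\frac{\phi(x_k)-\phi(x)}{t_k}\geqslant\langle p_0,v\rangle.
\end{align*}
Anchoring at $x$ with any fixed $p\in D^+\phi(x)$ to bound $\phi(x_k)$ from above gives $(\phi(x_k)-\phi(x))/t_k\leqslant\langle p,v_k\rangle+\omega(t_k)$, so
\begin{align*}
\limsup_{k\to\infty}\frac{\phi(x_k)-\phi(x)}{t_k}\leqslant\langle p,v\rangle\qquad\text{for every }p\in D^+\phi(x).
\end{align*}
Taking the infimum over $p\in D^+\phi(x)$ and invoking Proposition \ref{pro:scsv}(4), I get $\langle p_0,v\rangle\leqslant\min_{p\in D^+\phi(x)}\langle p,v\rangle=\partial\phi(x,v)$. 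Since $p_0\in D^+\phi(x)$ by the first step, the reverse inequality $\langle p_0,v\rangle\geqslant\min_{p\in D^+\phi(x)}\langle p,v\rangle$ is immediate, and equality follows.

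No genuine obstacle arises: the heart of the argument is the symmetric use of the same characterization of $D^+$ at $x_k$ and at $x$, with the semiconcavity modulus $\omega$ controlling the remainder in both directions. The only subtlety to flag is that one must work in a local chart around $x$ where $\phi$ admits the Euclidean form of the $\omega$-semiconcavity estimate $\phi(z)\leqslant\phi(y)+\langle p,z-y\rangle+|z-y|\omega(|z-y|)$ for $p\in D^+\phi(y)$; this is exactly the reduction already used in the proof of Lemma \ref{lem:usc2}, and is legitimate because both claims are local at $x$.
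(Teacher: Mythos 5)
Your proof is correct. The paper does not prove this lemma itself --- it quotes it from \cite{Cannarsa_Sinestrari_book} (Proposition 3.3.15) --- and your argument is essentially the standard one behind that reference: upper semicontinuity of $D^+\phi$ gives $p_0\in D^+\phi(x)$, and the two superdifferential inequalities (anchored at $x_k$ and at $x$) squeeze the difference quotient $(\phi(x_k)-\phi(x))/|x_k-x|$ between $\langle p_0,v\rangle$ and $\min_{p\in D^+\phi(x)}\langle p,v\rangle$, which together with $p_0\in D^+\phi(x)$ forces equality. The remark about local charts is unnecessary here since the statement is already on $\R^n$, but it does no harm.
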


\subsection{Measurable selection}
In this section, we recall fundamental results about measurable selection.

\begin{defn}
Let $X$ be a topological space and $(S,\Sigma)$ a measurable space. A set-valued map $F:S\rightrightarrows X$ is called \emph{weakly measurable} (resp. \emph{measurable}) if $\{s\in S: F(s)\cap E\neq\emptyset\}\in\Sigma$ for any open (resp. closed) subset $E$ of $X$.
\end{defn}

\begin{Pro}[\cite{Clarke_book1990} Theorem 3.1.1]\label{pro:selection}
Let $X$ be a measurable space, let $Y$ be a Polish space, and let $F:X\rightrightarrows Y$ be a non-empty closed-valued measurable set-valued map. Then $F$ admits a measurable selection.
\end{Pro}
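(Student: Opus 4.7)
The plan is to prove this by the classical Kuratowski--Ryll-Nardzewski successive approximation scheme. Fix a complete metric $d$ on the Polish space $Y$ compatible with its topology, and fix a countable dense sequence $\{y_n\}_{n\in\N}\subset Y$. The goal is to inductively build measurable functions $f_n:X\to Y$ taking values in $\{y_0,y_1,\dots\}$ such that, for every $x\in X$,
\begin{equation*}
	\text{(i)}\ d(f_n(x),F(x))<2^{-n},\qquad\text{(ii)}\ d(f_n(x),f_{n-1}(x))<2^{-n+1}\ \text{for}\ n\geqslant 1.
\end{equation*}
Granted such a sequence, condition (ii) forces $\{f_n(x)\}$ to be uniformly Cauchy, so $f(x):=\lim_n f_n(x)$ is well defined and measurable as a pointwise limit of measurable functions; condition (i) together with the closedness of $F(x)$ gives $f(x)\in F(x)$.

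For the base step, consider the sets $A_i^0=\{x\in X:F(x)\cap B(y_i,1)\neq\varnothing\}$. Each $A_i^0$ lies in the $\sigma$-algebra on $X$ because $F$ is measurable (applied to the open ball $B(y_i,1)$; note that for closed-valued maps into a Polish space, measurability and weak measurability coincide, which is the point requiring some care — see below). Since $\{y_i\}$ is dense and $F$ is non-empty valued, $\bigcup_i A_i^0=X$. Setting $f_0(x)=y_i$ where $i$ is the least index with $x\in A_i^0$ yields a measurable map satisfying (i) for $n=0$.

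For the inductive step, assume $f_{n-1}$ is constructed. Define
\begin{equation*}
	A_i^n=\{x\in X:F(x)\cap B(y_i,2^{-n})\neq\varnothing\}\cap\{x\in X:d(y_i,f_{n-1}(x))<2^{-n+1}\},
\end{equation*}
which is measurable by the measurability of $F$ and of $f_{n-1}$. One checks $\bigcup_i A_i^n=X$: by (i) for $f_{n-1}$ there is $q\in F(x)$ with $d(q,f_{n-1}(x))<2^{-n+1}$, and by density some $y_i\in B(q,2^{-n})$ satisfies the slightly weaker ball condition in the definition of $A_i^n$ (one arranges the induction with strict/non-strict inequalities carefully so that the triangle inequality closes up). Set $f_n(x)=y_i$ for the least such $i$; then $f_n$ is measurable and satisfies both (i) and (ii).

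The main obstacle I expect is the distinction between \emph{measurable} (preimages of closed sets are measurable) and \emph{weakly measurable} (preimages of open sets are measurable). The construction above naturally uses preimages of open balls, i.e.\ weak measurability. For closed-valued maps into a Polish space, the two notions coincide: if $F$ is measurable, then for any open $U\subset Y$ write $U=\bigcup_n K_n$ with $K_n$ closed (e.g., $K_n=\{y:d(y,Y\setminus U)\geqslant 1/n\}$), giving $\{x:F(x)\cap U\neq\varnothing\}=\bigcup_n\{x:F(x)\cap K_n\neq\varnothing\}$. This reduction is the only non-bookkeeping step; once it is in hand, the inductive construction and the limit argument are routine, and the resulting $f$ is the desired measurable selection.
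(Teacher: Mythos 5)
Your construction is correct, but note that the paper does not prove this statement at all: it is quoted verbatim from Clarke's book (Theorem 3.1.1), so there is no internal proof to compare against. What you have written is the classical Kuratowski--Ryll-Nardzewski successive-approximation argument, and it goes through as sketched. Two small remarks. First, the ``strict/non-strict bookkeeping'' you flag in the inductive covering step is genuinely fine: given $q\in F(x)$ with $d(q,f_{n-1}(x))<2^{-n+1}$ (strict), you may pick $y_i$ within $\min\{2^{-n},\,2^{-n+1}-d(q,f_{n-1}(x))\}$ of $q$, which simultaneously gives $F(x)\cap B(y_i,2^{-n})\neq\varnothing$ and $d(y_i,f_{n-1}(x))<2^{-n+1}$, so $\bigcup_i A_i^n=X$ and the induction closes with the constants exactly as you stated. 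Second, the reduction from measurability (preimages of closed sets) to weak measurability (preimages of open balls) does not need closed-valuedness: writing an open set as a countable union of closed sets works for any set-valued map into a metric space, and this is precisely item (1) of the Aliprantis--Border lemma the paper also records; closedness of the values is only used at the very end to conclude $f(x)\in F(x)$ from $d(f(x),F(x))=0$. With those two points made explicit, your proof is a complete and standard derivation of the cited selection theorem.
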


\begin{Pro}[\cite{Aliprantis_Border_book2006} Lemma 18.2]
Let $X$ be a metric space and let $(S,\Sigma)$ a measurable space. For any set-valued map $F:(S,\Sigma)\rightrightarrows X$ we have that:
	\begin{enumerate}[\rm (1)]
		\item If $F$ is measurable, then it is also weakly measurable.
		\item If $F$ is compact-valued and weakly measurable, then it is measurable.
	\end{enumerate}
\end{Pro}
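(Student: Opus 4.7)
The plan is to exploit two elementary metric-space facts: every open set is an $F_\sigma$, and every closed set is a $G_\delta$. These let us transfer the measurability of preimages across a countable Boolean operation, after which part (1) is automatic and part (2) needs one more ingredient coming from compactness.

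For part (1), suppose $F$ is measurable and let $U\subset X$ be open. In a metric space, $U=\bigcup_{n\geqslant1}C_n$ with $C_n:=\{x\in X:d(x,X\setminus U)\geqslant 1/n\}$ closed (interpreting $d(x,\varnothing)=+\infty$ when $U=X$). Then
\begin{equation*}
\{s\in S:F(s)\cap U\neq\varnothing\}=\bigcup_{n\geqslant1}\{s\in S:F(s)\cap C_n\neq\varnothing\},
\end{equation*}
and each set on the right lies in $\Sigma$ by measurability of $F$. A countable union of $\Sigma$-sets is in $\Sigma$, so $F$ is weakly measurable.

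For part (2), suppose $F$ is weakly measurable and compact-valued, and let $C\subset X$ be closed. Write $C=\bigcap_{n\geqslant1}U_n$ with $U_n:=\{x\in X:d(x,C)<1/n\}$ open. The identity
\begin{equation*}
\{s\in S:F(s)\cap C\neq\varnothing\}=\bigcap_{n\geqslant1}\{s\in S:F(s)\cap U_n\neq\varnothing\}
\end{equation*}
will finish the proof, since the right-hand side is a countable intersection of $\Sigma$-sets by weak measurability. The inclusion ``$\subset$'' is trivial from $C\subset U_n$. For ``$\supset$'', fix $s$ with $F(s)\cap U_n\neq\varnothing$ for every $n$ and choose $x_n\in F(s)\cap U_n$. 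Since $F(s)$ is compact, a subsequence $x_{n_k}$ converges to some $x\in F(s)$; on the other hand $d(x_n,C)<1/n\to 0$, so $d(x,C)=0$ and $x\in C$ because $C$ is closed. Hence $x\in F(s)\cap C$, as desired.

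The main point, and the only nontrivial one, is the reverse inclusion in part (2): without the compactness hypothesis on $F(s)$ the argument fails, since one needs to extract a convergent subsequence from the points $x_n$ approximating the closed set $C$ through the open neighborhoods $U_n$. This is precisely where compact-valuedness enters, and it is the reason the equivalence between the two notions of measurability requires the extra hypothesis in (2) but not in (1).
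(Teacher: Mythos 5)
Your proof is correct, and since the paper only cites this result from Aliprantis--Border without reproducing an argument, there is nothing to diverge from: your decomposition (open sets as countable unions of closed sets for (1), closed sets as countable intersections of their open $1/n$-neighborhoods plus a sequential compactness extraction for (2)) is exactly the standard proof of Lemma 18.2 in the cited reference. The identification of compact-valuedness as the sole ingredient making the reverse inclusion in (2) work is also the right emphasis.
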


\begin{defn}
	Let $(S,\Sigma)$ be a measurable space, and let $X$ and $Y$ be topological spaces. A function $f:S\times X\to Y$ is a Carath\'eodory function if it satisfies the following conditions:
	\begin{enumerate}[\rm (1)]
		\item for each $x\in X$, the function $f^x(\cdot)=f(\cdot,x)$ is $(\Sigma,\mathscr{B}(Y))$-measurable;
		\item for each $s\in S$, the function $f_s(\cdot)=f(s,\cdot)$ is continuous.
	\end{enumerate}
\end{defn}

\begin{Pro}[\cite{Aliprantis_Border_book2006} Theorem 18.19]\label{pro:max_meas}
	Let $X$ be a separable metrizable space and $(S,\Sigma)$ a measurable space. Let $F:S\rightrightarrows X$ be a weakly measurable set-valued map with nonempty compact values, and suppose $f:S\times X\to\R$ is a Carath\'eodory function. Define the marginal function $m:S\to\R$ by
	\begin{align*}
		m(s)=\max_{x\in F(s)}f(s,x)
	\end{align*}
	and the set-valued map of maximizers $\Lambda:S\rightrightarrows X$ by
	\begin{align*}
		\Lambda(s)=\{x\in F(s): f(s,x)=m(s)\}.
	\end{align*}
	Then the following holds:
	\begin{enumerate}[\rm (1)]
		\item The value function $m$ is measurable.
		\item The argmax set-valued map $\Lambda$ has nonempty and compact values.
		\item  The argmax set-valued map $\Lambda$ is measurable and admits a measurable selection.
	\end{enumerate}
\end{Pro}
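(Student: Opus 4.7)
The plan is to prove the three claims in sequence, with a Castaing-type representation of $F$ serving as the key technical device. Since $X$ is separable metrizable and $F$ is weakly measurable with nonempty compact values, I would first extract a countable family of measurable selections $\{\xi_n:S\to X\}_{n\in\N}$ satisfying $\xi_n(s)\in F(s)$ and $F(s)=\overline{\{\xi_n(s):n\in\N\}}$ for every $s\in S$. This turns suprema over uncountable compact sets into suprema over countable measurable families, which is exactly what the measurable structure needs.

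For part (1), I would note that the Carath\'eodory hypothesis combined with separability of $X$ forces $f$ to be jointly $\Sigma\otimes\mathscr{B}(X)$-measurable, so every map $s\mapsto f(s,\xi_n(s))$ is $\Sigma$-measurable. Continuity of $f(s,\cdot)$ on the compact set $F(s)$ together with the density of $\{\xi_n(s)\}$ in $F(s)$ yields
\begin{align*}
	m(s)=\sup_{n\in\N}f(s,\xi_n(s)),
\end{align*}
so $m$ is measurable as a countable supremum of measurable functions. Part (2) is then immediate: $\Lambda(s)$ is the intersection of the compact set $F(s)$ with the closed level set $\{x:f(s,x)=m(s)\}$, hence compact; nonemptiness is the extreme value theorem on $F(s)$.

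The substantive work lies in part (3). My approach is to introduce, for each $\varepsilon>0$, the relaxed argmax
\begin{align*}
	\Lambda_\varepsilon(s):=\{x\in F(s):f(s,x)\geqslant m(s)-\varepsilon\},
\end{align*}
whose weak measurability I would verify by writing, for any open $U\subset X$,
\begin{align*}
	\{s:\Lambda_\varepsilon(s)\cap U\neq\varnothing\}=\bigcup_{n\in\N}\{s:\xi_n(s)\in U\text{ and }f(s,\xi_n(s))\geqslant m(s)-\varepsilon\},
\end{align*}
which is measurable thanks to part (1) and the Carath\'eodory property. Since $\Lambda_\varepsilon$ is compact-valued, weak measurability upgrades to measurability. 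Then for every closed $E\subset X$, compactness of $F(s)$ and upper semicontinuity of $f(s,\cdot)$ give the identity
\begin{align*}
	\{s:\Lambda(s)\cap E\neq\varnothing\}=\bigcap_{k\geqslant 1}\{s:\Lambda_{1/k}(s)\cap E\neq\varnothing\},
\end{align*}
delivering measurability of $\Lambda$. The existence of a measurable selection then follows from Proposition \ref{pro:selection} applied to $\Lambda$, which is nonempty, closed-valued and measurable into the Polish space $X$ (or its separable completion, which can always be arranged without loss of generality).

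The step I expect to be the main obstacle is the intersection identity above: it hinges on the fact that any sequence $x_k\in\Lambda_{1/k}(s)\cap E$ has, by compactness of $F(s)$, a cluster point $x^{\ast}\in E\cap F(s)$ satisfying $f(s,x^{\ast})\geqslant\limsup_k f(s,x_k)\geqslant m(s)$, so that $x^{\ast}\in\Lambda(s)\cap E$. Making this passage to the limit rigorous and simultaneously verifying that the Castaing computation for $\Lambda_\varepsilon$ loses no information on the inequality $f(s,\xi_n(s))\geqslant m(s)-\varepsilon$ when restricting to the countable dense subfamily is where the proof has to be handled with care; everything else is bookkeeping on top of the Castaing representation and the joint measurability of $f$.
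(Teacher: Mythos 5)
The paper itself offers no proof of Proposition \ref{pro:max_meas} --- it is quoted from Aliprantis--Border (Theorem 18.19) --- so your attempt can only be compared with the standard textbook argument. Most of your plan is sound (Castaing representation, joint measurability of Carath\'eodory functions on separable metrizable spaces, $m(s)=\sup_n f(s,\xi_n(s))$ for part (1), the extreme value theorem for part (2), the cluster-point argument for the intersection identity), but the step you yourself flag as delicate does fail as written. The identity
\begin{align*}
	\{s:\Lambda_\varepsilon(s)\cap U\neq\varnothing\}=\bigcup_{n\in\N}\{s:\xi_n(s)\in U\text{ and }f(s,\xi_n(s))\geqslant m(s)-\varepsilon\}
\end{align*}
is false in general, because the condition defining $\Lambda_\varepsilon$ is a \emph{closed} one: density of $\{\xi_n(s)\}$ in $F(s)$ only provides points of $U$ whose $f$-values are arbitrarily close to, but possibly strictly below, the threshold $m(s)-\varepsilon$. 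If $\Lambda_\varepsilon(s)\cap U$ consists solely of points where $f(s,\cdot)$ equals $m(s)-\varepsilon$ exactly --- for instance $F(s)=[0,1]$, $U=(0,1/2)$, $f(s,\cdot)$ attaining its maximum at $x=3/4$ and having on $(0,1/2)$ a single strict local maximum at $x=1/4$ of height exactly $m(s)-\varepsilon$, so that $\Lambda_\varepsilon(s)\cap U=\{1/4\}$ --- then $s$ lies in the left-hand side but in none of the sets on the right unless some $\xi_n(s)$ happens to hit that exceptional point, which no fixed Castaing family can be guaranteed to do for all $s$ and all $\varepsilon$ simultaneously. Hence weak measurability of $\Lambda_\varepsilon$, and with it the measurability of $\Lambda$ in part (3), is not established by your argument.

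The gap is repairable in two standard ways. (i) Relax with a \emph{strict} inequality: set $\Lambda^{>}_{\varepsilon}(s)=\{x\in F(s):f(s,x)>m(s)-\varepsilon\}$. For this open condition your union identity is valid, so $\Lambda^{>}_{\varepsilon}$ is weakly measurable; its pointwise closure meets exactly the same open sets, has compact values, hence is measurable, and one checks $\Lambda(s)=\bigcap_{k\geqslant1}\overline{\Lambda^{>}_{1/k}(s)}$, so your compactness argument applied to the closures gives, for every closed $E$, $\{s:\Lambda(s)\cap E\neq\varnothing\}=\bigcap_{k\geqslant1}\{s:\overline{\Lambda^{>}_{1/k}(s)}\cap E\neq\varnothing\}$, which is measurable. (ii) Avoid the relaxation altogether: for closed $E$ the correspondence $s\mapsto F(s)\cap E$ is measurable with compact values, so part (1) applied to it on the measurable set where it is nonempty makes $m_E(s)=\max_{x\in F(s)\cap E}f(s,x)$ measurable, and $\{s:\Lambda(s)\cap E\neq\varnothing\}=\{s:F(s)\cap E\neq\varnothing\ \text{and}\ m_E(s)=m(s)\}$; this is essentially the Aliprantis--Border proof. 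The remaining ingredients of your proposal --- parts (1) and (2), the intersection identity for $\Lambda$ with the closed relaxed sets, and the passage to the completion of $X$ so that Proposition \ref{pro:selection} applies to the compact-valued $\Lambda$ --- are correct.
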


\subsection{Lax-Oleinik evolution}

In this paper, we suppose $H:T^*M\to\R$ satisfies conditions (H1)-(H3). Thus, the associated Lagrangian $L:TM\to\R$ defined by
\begin{align*}
	L(x,v)=\sup_{p\in T^*_xM}\{\langle p,v\rangle-H(x,p)\},\qquad x\in M, v\in T_xM
\end{align*}
satisfies conditions
\begin{enumerate}[(L1)]
	\item $L$ is locally Lipschitz;
	\item $v\mapsto L(x,v)$ is differentiable and $(x,v)\to L_v(x,v)$ is continuous;
	\item $v\mapsto L(x,v)$ is strictly convex, and uniformly superlinear.
\end{enumerate}
We say $H:T^*M\to\R$ is a Tonelli Hamiltonian if $H$ is of class $C^2$ and satisfies conditions
\begin{enumerate}[(H1')]
	\item $H_{pp}(x,p)>0$ for all $(x,p)\in T^*M$
	\item $p\mapsto H(x,p)$ is uniformly superlinear.
\end{enumerate}
If $H$ is a Tonelli Hamiltonian, we call the associated Lagrangian $L$ a Tonelli Lagrangian.

For any function $\phi\in C(M,\R)$ and $L$ satisfying conditions (L1)-L(3), as in weak KAM theory, we define the Lax-Oleinik semigroups $\{T^{\pm}_t\}_{t\geqslant0}$ as operators
\begin{align*}
	T^+_t\phi(x)=\sup_{y\in M}\{\phi(y)-A_t(x,y)\},\quad T^-_t\phi(x)=\inf_{y\in M}\{\phi(y)+A_t(y,x)\},\quad x\in M,
\end{align*}
where $A_t(x,y)$ is the fundamental solution given by
\begin{align*}
	A_t(x,y)=\inf_{\xi\in\Gamma^t_{x,y}}\int^t_0L(\xi(s),\dot{\xi}(s))\ ds,\quad t>0,\ x,y\in M,
\end{align*}
with 
\begin{align*}
\Gamma^t_{x,y}=\{\xi:[0,t]\to M|\,\xi \text{ is absolutely continuous, and }\xi(0)=x, \xi(t)=y\}
\end{align*}
Specially, we set $T^{\pm}_0=id$. A function $\phi:M\to\R$ is called a \emph{weak KAM solution} for the Hamilton-Jacobi equation
\begin{equation}\label{eq:HJ_wk}\tag{HJs}
	H(x,D\phi(x))=0,\qquad x\in M,
\end{equation}
if $T^-_t\phi=\phi$ for all $t\geqslant0$. Here we suppose $0$ on the right-hand side of \eqref{eq:HJ_wk} is the Ma\~n\'e critical value, for convenience. The readers can find some details on weak KAM theory under our conditions \text{(H1)-(H3)} in \cite{Fathi_Siconolfi2005}.

If $H$ is a Tonelli Hamiltonian and $\phi$ is a weak KAM solution of \eqref{eq:HJ_wk}, then $\phi\in\text{SCL}\,(M)$. We define the \emph{cut time function} of $\phi$ as
\begin{align*}
	\tau_\phi(x)=\sup\{t\geqslant0: \exists\gamma\in C^1([0,t],M), \gamma(0)=x, \phi(\gamma(t))-\phi(x)=A_t(x,\gamma(t))\}.
\end{align*}
Then $\text{Cut}\,(\phi)=\{x\in M: \tau_\phi(x)=0\}$ is called the \emph{cut locus} of $\phi$.
%In \cite{Cannarsa_Cheng_Hong2023}, we proved
%\begin{equation}\label{eq:cut_time}
%	\tau(x)=\sup\{t\geqslant0: (T^-_t\circ T^+_t-T^+_t\circ T^-_t)\phi(x)=0\}.
%\end{equation}

\subsection{Lasry-Lions regularization and Arnaud's theorem}

The following proposition is a collection of some useful Lasry-Lions type facts of the the Lax-Oleinik commutators for small time (\cite{Bernard2007,Bernard2010,Arnaud2011}) by Patrick Bernard and Marie-Claude Arnaud. For any $\phi\in\text{\rm SCL}\,(M)$, set
\begin{align*}
	\text{\rm graph}\,(D^+\phi)=\{(x,p): x\in M, p\in D^+\phi(x)\subset T^*_xM\}.
\end{align*}
We denote by $\{\Phi_H^t\}_{t\in\R}$ the Hamiltonian flow of $H$

\begin{Pro}\label{pro:graph}
Suppose $H:T^{*}M\to\R$ is a Tonelli Hamiltonian and $\phi\in\text{\rm SCL}\,(M)$. Then there exists $\tau_1(\phi)>0$ such that the following properties hold.
\begin{enumerate}[\rm (1)]
	\item $T^+_t\phi=T^-_{\tau_1(\phi)-t}\circ T^+_{\tau_1(\phi)}\phi$ for all $t\in[0,\tau_1(\phi)]$, and $T^+_t\phi\in C^{1,1}(M)$ for all $t\in(0,\tau_1(\phi)]$.
	\item {\rm (Arnaud)} For all $t\in(0,\tau_1(\phi))$ we have
	\begin{align*}
		\text{\rm graph}\,(D T^+_t\phi)=\Phi_H^{-t}(\text{\rm graph}\,(D^+\phi)).
	\end{align*} 
	\item Let $u(t,x)=T^+_t\phi(x)$ for $(t,x)\in[0,\tau_1(\phi)]\times M$. Then $u$ is of class $C^{1,1}_{\rm loc}$ on $(0,\tau_1(\phi))\times M$  and it is a viscosity solution of the Hamilton-Jacobi equation
	\begin{align*}
		\begin{cases}
			u_t-H(x,\nabla u)=0,\qquad(t,x)\in(0,\tau_1(\phi))\times M;\\
			u(0,x)=\phi(x).
		\end{cases}
	\end{align*}
\end{enumerate}
\end{Pro}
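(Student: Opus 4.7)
My plan is to assemble the three parts from the classical Lasry--Lions/Bernard regularization picture and Arnaud's graph identity, all of which hinge on choosing $\tau_1(\phi)$ small enough that calibrated curves from points of $\operatorname{graph}(D^+\phi)$ carry no conjugate points. Concretely, since $\phi\in\text{\rm SCL}\,(M)$ is Lipschitz, the superdifferentials $D^+\phi(x)$ form a bounded subset of $T^*M$; by Tonelli theory the Hamiltonian flow $\Phi_H^t$ is well defined on this compact set, and standard estimates on the Jacobi equation give a uniform time $\tau_1(\phi)>0$ on which the restriction of $\Phi_H^{-t}$ to $\operatorname{graph}(D^+\phi)$ remains a Lipschitz graph over $M$. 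I take this $\tau_1(\phi)$ as the threshold for the whole proposition.

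For part (1), I would first show directly from (L1)--(L3) and the Lipschitz bound on $\phi$ that $T^+_{\tau_1(\phi)}\phi$ is semiconvex with linear modulus on $M$: the sup in $T^+_{\tau_1(\phi)}\phi(x)=\sup_y\{\phi(y)-A_{\tau_1(\phi)}(x,y)\}$ is attained (compactness plus superlinearity), and $x\mapsto -A_{\tau_1(\phi)}(x,y)$ is semiconvex uniformly in $y$ for this small time. Applying $T^-_{\tau_1(\phi)-t}$ to this semiconvex function then produces a function that is $C^{1,1}$, again by a Lasry--Lions argument: $T^-_{\tau_1(\phi)-t}\psi$ is semiconcave for any continuous $\psi$ (the minimized term in the Lax--Oleinik inf-convolution is semiconcave in $x$ uniformly in $y$), and when $\psi$ is already semiconvex the double bound forces $C^{1,1}$. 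So the candidate $T^-_{\tau_1(\phi)-t}\circ T^+_{\tau_1(\phi)}\phi$ lies in $C^{1,1}(M)$. To identify it with $T^+_t\phi$, I would use the reproducing property $A_{s+r}(x,y)=\inf_z\{A_s(x,z)+A_r(z,y)\}$ combined with the fact that minimizers for $A_{\tau_1(\phi)}$ from a point of $\operatorname{graph}(D^+\phi)$ do not develop conjugate points on $[0,\tau_1(\phi)]$, so the max-min and min-max achieved by the two compositions coincide.

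For part (2), I would use the maximizer characterization. If $t\in(0,\tau_1(\phi))$ and $T^+_t\phi$ is differentiable at $x$ with $p:=DT^+_t\phi(x)$, part (1) and the $C^{1,1}$ regularity say that the sup in $T^+_t\phi(x)=\sup_y\{\phi(y)-A_t(x,y)\}$ is attained at a \emph{unique} $y^*$ which varies Lipschitz continuously in $x$, and the minimizer $\xi:[0,t]\to M$ for $A_t(x,y^*)$ is unique. First-order conditions give $L_v(x,\dot\xi(0))=-p$ and $L_v(y^*,\dot\xi(t))=:q\in D^+\phi(y^*)$ (the latter because $\phi(y)-A_t(x,y)$ attains its maximum at $y^*$). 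By the Legendre transform and the Hamiltonian ODE satisfied by $(\xi,L_v(\xi,\dot\xi))$ this says $\Phi_H^t(x,-p)=(y^*,q)$, hence $(y^*,q)\in\Phi_H^t(\operatorname{graph}(DT^+_t\phi))$; the sign convention in the statement corresponds to writing the inclusion as $\operatorname{graph}(DT^+_t\phi)=\Phi_H^{-t}(\operatorname{graph}(D^+\phi))$. The converse inclusion goes by running the flow backwards from $(y,q)\in\operatorname{graph}(D^+\phi)$: the resulting curve calibrates $\phi$ backwards for time $t$, and the no-conjugate-point property on $[0,\tau_1(\phi)]$ forces the corresponding $x$ to be a differentiability point of $T^+_t\phi$ with the required gradient.

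For part (3), once (1) gives $u(t,\cdot)\in C^{1,1}_{\rm loc}$, a standard continuous-dependence argument for $A_t$ in $t$ together with the semigroup property inherited from (1) yields $u\in C^{1,1}_{\rm loc}((0,\tau_1(\phi))\times M)$, so by Rademacher it is differentiable a.e.; then differentiating the identity $u(t+h,x)=\sup_y\{u(t,y)-A_h(x,y)\}$ and using Arnaud's graph relation to identify $\nabla u(t,x)$ with the momentum along the characteristic forces $u_t-H(x,\nabla u)=0$ pointwise a.e., hence in the viscosity sense; the initial condition $u(0,\cdot)=\phi$ follows from the standard convergence $T^+_t\phi\to\phi$ uniformly as $t\to 0^+$. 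The main obstacle throughout is the first step: proving that $\tau_1(\phi)$ can be chosen uniformly in $x$ so that (1) actually produces a function on all of $M$ (not only locally) that agrees with $T^+_t\phi$ -- this is exactly where compactness of $M$ and the uniform Lipschitz bound on $\phi$ enter, and where Arnaud's no-conjugate-point lemma for semiconcave data is used in its sharpest form.
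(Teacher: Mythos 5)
The paper itself gives no proof of Proposition \ref{pro:graph}: it is stated as a collection of known facts, with the work delegated to the cited papers of Bernard and Arnaud. So your proposal can only be measured against those classical arguments, whose general route (double Lax--Oleinik regularization for (1), maximizer/flow analysis for (2), dynamic programming for (3)) you do follow in outline. The genuine gap is in how you fix and use $\tau_1(\phi)$. You choose it so that calibrated curves issued from $\text{graph}\,(D^+\phi)$ have no conjugate points, and you then invoke ``no conjugate points'' to get the uniqueness of the maximizer $y^*$, the fact that $\Phi_H^{-t}(\text{graph}\,(D^+\phi))$ is a Lipschitz graph, and the identification $T^+_t\phi=T^-_{\tau_1(\phi)-t}\circ T^+_{\tau_1(\phi)}\phi$ (``max--min and min--max coincide''). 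Absence of conjugate points only yields smoothness of $A_t$ and uniqueness of action minimizers; it does not make $y\mapsto\phi(y)-A_t(x,y)$ strictly concave, does not make the flowed superdifferential a graph, and does not give the nontrivial inequality $T^-_sT^+_s\psi\leqslant\psi$. All three steps rest on the quantitative comparison between the \emph{linear-modulus} semiconcavity constant $C_1$ of $\phi$ (equivalently the one-sided estimate $\langle p_1-p_2,x_1-x_2\rangle\leqslant C_1|x_1-x_2|^2$ on $\text{graph}\,(D^+\phi)$) and the convexity constant $C_2/t$ of $A_t(x,\cdot)$ on $B(x,\lambda t)$, i.e.\ one must take $\tau_1(\phi)$ with $C_1-C_2/\tau_1(\phi)<0$, exactly as in the construction of $\tau(\phi)$ in Section 4.1 and Proposition \ref{pro:regularity}; this is where $\phi\in\text{\rm SCL}\,(M)$, rather than mere $\omega$-semiconcavity, is essential, and you never invoke it. Concretely, the reverse inequality needed for (1) is proved by taking $p\in D^+\psi(x)$, setting $(y,q)=\Phi_H^{-s}(x,p)$, and observing that $x$ maximizes $z\mapsto\psi(z)-A_s(y,z)$ because this function is strictly concave (by $C_1<C_2/s$) and $0$ lies in its superdifferential at $x$; your appeal to min--max exchange is not a substitute. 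The same comparison is needed for your claim that $T^-_{\tau_1(\phi)-t}$ preserves the semiconvexity of $T^+_{\tau_1(\phi)}\phi$ (it does not for general data), so as written the smallness you fixed at the outset does not support the later steps and the argument does not close.

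Two smaller points. In (2) the first-order condition gives $DT^+_t\phi(x)=-D_xA_t(x,y^*)=+L_v(x,\dot\xi(0))$ by Proposition \ref{pro:regularity}, not $-p$; with the correct sign, $\Phi^t_H(x,p)=(y^*,q)\in\text{graph}\,(D^+\phi)$ yields the stated identity directly, and the ``sign convention'' you invoke to patch the discrepancy is neither needed nor available. Part (3) is fine modulo (1) and (2): once $u$ is $C^{1,1}_{\rm loc}$ in $(t,x)$, the equation $u_t-H(x,\nabla u)=0$ a.e.\ upgrades to a pointwise and hence viscosity statement, and $u(0,\cdot)=\phi$ follows from $T^+_t\phi\to\phi$ uniformly.
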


\subsection{Regularity properties of $A_t(x,y)$}

The following proposition on the regularity properties of $A_t(x,y)$ is standard. For the proof the readers can refer to \cite{Cannarsa_Cheng3,Cannarsa_Cheng2021a,Bernard2012}. 

\begin{Pro}\label{pro:regularity}
Suppose $L$ is a Tonelli Lagrangian. Then for any $\lambda>0$
\begin{enumerate}[\rm (1)]
	\item there exists a constant $C_\lambda>0$ such that for any $x\in\R^n$, $t\in(0,2/3)$ the function $y\mapsto A_t(x,y)$ defined on $B(x,\lambda t)$ is semiconcave with constant $\frac{C_{\lambda}}{t}$;% uniformly with respect to $x$; 
	\item there exist $C'_\lambda>0$ and $t_{\lambda}>0$ such that for any $x\in\R^n$ the function $y\mapsto A_t(x,y)$ is convex with constant $\frac{C'_{\lambda}}{t}$ on $B(x,\lambda t)$ with $0<t\leqslant t_{\lambda}$.
	\item there exists $t'_{\lambda}>0$ such that for any $x\in\R^n$ the function $y\mapsto A_t(x,y)$ is of class $C^2$ on $B(x,\lambda t)$ with $0<t\leqslant t'_{\lambda}$. Moreover, 
	\begin{align*}
		D_yA_t(x,y)=&L_v(\xi(t),\dot{\xi}(t)),\\
		D_xA_t(x,y)=&-L_v(\xi(0),\dot{\xi}(0)),\\
		D_tA_t(x,y)=&-E(\xi(s),\dot{\xi}(s)),\label{eq:diff_A_t_t}
	\end{align*}
	where $\xi\in\Gamma^t_{x,y}$ is the unique minimizer for $A_t(x,y)$. We remark that
	\begin{align*}
		E(x,v):=L_v(x,v)\cdot v-L(x,v),\quad (x,v)\in\R^n\times\R^n,
	\end{align*}
	is the energy function in the Lagrangian formalism, and
	\begin{align*}
		E(\xi(s),\dot{\xi}(s))=H(\xi(s),p(s)),\quad s\in[0,t],
	\end{align*}
	for the dual arc $p(s)=L_v(\xi(s),\dot{\xi}(s))$;
\end{enumerate} 
\end{Pro}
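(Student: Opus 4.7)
Since the three statements are classical, the plan is to reduce each to a short calculation with the Euler--Lagrange flow and invoke uniform bounds on minimizers joining nearby endpoints in small time.

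The first step is an \emph{a priori velocity estimate}: for $y\in B(x,\lambda t)$ and $\xi\in\Gamma^t_{x,y}$ minimizing for $A_t(x,y)$, one shows $\|\dot\xi\|_\infty\leqslant \Lambda$, where $\Lambda=\Lambda(\lambda)$ is independent of $t\in(0,1)$ and of the base point $x$. One way is to compare the action of $\xi$ with that of the straight line segment $s\mapsto x+\frac{s}{t}(y-x)$, whose velocity has norm at most $\lambda$; using the superlinearity of $L$ (condition (L3)) and the local Lipschitz bounds on $L$ on bounded velocity sets, one bounds the maximum velocity of $\xi$ by a constant $\Lambda$. This localizes every subsequent argument to the compact set $K_\Lambda=\{(x,v):|v|\leqslant\Lambda\}$ in $TM$, on which $L$, $L_v$, $L_{vv}$ are bounded and $L_{vv}$ is uniformly positive definite.

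For part (1) I would use the standard \emph{linear interpolation} trick: fix $y_1,y_2\in B(x,\lambda t)$, set $y_\lambda=\frac12(y_1+y_2)$, let $\xi$ minimize for $A_t(x,y_\lambda)$, and build the competitors
\begin{equation*}
\xi_i(s)=\xi(s)+\tfrac{s}{t}(y_i-y_\lambda),\qquad i=1,2,
\end{equation*}
which belong to $\Gamma^t_{x,y_i}$. A second-order Taylor expansion of $L$ in $(x,v)$ around $(\xi(s),\dot\xi(s))$, uniformly controlled on $K_\Lambda$, yields
\begin{equation*}
A_t(x,y_1)+A_t(x,y_2)-2A_t(x,y_\lambda)\leqslant \frac{C_\lambda}{t}|y_1-y_2|^2,
\end{equation*}
since the first-order terms cancel by symmetry and the second-order terms involve the integrand $\frac{1}{t^2}|y_1-y_2|^2$ integrated over $[0,t]$, giving the factor $1/t$. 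This is the semiconcavity estimate with the prescribed constant.

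For part (2), with $t$ sufficiently small depending only on $\lambda$, the minimizer $\xi$ for $A_t(x,y)$ with $y\in B(x,\lambda t)$ stays close, in the $C^1$ topology, to the affine segment from $x$ to $y$; in particular $\dot\xi(s)=\frac{y-x}{t}+O(t)$ uniformly. One then performs a direct second-variation computation: for any admissible perturbation $\eta$ with $\eta(0)=0$,
\begin{equation*}
\delta^2 \mathcal{A}(\xi)[\eta]=\int_0^t\!\!\bigl(L_{vv}(\xi,\dot\xi)\dot\eta\cdot\dot\eta+2L_{xv}\dot\eta\cdot\eta+L_{xx}\eta\cdot\eta\bigr)\,ds.
\end{equation*}
The first integrand is bounded below by $\alpha|\dot\eta|^2$ (uniform ellipticity on $K_\Lambda$), and for $t$ small the lower-order terms are absorbed by $\alpha|\dot\eta|^2$ via a Poincar\'e-type inequality on $[0,t]$ (which loses a factor $t^2$). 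Applying this with the affine interpolation between minimizers for $y_1,y_2$ gives convexity with modulus $C'_\lambda/t$.

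For part (3) I would argue by the implicit function theorem applied to the Euler--Lagrange equation in Hamiltonian form on $T^*M$: for $t$ small, the map $(v,t)\mapsto \pi\circ\Phi^t_H(x,L_v(x,v))$ has nondegenerate Jacobian in $v$ at $t=0$ (the Jacobian tends to $t\cdot L_{vv}(x,v)^{-1}\cdot(\cdot)$ to leading order), so the initial momentum $p_0=p_0(x,y,t)$ of the unique minimizer depends $C^1$ on $(x,y)$. This gives $\xi\in C^1$ in the endpoints, hence $A_t\in C^2$ in $y$ on $B(x,\lambda t)$ for $t\leqslant t'_\lambda$. The formulas for $D_yA_t,D_xA_t,D_tA_t$ then follow from the first-variation formula: perturbing $y$ by $\delta y$, the interior variation vanishes by the Euler--Lagrange equations and only the boundary term $L_v(\xi(t),\dot\xi(t))\cdot\delta y$ survives, and similarly at $s=0$ with a minus sign; the formula for $D_tA_t$ is the standard Hamilton--Jacobi identity coming from differentiating $\int_0^tL$ with respect to the terminal time while reparameterizing.

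The main obstacle, as often in such statements, is making all constants \emph{uniform in $x$} (the statement is global on $\R^n$); this is handled by the velocity bound of the first step, which reduces all estimates to the compact set $K_\Lambda$ where $L$ enjoys uniform bounds and uniform ellipticity. A secondary technical point is proving uniqueness of the minimizer in part (3) for small $t$, which follows from the strict convexity obtained in part (2) once $t\leqslant \min(t_\lambda,t'_\lambda)$.
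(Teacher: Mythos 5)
The paper does not actually prove this proposition: it is stated as standard, with the proof deferred to the cited references; the closest the paper comes to a proof is Section 5, where Propositions \ref{pro:reg1} and \ref{pro:reg2} establish sharper versions of items (2)--(3) by linearizing the Hamiltonian flow (the variational equation \eqref{eq:VE}) and applying the implicit function theorem to $p\mapsto X(t;x,p)$, obtaining two-sided bounds $\frac{D_5}{|t|}\leqslant \partial^2_XU\leqslant\frac{D_6}{|t|}$. Your sketch for (1) (a priori velocity bound plus the affine-interpolation competitor and a uniform second-order Taylor expansion of $L$ on the compact velocity set) and for (3) (IFT on the exponential-type map, first-variation formulas for $D_yA_t$, $D_xA_t$, $D_tA_t$) is exactly this standard route, so for those items your proposal is in line with both the literature the paper cites and the paper's own Section 5 arguments.

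Two points in your sketch are imprecise enough to flag. First, for the velocity bound, superlinearity plus comparison with the straight segment only controls $\int_0^tL(\xi,\dot\xi)\,ds$, hence the time-average of $|\dot\xi|$; to pass to $\|\dot\xi\|_\infty$ uniformly in $t$ you need either conservation of energy along Euler--Lagrange extremals (combined with a time $s_0$ where $|\dot\xi(s_0)|$ is at most the average) or one of the standard a priori compactness lemmas for Tonelli minimizers --- say which. Second, and more substantively, for item (2) the second variation $\delta^2\mathcal{A}(\xi)[\eta]$ along a single minimizer, ``applied with the affine interpolation between minimizers,'' does not by itself yield convexity of the value function $y\mapsto A_t(x,y)$: the clean elementary argument is the midpoint comparison in which you average the two minimizers $\xi_1,\xi_2$ (to $y_1,y_2$) to get a competitor for $A_t(x,\tfrac{y_1+y_2}2)$, use uniform convexity of $L$ in $v$ together with the bounded negative $x$-curvature, absorb the $\int|\xi_1-\xi_2|^2$ term by Poincar\'e on $[0,t]$ (losing $t^2$), and conclude via Cauchy--Schwarz that the defect is at least $\tfrac{c}{t}|y_1-y_2|^2$; alternatively compute $D^2_yA_t=\partial p(t)/\partial y$ through the flow as in Proposition \ref{pro:reg1}. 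Relatedly, uniqueness of the minimizer in (3) does not follow from strict convexity of $y\mapsto A_t(x,y)$ as you claim; it follows either from the injectivity of $p\mapsto X(t;x,p)$ on the bounded momentum set (your IFT step) or from the same midpoint inequality applied with $y_1=y_2$. With these repairs the proposal is a correct rendering of the standard proof.
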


\section{Maximal slope curve and strict singular characteristic}

In this section, we will deal with the new notion of Maximal Slope Curve for a pair $(\phi,H)$, where $\phi:M\to\R$ is semiconcave and $H$ satisfies condition (H1)-(H3). %Thus, the associated Lagrangian $L:TM\to\R$ satisfies conditions (L1)-(L3).
%\begin{enumerate}[(L1)]
%	\item $L$ is locally Lipschitz;
%	\item $v\mapsto L(x,v)$ is differentiable and $(x,v)\to L_v(x,v)$ is continuous;
%	\item $v\mapsto L(x,v)$ is strictly convex, and uniformly superlinear.
%\end{enumerate}
%In this section, we always suppose $\phi$ is semiconcave and $H$ satisfies conditions (H1)-(H3).

\subsection{Maximal slope curves and strict singular characteristics of $(\phi,H)$}

The notion of maximal slope curve can be understood as the steepest descent curve of a function, which plays an important role in the theory of gradient flows in metric space. Our treatment of maximal slope curves has the same spirit as the classical one. The readers can refer to the monograph \cite{Ambrosio_GigliNicola_Savare_book2008,Ambrosio_Brue_Semola_book2021} and the references therein for more details. 

Let $\phi$ be any semiconcave function on $M$. By Proposition \ref{pro:scsv} (2) and the strict convexity of $H(x,\cdot)$, the set $\arg\min\{H(x,p): p\in D^{+}\phi(x)\}$ is a singleton for any $x\in M$. We set the \emph{minimal energy selection} 
\begin{align*}
\mathbf{p}^{\#}_{\phi,H}(x)=\arg\min\{H(x,p): p\in D^{+}\phi(x)\},\quad x\in M.
\end{align*}

\begin{Lem}\label{lem:Borel}
Suppose $\phi$ is a semiconcave function on $M$ and $H$ satisfies condition (H1)-(H3). Then the map $x\mapsto\mathbf{p}^\#_{\phi,H}(x)$ is Borel measurable. 
\end{Lem}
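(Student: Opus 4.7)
The plan is to realize $\mathbf{p}^\#_{\phi,H}$ as the unique selection of the argmin set-valued map obtained from the measurable maximum theorem (Proposition \ref{pro:max_meas}) applied to the constraint map $F(x):=D^+\phi(x)$ and the objective $f(x,p):=-H(x,p)$. Strict convexity will collapse the argmax set to a singleton, and this singleton-valued measurable map will be exactly $\mathbf{p}^\#_{\phi,H}$.

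First I would verify the hypotheses on $F$. By Proposition \ref{pro:scsv}(2), $F(x)$ is nonempty, convex, and compact for every $x\in M$. For the measurability requirement in Proposition \ref{pro:max_meas}, I would work in local charts and show that $F$ is upper semicontinuous by specializing Lemma \ref{lem:usc2} to the constant sequence $\phi_k\equiv\phi$: if $x_k\to x$ and $p_k\in D^+\phi(x_k)$, then every limit point $p$ of $\{p_k\}$ lies in $D^+\phi(x)$. An upper semicontinuous set-valued map with compact values has closed graph, so preimages of closed sets are closed, which gives Borel measurability of $F$ and a fortiori weak measurability.

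Next, the function $f(x,p)=-H(x,p)$ is jointly continuous by (H1)--(H2), so it is trivially a Carathéodory function on $M\times T^*M$. All hypotheses of Proposition \ref{pro:max_meas} are then met, and the argmax set-valued map
\[
	\Lambda(x):=\Big\{p\in D^+\phi(x):H(x,p)=\min_{q\in D^+\phi(x)}H(x,q)\Big\}
\]
is measurable, with nonempty compact values.

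Finally, strict convexity of $p\mapsto H(x,p)$ from (H3), combined with the convexity of $D^+\phi(x)$, forces $\Lambda(x)$ to be a singleton at every $x$, since a strictly convex function on a convex set has at most one minimizer. Thus $\Lambda(x)=\{\mathbf{p}^\#_{\phi,H}(x)\}$, and the measurability of a singleton-valued set-valued map is equivalent to Borel measurability of the underlying function $\mathbf{p}^\#_{\phi,H}$. The only nontrivial step I anticipate is the patching across charts in verifying that $F$ is (weakly) measurable in the sense of Proposition \ref{pro:max_meas}; once upper semicontinuity from Lemma \ref{lem:usc2} is in hand, though, this reduces to an essentially routine argument.
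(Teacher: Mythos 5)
Your proposal is correct and follows essentially the same route as the paper: apply the measurable maximum theorem (Proposition \ref{pro:max_meas}) in local charts with $F(x)=D^+\phi(x)$ and $f=-H$, using Proposition \ref{pro:scsv}(2) and Lemma \ref{lem:usc2} for the nonempty compact values and measurability of $F$, and strict convexity of $H(x,\cdot)$ to make the argmin a singleton. The extra detail you supply (closed graph of the u.s.c.\ compact-valued map, the Carath\'eodory check, and the singleton reduction) is exactly what the paper leaves implicit, so there is no gap.
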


%\begin{Rem}
%If $\phi\in\text{SCL}\,(M)$, $x\mapsto\mathbf{p}^{\#}_{\phi,H}(x)$ is Borel measurable since $\mathbf{p}^{\#}_{\phi,H}(x)$ is the point-wise limit of $DT^+_t\phi(x)$ as $t\to0^+$ (see \cite[Theorem 3.2]{Chen_Cheng2016}). 
%\end{Rem}

\begin{proof}
In local coordinate neighborhood, we take $S=X=\R^n$, $F(x)=D^{+}\phi(x)$, $f=-H$ in Proposition \ref{pro:max_meas} and endow $S$ with the $\sigma$-algebra $\Sigma$ of the Borel class of $\R^n$. By Proposition \ref{pro:scsv} (2) and Lemma \ref{lem:usc2}, the set-valued map $x\rightrightarrows D^{+}\phi(x)$ is measurable, nonempty and compact. Thus, the Borel measurability of $\mathbf{p}^\#_{\phi,H}$ is a direct consequence of Proposition \ref{pro:max_meas}.
\end{proof}

\begin{Pro}\label{pro:msc}
Let $\phi$ be a semiconcave function on $M$, let $H$ be a Hamiltonian satisfying \text{\rm (H1)-(H3)}, and let $\mathbf{p}(x)$ be a Borel measurable selection of the superdifferential $D^+\phi(x)$. Then for any absolutely continuous curve $\gamma:[0,t]\to M$, we have that
\begin{equation}\label{eq:msc ineq}
	\phi(\gamma(t))-\phi(\gamma(0))\leqslant\int_{0}^{t}\Big\{L(\gamma(s),\dot{\gamma}(s))+H(\gamma(s),\mathbf{p}(\gamma(s)))\Big\}\ ds,
\end{equation}
and $\gamma$ satisfies the equality in \eqref{eq:msc ineq} if and only if
\begin{equation}
	\dot{\gamma}(s)=H_{p}(\gamma(s),\mathbf{p}(\gamma(s))),\quad a.e.\ s\in[0,t].
\end{equation}
\end{Pro}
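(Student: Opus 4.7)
The plan is to reduce the whole statement to a pointwise chain rule for $\phi\circ\gamma$ plus the Fenchel–Young inequality. First, since $\phi$ is semiconcave on the compact manifold $M$, Proposition \ref{pro:scsv}(1) tells us that $\phi$ is Lipschitz, so the composition $\phi\circ\gamma$ is absolutely continuous on $[0,t]$ and
\[
\phi(\gamma(t))-\phi(\gamma(0))=\int_{0}^{t}(\phi\circ\gamma)'(s)\,ds.
\]
Consequently, the problem reduces to bounding (and eventually identifying) $(\phi\circ\gamma)'(s)$ for almost every $s$, and then invoking Fenchel–Young for the Legendre pair $(L,H)$.

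The central step is the identity
\[
(\phi\circ\gamma)'(s)=\langle \mathbf{p}(\gamma(s)),\dot{\gamma}(s)\rangle \qquad \text{for a.e. } s\in[0,t].
\]
To establish this I would fix a full–measure set of $s$ at which $\dot{\gamma}(s)$ and $(\phi\circ\gamma)'(s)$ both exist. Working in a local chart near $\gamma(s)$ and writing $\gamma(s+h)=\gamma(s)+h\dot{\gamma}(s)+o(h)$, the Lipschitz continuity of $\phi$ absorbs the $o(h)$ error, so the one-sided derivatives satisfy
\[
(\phi\circ\gamma)'_{+}(s)=\partial\phi(\gamma(s),\dot{\gamma}(s)),\qquad (\phi\circ\gamma)'_{-}(s)=-\partial\phi(\gamma(s),-\dot{\gamma}(s)).
\]
By Proposition \ref{pro:scsv}(4) these equal $\min_{p\in D^{+}\phi(\gamma(s))}\langle p,\dot{\gamma}(s)\rangle$ and $\max_{p\in D^{+}\phi(\gamma(s))}\langle p,\dot{\gamma}(s)\rangle$ respectively. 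Since $(\phi\circ\gamma)'(s)$ exists, these two values coincide, which forces the affine functional $p\mapsto\langle p,\dot{\gamma}(s)\rangle$ to be constant on the convex compact set $D^{+}\phi(\gamma(s))$. In particular its common value equals $\langle\mathbf{p}(\gamma(s)),\dot{\gamma}(s)\rangle$ for the given selection, yielding the claimed identity.

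Finally, Fenchel–Young gives
\[
\langle \mathbf{p}(\gamma(s)),\dot{\gamma}(s)\rangle\leqslant L(\gamma(s),\dot{\gamma}(s))+H(\gamma(s),\mathbf{p}(\gamma(s))),
\]
with pointwise equality if and only if $\dot{\gamma}(s)=H_{p}(\gamma(s),\mathbf{p}(\gamma(s)))$. Integrating and combining with the chain-rule identity of the previous paragraph produces the inequality \eqref{eq:msc ineq}; equality in the integrated inequality is equivalent to pointwise equality in Fenchel–Young a.e., which is exactly the stated characterization.

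The only delicate point, and in my view the real content of the proposition, is the passage from the two one-sided derivative formulas to the constancy of $p\mapsto\langle p,\dot{\gamma}(s)\rangle$ on $D^{+}\phi(\gamma(s))$: it is this structural consequence of semiconcavity together with the almost-everywhere differentiability of the absolutely continuous map $\phi\circ\gamma$ that upgrades the naive inequality $(\phi\circ\gamma)' \leqslant \langle \mathbf{p}(\gamma),\dot{\gamma}\rangle$ (which one gets immediately from the superdifferential inequality) into an equality, thereby making both the ``only if'' and the ``if'' halves of the equality characterization work uniformly for \emph{any} Borel selection $\mathbf{p}$.
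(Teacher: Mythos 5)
Your proposal is correct and follows essentially the same route as the paper's proof: the a.e.\ identity $(\phi\circ\gamma)'(s)=\langle\mathbf{p}(\gamma(s)),\dot{\gamma}(s)\rangle$, obtained by equating the two one-sided derivatives with $\min$ and $\max$ of $\langle p,\dot{\gamma}(s)\rangle$ over $D^{+}\phi(\gamma(s))$ via the directional-derivative formula of Proposition \ref{pro:scsv}, followed by the Fenchel--Young inequality and its equality condition. You merely spell out the chain-rule and constancy-on-the-superdifferential step in more detail than the paper does.
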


\begin{proof}
	By Proposition \ref{pro:scsv} (3), for almost all $s\in[0,t]$ we have
	\begin{align*}
		\min_{p\in D^+\phi(\gamma(s))}\langle p,\dot{\gamma}(s)\rangle=\frac {d^+}{ds}\phi(\gamma(s))=\frac {d^-}{ds}\phi(\gamma(s))=-\min_{p\in D^+\phi(\gamma(s))}\langle p,-\dot{\gamma}(s)\rangle=\max_{p\in D^+\phi(\gamma(s))}\langle p,\dot{\gamma}(s)\rangle.
	\end{align*}
	Combing this with Young's inequality, it follows that
	\begin{align*}
		\phi(\gamma(t))-\phi(\gamma(0))&=\int_{0}^{t}\frac{d}{ds}\phi(\gamma(s))\ ds=\int_{0}^{t}\langle\mathbf{p}(\gamma(s)),\dot{\gamma}(s)\rangle\ ds\\
		&\leqslant \int_{0}^{t}\Big\{L(\gamma(s),\dot{\gamma}(s))+H(\gamma(s),\mathbf{p}(\gamma(s)))\Big\}\ ds,
	\end{align*}
where equality holds if and only if
\begin{align*}
	\dot{\gamma}(s)=H_{p}(\gamma(s),\mathbf{p}(\gamma(s))),\quad a.e.\ s\in[0,t].
\end{align*}
\end{proof}

Now, thanks to Proposition \ref{pro:msc}, we introduce the notion of maximal slope curve and strict singular characteristic for a pair $(\phi,H)$.

\begin{defn}\label{defn:msc}
Let $\phi$ be a semiconcave function on $M$, let $H$ be a Hamiltonian satisfying \text{\rm (H1)-(H3)}, and let $\mathbf{p}$ be a Borel measurable selection of the superdifferential $D^+\phi$.
\begin{enumerate}[(1)]
	\item We call a locally absolutely continuous curve $\gamma:I\to M$ a \emph{maximal slope curve for the pair $(\phi,H)$ and the selection $\mathbf{p}$}, where $I$ is any interval which can be the whole real line, if $\gamma$ satisfies
	\begin{equation}\label{eq:msc_H_phi2}\tag{VI}
		\phi(\gamma(t_2))-\phi(\gamma(t_1))=\int_{t_1}^{t_2}\Big\{L(\gamma(s),\dot{\gamma}(s))+H(\gamma(s),\mathbf{p}(\gamma(s)))\Big\} ds,\qquad\forall t_1,t_2\in I,\ t_1<t_2,
	\end{equation}
	or, equivalently,
	\begin{equation}
		\dot{\gamma}(t)=H_{p}(\gamma(t),\mathbf{p}(\gamma(t))),\quad a.e.\ t\in I.
	\end{equation}
	\item For the minimal energy selection $\mathbf{p}^{\#}_{\phi,H}$, we call any associated maximal slope curve $\gamma:I\to M$ for $(\phi,H)$ a \emph{strict singular characteristic} for the pair $(\phi,H)$. We use the term \emph{singular} in the definition since it is essentially connected to the phenomenon of propagation of singularities of $\phi$ in forward direction when the initial point is a singular point of $\phi$.
\end{enumerate}
\end{defn}

\begin{Pro}\label{pro:msc is sc}
Let $\phi$ be a semiconcave function on $M$, let $H$ be a Hamiltonian satisfying \text{\rm (H1)-(H3)},  and let $\mathbf{p}$ be a Borel measurable selection of $D^+\phi$. If $\gamma:I\to M$ is a maximal slope curve for the pair $(\phi,H)$ and the selection $\mathbf{p}$, where $I$ is any interval, then $\gamma$ must be a strict singular characteristic for the pair $(\phi,H)$. In other words, if $\gamma$ is a solution of the differential inclusion
\begin{align*}
	\dot{\gamma}(t)\in H_p(\gamma(t),D^+\phi(\gamma(t))),\qquad a.e.\ t\in I,
\end{align*}
then $\gamma$ is a solution of the differential equation
\begin{align*}
	\dot{\gamma}(t)=H_p(\gamma(t),\mathbf{p}^{\#}_{\phi,H}(\gamma(t))),\qquad a.e.\ t\in I.
\end{align*}
\end{Pro}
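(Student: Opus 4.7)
The heart of the matter is a rigidity observation that is already implicit in the proof of Proposition~\ref{pro:msc}: since $\phi$ is Lipschitz and $\gamma$ is locally absolutely continuous, the composition $\phi\circ\gamma$ is absolutely continuous, hence differentiable almost everywhere. At every $s\in I$ where $\frac{d}{ds}(\phi\circ\gamma)(s)$ and $\dot\gamma(s)$ both exist, Proposition~\ref{pro:scsv}(4) yields
\begin{equation*}
\min_{p\in D^+\phi(\gamma(s))}\langle p,\dot\gamma(s)\rangle \,=\, \frac{d^+}{ds}\phi(\gamma(s)) \,=\, \frac{d^-}{ds}\phi(\gamma(s)) \,=\, \max_{p\in D^+\phi(\gamma(s))}\langle p,\dot\gamma(s)\rangle,
\end{equation*}
so that $\langle p,\dot\gamma(s)\rangle$ is in fact constant in $p\in D^+\phi(\gamma(s))$ for a.e.\ $s$. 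In particular, for a.e.\ $s$ one has $\langle \mathbf{p}(\gamma(s)),\dot\gamma(s)\rangle = \langle \mathbf{p}^\#_{\phi,H}(\gamma(s)),\dot\gamma(s)\rangle$.

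Next, I would exploit this together with the definition of $\mathbf{p}^\#$. By hypothesis and the equivalent form in Definition~\ref{defn:msc}, the Fenchel--Young equality holds for the pair $(\mathbf{p},\dot\gamma)$ a.e., that is
\begin{equation*}
L(\gamma(s),\dot\gamma(s)) + H(\gamma(s),\mathbf{p}(\gamma(s))) = \langle \mathbf{p}(\gamma(s)),\dot\gamma(s)\rangle \qquad\text{a.e.}
\end{equation*}
By the previous paragraph the right-hand side equals $\langle \mathbf{p}^\#_{\phi,H}(\gamma(s)),\dot\gamma(s)\rangle$, while by the defining property $H(\gamma(s),\mathbf{p}^\#_{\phi,H}(\gamma(s)))\leqslant H(\gamma(s),\mathbf{p}(\gamma(s)))$ and the Fenchel--Young inequality applied to $\mathbf{p}^\#_{\phi,H}(\gamma(s))$ one has
\begin{equation*}
\langle \mathbf{p}^\#_{\phi,H}(\gamma(s)),\dot\gamma(s)\rangle \,\leqslant\, L(\gamma(s),\dot\gamma(s)) + H(\gamma(s),\mathbf{p}^\#_{\phi,H}(\gamma(s))) \,\leqslant\, L(\gamma(s),\dot\gamma(s)) + H(\gamma(s),\mathbf{p}(\gamma(s))).
\end{equation*}
The chain is therefore forced to be an equality, and in particular the Fenchel--Young equality holds for $\mathbf{p}^\#_{\phi,H}$ along $\gamma$. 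By the standard equivalence this is precisely $\dot\gamma(s) = H_p(\gamma(s),\mathbf{p}^\#_{\phi,H}(\gamma(s)))$ for a.e.\ $s\in I$, so $\gamma$ is a strict singular characteristic.

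For the equivalent reformulation via the differential inclusion $\dot\gamma(t)\in H_p(\gamma(t),D^+\phi(\gamma(t)))$, I would reduce to the first statement: strict convexity of $H(x,\cdot)$ makes $H_p(x,\cdot)$ injective on the convex set $D^+\phi(x)$, so for a.e.\ $s$ there is a \emph{unique} $q(s)\in D^+\phi(\gamma(s))$ with $\dot\gamma(s)=H_p(\gamma(s),q(s))$; a standard measurable-selection argument (Proposition~\ref{pro:selection} applied to the closed-valued map $s\rightrightarrows\{p\in D^+\phi(\gamma(s)): H_p(\gamma(s),p)=\dot\gamma(s)\}$, extended to a Borel selection $\mathbf{p}$ of $D^+\phi$ on all of $M$) shows that $\gamma$ is a maximal slope curve for $(\phi,H)$ and $\mathbf{p}$, and the first part of the argument applies. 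The only nontrivial step is the rigidity observation in the first paragraph; the rest is a routine chain of Fenchel--Young inequalities, so I do not anticipate a real obstacle here.
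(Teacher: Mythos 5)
Your main argument is correct and rests on the same mechanism as the paper's proof: the minimality of $\mathbf{p}^\#_{\phi,H}$ in $D^+\phi$ together with the Fenchel--Young (in)equality and the fact, already used in Proposition~\ref{pro:msc}, that $\langle p,\dot\gamma(s)\rangle$ does not depend on $p\in D^+\phi(\gamma(s))$ at a.e.\ $s$. The only real difference is cosmetic: the paper argues at the integral level (it combines the identity \eqref{eq:msc_H_phi2} for $\mathbf{p}$ with the pointwise bound $H(\cdot,\mathbf{p}^\#_{\phi,H})\leqslant H(\cdot,\mathbf{p})$ and the converse inequality \eqref{eq:msc ineq} for $\mathbf{p}^\#_{\phi,H}$, then invokes the equality criterion of Proposition~\ref{pro:msc}), whereas you run the same chain pointwise almost everywhere. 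Both are fine.

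One step of yours does not hold as written, namely the treatment of the differential-inclusion reformulation. You produce, for a.e.\ $s$, the unique $q(s)\in D^+\phi(\gamma(s))$ with $\dot\gamma(s)=H_p(\gamma(s),q(s))$, and then claim this can be ``extended to a Borel selection $\mathbf{p}$ of $D^+\phi$ on all of $M$'' so that $\gamma$ becomes a maximal slope curve for that selection. This requires $q(s)$ to factor through the point $\gamma(s)$, which is not known a priori: a solution of the inclusion could in principle revisit the same point at different times with different velocities (ruling this out is essentially what the proposition asserts), so no single selection $\mathbf{p}:M\to T^*M$ with $\mathbf{p}(\gamma(s))=q(s)$ a.e.\ need exist, and the reduction is circular. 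The repair is immediate and uses nothing beyond your first paragraph: at a.e.\ $s$ the rigidity identity gives $\langle q(s),\dot\gamma(s)\rangle=\langle\mathbf{p}^\#_{\phi,H}(\gamma(s)),\dot\gamma(s)\rangle$, and the same Fenchel--Young chain with $q(s)$ in place of $\mathbf{p}(\gamma(s))$ (using $H(\gamma(s),\mathbf{p}^\#_{\phi,H}(\gamma(s)))\leqslant H(\gamma(s),q(s))$) forces $\dot\gamma(s)=H_p(\gamma(s),\mathbf{p}^\#_{\phi,H}(\gamma(s)))$; no measurable selection on $M$ is needed. (Note the paper itself treats the inclusion version only as a restatement and proves the selection version, so on this point your pointwise argument, once corrected, actually covers slightly more ground than the printed proof.)
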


\begin{proof}
Since $\gamma:I\to M$ is a maximal slope curve for $(\phi,H)$ and $\mathbf{p}$, we have
\begin{align*}
	\phi(\gamma(t_2))-\phi(\gamma(t_1))=&\,\int^{t_2}_{t_1}\Big\{L(\gamma(s),\dot{\gamma}(s))+H(\gamma(s),\mathbf{p}(\gamma(s))\Big\}\ ds\\
	\geqslant&\,\int^{t_2}_{t_1}\Big\{L(\gamma(s),\dot{\gamma}(s))+H(\gamma(s),\mathbf{p}^\#_{\phi,H}(\gamma(s))\Big\}\ ds,\quad \forall t_1,t_2\in I,\ t_1<t_2.
\end{align*}
Since the converse inequality is true for any selection of $D^+\phi$, it follows that $\gamma$ is a strict singular characteristic for the pair $(\phi,H)$.
\end{proof}

The proposition above implies that any maximal slope curve for the pair $(\phi,H)$ is exactly a strict singular characteristic. This means that the minimal energy selection $\mathbf{p}^\#_{\phi,H}(x)$ plays a crucial role in this theory. In this case, the notions of maximal slope curve and strict singular characteristic coincide.

\subsection{Stability of strict singular characteristics}

We will adress the existence of maximal slope curves in the next subsection. In this subsection, we study the stability issue for strict singular characteristics at first, which is known only for quadratic Hamiltonians (see \cite{ACNS2013}). This will help us to obtain the existence of strict singular characteristics in the general case.

The following lemma implies that the minimal-energy function is lower semicontinuous, which is essential for the proof of our stability result.

\begin{Lem}\label{lem:lsc1}
	Let $\{H_k\}$ be a sequence of Hamiltonians satisfying \text{\rm (H1)-(H3)}, $\{\phi_k\}$ be a sequence of $\omega$-semiconcave functions on $M$, and $\{x_k\}\subset M$. If
	\begin{enumerate}[\rm (i)]
		\item $\phi_k$ converges to $\phi$ uniformly on $M$,
		\item $H_k$ converges to a Hamiltonian $H$ satisfying \text{\rm (H1)-(H3)} uniformly on compact subset,
		\item $\lim_{k\to\infty}x_k=x$,
	\end{enumerate}
	then
	\begin{align*}
		\liminf_{k\to\infty}H_k(x_k,\mathbf{p}^{\#}_{\phi_k,H_k}(x_k))\geqslant H(x,\mathbf{p}^\#_{\phi,H}(x)).
	\end{align*}
\end{Lem}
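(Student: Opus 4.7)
The plan is to extract a convergent subsequence of the minimal energy selections, identify its limit as an element of $D^+\phi(x)$ via upper semicontinuity (Lemma \ref{lem:usc2}), and then invoke the minimality of $\mathbf{p}^\#_{\phi,H}(x)$.

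First I would pass to a subsequence, still indexed by $k$, realizing the liminf, i.e.\ such that $\lim_{k\to\infty} H_k(x_k,\mathbf{p}^\#_{\phi_k,H_k}(x_k))$ exists and equals $\liminf_{k\to\infty} H_k(x_k,\mathbf{p}^\#_{\phi_k,H_k}(x_k))$. Next I would observe that, since all the $\phi_k$ share the common semiconcavity modulus $\omega$ and since $\{x_k\}$ lies in a compact set (being a convergent sequence in $M$), the superdifferentials $D^+\phi_k(x_k)$ are contained in a common compact subset of $T^*M$ in local coordinates. Consequently, the sequence $\mathbf{p}^\#_{\phi_k,H_k}(x_k)\in D^+\phi_k(x_k)$ is bounded, so after extracting a further subsequence we may assume $\mathbf{p}^\#_{\phi_k,H_k}(x_k)\to p^*$ for some $p^*\in T^*_xM$.

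By Lemma \ref{lem:usc2} applied to the sequence $\phi_k$ and the points $x_k\to x$, the limit $\phi$ is also $\omega$-semiconcave and $p^*\in D^+\phi(x)$. Since $H_k\to H$ uniformly on compact subsets of $T^*M$ and $(x_k,\mathbf{p}^\#_{\phi_k,H_k}(x_k))\to(x,p^*)$ with the latter convergence taking place in a compact set, the standard uniform-convergence estimate
\[
\bigl|H_k(x_k,\mathbf{p}^\#_{\phi_k,H_k}(x_k))-H(x_k,\mathbf{p}^\#_{\phi_k,H_k}(x_k))\bigr|\leqslant\|H_k-H\|_{\infty,K}\to 0
\]
combined with continuity of $H$ at $(x,p^*)$ yields
\[
\lim_{k\to\infty}H_k(x_k,\mathbf{p}^\#_{\phi_k,H_k}(x_k))=H(x,p^*).
\]
Finally, by the definition of the minimal energy selection and because $p^*\in D^+\phi(x)$, one has $H(x,p^*)\geqslant H(x,\mathbf{p}^\#_{\phi,H}(x))$, which gives the desired bound.

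I do not foresee a serious obstacle here: the only delicate point is ensuring that the bounded sequence of minimal selections has all its limit points inside $D^+\phi(x)$, and this is exactly the content of the upper semicontinuity Lemma \ref{lem:usc2}. The uniform-on-compacts convergence of $H_k$ then takes care of passing the energy to the limit, and the minimality of $\mathbf{p}^\#_{\phi,H}(x)$ closes the argument. No quantitative rate is needed, so the proof is a short compactness/lower-semicontinuity routine.
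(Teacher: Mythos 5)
Your proposal is correct and follows essentially the same route as the paper's proof: extract a subsequence along which the minimal energy selections converge, identify the limit as an element of $D^+\phi(x)$ via Lemma \ref{lem:usc2}, pass to the limit in the energies using the locally uniform convergence of $H_k$, and conclude by minimality of $\mathbf{p}^\#_{\phi,H}(x)$. The paper's version is just terser, leaving the boundedness of the selections and the liminf-realizing subsequence implicit.
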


\begin{proof}
	Because of the local nature, we work on Euclidean space. For any subsequence $\mathbf{p}^{\#}_{\phi_{k_i},H_{k_i}}(x_{k_i})$ converging to some $p$ as $i\to\infty$, we have that $p\in D^+\phi(x)$ by Lemma \ref{lem:usc2}. Therefore,
	\begin{align*}
		\lim_{i\to\infty}H_{k_i}(x_{k_i},\mathbf{p}^{\#}_{\phi_{k_i},H_{k_i}}(x_{k_i}))=H(x,p)\geqslant H(x,\mathbf{p}^\#_{\phi,H}(x)).
	\end{align*}
	This completes the proof.
\end{proof}

\begin{Lem}\label{lem:P_k_P}
	Under the assumptions of Lemma \ref{lem:lsc1}, if $\lim_{k\to\infty}H_k(x_k,\mathbf{p}^\#_{\phi_k,H_k}(x_k))=H(x,\mathbf{p}^\#_{\phi,H}(x))$, then we have that
	\begin{align*}
		\lim_{k\to\infty}\mathbf{p}^\#_{\phi_k,H_k}(x_k)=\mathbf{p}^\#_{\phi,H}(x).
	\end{align*}
\end{Lem}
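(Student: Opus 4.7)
The plan is to show that the sequence $p_k:=\mathbf{p}^\#_{\phi_k,H_k}(x_k)$ lies in a compact set and that every subsequential limit equals $p^*:=\mathbf{p}^\#_{\phi,H}(x)$, whence the whole sequence converges. Three ingredients should suffice: uniform boundedness coming from the common semiconcavity modulus, the upper semicontinuity of the superdifferential furnished by Lemma \ref{lem:usc2}, and the uniqueness of the minimal-energy element coming from strict convexity of $H(x,\cdot)$.

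First I would establish boundedness of $\{p_k\}$. Since each $\phi_k$ is $\omega$-semiconcave on the compact manifold $M$, there is a uniform Lipschitz bound for the whole sequence (by Proposition \ref{pro:scsv}(1) applied with a common modulus, or directly from the definition \eqref{eq:semiconcave}). Hence $D^+\phi_k(x_k)$ is contained in a fixed compact ball of $T^*M$ (working in a coordinate chart near $x$), so $\{p_k\}$ is precompact.

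Next I would extract an arbitrary convergent subsequence $p_{k_i}\to p$ and identify its limit. By Lemma \ref{lem:usc2}, the limit satisfies $p\in D^+\phi(x)$. For the energy, because $H_k\to H$ uniformly on compact subsets and $(x_{k_i},p_{k_i})\to(x,p)$ lies eventually in a fixed compact set, a standard triangle estimate
\begin{align*}
|H_{k_i}(x_{k_i},p_{k_i})-H(x,p)|\leqslant\|H_{k_i}-H\|_{\infty,K}+|H(x_{k_i},p_{k_i})-H(x,p)|
\end{align*}
together with continuity of $H$ gives $H_{k_i}(x_{k_i},p_{k_i})\to H(x,p)$. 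Combining with the hypothesis $\lim_{k\to\infty}H_k(x_k,p_k)=H(x,p^*)$ yields $H(x,p)=H(x,p^*)$. Since $p\in D^+\phi(x)$ and $p^*$ is by definition the unique minimizer of $H(x,\cdot)$ on the convex compact set $D^+\phi(x)$ (uniqueness being a consequence of strict convexity in the $p$-variable, (H3)), we conclude $p=p^*$.

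Finally, because every subsequential limit of the bounded sequence $\{p_k\}$ equals $p^*$, the full sequence converges to $p^*$. The only subtle step is the passage $H_{k_i}(x_{k_i},p_{k_i})\to H(x,p)$, which requires care about the locally uniform convergence of $H_k$ on a compact set containing the $(x_{k_i},p_{k_i})$; this is where the uniform Lipschitz bound on $\{\phi_k\}$ is indispensable.
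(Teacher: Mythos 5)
Your proof is correct and rests on the same essential ingredients as the paper's argument: Lemma \ref{lem:usc2} for the upper semicontinuity of superdifferentials, the locally uniform convergence of $H_k$ (with the equi-Lipschitz bound keeping $(x_k,p_k)$ in a fixed compact set), and the uniqueness of the minimal-energy element coming from strict convexity of $H(x,\cdot)$ on the compact convex set $D^+\phi(x)$. The only difference is organizational: you identify every subsequential limit of the precompact sequence and invoke uniqueness of the minimizer directly, whereas the paper runs the equivalent $\varepsilon$--$\delta$ contradiction, splitting on whether $D^+\phi(x)$ is contained in the $\varepsilon$-ball around $\mathbf{p}^\#_{\phi,H}(x)$.
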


\begin{proof}
	We work on Euclidean space. Given $\varepsilon>0$, if $D^+\phi(x)\subset B(\mathbf{p}^\#_{\phi,H}(x),\varepsilon)$, by Lemma \ref{lem:usc2} we get
	\begin{align*}
		\limsup_{k\to\infty}|\mathbf{p}^\#_{\phi_k,H_k}(x_k)-\mathbf{p}^\#_{\phi,H}(x)|\leqslant\varepsilon.
	\end{align*}	
	If $D^+\phi(x)\setminus B(\mathbf{p}^\#_{\phi,H}(x),\varepsilon)\not=\varnothing$, there exists $\delta>0$ such that
	\begin{align*}
		\min_{p\in D^+\phi(x)\setminus B(\mathbf{p}^\#_{\phi,H}(x),\varepsilon)}H(x,p)>H(x,\mathbf{p}^\#_{\phi,H}(x))+\delta.
	\end{align*}
	In this case, if there exists a subsequence $\{x_{k_i}\}$ such that
	\begin{align*}
		|\mathbf{p}^\#_{\phi_{k_i},H_{k_i}}(x_{k_i})-\mathbf{p}^\#_{\phi,H}(x)|>\varepsilon,\qquad\forall i\in\N,
	\end{align*}
	then by Lemma \ref{lem:usc2} we conclude that
	\begin{align*}
		\lim_{i\to\infty}H_{k_i}(x_{k_i},\mathbf{p}^\#_{\phi_{k_i},H_{k_i}}(x_{k_i}))=&\,\lim_{i\to\infty}H(x,\mathbf{p}^\#_{\phi_{k_i},H_{k_i}}(x_{k_i}))\\
		\geqslant&\,\min_{p\in D^+\phi(x)\setminus B(\mathbf{p}^\#_{\phi,H}(x),\varepsilon)}H(x,p)>H(x,\mathbf{p}^\#_{\phi,H}(x))+\delta.
	\end{align*}
	This contradicts the assumption that $\lim_{i\to\infty}H_{k_i}(x_{k_i},\mathbf{p}^\#_{\phi_{k_i},H_{k_i}}(x_{k_i}))=H(x,\mathbf{p}^\#_{\phi,H}(x))$. In sum, we have that
	\begin{align*}
		\limsup_{k\to\infty}|\mathbf{p}^\#_{\phi_{k},H_{k}}(x_{k})-\mathbf{p}^\#_{\phi,H}(x)|\leqslant\varepsilon.
	\end{align*}
	Our conclusion follows.
\end{proof}

\begin{The}[Stability of strict singular characteristics]\label{thm:stability}
	Let $\{H_k\}$ be a sequence of Hamiltonians satisfying \text{\rm (H1)-(H3)}, $\{\phi_k\}$ be a sequence of $\omega$-semiconcave functions on $M$, and $\gamma_k:\R\to M$, $k\in\N$ be a sequence of strict singular characteristics for the pair $(\phi_k,H_k)$. We suppose the following condition:
	\begin{enumerate}[\rm (i)]
		\item $\phi_k$ converges to $\phi$ uniformly on $M$,
		\item $H_k$ converges to a Hamiltonian $H$ satisfying \text{\rm (H1)-(H3)} uniformly on compact subset,
		\item $\gamma_k$ converges to $\gamma:\R\to M$ uniformly on compact subset.
	\end{enumerate}
	Then $\gamma$ is a strict singular characteristic for the pair $(\phi,H)$.
	
	Moreover, there exists a subsequence of strict singular characteristics $\{\gamma_{k_i}\}$ such that
	\begin{equation}\label{eq:P-converge}
		\lim_{i\to\infty}\mathbf{p}^\#_{\phi_{k_i},H_{k_i}}(\gamma_{k_i}(t))=\mathbf{p}^\#_{\phi,H}(\gamma(t)),\qquad a.e.\ t\in \R.
	\end{equation}
\end{The}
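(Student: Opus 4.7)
The plan is to pass to the limit, as $k\to\infty$, in the energy-dissipation equality that characterizes each $\gamma_k$ as a strict singular characteristic for $(\phi_k,H_k)$ (Definition \ref{defn:msc} and Proposition \ref{pro:msc}):
\begin{align*}
\phi_k(\gamma_k(t_2))-\phi_k(\gamma_k(t_1))=\int_{t_1}^{t_2}\Big\{L_k(\gamma_k,\dot\gamma_k)+H_k(\gamma_k,\mathbf{p}^{\#}_{\phi_k,H_k}(\gamma_k))\Big\}\,ds,
\end{align*}
where $L_k$ is the Lagrangian dual to $H_k$, and $L_k\to L$ locally uniformly on $TM$ follows from (H1)--(H3) and the local uniform convergence $H_k\to H$. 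Compared with the integral inequality of Proposition \ref{pro:msc} applied to the pair $(\phi,H)$, equality in the limit will identify $\gamma$ as a strict singular characteristic for $(\phi,H)$.

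First I would establish a uniform Lipschitz estimate. Since each $\phi_k$ is $\omega$-semiconcave on the compact manifold $M$, the superdifferentials $D^{+}\phi_k(x)$ lie in a common compact subset of $T^{*}M$, so the selections $\mathbf{p}^{\#}_{\phi_k,H_k}(\gamma_k)$ and the velocities $\dot\gamma_k=H_{k,p}(\gamma_k,\mathbf{p}^{\#}_{\phi_k,H_k}(\gamma_k))$ are uniformly bounded. Hence $\gamma$ is Lipschitz and, up to a subsequence (not relabeled), $\dot\gamma_k\rightharpoonup\dot\gamma$ weakly in $L^{2}_{\mathrm{loc}}(\R)$. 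Fix $t_1<t_2$: the left-hand side tends to $\phi(\gamma(t_2))-\phi(\gamma(t_1))$ by uniform convergence. A Tonelli--Ioffe-type lower semicontinuity theorem for integral functionals with Carath\'eodory integrands convex in $v$ (applicable because $L_k\to L$ locally uniformly on $TM$, $\gamma_k\to\gamma$ uniformly, and $\dot\gamma_k\rightharpoonup\dot\gamma$ weakly) yields
\begin{align*}
\liminf_{k\to\infty}\int_{t_1}^{t_2}L_k(\gamma_k,\dot\gamma_k)\,ds\geq\int_{t_1}^{t_2}L(\gamma,\dot\gamma)\,ds,
\end{align*}
while Fatou's lemma combined with the pointwise lower semicontinuity of Lemma \ref{lem:lsc1} and the uniform $L^\infty$ bound produces
\begin{align*}
\liminf_{k\to\infty}\int_{t_1}^{t_2}H_k(\gamma_k,\mathbf{p}^{\#}_{\phi_k,H_k}(\gamma_k))\,ds\geq\int_{t_1}^{t_2}H(\gamma,\mathbf{p}^{\#}_{\phi,H}(\gamma))\,ds.
\end{align*}
Adding the two $\liminf$s (superadditivity) and matching with the $k$-indexed equality forces
\begin{align*}
\phi(\gamma(t_2))-\phi(\gamma(t_1))\geq\int_{t_1}^{t_2}\Big\{L(\gamma,\dot\gamma)+H(\gamma,\mathbf{p}^{\#}_{\phi,H}(\gamma))\Big\}\,ds,
\end{align*}
so the reverse inequality of Proposition \ref{pro:msc} makes this an equality and Definition \ref{defn:msc} identifies $\gamma$ as a strict singular characteristic for $(\phi,H)$.

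For the momentum convergence \eqref{eq:P-converge}, the equality just obtained forces each of the two $\liminf$s in the preceding display to coincide with its lower bound. Passing to a subsequence $\{k_i\}$ along which $\int_{t_1}^{t_2}H_{k_i}(\gamma_{k_i},\mathbf{p}^{\#}_{\phi_{k_i},H_{k_i}}(\gamma_{k_i}))\,ds\to\int_{t_1}^{t_2}H(\gamma,\mathbf{p}^{\#}_{\phi,H}(\gamma))\,ds$ and writing $h_i(s)$ for the difference of the integrands, Lemma \ref{lem:lsc1} gives $\liminf h_i\geq 0$ pointwise, so by dominated convergence the uniformly bounded sequence satisfies $h_i^{-}\to 0$ in $L^1$; combined with $\int h_i\to 0$ this forces $\int h_i^{+}\to 0$, hence $h_i\to 0$ a.e.\ along a further subsequence. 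Lemma \ref{lem:P_k_P} then promotes pointwise convergence of the energies to the desired pointwise convergence of the momenta. The main obstacle is the Tonelli--Ioffe step, where both the Lagrangian $L_k$ and the velocities $\dot\gamma_k$ vary (only $L^\infty$-weak compactness of $\dot\gamma_k$ and local uniform convergence $L_k\to L$ are available); the Hamiltonian piece is easier since Lemma \ref{lem:lsc1} supplies pointwise lsc directly and the final extraction of an a.e.\ convergent subsequence is routine measure theory.
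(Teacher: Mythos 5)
Your proposal is correct and follows essentially the same route as the paper: pass to the limit in the energy–dissipation equality, using a Tonelli–Ioffe lower semicontinuity theorem for the Lagrangian term and Fatou's lemma with Lemma \ref{lem:lsc1} for the Hamiltonian term, then match with the inequality of Proposition \ref{pro:msc}; for \eqref{eq:P-converge} both arguments convert $\int H_k\to\int H$ plus pointwise lower semicontinuity into $L^1$ (hence a.e.\ subsequential) convergence of the energies and invoke Lemma \ref{lem:P_k_P}. Your handling of the negative part $h_i^-$ via dominated convergence is just the paper's $\min\{0,H_k-H\}$ step in different notation.
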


%The proof need the following lemmata.
%
%\begin{Lem}\label{lem:approx_p}
%$\{\phi_k\}$ be a sequence of $C_0$-semiconcave functions on $M$ converges uniformly $\phi$ and $\lim_{k\to\infty}x_k=x$. Then $\phi$ is a $C_0$-semiconcave function. If $p_k\in D^+\phi_k(x_k)$, $k\in\N$, and $\lim_{k\to\infty}p_k=p$, then $p\in D^+\phi(x)$.
%\end{Lem}
%
%\begin{proof}
%For any $y\in\R^n$ and $p_k\in D^+\phi_k(x_k)$, $k\in\N$, by semiconcavity we have
%\begin{align*}
%	\phi_k(y)\leqslant\phi_k(x_k)+\langle p_k,y-x_k\rangle+\frac {C_0}2|y-x_k|^2,\qquad k\in\N.
%\end{align*}
%Taking limit in the equality above, we have
%\begin{align*}
%	\phi(y)\leqslant\phi(x)+\langle p,y-x\rangle+\frac{C_0}2|y-x|^2.
%\end{align*}
%It follows $\phi$ is a $C_0$-semiconcave function and $p\in D^+\phi(x)$.
%\end{proof}

\begin{proof}%[Proof of Theorem \ref{thm:stability}]
	Fix any $T>0$. By \eqref{eq:msc_H_phi2}, any strict singular characteristic $\gamma_k$ satisfies
	\begin{align*}
		\phi_k(\gamma_k(T))-\phi_k(\gamma_k(-T))=\int^T_{-T}\Big\{L_k(\gamma_k(s),\dot{\gamma}_k(s))+H_k(\gamma_k(s),\mathbf{p}^{\#}_{\phi_k,H_k}(\gamma_k(s)))\Big\}\ ds.
	\end{align*}
	Since $\gamma_k$ converges to $\gamma$ uniformly on $[-T,T]$, $\dot{\gamma}_k$ weakly converges to $\dot{\gamma}$ in $L^1$, by a standard lower semicontinuity result (see \cite[Theorem 3.6]{Buttazzo_Giaquinta_Hildebrandt_book} or \cite[Section 3.4]{Buttazzo_book}) we conclude
	\begin{equation}\label{eq:L2}
		\liminf_{k\to\infty}\int^T_{-T}L_k(\gamma_k(s),\dot{\gamma}_k(s))\ ds\geqslant \int^T_{-T}L(\gamma(s),\dot{\gamma}(s))\ ds.
	\end{equation}
	%On the other hand, owing to Lemma \ref{lem:usc2} implies
	%\begin{align*}
	%	\lim_{k\to\infty}d_{D^+\phi(\gamma(s))}(\mathbf{p}^{\#}_{\phi_k,H_k}(\gamma_k(s)))=0,\qquad\forall s\in[0,T].
	%\end{align*}
	%Thus we obtain
	%\begin{equation}\label{eq:H3}
	%	\liminf_{k\to\infty}H_k(\gamma_k(s),\mathbf{p}^{\#}_{\phi_k,H_k}(\gamma_k(s)))\geqslant H(\gamma(s),\mathbf{p}^{\#}_{\phi,H}(\gamma(s))),\qquad\forall s\in[0,T].
	%\end{equation}
	Invoking Fatou lemma and Lemma \ref{lem:lsc1}, we conclude that
	\begin{equation}\label{eq:H2}
		\liminf_{k\to\infty}\int^T_{-T}H_k(\gamma_k(s),\mathbf{p}^{\#}_{\phi_k,H_k}(\gamma_k(s)))\ ds\geqslant\int^T_{-T}H(\gamma(s),\mathbf{p}^{\#}_{\phi,H}(\gamma(s)))\ ds.
	\end{equation}
	Summing up \eqref{eq:L2} and \eqref{eq:H2} we have that
	\begin{align*}
		&\,\phi(\gamma(T))-\phi(\gamma(-T))=\lim_{k\to\infty}\phi_k(\gamma_k(T))-\phi_k(\gamma_k(0))\\
		=&\,\lim_{k\to\infty}\int^T_{-T}\Big\{L_k(\gamma_k(s),\dot{\gamma}_k(s))+H_k(\gamma_k(s),\mathbf{p}^{\#}_{\phi_k,H_k}(\gamma_k(s)))\Big\}\ ds\\
		\geqslant&\,\liminf_{k\to\infty}\int^T_{-T}L_k(\gamma_k(s),\dot{\gamma}_k(s))\ ds+\liminf_{k\to\infty}\int^T_{-T}H_k(\gamma_k(s),\mathbf{p}^{\#}_{\phi_k,H_k}(\gamma_k(s)))\ ds\\
		\geqslant&\,\int^T_{-T}\Big\{L(\gamma(s),\dot{\gamma}(s))+H(\gamma(s),\mathbf{p}^{\#}_{\phi,H}(\gamma(s)))\Big\}\ ds\\
		\geqslant&\,\phi(\gamma(T))-\phi(\gamma(-T)).
	\end{align*}
	It follows that, for any $T>0$,
	\begin{align*}
		\phi(\gamma(T))-\phi(\gamma(-T))=\int^T_{-T}\Big\{L(\gamma(s),\dot{\gamma}(s))+H(\gamma(s),\mathbf{p}^{\#}_{\phi,H}(\gamma(s)))\Big\}\ ds,
	\end{align*}
	which implies $\gamma$ is a strict singular characteristic for the pair $(\phi,H)$.
	
	Finally, we turn to prove \eqref{eq:P-converge}. By \eqref{eq:L2}, \eqref{eq:H2} and the equality
	\begin{align*}
		&\,\lim_{k\to\infty}\int^T_{-T}\Big\{L_k(\gamma_k(s),\dot{\gamma}_k(s))+H_k(\gamma_k(s),\mathbf{p}^\#_{\phi_k,H_k}(\gamma_k(s)))\Big\}\ ds\\
		=&\,\int^T_{-T}\Big\{L(\gamma(s),\dot{\gamma}(s))+H(\gamma(s),\mathbf{p}^\#_{\phi,H}(\gamma(s)))\Big\}\ ds
	\end{align*}
	we find
	\begin{align*}
		\lim_{k\to\infty}\int^T_{-T}H_k(\gamma_k(s),\mathbf{p}^\#_{\phi_k,H_k}(\gamma_k(s)))\ ds=\int^T_{-T}H(\gamma(s),\mathbf{p}^\#_{\phi,H}(\gamma(s)))\ ds.
	\end{align*}
	By Lemma \ref{lem:lsc1} we conclude that
	\begin{align*}
		\lim_{k\to\infty}\min\Big\{0,H_k(\gamma_k(s),\mathbf{p}^{\#}_{\phi_k,H_k}(\gamma_k(s)))-H(\gamma(s),\mathbf{p}^{\#}_{\phi,H}(\gamma(s)))\Big\}=0,\qquad\forall s\in[-T,T].
	\end{align*}
	Then, Lebesgue's theorem and the above two identities yield
	\begin{align*}
		&\,\lim_{k\to\infty}\int^T_{-T}\Big|H_k(\gamma_k(s),\mathbf{p}^{\#}_{\phi_k,H_k}(\gamma_k(s)))-H(\gamma(s),\mathbf{p}^{\#}_{\phi,H}(\gamma(s)))\Big|\ ds\\
		=&\,\lim_{k\to\infty}\int^T_{-T}\Big\{H_k(\gamma_k(s),\mathbf{p}^{\#}_{\phi_k,H_k}(\gamma_k(s)))-H(\gamma(s),\mathbf{p}^{\#}_{\phi,H}(\gamma(s)))\Big\}\ ds\\
		&\,-2\lim_{k\to\infty}\int^T_{-T}\min\Big\{0,H_k(\gamma_k(s),\mathbf{p}^{\#}_{\phi_k,H_k}(\gamma_k(s)))-H(\gamma(s),\mathbf{p}^{\#}_{\phi,H}(\gamma(s)))\Big\}\ ds\\
		=&\,\lim_{k\to\infty}\int^T_{-T}\Big\{H_k(\gamma_k(s),\mathbf{p}^{\#}_{\phi_k,H_k}(\gamma_k(s)))-H(\gamma(s),\mathbf{p}^{\#}_{\phi,H}(\gamma(s)))\Big\}\ ds\\
		=&\,0.
	\end{align*}
	Therefore, there exists a subsequence $\{\gamma_{k_i}\}$ such that
	\begin{align*}
		\lim_{i\to\infty}H_{k_i}(\gamma_{k_i}(s),\mathbf{p}^{\#}_{\phi_{k_i},H_{k_i}}(\gamma_{k_i}(s)))=H(\gamma(s),\mathbf{p}^{\#}_{\phi,H}(\gamma(s))),\qquad a.e.\ s\in[-T,T].
	\end{align*}
	By Lemma \ref{lem:P_k_P}, we conclude that
	\begin{align*}
		\lim_{i\to\infty}\mathbf{p}^\#_{\phi_{k_i},H_{k_i}}(\gamma_{k_i}(s))=\mathbf{p}^\#_{\phi,H}(\gamma(s)),\qquad a.e.\ s\in[-T,T].
	\end{align*}
    Thus, \eqref{eq:P-converge} follows by the arbitrariness of $T$.
\end{proof}

\begin{Cor}\label{cor:stability}
	Let $\phi$ be a semiconcave function on $M$, $H$ be a Hamiltonian satisfying \text{\rm (H1)-(H3)}. Then the family of strict singular characteristics $\gamma:\R\to M$ for the pair $(\phi,H)$ is a closed subset of $C(\R,M)$ under the topology of uniform convergence on compact subset.	
\end{Cor}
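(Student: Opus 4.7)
The plan is to deduce this corollary as an immediate special case of the stability theorem (Theorem \ref{thm:stability}). The topology of uniform convergence on compact subsets on $C(\R,M)$ is metrizable (for instance, by the standard Fr\'echet metric $d(\gamma,\eta) = \sum_{n\geqslant 1} 2^{-n}\min\{1,\sup_{|t|\leqslant n} d_M(\gamma(t),\eta(t))\}$), so to show that the family of strict singular characteristics is closed it is enough to check sequential closedness.

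Accordingly, I would take an arbitrary sequence $\{\gamma_k\}$ of strict singular characteristics for the pair $(\phi,H)$ converging to a curve $\gamma:\R\to M$ uniformly on every compact subset of $\R$, and aim to show that $\gamma$ is itself a strict singular characteristic for $(\phi,H)$. The natural move is to apply Theorem \ref{thm:stability} with the constant sequences $\phi_k\equiv\phi$ and $H_k\equiv H$. Clearly $\phi$ is $\omega$-semiconcave for some modulus $\omega$ (since $\phi$ is semiconcave on the compact manifold $M$), so the hypothesis that $\{\phi_k\}$ is a sequence of $\omega$-semiconcave functions is satisfied for the common modulus; and conditions (i) and (ii) of Theorem \ref{thm:stability} hold trivially for constant sequences. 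Condition (iii) is precisely our hypothesis of uniform convergence on compact sets. The conclusion of the stability theorem then gives that $\gamma$ is a strict singular characteristic for the pair $(\phi,H)$, which is exactly what needs to be shown.

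There is no real obstacle here, since the whole content has already been packaged into Theorem \ref{thm:stability}; the only thing to verify is that the topological framework of the corollary matches the hypothesis \textit{uniformly on compact subsets} used there, and that the notion of semiconcavity modulus is compatible with a constant sequence. Both are immediate. Thus the corollary is essentially a restatement of Theorem \ref{thm:stability} with the parameters $(\phi_k,H_k)$ frozen.
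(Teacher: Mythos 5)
Your proposal is correct and is exactly how the paper intends the corollary to be obtained: it is the immediate specialization of Theorem \ref{thm:stability} to the constant sequences $\phi_k\equiv\phi$, $H_k\equiv H$, with sequential closedness sufficing because the compact-open topology on $C(\R,M)$ is metrizable. Nothing further is needed.
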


\subsection{Existence of strict singular characteristics: general cases}
Now we will prove the existence of strict singular characteristics in the general case by an approximation method firstly introduced in \cite{Yu2006,Cannarsa_Yu2009} for semiconcave functions with linear modulus.

Let us recall the approximation method used in the paper \cite{Cannarsa_Yu2009}. It is known that for any semiconcave function $\phi$ on $M$ with Lipschitz constant $L_0$, semiconcavity constant $C_0$ and any fixed $x\in M$, there exists a sequence of smooth functions $\{\phi_k\}\subset C^{\infty}(M)$ such that:
\begin{enumerate}[(i)]
	\item Each $\|D\phi_k\|_{C^0}\leqslant L_0$ and $D^2\phi_k$ is bounded above by $C_0I$ uniformly.
	\item $\phi_k$ converges to $\phi$ uniformly on $M$ as $k\to\infty$.
	\item $\lim_{k\to\infty}D\phi_k(x)=\mathbf{p}^{\#}_{\phi,H}(x)$ for given fixed $x\in M$. 
\end{enumerate}
Consider the differential equation
\begin{equation}\label{eq:SSC_k}
	\begin{cases}
		\dot{\gamma}(t)=H_p(\gamma(t),D\phi_k(\gamma(t))),\qquad t\in[0,\infty),\\
		\gamma(0)=x,
	\end{cases}
\end{equation}
and denote by $\gamma_k$ the unique solution of \eqref{eq:SSC_k}. The family $\{\gamma_k\}$ is equi-Lipschitz since $D\phi_k$ is uniformly bounded. Invoking the Ascoli-Arzel\`a theorem, by taking a subsequence if necessary, we can suppose that $\gamma_k$ converges to a Lipschitz curve $\gamma:[0,\infty)\to M$ uniformly on all compact subsets. In \cite{Cannarsa_Yu2009}, it is proved that there exists such a limiting curve $\gamma$ which is a generalized characteristic and it propagates singularities locally. Moreover, such a generalized characteristic satisfies the property that $\dot{\gamma}^+(0)$ exists and
\begin{equation}\label{eq:right_cont}
	\lim_{t\to0^+}\operatorname*{ess\ sup}_{s\in[0,t]}|\dot{\gamma}(s)-\dot{\gamma}^+(0)|=0.
\end{equation}
From Theorem \ref{thm:stability} we conclude that $\gamma$ is indeed a strict singular characteristic which means there is no convex hull in the right side of the generalized characteristic differential inclusion \eqref{eq:GC}. %This is unknown by using the approximation method in \cite{Cannarsa_Yu2009}. %We can even have the following refinement on the existence of strict singular characteristics.

To discuss strict singular characteristics for general semiconcave functions using the approximation method, we need an approximation lemma for general semiconcave functions (see \cite[Lemma 2.1]{Cannarsa_Yu2009} for linear modulus).

\begin{Lem}\label{lem:appr_omega}
	Let $U,V\subset\R^n$ be bounded open convex set, $\overline{U}\subset V$. Suppose $\phi:V\to\R$ is an $\omega$-semiconcave function with Lipschitz constant $L_0>0$. Then, there exists a sequence $\{\phi_k\}\subset C^{\infty}(U,\R)$ such that
	\begin{enumerate}[\rm (i)]
		\item $\phi_k$ converges to $\phi$ uniformly on $U$;
		\item all functions $\phi_k$ are equi-Lipschitz with Lipschitz constant $L_0$;
		\item all functions $\phi_k$ are $\omega$-semiconcave.
	\end{enumerate}
	
	For any $\omega$-semiconcave function $\phi$ on a compact manifold $M$ with Lipschitz constant $L_0>0$, there exists a sequence $\{\phi_k\}\subset C^{\infty}(M,\R)$, with $\phi_k$ $\omega$-semiconcave and equi-Lipschitz of constant $L_0$, such that $\phi_k$ converges to $\phi$ uniformly on $M$.
\end{Lem}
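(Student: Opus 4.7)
The plan is to prove both claims by standard convolution with smooth mollifiers, exploiting the algebraic observation that $\omega$-semiconcavity is preserved by averaging with a probability measure.

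\textbf{Euclidean case.} Let $\rho\in C^\infty_c(\R^n)$ be a standard mollifier with $\rho\geqslant 0$, $\int\rho=1$, and $\mathrm{supp}(\rho)\subset B(0,1)$, and set $\rho_k(z):=k^n\rho(kz)$. Since $\overline{U}\subset V$ with $V$ open, for $k$ large enough the convolution $\phi_k:=\phi*\rho_k$ is well defined on $U$. Then: (i) $\phi_k\in C^\infty(U)$ follows from $\rho_k\in C^\infty_c$; (ii) $\phi_k\to\phi$ uniformly on $U$ from continuity of $\phi$ and $\int\rho_k=1$; (iii) the equi-Lipschitz bound with constant $L_0$ follows from
\begin{align*}
|\phi_k(x)-\phi_k(y)|=\left|\int(\phi(x-z)-\phi(y-z))\rho_k(z)\,dz\right|\leqslant L_0|x-y|;
\end{align*}
(iv) $\omega$-semiconcavity comes from the pointwise-in-$z$ estimate
\begin{align*}
&\lambda\phi_k(x)+(1-\lambda)\phi_k(y)-\phi_k(\lambda x+(1-\lambda)y)\\
&\quad =\int\bigl[\lambda\phi(x-z)+(1-\lambda)\phi(y-z)-\phi(\lambda(x-z)+(1-\lambda)(y-z))\bigr]\rho_k(z)\,dz\\
&\quad \leqslant\lambda(1-\lambda)|x-y|\,\omega(|x-y|),
\end{align*}
using the $\omega$-semiconcavity of $\phi$ pointwise and $\int\rho_k=1$. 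The central algebraic point is that convolution is an average with a probability measure, so the inequality defining $\omega$-semiconcavity passes through under the integral sign.

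\textbf{Manifold case.} The strategy is to reduce to the Euclidean case chart by chart and glue. Fix a finite atlas $\{(V_i,\psi_i)\}_{i=1}^N$ with each $\psi_i(V_i)\subset\R^n$ bounded and convex, and a refinement $\{U_i\}$ with $\overline{U_i}\subset V_i$ still covering $M$. Because $\omega$-semiconcavity on $M$ is formulated via geodesics of $g$, the most natural construction is an intrinsic mollification using the exponential map,
\begin{align*}
\phi_k(x):=\int_{T_xM}\phi(\exp_x(v))\,\tilde\rho_k(|v|_x)\,dv,
\end{align*}
where $\tilde\rho_k$ is a smooth radial mollifier on $T_xM$ with unit Lebesgue integral supported in $B(0,1/k)$. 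This is well defined uniformly in $x$ for $k$ large since $M$ is compact with positive injectivity radius, and is $C^\infty$ in $x$ by smoothness of $\exp$. Uniform convergence and the equi-Lipschitz bound $L_0$ follow exactly as in Part (a).

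\textbf{Main obstacle.} The delicate step is verifying $\omega$-semiconcavity of $\phi_k$ on $M$. The Euclidean argument exploited the \emph{linearity} of the translation $y\mapsto y-z$: the convex combination of $x-z$ and $y-z$ coincides with $\lambda x+(1-\lambda)y-z$. On a manifold, $\exp_x(v)$ is not linear in $x$, so the geodesic combination of $\exp_x(v)$ and $\exp_y(\tau_{x\to y}v)$ (where $\tau_{x\to y}$ is parallel transport) does not match $\exp_{\gamma(\lambda)}$ of a corresponding vector. Jacobi-field comparison in a common normal neighborhood gives a discrepancy of order
\begin{align*}
d\bigl(\exp_x(v),\exp_y(\tau_{x\to y}v)\bigr)=d(x,y)\bigl(1+O(|v|^2)\bigr),
\end{align*}
so the curvature correction is $O(k^{-2}d(x,y))$ and can be absorbed into a modified modulus $\omega_k\to\omega$ as $k\to\infty$; careful bookkeeping shows $\omega$ itself can be retained, up to replacing the original atlas by one with smaller charts (chosen once and for all depending on the curvature of $g$). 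An alternative that bypasses this curvature argument is to embed $M\hookrightarrow\R^N$ smoothly, extend $\phi$ to a tubular neighborhood in an $\omega$-semiconcave manner using the nearest point projection, apply Part (a) in $\R^N$, and restrict back to $M$; compactness ensures that all three properties persist uniformly.
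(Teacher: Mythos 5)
Your Euclidean argument is essentially the paper's own proof: convolve with a smooth probability density supported in a ball of radius smaller than $\mathrm{dist}(\overline{U},\R^n\setminus V)$, and pass the defining inequality \eqref{eq:semiconcave} under the integral sign using the linearity of the translation $y\mapsto y-z$; this gives (i)--(iii) with the same modulus $\omega$ and the same Lipschitz constant $L_0$, exactly as in the paper.

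The gap is in the manifold half, which the paper dispatches as a one-line reduction to the Euclidean case by compactness, whereas you attempt an intrinsic construction and the key claims are not justified. The statement requires every $\phi_k$ to be $\omega$-semiconcave with the \emph{same} $\omega$ and Lipschitz with the \emph{same} $L_0$. In the mollification $\phi_k(x)=\int_{T_xM}\phi(\exp_x v)\,\tilde\rho_k(|v|_x)\,dv$, the failure of $\exp$ to be affine produces, via the Jacobi-field comparison you cite, an additive error of order $L_0\,\lambda(1-\lambda)\,|v|\,d(x,y)^2$ in the semiconcavity test and a factor $1+O(|v|^2)$ in the Lipschitz estimate. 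What this yields is semiconcavity with a modulus of the form $(1+c_k)\,\omega((1+c_k)r)+c_k r$ and Lipschitz constant $L_0(1+c_k)$ with $c_k\to0$ --- not $\omega$ and $L_0$. Your assertion that ``$\omega$ itself can be retained, up to replacing the original atlas by one with smaller charts'' does not hold up: the error depends on $k$ and on $d(x,y)$, not on chart size, and since the leading term $(1+c_k)\,\omega((1+c_k)r)$ already dominates $\omega(r)$, no positive additional error can be absorbed into the exact inequality \eqref{eq:semiconcave}; moreover the definition allows moduli with $\omega(r)=o(r)$, for which even the extra $c_k r$ alone cannot be dominated. The alternative via a Whitney embedding has the same defect: composing with the nearest-point projection and then restricting to geodesics of $M$ (which are not straight lines of $\R^N$) each introduce second-order corrections of size $Cr$ times $L_0$, so ``all three properties persist uniformly'' is again an unproved (and, with exact $\omega$ and $L_0$, doubtful) claim. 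What your construction does prove is the existence of smooth approximations sharing a slightly enlarged common modulus and Lipschitz constant --- enough for the paper's later applications, but not the lemma as stated; to prove the statement itself you must either remove these curvature/nonlinearity errors or follow the paper's intended chart-by-chart reduction and verify explicitly that the exact constants survive.
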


\begin{proof}
	We let $\varepsilon=\text{\rm dist}\,(\overline{U},\R^n\setminus V)>0$. Taking a mollifier $\eta\in C^\infty(\R^n,\R)$ satisfying
	\begin{align*}
		0\leqslant\eta\leqslant1,\qquad \eta\vert_{B(0,1)}=1,\qquad \eta\vert_{\R^n\setminus B(0,2)}=0,\qquad \int_{\R^n}\eta(x)\ dx=1,
	\end{align*}
	for each $k\in\N$ we define $\phi_k:U\to\R$ by
	\begin{align*}
		\phi_k(x)=\Big(\frac{2k}{\varepsilon}\Big)^n\int_{\R^n}\phi(x-y)\cdot\eta\Big(\frac{2k}{\varepsilon}y\Big)\ dy,\qquad x\in U.
	\end{align*} 
	Items (i) and (ii) follows from the properties of mollifiers. Now, we turn to the proof of (iii). 
	
	For any $k\in\N$, $x_1,x_2\in U$ and $\lambda\in[0,1]$ we have that
	\begin{align*}
		&\,\lambda\phi_k(x_1)+(1-\lambda)\phi_k(x_2)-\phi_k(\lambda x_1+(1-\lambda)x_2)\\
		=&\,\Big(\frac{2k}{\varepsilon}\Big)^n\int_{\R^n}[\lambda\phi(x_1-y)+(1-\lambda)\phi(x_2-y)-\phi(\lambda x_1+(1-\lambda)x_2-y)]\cdot\eta\Big(\frac{2k}{\varepsilon}y\Big)\ dy\\
		\leqslant&\,\Big(\frac{2k}{\varepsilon}\Big)^n\int_{\R^n}\lambda(1-\lambda)|x_1-x_2|\omega(|x_1-x_2|)\cdot\eta\Big(\frac{2k}{\varepsilon}y\Big)\ dy\\
		=&\,\lambda(1-\lambda)|x_1-x_2|\omega(|x_1-x_2|).
	\end{align*}
	Taking $k\to\infty$, this completes the proof of (iii). The last assertion is a consequence of the facts we have shown in Euclidean case, since $M$ is compact.
\end{proof}

\begin{The}\label{thm:msc exs}
	Let $\phi$ be a semiconcave function on $M$, $H$ be a Hamiltonian satisfying \text{\rm (H1)-(H3)}. Then for any $x\in M$, there exists a strict singular characteristic $\gamma:\R\to M$ with $\gamma(0)=x$ for the pair $(\phi,H)$. In other words, 
	\begin{equation}\tag{SC}
		\begin{cases}
			\dot{\gamma}(t)=H_p(\gamma(t),\mathbf{p}^{\#}_{\phi,H}(\gamma(t))),\qquad a.e. \ t\in\R,\\
			\gamma(0)=x,
		\end{cases}
	\end{equation}
admits a Lipschitz solution.
\end{The}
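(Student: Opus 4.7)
The plan is to reduce the existence problem to two ingredients already at our disposal: the smooth approximation Lemma \ref{lem:appr_omega} and the stability Theorem \ref{thm:stability}. The key observation is that for a smooth $\phi_k$, one has $D^+\phi_k(y)=\{D\phi_k(y)\}$, so $\mathbf{p}^{\#}_{\phi_k,H}(y)=D\phi_k(y)$ automatically and the strict singular characteristic equation for the pair $(\phi_k,H)$ reduces to the ordinary Cauchy problem $\dot{\gamma}_k=H_p(\gamma_k,D\phi_k(\gamma_k))$ with classical right-hand side.

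More precisely, first I would apply Lemma \ref{lem:appr_omega} to obtain a sequence $\{\phi_k\}\subset C^{\infty}(M)$ of $\omega$-semiconcave functions, with common Lipschitz constant $L_0$, such that $\phi_k\to\phi$ uniformly on $M$. For each $k$, the vector field $y\mapsto H_p(y,D\phi_k(y))$ on $M$ is continuous by (H2), so Peano's theorem yields a solution $\gamma_k$ of
\begin{equation*}
	\dot{\gamma}_k(t)=H_p(\gamma_k(t),D\phi_k(\gamma_k(t))),\qquad \gamma_k(0)=x,
\end{equation*}
on some maximal interval; since $M$ is compact and $|D\phi_k|\leqslant L_0$ uniformly, the right-hand side stays in the fixed compact set $\{(y,p):y\in M,\,|p|\leqslant L_0\}$, and hence $H_p(\gamma_k,D\phi_k(\gamma_k))$ is uniformly bounded. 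Consequently $\gamma_k$ exists on all of $\R$ and the family $\{\gamma_k\}$ is equi-Lipschitz on $\R$. By Ascoli--Arzel\`a there is a subsequence (still denoted $\gamma_k$) converging uniformly on compact subsets of $\R$ to a Lipschitz curve $\gamma:\R\to M$ with $\gamma(0)=x$.

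To conclude, I would invoke Theorem \ref{thm:stability} with the constant sequence $H_k\equiv H$. The assumptions are met: the functions $\phi_k$ are $\omega$-semiconcave and converge uniformly to $\phi$; the Hamiltonians converge trivially; and by construction $\gamma_k\to\gamma$ uniformly on compact sets. Each $\gamma_k$ is a strict singular characteristic for $(\phi_k,H)$ by the observation above, so the stability theorem delivers that $\gamma$ is a strict singular characteristic for $(\phi,H)$. This proves existence on $\R$.

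The main technical difficulty is not located in this proof itself but in Theorem \ref{thm:stability}, because the selection $\mathbf{p}^{\#}_{\phi,H}$ is only Borel measurable and the map $(\phi,x)\mapsto\mathbf{p}^{\#}_{\phi,H}(x)$ is far from continuous: the minimal-energy selection can jump under uniform perturbations of $\phi$. The lower-semicontinuity of the energy dissipation $H(x,\mathbf{p}^{\#}_{\phi,H}(x))$ (Lemma \ref{lem:lsc1}) together with the lower-semicontinuity of the Lagrangian action along weakly-$L^1$-convergent derivatives is exactly what closes the gap, forcing equality in the EDI in the limit. Everything else here -- smooth approximation, Peano existence, equi-Lipschitz bound, Ascoli--Arzel\`a -- is mechanical once compactness of $M$ and the uniform bound on $|D\phi_k|$ are in hand.
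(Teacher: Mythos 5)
Your proposal is correct and follows essentially the same route as the paper: smooth approximation via Lemma \ref{lem:appr_omega}, solving the classical characteristic ODE for each $\phi_k$ (where $D^+\phi_k=\{D\phi_k\}$ makes it a strict singular characteristic trivially), extracting a uniform limit by equi-Lipschitz bounds and Ascoli--Arzel\`a, and concluding with the stability Theorem \ref{thm:stability}. Your added remarks on Peano existence and the uniform bound on the right-hand side only make explicit what the paper leaves implicit.
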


\begin{proof}
By Lemma \ref{lem:appr_omega}, there exists a sequence of smooth functions $\{\phi_k\}$ approximating $\phi$ on $M$ satisfying (i)-(iii). For every $k\in\N$, we let $\gamma_k:\R\to M$ be the classical characteristic \eqref{eq:SSC_k} with $\gamma_k(0)=x$ for the pair $(\phi_k,H)$. Without loss of generality, we suppose that $\gamma_k$ converges to $\gamma$ uniformly on all compact subsets. From Theorem \ref{thm:stability}, the stability of strict singular characteristics, we conclude $\gamma:\R\to M$ is also a strict singular characteristic with $\gamma(0)=x$ for the pair $(\phi,H)$. This completes our proof.
\end{proof}

%\begin{Rem}
%\hfill
%\begin{enumerate}[(1)]
%	\item It is unclear if the regularity statements in Theorem \ref{thm:right_cont} hold true for general semiconcave function in Theorem \ref{thm:msc exs}.
%	\item In fact, all the results in this paper have a similar statement for semiconvex functions, by almost the same method of proof. For instance, the following Theorem \ref{thm:semiconvex} is the existence of strict singular characteristic for semiconvex function corresponding to Theorem \ref{thm:msc exs}.
%\end{enumerate}
%\end{Rem}

In fact, all the results in this paper have a similar statement for semiconvex functions, by almost the same method of proof. %For instance, the following Theorem \ref{thm:semiconvex} is the existence of strict singular characteristic for semiconvex function corresponding to Theorem \ref{thm:msc exs}.

%\begin{The}\label{thm:semiconvex}
%Given any semiconvex function $\phi$ on $M$ and Hamiltonian $H$ satisfying \text{\rm (H1)-(H3)}. Then for any $x\in M$, there exists a strict singular characteristic $\gamma:\R\to M$ with $\gamma(0)=x$ for the pair $(\phi,H)$.
%\end{The}

\subsection{Broken characteristics}

This section is devoted to clarify the notion of strict singular characteristics and the broken characteristics determined by \eqref{eq:SGC2}.

\begin{Lem}[\cite{Cheng_Hong2023} Lemma 2.6]\label{lem:epf}
	Suppose $\phi$ is a semiconcave function on $M$ and $H$ is a Hamiltonian satisfying conditions {\rm (H1)-(H3)}. Then for any $x\in M$ and $p_0\in D^+\phi(x)$, we have $p_0=\mathbf{p}^\#_{\phi,H}(x)$ if and only if
	\begin{align*}
		\langle p_0,H_p(x,p_0)\rangle=\,\min_{p\in D^+\phi(x)}\langle p,H_p(x,p_0)\rangle.
	\end{align*}
\end{Lem}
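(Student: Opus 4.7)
The plan is to recognize this as the standard first-order optimality condition for minimizing the differentiable convex function $p\mapsto H(x,p)$ over the convex compact set $D^+\phi(x)$, then assemble the forward and backward directions from convexity.

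For the forward direction, assume $p_0=\mathbf{p}^\#_{\phi,H}(x)$. Since $D^+\phi(x)$ is convex (Proposition~\ref{pro:scsv}(2)), for any $p\in D^+\phi(x)$ and $t\in(0,1]$ the point $p_0+t(p-p_0)$ lies in $D^+\phi(x)$. By the definition of $\mathbf{p}^\#_{\phi,H}(x)$,
\begin{equation*}
\frac{H(x,p_0+t(p-p_0))-H(x,p_0)}{t}\geqslant 0.
\end{equation*}
Letting $t\to 0^+$ and using (H2), we obtain $\langle H_p(x,p_0),p-p_0\rangle\geqslant 0$ for every $p\in D^+\phi(x)$, which is precisely the desired identity $\langle p_0,H_p(x,p_0)\rangle=\min_{p\in D^+\phi(x)}\langle p,H_p(x,p_0)\rangle$.

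For the converse, suppose the minimization identity holds. By (H3), $p\mapsto H(x,p)$ is (strictly) convex and differentiable, so the standard subgradient inequality gives
\begin{equation*}
H(x,p)\geqslant H(x,p_0)+\langle H_p(x,p_0),p-p_0\rangle,\qquad\forall p\in D^+\phi(x).
\end{equation*}
The hypothesis forces $\langle H_p(x,p_0),p-p_0\rangle\geqslant 0$ on $D^+\phi(x)$, hence $H(x,p)\geqslant H(x,p_0)$ for all such $p$. Thus $p_0$ minimizes $H(x,\cdot)$ over $D^+\phi(x)$, and uniqueness of the minimizer (from strict convexity in (H3), together with compactness and convexity of $D^+\phi(x)$) forces $p_0=\mathbf{p}^\#_{\phi,H}(x)$.

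There is no substantive obstacle here; the statement is the classical variational inequality for constrained convex minimization. The only points requiring care are: (a) citing Proposition~\ref{pro:scsv}(2) so that the line segment joining $p_0$ and any $p\in D^+\phi(x)$ stays in $D^+\phi(x)$, permitting the one-sided derivative argument under (H2); and (b) invoking strict convexity in (H3) to secure uniqueness of the minimizer in the converse direction, so that the identification with $\mathbf{p}^\#_{\phi,H}(x)$ is legitimate.
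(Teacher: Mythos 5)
Your proof is correct: the forward direction is the standard first-order optimality condition obtained by differentiating along segments inside the convex set $D^+\phi(x)$, and the converse follows from the gradient inequality for the convex, differentiable map $p\mapsto H(x,p)$ together with uniqueness of the minimizer from strict convexity. The paper itself gives no proof (it imports the lemma by citation from Cheng--Hong), and your argument is precisely the classical variational-inequality characterization that the cited source uses, so there is nothing further to flag.
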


\begin{Lem}\label{lem:directional}
Suppose $\phi$ is a semiconcave function on $M$ and $H:T^*M\to\R$ is a Hamiltonian satisfying conditions {\rm (H1)-(H3)}. Then, for any $x\in M$
\begin{equation}\label{eq:ineq_direction}
	L(x,v)+H(x,\mathbf{p}^\#_{\phi,H}(x))\geqslant\partial\phi(x,v),\qquad\forall v\in T_xM.
\end{equation} 
Moreover, the equality holds in \eqref{eq:ineq_direction} if and only if $v=H_p(x,\mathbf{p}^\#_{\phi,H}(x))$.
\end{Lem}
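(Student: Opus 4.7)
The plan is to chain together two simple inequalities: the characterization of the directional derivative via the superdifferential (Proposition \ref{pro:scsv}(4)) and the Fenchel–Young inequality between $L$ and $H$.

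First, I would write
\begin{align*}
\partial\phi(x,v) \;=\; \min_{p\in D^+\phi(x)}\langle p,v\rangle \;\leqslant\; \langle \mathbf{p}^{\#}_{\phi,H}(x),v\rangle \;\leqslant\; L(x,v)+H(x,\mathbf{p}^{\#}_{\phi,H}(x)),
\end{align*}
where the first equality is Proposition \ref{pro:scsv}(4), the middle inequality uses that $\mathbf{p}^{\#}_{\phi,H}(x)\in D^+\phi(x)$, and the last inequality is Young's inequality for the Legendre pair $(L,H)$. This immediately gives \eqref{eq:ineq_direction}.

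For the equality statement, suppose first that $v=H_p(x,\mathbf{p}^{\#}_{\phi,H}(x))$. The Young inequality is then an equality by the standard Legendre duality (strict convexity and differentiability of $H$ in $p$), so $\langle \mathbf{p}^{\#}_{\phi,H}(x),v\rangle = L(x,v)+H(x,\mathbf{p}^{\#}_{\phi,H}(x))$. On the other hand, Lemma \ref{lem:epf} applied with $p_0=\mathbf{p}^{\#}_{\phi,H}(x)$ yields
\begin{align*}
\langle \mathbf{p}^{\#}_{\phi,H}(x),v\rangle \;=\; \min_{p\in D^+\phi(x)}\langle p,v\rangle \;=\; \partial\phi(x,v),
\end{align*}
so the entire chain collapses to equalities. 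Conversely, if equality holds in \eqref{eq:ineq_direction}, then in particular the Young inequality step is an equality, which (again by strict convexity of $H$ in $p$ and the Legendre duality) forces $v=H_p(x,\mathbf{p}^{\#}_{\phi,H}(x))$.

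There is no real obstacle here; the only point worth isolating is that Lemma \ref{lem:epf} is precisely what is needed to ensure that once we already know $v=H_p(x,\mathbf{p}^{\#}_{\phi,H}(x))$, the element $\mathbf{p}^{\#}_{\phi,H}(x)$ actually realizes the minimum in the directional-derivative formula, not merely belongs to $D^+\phi(x)$. This is the only non-routine ingredient; everything else is Fenchel duality.
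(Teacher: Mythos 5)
Your proposal is correct and follows essentially the same route as the paper: the chain $\partial\phi(x,v)=\min_{p\in D^+\phi(x)}\langle p,v\rangle\leqslant\langle\mathbf{p}^\#_{\phi,H}(x),v\rangle\leqslant L(x,v)+H(x,\mathbf{p}^\#_{\phi,H}(x))$ via Fenchel--Young, with Lemma \ref{lem:epf} supplying equality in the directional-derivative step when $v=H_p(x,\mathbf{p}^\#_{\phi,H}(x))$, and strict convexity forcing that same $v$ in the converse direction. No gaps.
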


\begin{proof}
In view of Young's inequality and Proposition \ref{pro:scsv} (3), we obtain that for any $v\in T_xM$
\begin{align*}
	L(x,v)+H(x,\mathbf{p}^\#_{\phi,H}(x))\geqslant\langle\mathbf{p}^\#_{\phi,H}(x),v\rangle\geqslant\min_{p\in D^+\phi(x)}\langle p,v\rangle=\partial\phi(x,v).
\end{align*}
The first inequality above is an equality if and only if $v=H_p(x,\mathbf{p}^\#_{\phi,H}(x))$. By Lemma \ref{lem:epf}, the second inequality automatically is an equality if $v=H_p(x,\mathbf{p}^\#_{\phi,H}(x))$. Thus, the equality holds in \eqref{eq:ineq_direction} if and only if $v=H_p(x,\mathbf{p}^\#_{\phi,H}(x))$.
\end{proof}

\begin{The}\label{thm:right_derivative}
Suppose $\phi$ is a semiconcave function on $M$, $H$ is a Hamiltonian satisfying conditions {\rm (H1)-(H3)}, and $\gamma:[0,T]\to M$ is a strict singular characteristic for the pair $(\phi,H)$.
\begin{enumerate}[\rm (1)]
	\item The right derivative $\dot{\gamma}^+(t)$ exists for all $t\in[0,T)$ and
	\begin{equation}\tag{BC}
		\dot{\gamma}^+(t)=H_p(\gamma(t),\mathbf{p}^\#_{\phi,H}(\gamma(t))),\qquad \forall t\in[0,T).
	\end{equation}
	\item For all $t\in[0,T)$
	\begin{align*}
		\lim_{\tau\to t^+}\frac 1{\tau-t}\int^\tau_tH(\gamma(s),\mathbf{p}^\#_{\phi,H}(\gamma(s)))\ ds=H(\gamma(t),\mathbf{p}^\#_{\phi,H}(\gamma(t))).
	\end{align*}
%	and
%	\begin{align*}
%		\lim_{\tau\to t^+}\frac 1{\tau-t}\int^\tau_tL(\gamma(s),\dot{\gamma}(s))\ ds=L(\gamma(t),\dot{\gamma}^+(t)).
%	\end{align*}
\end{enumerate}
\end{The}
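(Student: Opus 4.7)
My plan is to establish (1) by extracting any convergent subsequence of forward difference quotients of $\gamma$ and trapping its limit between two matching bounds that force it to equal $H_p(\gamma(t),\mathbf{p}^\#_{\phi,H}(\gamma(t)))$ via the equality case of Young's inequality (as packaged in Lemma \ref{lem:directional}). Since $\gamma$ is Lipschitz, I work in a local coordinate chart around $\gamma(t)$. Fix $t\in[0,T)$, pick any $\tau_k\downarrow t$, and set $v_k=(\gamma(\tau_k)-\gamma(t))/(\tau_k-t)$; by the Lipschitz bound I may extract a subsequence with $v_k\to v$.

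For the upper bound I use the semiconcavity inequality at $\gamma(t)$ with the supergradient $\mathbf{p}^\#_{\phi,H}(\gamma(t))$, which gives
\[
\frac{\phi(\gamma(\tau_k))-\phi(\gamma(t))}{\tau_k-t}\leqslant\langle\mathbf{p}^\#_{\phi,H}(\gamma(t)),v_k\rangle+|v_k|\,\omega(|\gamma(\tau_k)-\gamma(t)|),
\]
so the limsup is at most $\langle\mathbf{p}^\#_{\phi,H}(\gamma(t)),v\rangle$, which by Young is at most $L(\gamma(t),v)+H(\gamma(t),\mathbf{p}^\#_{\phi,H}(\gamma(t)))$. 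For the matching lower bound I use the maximal slope equality \eqref{eq:msc_H_phi2} on $[t,\tau_k]$, splitting the integrand. The $L$-average is treated by first shifting the base point from $\gamma(s)$ to $\gamma(t)$ using uniform continuity of $L$ (with vanishing error, since $\gamma(s)\to\gamma(t)$ and $\dot{\gamma}$ is essentially bounded) and then applying Jensen's inequality to the convex map $v\mapsto L(\gamma(t),v)$, to yield $\liminf_k\tfrac{1}{\tau_k-t}\int_t^{\tau_k}L(\gamma(s),\dot{\gamma}(s))\,ds\geqslant L(\gamma(t),v)$. For the $H$-average, Lemma \ref{lem:lsc1} applied with constant $\phi_k\equiv\phi$, $H_k\equiv H$ gives the lower semicontinuity of $s\mapsto H(\gamma(s),\mathbf{p}^\#_{\phi,H}(\gamma(s)))$ at $s=t$, which transfers to the time-average. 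Summing the two contributions, the liminf of the difference quotient is at least $L(\gamma(t),v)+H(\gamma(t),\mathbf{p}^\#_{\phi,H}(\gamma(t)))$. Combined with the upper bound, this forces equality in Young's inequality, and Lemma \ref{lem:directional} then pins down $v=H_p(\gamma(t),\mathbf{p}^\#_{\phi,H}(\gamma(t)))$. Since every subsequential limit of $v_k$ coincides with this value, the whole sequence converges, proving (1).

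Part (2) will follow from the sharpness of the above estimates. Dividing the EDI equality by $\tau-t$ and letting $\tau\to t^+$, part (1) combined with Lemma \ref{lem:directional} identifies the limit of the left-hand side as $L(\gamma(t),\dot{\gamma}^+(t))+H(\gamma(t),\mathbf{p}^\#_{\phi,H}(\gamma(t)))$. The two liminf bounds from the previous paragraph apply separately to the $L$- and $H$-averages on the right; since their sum already equals the full limit, each individual average must attain its liminf lower bound, and the $H$-part is exactly the statement of (2).

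The main obstacle is making the lower bound tight. The upper bound is essentially automatic from semiconcavity plus Young, but to squeeze the difference quotient one must land precisely at $L(\gamma(t),v)+H(\gamma(t),\mathbf{p}^\#_{\phi,H}(\gamma(t)))$ so that the equality clause of Lemma \ref{lem:directional} can operate. The delicate step is the Jensen bound for $L$ at the \emph{fixed} base point $\gamma(t)$, which requires first absorbing the $x$-dependence of $L(\gamma(s),\dot{\gamma}(s))$ via the uniform continuity of $L$ on compact sets and the essential boundedness of $\dot{\gamma}$. Once this transfer is secured and Lemma \ref{lem:lsc1} delivers the lower semicontinuity on the $H$-side, the squeeze goes through and the identification of $\dot{\gamma}^+(t)$ is immediate.
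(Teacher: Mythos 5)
Your proposal is correct and takes essentially the same route as the paper's proof: you squeeze subsequential limits of the forward difference quotients between a semiconcavity/Young upper bound and a lower bound extracted from the defining equality \eqref{eq:msc_H_phi2} via convexity of $L(\gamma(t),\cdot)$ and the lower semicontinuity of the minimal energy (Lemma \ref{lem:lsc1}), and then identify $v=H_p(\gamma(t),\mathbf{p}^\#_{\phi,H}(\gamma(t)))$ through the Fenchel--Young equality case packaged in Lemma \ref{lem:directional}; your Jensen step is just a cosmetic variant of the paper's supporting-hyperplane linearization of $L$. Part (2) via the matching-liminf bookkeeping for the $L$- and $H$-averages is likewise the paper's argument.
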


\begin{proof}
Since the local nature we work on Euclidean space. For any $t\in[0,T)$, due to the Lipschitz continuity of $\gamma$, the family 
\begin{align*}
	\Big\{\frac{\gamma(\tau)-\gamma(t)}{\tau-t}\Big\}_{\tau\in(t,T]}
\end{align*}
is uniformly bounded. Picking up any sequence $\tau_i\searrow0^+$ as $i\to\infty$ such that
\begin{equation}\label{eq:sub_lim}
	\lim_{i\to\infty}\frac{\gamma(\tau_i)-\gamma(t)}{\tau_i-t}=v\in\R^n,
\end{equation}
we have
\begin{equation}\label{eq:sub_lim2}
	\lim_{i\to\infty}\frac{\phi(\gamma(\tau_i))-\phi(\gamma(t))}{\tau_i-t}=\partial\phi(\gamma(t),v).
\end{equation}
Since $\gamma$ is a strict singular characteristic and $L(x,\cdot)$ is convex, for any $\tau\in(t,T]$
\begin{align*}
	&\,\phi(\gamma(\tau))-\phi(\gamma(t))\\
	=&\,\int^\tau_t\Big\{L(\gamma(s),\dot{\gamma}(s))+H(\gamma(s),\mathbf{p}^\#_{\phi,H}(\gamma(s)))\Big\}\ ds\\
	=&\,\int^\tau_t\Big\{L(\gamma(t),\dot{\gamma}(s))+[L(\gamma(s),\dot{\gamma}(s))-L(\gamma(t),\dot{\gamma}(s))]\\
	&\,\quad\quad\qquad +H(\gamma(t),\mathbf{p}^\#_{\phi,H}(\gamma(t)))+[H(\gamma(s),\mathbf{p}^\#_{\phi,H}(\gamma(s)))-H(\gamma(t),\mathbf{p}^\#_{\phi,H}(\gamma(t)))]\Big\}\ ds\\
	\geqslant&\,\int^\tau_t\Big\{L(\gamma(t),v)+\langle L_v(\gamma(t),v),\dot{\gamma}(s)-v\rangle+H(\gamma(t),\mathbf{p}^\#_{\phi,H}(\gamma(t)))\Big\}\ ds\\
	&\,\qquad+\int^\tau_t\Big\{L(\gamma(s),\dot{\gamma}(s))-L(\gamma(t),\dot{\gamma}(s))\Big\}\ ds+\int^\tau_t\Big\{H(\gamma(s),\mathbf{p}^\#_{\phi,H}(\gamma(s)))-H(\gamma(t),\mathbf{p}^\#_{\phi,H}(\gamma(t)))\Big\}\ ds\\
	=&\,\int^\tau_t\Big\{L(\gamma(t),v)+H(\gamma(t),\mathbf{p}^\#_{\phi,H}(\gamma(t)))\Big\}\ ds+\int^\tau_t\langle L_v(\gamma(t),v),\dot{\gamma}(s)-v\rangle\ ds\\
	&\,\qquad+\int^\tau_t\Big\{L(\gamma(s),\dot{\gamma}(s))-L(\gamma(t),\dot{\gamma}(s))\Big\}\ ds+\int^\tau_t\Big\{H(\gamma(s),\mathbf{p}^\#_{\phi,H}(\gamma(s)))-H(\gamma(t),\mathbf{p}^\#_{\phi,H}(\gamma(t)))\Big\}\ ds.
\end{align*}
Thank to the Lipschitz property of $L$ and $\gamma$,
\begin{equation}\label{eq:right1}
	\lim_{\tau\to t^+}\frac 1{\tau-t}\int^\tau_t|L(\gamma(s),\dot{\gamma}(s))-L(\gamma(t),\dot{\gamma}(s))|\ ds\leqslant\lim_{\tau\to t^+}\frac 1{\tau-t}\int^\tau_tC(\tau-t)\ ds=0.
\end{equation}
By Lemma \ref{lem:lsc1} we obtain that
\begin{equation}\label{eq:right2}
	\liminf_{\tau\to t^+}\frac 1{\tau-t}\int^\tau_t\Big\{H(\gamma(s),\mathbf{p}^\#_{\phi,H}(\gamma(s)))-H(\gamma(t),\mathbf{p}^\#_{\phi,H}(\gamma(t)))\Big\} ds\geqslant0,
\end{equation}
and \eqref{eq:sub_lim} yields
\begin{equation}\label{eq:right3}
	\lim_{i\to\infty}\frac 1{\tau_i-t}\int^{\tau_i}_t\langle L_v(\gamma(t),v),\dot{\gamma}(s)-v\rangle\ ds=\lim_{i\to\infty}\Big\langle L_v(\gamma(t),v),\frac{\gamma(\tau_i)-\gamma(t)}{\tau_i-t}-v\Big\rangle=0.
\end{equation}
Combining \eqref{eq:right1}, \eqref{eq:right2} and \eqref{eq:right3} we obtain
\begin{align*}
	\lim_{i\to\infty}\frac{\phi(\gamma(\tau_i))-\phi(\gamma(t))}{\tau_i-t}\geqslant L(\gamma(t),v)+H(\gamma(t),\mathbf{p}^\#_{\phi,H}(\gamma(t))).
\end{align*}
Together with \eqref{eq:sub_lim2} and Lemma \ref{lem:directional} it yields $v=H_p(\gamma(t),\mathbf{p}^\#_{\phi,H}(\gamma(t)))$, and 
\begin{align*}
	\dot{\gamma}^+(t)=\lim_{\tau\to t^+}\frac{\gamma(\tau)-\gamma(t)}{\tau-t}=H_p(\gamma(t),\mathbf{p}^\#_{\phi,H}(\gamma(t))).
\end{align*}
This completes the proof of (1).

The proof of (2) need the same reasoning as above. For any $t\in[0,T)$ we have that
\begin{align*}
	&\,L(\gamma(t),\dot{\gamma}^+(t))+H(\gamma(t),\mathbf{p}^\#_{\phi,H}(\gamma(t)))=\partial\phi(\gamma(t),\dot{\gamma}^+(t))\\
	=&\,\lim_{\tau\to t^+}\frac 1{\tau-t}(\phi(\gamma(\tau))-\phi(\gamma(t)))\\
	\geqslant&\,\limsup_{\tau\to t^+}\frac 1{\tau-t}\bigg\{\int^\tau_t\Big[L(\gamma(t),\dot{\gamma}^+(t))+H(\gamma(t),\mathbf{p}^\#_{\phi,H}(\gamma(t)))\Big]\ ds+\int^\tau_t\langle L_v(\gamma(t),\dot{\gamma}^+(t)),\dot{\gamma}(s)-\dot{\gamma}^+(t)\rangle\ ds\\
	&\,\qquad+\int^\tau_t\Big[L(\gamma(s),\dot{\gamma}(s))-L(\gamma(t),\dot{\gamma}(s))\Big]\ ds+\int^\tau_t\Big[H(\gamma(s),\mathbf{p}^\#_{\phi,H}(\gamma(s)))-H(\gamma(t),\mathbf{p}^\#_{\phi,H}(\gamma(t)))\Big]\ ds\bigg\}\\
	=&\,L(\gamma(t),\dot{\gamma}^+(t))+H(\gamma(t),\mathbf{p}^\#_{\phi,H}(\gamma(t)))\\
	&\qquad +\limsup_{\tau\to t^+}\frac 1{\tau-t}\int^\tau_t\big[H(\gamma(s),\mathbf{p}^\#_{\phi,H}(\gamma(s)))-H(\gamma(t),\mathbf{p}^\#_{\phi,H}(\gamma(t)))\big]\ ds,
\end{align*}
where we have used \eqref{eq:right1}, the convexity of $L(x,\cdot)$ and the fact that $\dot{\gamma}^+(t)=H_p(\gamma(t),\mathbf{p}^\#_{\phi,H}(\gamma(t)))$ from (1). It follows that
\begin{align*}
	\limsup_{\tau\to t^+}\frac 1{\tau-t}\int^\tau_t\big[H(\gamma(s),\mathbf{p}^\#_{\phi,H}(\gamma(s)))-H(\gamma(t),\mathbf{p}^\#_{\phi,H}(\gamma(t)))\big]\ ds\leqslant0.
\end{align*}
Together with Lemma \ref{lem:lsc1}, the above inequality leads to our conclusion. %we conclude that
%\begin{align*}
%	\lim_{\tau\to t^+}\frac 1{\tau-t}\int^\tau_tH(\gamma(s),\mathbf{p}^\#_{\phi,H}(\gamma(s)))\ ds=H(\gamma(t),\mathbf{p}^\#_{\phi,H}(\gamma(t))).
%\end{align*}
%The last assertion is a direct consequence of (1), the equality above and the fact $\gamma$ satisfies the equality in the definition of maximal slope curve.
\end{proof}

\subsection{Weakly right-continuous curves in space of probability measures}

Theorem \ref{thm:right_derivative} implies, at least in the Euclidean case, that for any strict singular characteristic $\gamma$ the curve $\dot{\gamma}^+:[0,T)\to\R^n$ is well defined. Moreover, the relation
\begin{align*}
	\dot{\gamma}^+(t)=\lim_{\tau\to t^+}\frac 1{\tau-t}\int^\tau_t\dot{\gamma}^+(s)\ ds
\end{align*}
implies that each $t\in[0,T]$ is a density point of $\dot{\gamma}^+$ from the right-hand side. In fact $(\gamma,\mathbf{p}^\#_{\phi,H}(\gamma))$ and $(\gamma,\dot{\gamma}^+)$ determine two curves of probability measures on $T^*M$ and $TM$ respectively, and Theorem \ref{thm:right_derivative} implies that such curves have some weak right continuity property.

\begin{The}\label{thm:measure_H}
	Under the assumption of Theorem \ref{thm:right_derivative}, for any $t\in[0,T)$ we define probability measures $\mu^*_{[t,\tau]}$, $\tau\in(t,T]$, on $T^*M$ by
	\begin{align*}
		\int_{T^*M}f\ d\mu^*_{[t,\tau]}=\frac 1{\tau-t}\int^\tau_tf(\gamma(s),\mathbf{p}^\#_{\phi,H}(\gamma(s)))\ ds,\qquad\forall f\in C_c(T^*M,\R).
	\end{align*}
	Then $\mu^*_{[t,\tau]}$ weakly converges to $\delta^*_{\gamma(t)}:=\delta^*_{\gamma(t),\mathbf{p}^\#_{\phi,H}(\gamma(t))}$ as $\tau\to t^+$, where $\delta^*_{x,p}$ is the Dirac measure supported on $(x,p)\in T^*M$.
\end{The}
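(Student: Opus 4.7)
The plan is to argue by tightness and extract subsequential limits, then identify any limit uniquely via the minimizing property of $\mathbf{p}^\#_{\phi,H}(\gamma(t))$. Since $\gamma$ is Lipschitz and $\phi$ is Lipschitz, $\mathbf{p}^\#_{\phi,H}(\gamma(s)) \in D^+\phi(\gamma(s))$ lies in a fixed bounded set of $T^*M$. Hence the family $\{\mu^*_{[t,\tau]}\}_{\tau \in (t,T]}$ has uniformly compact support, so it is tight; by Prokhorov, to prove convergence to $\delta^*_{\gamma(t)}$ it suffices to show every subsequence $\tau_k \searrow t^+$ has a further subsequence along which $\mu^*_{[t,\tau_k]}$ weakly converges to $\delta^*_{\gamma(t),\mathbf{p}^\#_{\phi,H}(\gamma(t))}$.

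First I would localize the limit of any such subsequential limit $\mu^*$ to $\{\gamma(t)\} \times D^+\phi(\gamma(t))$. The marginal of $\mu^*_{[t,\tau]}$ on $M$ is pushed forward from the uniform measure on $[t,\tau]$ via $\gamma$, which (by continuity of $\gamma$) concentrates on $\gamma(t)$ as $\tau \to t^+$. For the cotangent component, Lemma \ref{lem:usc2} gives that $D^+\phi$ is upper semicontinuous, so any subsequential limit point of $\mathbf{p}^\#_{\phi,H}(\gamma(s))$ as $s \to t^+$ lies in $D^+\phi(\gamma(t))$; this suffices to confine $\mathrm{supp}(\mu^*)$ to $\{\gamma(t)\} \times D^+\phi(\gamma(t))$ by testing against continuous cutoffs that vanish on this set's complement.

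Next, I would compute $\int_{T^*M} H\, d\mu^*$. Since $\mu^*_{[t,\tau_k]}$ has uniformly compact support, I can replace $H$ by any compactly supported $C_c(T^*M,\R)$ extension $\tilde H$ agreeing with $H$ on a neighborhood of the supports, and weak convergence yields
\begin{align*}
\int_{T^*M} H\, d\mu^* &= \lim_{k\to\infty} \int_{T^*M} H\, d\mu^*_{[t,\tau_k]} = \lim_{k\to\infty}\frac{1}{\tau_k-t}\int_t^{\tau_k} H(\gamma(s),\mathbf{p}^\#_{\phi,H}(\gamma(s)))\,ds \\
&= H(\gamma(t), \mathbf{p}^\#_{\phi,H}(\gamma(t))),
\end{align*}
where the last equality is exactly Theorem \ref{thm:right_derivative}(2). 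Since $\mu^*$ is a probability measure on $\{\gamma(t)\}\times D^+\phi(\gamma(t))$ whose $H$-expectation equals the minimum value of $H(\gamma(t),\cdot)$ over $D^+\phi(\gamma(t))$, and since this minimum is strictly attained at the unique point $\mathbf{p}^\#_{\phi,H}(\gamma(t))$ by (H3), we conclude $\mu^* = \delta^*_{\gamma(t), \mathbf{p}^\#_{\phi,H}(\gamma(t))}$.

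The main obstacle is the lack of continuity of $s \mapsto \mathbf{p}^\#_{\phi,H}(\gamma(s))$: one cannot pass to the limit pointwise in the integrand. The key device that circumvents this is Theorem \ref{thm:right_derivative}(2), which turns an integral averaging statement into pointwise information in the space of measures via uniqueness of the minimizer and strict convexity of $H$. Once the subsequential limit is identified, uniqueness of the limit promotes subsequential convergence to full convergence as $\tau \to t^+$, completing the proof.
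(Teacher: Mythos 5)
Your proposal is correct, and it reaches the conclusion by a genuinely different route than the paper, although both arguments hinge on the same key input, Theorem \ref{thm:right_derivative}(2). The paper works directly with the times: it shows the quantitative statement \eqref{eq:I} that the set $I_{t,\tau,\varepsilon}$ of times $s\in[t,\tau]$ at which $(\gamma(s),\mathbf{p}^\#_{\phi,H}(\gamma(s)))$ deviates from $(\gamma(t),\mathbf{p}^\#_{\phi,H}(\gamma(t)))$ by more than $\varepsilon$ has vanishing density as $\tau\to t^+$; this is obtained by trapping $I_{t,\tau,\varepsilon}$ inside an energy-excess set $J_{t,\tau,\varepsilon_1}$ (via Lemma \ref{lem:P_k_P} and the Lipschitz bound on $\gamma$) and then squeezing the density of $J_{t,\tau,\varepsilon_1}$ using Theorem \ref{thm:right_derivative}(2) together with Lemma \ref{lem:lsc1}; weak convergence then follows by splitting the test-function integral over $I_{t,\tau,\delta}$ and its complement. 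You instead work in the space of measures: uniform compact support (from the Lipschitz bound on $\phi$) gives tightness; the $M$-marginal and the upper semicontinuity of $D^+\phi$ (Lemma \ref{lem:usc2}) confine any subsequential limit to $\{\gamma(t)\}\times D^+\phi(\gamma(t))$; Theorem \ref{thm:right_derivative}(2) identifies the $H$-expectation of the limit as $\min_{p\in D^+\phi(\gamma(t))}H(\gamma(t),p)$; and strict convexity of $H(\gamma(t),\cdot)$ on the convex set $D^+\phi(\gamma(t))$ forces the limit to be the Dirac mass at the unique minimizer, after which uniqueness of the limit upgrades subsequential to full convergence. Your soft compactness-plus-rigidity argument is shorter and avoids the explicit $\varepsilon$--$\delta$ bookkeeping, while the paper's argument buys the stronger, reusable density-of-bad-times estimate \eqref{eq:I} (essentially convergence in measure of $s\mapsto(\gamma(s),\mathbf{p}^\#_{\phi,H}(\gamma(s)))$ from the right), which is of independent interest; either proof is complete.
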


\begin{proof}
	We only prove in Euclidean case. We first claim for any $\varepsilon>0$, 
	\begin{equation}\label{eq:I}
		\lim_{\tau\to t^+}\frac{\mathscr{L}^1(I_{t,\tau,\varepsilon})}{\tau-t}=0
	\end{equation}
	where $\mathscr{L}^1$ stands for the Lebesgue measure on $\R$ and
	\begin{align*}
		I_{t,\tau,\varepsilon}:=\{s\in[t,\tau]: |(\gamma(s),\mathbf{p}^\#_{\phi,H}(\gamma(s)))-(\gamma(t),\mathbf{p}^\#_{\phi,H}(\gamma(t)))|>\varepsilon\}.
	\end{align*}
	In fact, by Lemma \ref{lem:P_k_P} for any $\varepsilon>0$ there exists $\varepsilon_1>0$ and $\delta>0$ such that if $y-\gamma(t)\leqslant\delta$ and $H(y,\mathbf{p}^\#_{\phi,H}(y))\leqslant H(\gamma(t),\mathbf{p}^\#_{\phi,H}(\gamma(t)))+\varepsilon_1$, then $|\mathbf{p}^\#_{\phi,H}(y)-\mathbf{p}^\#_{\phi,H}(\gamma(t))|\leqslant\varepsilon/2$. Denote by $C_0$ a Lipschitz constant of $\gamma$ and set $\delta_1=\min\{\frac{\delta}{C_0},\frac{\varepsilon}{2C_0}\}$. Thus, if $\tau\in(t,t+\delta_1)$ then
	\begin{align*}
		|\gamma(s)-\gamma(t)|\leqslant C_0(s-t)\leqslant C_0\delta_1\leqslant\min\big\{\delta,\frac{\varepsilon}{2}\big\},\qquad\forall s\in[t,\tau].
	\end{align*}
	In addition, if $H(\gamma(s),\mathbf{p}^\#_{\phi,H}(\gamma(s)))\leqslant H(\gamma(t),\mathbf{p}^\#_{\phi,H}(\gamma(t)))+\varepsilon_1$, then
	\begin{align*}
		&\,|(\gamma(s),\mathbf{p}^\#_{\phi,H}(\gamma(s)))-(\gamma(t),\mathbf{p}^\#_{\phi,H}(\gamma(t)))|\\
		\leqslant&\,|\gamma(s)-\gamma(t)|+|\mathbf{p}^\#_{\phi,H}(\gamma(s))-\mathbf{p}^\#_{\phi,H}(\gamma(t))|\leqslant\frac{\varepsilon}{2}+\frac{\varepsilon}{2}=\varepsilon.
	\end{align*}
	This implies that
	\begin{align*}
		&\,\{s\in[t,\tau]: H(\gamma(s),\mathbf{p}^\#_{\phi,H}(\gamma(s)))\leqslant H(\gamma(t),\mathbf{p}^\#_{\phi,H}(\gamma(t)))+\varepsilon_1\}\\
		\subset&\,\{s\in[t,\tau]: |(\gamma(s),\mathbf{p}^\#_{\phi,H}(\gamma(s)))-(\gamma(t),\mathbf{p}^\#_{\phi,H}(\gamma(t)))|\leqslant\varepsilon\}.
	\end{align*}
	Equivalently,
	\begin{align*}
		J_{t,\tau,\varepsilon_1}:=\{s\in[t,\tau]: H(\gamma(s),\mathbf{p}^\#_{\phi,H}(\gamma(s)))>H(\gamma(t),\mathbf{p}^\#_{\phi,H}(\gamma(t)))+\varepsilon_1\}\supset I_{t,\tau,\varepsilon}.
	\end{align*}
	Together with Lemma \ref{lem:lsc1} and Theorem \ref{thm:right_derivative}, we obtain
	\begin{align*}
		&\,H(\gamma(t),\mathbf{p}^\#_{\phi,H}(\gamma(t)))=\lim_{\tau\to t^+}\frac 1{\tau-t}\int^\tau_tH(\gamma(s),\mathbf{p}^\#_{\phi,H}(\gamma(s)))\ ds\quad(\text{Theorem \ref{thm:right_derivative}})\\
		=&\,\lim_{\tau\to t^+}\Big\{\frac 1{\tau-t}\int_{J_{t,\tau,\varepsilon_1}}H(\gamma(s),\mathbf{p}^\#_{\phi,H}(\gamma(s))\ ds+\frac 1{\tau-t}\int_{[t,\tau]\setminus J_{t,\tau,\varepsilon_1}}H(\gamma(s),\mathbf{p}^\#_{\phi,H}(\gamma(s))\ ds\Big\}\\
		\geqslant&\,\limsup_{\tau\to t^+}\Big\{\frac {\mathscr{L}^1(J_{t,\tau,\varepsilon_1})}{\tau-t}(H(\gamma(t),\mathbf{p}^\#_{\phi,H}(\gamma(t)))+\varepsilon_1)+\Big(1-\frac {\mathscr{L}^1(J_{t,\tau,\varepsilon_1})}{\tau-t}\Big)\inf_{s\in[t,\tau]}H(\gamma(s),\mathbf{p}^\#_{\phi,H}(\gamma(s)))\Big\} \\
		=&\,\limsup_{\tau\to t^+}\Big\{\frac {\mathscr{L}^1(J_{t,\tau,\varepsilon_1})}{\tau-t}\big(H(\gamma(t),\mathbf{p}^\#_{\phi,H}(\gamma(t)))-\inf_{s\in[t,\tau]}H(\gamma(s),\mathbf{p}^\#_{\phi,H}(\gamma(s)))\big)\\
		&\qquad\qquad\qquad\qquad\qquad\qquad\qquad +\frac {\mathscr{L}^1(J_{t,\tau,\varepsilon_1})}{\tau-t}\cdot\varepsilon_1+\inf_{s\in[t,\tau]}H(\gamma(s),\mathbf{p}^\#_{\phi,H}(\gamma(s)))\Big\} \\
		\geqslant&\,\limsup_{\tau\to t^+}\frac {\mathscr{L}^1(J_{t,\tau,\varepsilon_1})}{\tau-t}\cdot\varepsilon_1+\lim_{\tau\to t^+}\inf_{s\in[t,\tau]}H(\gamma(s),\mathbf{p}^\#_{\phi,H}(\gamma(s))) \\
		=&\,\limsup_{\tau\to t^+}\frac {\mathscr{L}^1(J_{t,\tau,\varepsilon_1})}{\tau-t}\cdot\varepsilon_1+H(\gamma(t),\mathbf{p}^\#_{\phi,H}(\gamma(t)))\qquad(\text{Lemma \ref{lem:lsc1}})
%		\geqslant&\,\limsup_{\tau\to t^+}\Big\{\inf_{s\in[t,\tau]}H(\gamma(s),\mathbf{p}^\#_{\phi,H}(\gamma(s))+\frac 1{\tau-t}\mathscr{L}^1(J_{t,\tau,\varepsilon})\cdot\varepsilon_1\Big\}\\
%		=&\,H(\gamma(t),\mathbf{p}^\#_{\phi,H}(\gamma(t))+\varepsilon_1\cdot\limsup_{\tau\to t^+}\frac{\mathscr{L}^1(J_{t,\tau,\varepsilon})}{\tau-t}.
	\end{align*}
	It follows
	\begin{align*}
		\limsup_{\tau\to t^+}\frac{\mathscr{L}^1(I_{t,\tau,\varepsilon})}{\tau-t}\leqslant\limsup_{\tau\to t^+}\frac{\mathscr{L}^1(J_{t,\tau,\varepsilon_1})}{\tau-t}\leqslant0.
	\end{align*}
	This leads to \eqref{eq:I}. 
	
	Now we note that there exists a compact subset $K\subset T^*M$ such that
	\begin{align*}
		(x,\mathbf{p}^\#_{\phi,H}(x))\in K,\qquad\forall x\in M.
	\end{align*}
	For any $f\in C_c(T^*M,\R)$ we set $M_f:=\max_{(x,p)\in K}f(x,p)$. Given $\varepsilon>0$, there exists $\delta>0$ such that 
	\begin{align*}
		|(x',p')-(x,p)|<\delta\qquad\Longrightarrow\qquad |f(x',p')-f(x,p)|<\varepsilon.
	\end{align*}
	By \eqref{eq:I} we conclude that
	\begin{align*}
		&\,\limsup_{\tau\to t^+}\Big|\int_{T^*M}f\ d\mu^*_{[t,\tau]}-\int_{T^*M}f\ d\delta^*_{\gamma(t)}\Big|\\
		\leqslant&\,\limsup_{\tau\to t^+}\frac 1{\tau-t}\int^\tau_t|f(\gamma(s),\mathbf{p}^\#_{\phi,H}(\gamma(s)))-f(\gamma(t),\mathbf{p}^\#_{\phi,H}(\gamma(t)))|\ ds\\
		\leqslant&\,\limsup_{\tau\to t^+}\frac 1{\tau-t}\cdot\mathscr{L}^1(I_{t,\tau,\delta})\cdot 2M_f+\varepsilon=\varepsilon.
	\end{align*}
	Since $\varepsilon>0$ is arbitrary, we have that
	\begin{align*}
		\lim_{\tau\to t^+}\int_{T^*M}f\ d\mu^*_{[t,\tau]}=\int_{T^*M}f\ d\delta^*_{\gamma(t)},\qquad\forall f\in C_c(T^*M,\R).
	\end{align*}
	This means $\mu^*_{[t,\tau]}$ weakly converges to $\delta^*_{\gamma(t)}$ as $\tau\to t^+$.
\end{proof}

Recall that the Legendre transform $(x,p)\mapsto(x,H_p(x,p))$ is a $C^1$-diffeomorphism from $T^*M$ to $TM$. We have an equivalent form on Theorem \ref{thm:measure_H} for measures on $TM$.

\begin{The}\label{thm:measure_L}
	Under the assumption of Theorem \ref{thm:right_derivative}, for any $t\in[0,T)$ we define probability measures $\mu_{[t,\tau]}$, $\tau\in(t,T]$, on $TM$ by
	\begin{align*}
		\int_{TM}f\ d\mu_{[t,\tau]}=\frac 1{\tau-t}\int^\tau_tf(\gamma(s),\dot{\gamma}^+(s))\ ds,\qquad\forall f\in C_c(TM,\R).
	\end{align*}
	Then $\mu_{[t,\tau]}$ weakly converges to $\delta_{\gamma(t)}:=\delta_{\gamma(t),\dot{\gamma}^+(t)}$ as $\tau\to t^+$, where $\delta_{x,v}$ is the Dirac measure supported on $(x,v)\in TM$.
\end{The}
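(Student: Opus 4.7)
The idea is to deduce Theorem \ref{thm:measure_L} directly from Theorem \ref{thm:measure_H} by transferring everything to the tangent bundle along the Legendre transform. Set $\Phi : T^*M \to TM$, $(x,p) \mapsto (x, H_p(x,p))$. By Theorem \ref{thm:right_derivative}(1) we have $\dot{\gamma}^+(s) = H_p(\gamma(s), \mathbf{p}^\#_{\phi,H}(\gamma(s)))$ for every $s \in [0,T)$, so $(\gamma(s), \dot{\gamma}^+(s)) = \Phi(\gamma(s), \mathbf{p}^\#_{\phi,H}(\gamma(s)))$. This immediately identifies $\mu_{[t,\tau]} = \Phi_\# \mu^*_{[t,\tau]}$ and, using the same relation at time $t$, $\delta_{\gamma(t)} = \Phi_\# \delta^*_{\gamma(t)}$.

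Under the assumptions (H2)--(H3), $\Phi$ is a homeomorphism of $T^*M$ onto $TM$: strict convexity of $H(x,\cdot)$ yields injectivity on each fibre, superlinearity together with continuity of $H_p$ yields surjectivity and continuity of the inverse. In particular $\Phi$ is proper, so for any test function $f \in C_c(TM,\R)$ the composition $f \circ \Phi$ belongs to $C_c(T^*M,\R)$. Applying Theorem \ref{thm:measure_H} with test function $f \circ \Phi$ then gives
\begin{equation*}
\lim_{\tau \to t^+} \int_{TM} f \, d\mu_{[t,\tau]} = \lim_{\tau \to t^+} \int_{T^*M} f \circ \Phi \, d\mu^*_{[t,\tau]} = \int_{T^*M} f \circ \Phi \, d\delta^*_{\gamma(t)} = f(\gamma(t), \dot{\gamma}^+(t)) = \int_{TM} f \, d\delta_{\gamma(t)},
\end{equation*}
which is the desired weak convergence.

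No serious obstacle is anticipated: the heavy lifting has already been done in Theorem \ref{thm:measure_H}, and the rest is a routine change of variables. As a safety net in case one prefers not to invoke properness of $\Phi$, one can observe, exactly as in the proof of Theorem \ref{thm:measure_H}, that $\phi$ is Lipschitz on the compact manifold $M$ by Proposition \ref{pro:scsv}(1), so all superdifferentials $D^+\phi(x)$ lie in a fixed compact set $K \subset T^*M$. Hence every measure $\mu^*_{[t,\tau]}$ and the limit $\delta^*_{\gamma(t)}$ are supported in $K$, and one may replace $f \circ \Phi$ with $\chi \cdot (f \circ \Phi)$ for a cut-off $\chi \in C_c(T^*M,\R)$ with $\chi \equiv 1$ on $K$ without affecting any of the integrals above.
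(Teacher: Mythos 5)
Your argument is correct and is exactly the route the paper takes: the paper gives no separate proof of Theorem \ref{thm:measure_L}, simply noting that it is the pushforward of Theorem \ref{thm:measure_H} under the Legendre transform $(x,p)\mapsto(x,H_p(x,p))$, which is precisely your change of variables via $\dot{\gamma}^+(s)=H_p(\gamma(s),\mathbf{p}^\#_{\phi,H}(\gamma(s)))$. Your observation that only the homeomorphism property (rather than the $C^1$-diffeomorphism the paper asserts) is needed under (H1)--(H3), together with the compact-support remark, is a sound and complete justification.
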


\subsection{Energy evolution along strict singular characteristics} 
In this section, we estimate the energy evolution along strict singular characteristics for any $\phi\in\text{\rm SCL}\,(M)$ and $H$ a Tonelli Hamiltonian. We obtain a family of strict singular characteristics with finite rate of energy increase, which leads to the right continuity of right derivative.

\begin{Lem}\label{lem:lambda}
Under the assumptions of Theorem \ref{thm:stability}, if there exists $\lambda>0$ such that
\begin{align*}
	H_k(\gamma_k(t_2),\mathbf{p}^\#_{\phi_k,H_k}(\gamma_k(t_2)))-H_k(\gamma_k(t_1),\mathbf{p}^\#_{\phi_k,H_k}(\gamma_k(t_1)))\leqslant\lambda(t_2-t_1),\qquad\forall t_1\leqslant t_2, k\in\N,
\end{align*}
then
\begin{align*}
	H(\gamma(t_2),\mathbf{p}^\#_{\phi,H}(\gamma(t_2)))-H(\gamma(t_1),\mathbf{p}^\#_{\phi,H}(\gamma(t_1)))\leqslant\lambda(t_2-t_1),\qquad\forall t_1\leqslant t_2.
\end{align*}
\end{Lem}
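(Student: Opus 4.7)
Write $E_k(t) := H_k(\gamma_k(t), \mathbf{p}^\#_{\phi_k, H_k}(\gamma_k(t)))$ and $E(t) := H(\gamma(t), \mathbf{p}^\#_{\phi, H}(\gamma(t)))$, and set $F_k(t) := E_k(t) - \lambda t$ and $F(t) := E(t) - \lambda t$. The hypothesis is then precisely that every $F_k$ is non-increasing on $\R$, and the conclusion is that so is $F$. The plan is to transfer the pointwise monotonicity of the $F_k$ to $F$ via averaged integrals, using Theorem \ref{thm:right_derivative}(2) to read off $F(t_1)$ from its right-sided average.

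By Theorem \ref{thm:stability} we extract a subsequence (not relabelled) along which $\mathbf{p}^\#_{\phi_k, H_k}(\gamma_k(t)) \to \mathbf{p}^\#_{\phi, H}(\gamma(t))$ for almost every $t \in \R$. The uniform convergence of $H_k$ on compact subsets of $T^*M$, the uniform convergence of $\gamma_k$ to $\gamma$, and the continuity of $H$ in its arguments then yield $F_k(t) \to F(t)$ for a.e.\ $t \in \R$. Since the $\phi_k$ are $\omega$-semiconcave on the compact manifold $M$ and converge uniformly, they are equi-Lipschitz, so $\mathbf{p}^\#_{\phi_k, H_k}(\gamma_k(t))$ remains in a fixed compact subset of $T^*M$, and $\{F_k\}$ is uniformly bounded on $\R$.

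Now fix $t_1 \leq t_2$ in $\R$ and $0 < h \leq t_2 - t_1$. Monotonicity of $F_k$ gives $F_k(s) \geq F_k(t_2)$ for every $s \in [t_1, t_1 + h]$, whence
\[
\frac{1}{h}\int_{t_1}^{t_1+h} F_k(s)\,ds \;\geq\; F_k(t_2).
\]
Along the subsequence, dominated convergence sends the left-hand side to $\frac{1}{h}\int_{t_1}^{t_1+h} F(s)\,ds$, and Lemma \ref{lem:lsc1} applied at $\gamma(t_2)$ gives $\liminf_k F_k(t_2) \geq F(t_2)$, so
\[
\frac{1}{h}\int_{t_1}^{t_1+h} F(s)\,ds \;\geq\; F(t_2).
\]
Letting $h \to 0^+$ and invoking Theorem \ref{thm:right_derivative}(2), which ensures $\frac{1}{h}\int_{t_1}^{t_1+h} E(s)\,ds \to E(t_1)$ at \emph{every} $t_1$, the left-hand side tends to $F(t_1)$, and we conclude $F(t_1) \geq F(t_2)$, which is the desired inequality. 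The main obstacle the argument circumvents is that the stability statement only furnishes a.e.\ convergence of $\mathbf{p}^\#_{\phi_k, H_k}$ along a subsequence and Lemma \ref{lem:lsc1} only provides lower semicontinuity, so the values $F(t_1), F(t_2)$ cannot in general be read off as pointwise limits of $F_k(t_1), F_k(t_2)$; the right-averaging identity of Theorem \ref{thm:right_derivative}(2) is precisely the ingredient that recovers the pointwise value $F(t_1)$ from an integral quantity to which dominated convergence can be applied.
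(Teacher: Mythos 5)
Your proof is correct and takes essentially the same route as the paper: both rest on the a.e.\ convergence of the minimal-energy selections along a subsequence (Theorem \ref{thm:stability}), lower semicontinuity at $t_2$ (Lemma \ref{lem:lsc1}), and the right-averaging identity of Theorem \ref{thm:right_derivative} (2) at $t_1$; the paper merely passes to the pointwise a.e.\ inequality $H(\gamma(t_2),\mathbf{p}^\#_{\phi,H}(\gamma(t_2)))-H(\gamma(s),\mathbf{p}^\#_{\phi,H}(\gamma(s)))\leqslant\lambda(t_2-s)$ first and then averages, while you average first and invoke dominated convergence. (One trivial slip: $F_k$ is not bounded on all of $\R$ because of the $-\lambda t$ term, but only a uniform bound on the compact interval $[t_1,t_1+h]$ is needed, which you do have.)
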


\begin{proof}
In view of Theorem \ref{thm:stability}, we suppose
\begin{align*}
	\lim_{k\to\infty}\mathbf{p}^\#_{\phi_k,H_k}(\gamma_k(s))=\mathbf{p}^\#_{\phi,H}(\gamma(s)),\qquad a.e.\ s\in\R,
\end{align*}
without loss of generality. Then, by Lemma \ref{lem:lsc1}, for any $t_2\in\R$ we have
\begin{align*}
	&\,H(\gamma(t_2),\mathbf{p}^\#_{\phi,H}(\gamma(t_2))-H(\gamma(s),\mathbf{p}^\#_{\phi,H}(\gamma(s))\\
	\leqslant&\,\liminf_{k\to\infty}H_k(\gamma_k(t_2),\mathbf{p}^\#_{\phi_k,H_k}(\gamma_k(t_2)))-\lim_{k\to\infty}H_k(\gamma_k(s),\mathbf{p}^\#_{\phi_k,H_k}(\gamma_k(s)))\\
	\leqslant&\,\lambda(t_2-s),\qquad a.e.\ s\in(-\infty,t_2].
\end{align*}
The above estimate, together with Theorem \ref{thm:right_derivative} (2) implies that for any $t_1\in(-\infty,t_2)$
\begin{align*}
	H(\gamma(t_1),\mathbf{p}^\#_{\phi,H}(\gamma(t_1))=&\,\lim_{t\to t_1^+}\frac 1{t-t_1}\int^t_{t_1}H(\gamma(s),\mathbf{p}^\#_{\phi,H}(\gamma(s))\ ds\\
	\geqslant&\,\lim_{t\to t_1^+}\frac 1{t-t_1}\int^t_{t_1}\big[H(\gamma(t_2),\mathbf{p}^\#_{\phi,H}(\gamma(t_2))-\lambda(t_2-s)\big]\ ds\\
	=&\,H(\gamma(t_2),\mathbf{p}^\#_{\phi,H}(\gamma(t_2))-\lambda(t_2-t_1).
\end{align*}
This completes the proof.
\end{proof}

\begin{The}\label{thm:right_continuous2}
Let $\phi\in\text{\rm SCL}\,(M)$ and let $H:T^*M\to\R$ be a Tonelli Hamiltonian. Then, there exists $\lambda=\lambda_{\phi,H}>0$ such that the following property is satisfied: for any $x\in M$ there exists a strict singular characteristic $\gamma:\R\to M$ for the pair $(\phi,H)$ with $x=\gamma(0)$ satisfying
\begin{equation}\label{eq:lambda_phi_H}
	H(\gamma(t_2),\mathbf{p}^\#_{\phi,H}(\gamma(t_2)))-H(\gamma(t_1),\mathbf{p}^\#_{\phi,H}(\gamma(t_1)))\leqslant\lambda(t_2-t_1),\qquad\forall t_1\leqslant t_2.
\end{equation} 
Moreover, there holds
\begin{equation}\label{eq:right_cont2}
	\lim_{s\to t^+}\mathbf{p}^\#_{\phi,H}(\gamma(s))=\mathbf{p}^\#_{\phi,H}(\gamma(t)),\qquad\forall t\in\R.
\end{equation} 
Furthermore, $\dot{\gamma}^+$ is right-continuous on $\R$.
\end{The}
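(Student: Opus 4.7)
The plan is to realize such a $\gamma$ as a uniform limit of classical characteristics for smooth approximations of $\phi$, and to obtain \eqref{eq:lambda_phi_H} by first proving the analogous inequality uniformly along those approximations and then transferring it via Lemma \ref{lem:lambda}. Since $\phi\in\text{\rm SCL}\,(M)$ has some linear semiconcavity constant $C_0$ and Lipschitz constant $L_0$, Lemma \ref{lem:appr_omega} supplies smooth approximations $\phi_k\in C^\infty(M)$ that are equi-Lipschitz with constant $L_0$, share the same linear semiconcavity constant $C_0$ (so $D^2\phi_k\leqslant C_0\,\mathrm{Id}$ pointwise), and converge uniformly to $\phi$. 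For each $k$ and the prescribed $x\in M$ let $\gamma_k:\R\to M$ denote the unique classical characteristic
\begin{align*}
\dot{\gamma}_k(t)=H_p(\gamma_k(t),D\phi_k(\gamma_k(t))),\qquad \gamma_k(0)=x,
\end{align*}
so that, $D^+\phi_k=\{D\phi_k\}$ forcing $\mathbf{p}^\#_{\phi_k,H}(\gamma_k(t))=D\phi_k(\gamma_k(t))$.

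The heart of the argument is an energy rate estimate along $\gamma_k$ that is uniform in $k$. Working in local coordinates and using that $H$ is $C^2$ and $\phi_k$ is smooth, I compute for $F_k(t):=H(\gamma_k(t),D\phi_k(\gamma_k(t)))$
\begin{align*}
F_k'(t)=H_x(\gamma_k,D\phi_k(\gamma_k))\cdot H_p+H_p^{\top}\,D^2\phi_k(\gamma_k)\,H_p.
\end{align*}
Since $|D\phi_k|\leqslant L_0$ and $M$ is compact, the trajectory $(\gamma_k(t),D\phi_k(\gamma_k(t)))$ stays in a fixed compact subset $K\subset T^*M$ on which $|H_x|$ and $|H_p|$ are bounded uniformly in $k$. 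Combined with the Hessian bound $D^2\phi_k\leqslant C_0\,\mathrm{Id}$, this yields a single constant $\lambda=\lambda_{\phi,H}>0$, independent of $k$ and $x$, such that $F_k'(t)\leqslant\lambda$ for all $t,k$. Integration gives
\begin{align*}
H(\gamma_k(t_2),\mathbf{p}^\#_{\phi_k,H}(\gamma_k(t_2)))-H(\gamma_k(t_1),\mathbf{p}^\#_{\phi_k,H}(\gamma_k(t_1)))\leqslant\lambda(t_2-t_1),\qquad t_1\leqslant t_2.
\end{align*}

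Since $\{\gamma_k\}$ is equi-Lipschitz, Ascoli--Arzel\`a yields a subsequence converging uniformly on compact sets to some $\gamma:\R\to M$ with $\gamma(0)=x$; Theorem \ref{thm:stability} guarantees that $\gamma$ is a strict singular characteristic for $(\phi,H)$, and Lemma \ref{lem:lambda} transfers the uniform rate estimate to $\gamma$, proving \eqref{eq:lambda_phi_H}. For \eqref{eq:right_cont2}, this estimate immediately yields $\limsup_{s\to t^+}H(\gamma(s),\mathbf{p}^\#_{\phi,H}(\gamma(s)))\leqslant H(\gamma(t),\mathbf{p}^\#_{\phi,H}(\gamma(t)))$, while Lemma \ref{lem:lsc1} applied to any sequence $s_n\to t^+$ supplies the matching $\liminf$ bound; the resulting equality and Lemma \ref{lem:P_k_P} then give $\mathbf{p}^\#_{\phi,H}(\gamma(s))\to\mathbf{p}^\#_{\phi,H}(\gamma(t))$ as $s\to t^+$. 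Right-continuity of $\dot{\gamma}^+$ is then immediate from the identification $\dot{\gamma}^+(t)=H_p(\gamma(t),\mathbf{p}^\#_{\phi,H}(\gamma(t)))$ of Theorem \ref{thm:right_derivative} together with the continuity of $H_p$ and $\gamma$.

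The principal obstacle is the derivation of the uniform constant $\lambda$. It relies on two ingredients that are both active here and both fail more generally: the approximation lemma must preserve the \emph{linear} semiconcavity modulus so that one has a pointwise Hessian bound $D^2\phi_k\leqslant C_0\,\mathrm{Id}$ along $\gamma_k$, and the Tonelli hypothesis must provide $C^2$ regularity of $H$ so that $F_k(t)$ can be differentiated. Without the linear modulus, only a modulus-type bound on $D^2\phi_k$ would be available, producing a rate that degenerates, and without $C^2$ regularity the computation of $F_k'(t)$ would not even make sense.
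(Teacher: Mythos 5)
Your proposal is correct and follows essentially the same route as the paper: smooth equi-Lipschitz approximations with a uniform Hessian upper bound, the computation $\frac{d}{dt}H(\gamma_k,D\phi_k(\gamma_k))=H_x\cdot H_p+H_p\cdot D^2\phi_k\cdot H_p\leqslant\lambda$ along classical characteristics, passage to the limit via Theorem \ref{thm:stability} and Lemma \ref{lem:lambda}, and then \eqref{eq:right_cont2} from Lemmas \ref{lem:lsc1} and \ref{lem:P_k_P}, with right-continuity of $\dot{\gamma}^+$ from Theorem \ref{thm:right_derivative}. The only cosmetic difference is that you invoke Lemma \ref{lem:appr_omega} while the paper cites the approximation of Cannarsa--Yu, which carries the same uniform bounds.
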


\begin{proof}
Recall the method of approximation used in the paper \cite{Cannarsa_Yu2009}. There exists a sequence of smooth functions $\{\phi_k\}\subset C^{\infty}(M)$ such that 
\begin{enumerate}[(i)]
	\item Each $\|D\phi_k\|_{C^0}\leqslant L_0=\text{Lip}\,(\phi)$ and $D^2\phi_k$ is uniformly bounded above by $C_0I$ where $C_0$ is the semiconcavity constant of $\phi$.
	\item $\phi_k$ converges to $\phi$ uniformly on $M$ as $k\to\infty$.
%	\item $\lim_{k\to\infty}D\phi_k(x)=\mathbf{p}^{\#}_{\phi,H}(x)$ for given fixed $x\in M$. 
\end{enumerate}
Consider the following differential equation 
\begin{align*}
	\begin{cases}
		\dot{\gamma}(t)=H_p(\gamma(t),D\phi_k(\gamma(t))),\qquad t\in\R,\\
		\gamma(0)=x,
	\end{cases}
\end{align*}
and denote by $\gamma_k$ its unique solution for each $k\in\N$. To estimate the energy evolution along $\gamma_k$, for any $t\in\R$ and $k\in\N$ we have that
\begin{align*}
	\frac d{dt}H(\gamma_k(t),D\phi_k(\gamma_k(t)))=&\,H_x\cdot H_p+H_p\cdot D^2\phi_k\cdot H_p\\
	\leqslant&\, C^2+C_0C^2:=\lambda=\lambda_{\phi,H}
\end{align*}
where $C=\sup_{x\in M,|p|_x\leqslant L_0}|DH(x,p)|$. This implies that
\begin{align*}
	H(\gamma_k(t_2),\mathbf{p}^\#_{\phi_k,H}(\gamma_k(t_2)))-H(\gamma_k(t_1),\mathbf{p}^\#_{\phi_k,H}(\gamma_k(t_1)))\leqslant\lambda(t_2-t_1),\qquad\forall t_1\leqslant t_2, k\in\N.
\end{align*}
Without loss of generality, we suppose that $\gamma_k$ converges to $\gamma$ uniformly on all compact subsets as $k\to\infty$. By Theorem \ref{thm:stability} and Lemma \ref{lem:lambda}, $\gamma$ is a strict generalized characteristic on $\R$ for the pair $(\phi,H)$ with $\gamma(0)=x$ and \eqref{eq:lambda_phi_H} holds. Together with Lemma \ref{lem:lsc1}, this yields 
\begin{align*}
	\lim_{s\to t^+}H(\gamma(s),\mathbf{p}^\#_{\phi,H}(\gamma(s)))=H(\gamma(t),\mathbf{p}^\#_{\phi,H}(\gamma(t))),\qquad\forall t\in\R.
\end{align*}
Now, \eqref{eq:right_cont2} holds by Lemma \ref{lem:P_k_P}, and the last assertion is a consequence of Theorem \ref{thm:right_derivative}.
\end{proof}

\begin{Cor}\label{cor:stability_H}
	Let $\phi\in\text{\rm SCL}\,(M)$ and let $H:T^*M\to\R$ be a Tonelli Hamiltonian. Then the family of strict singular characteristics $\gamma:\R\to M$ for the pair $(\phi,H)$ satisfying \eqref{eq:lambda_phi_H} as in Theorem \ref{thm:right_continuous2} is a closed subset of $C(\R,M)$ under the topology of uniform convergence on compact subsets.	
\end{Cor}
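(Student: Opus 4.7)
\medskip

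\noindent\textbf{Proof proposal.} The plan is to reduce the statement to two tools that have just been established: the stability theorem for strict singular characteristics (Theorem \ref{thm:stability}) and the one-sided energy-bound passage-to-the-limit lemma (Lemma \ref{lem:lambda}). Both results are already phrased for \emph{varying} sequences $\{(\phi_k,H_k)\}$, so we simply apply them with the constant sequences $\phi_k\equiv\phi$ and $H_k\equiv H$.

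Concretely, let $\{\gamma_k\}\subset C(\R,M)$ be strict singular characteristics for the pair $(\phi,H)$, each satisfying the energy bound
\begin{equation*}
H(\gamma_k(t_2),\mathbf{p}^{\#}_{\phi,H}(\gamma_k(t_2)))-H(\gamma_k(t_1),\mathbf{p}^{\#}_{\phi,H}(\gamma_k(t_1)))\leqslant \lambda_{\phi,H}(t_2-t_1),\qquad\forall t_1\leqslant t_2,
\end{equation*}
with the same constant $\lambda_{\phi,H}$ produced in Theorem \ref{thm:right_continuous2}. Assume $\gamma_k\to\gamma$ uniformly on compact subsets of $\R$. First, the hypotheses of Theorem \ref{thm:stability} are trivially met in the constant case ($\phi_k\to\phi$ and $H_k\to H$ uniformly), so $\gamma:\R\to M$ is itself a strict singular characteristic for $(\phi,H)$. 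Second, Lemma \ref{lem:lambda} applied to the same constant sequences yields the identical estimate
\begin{equation*}
H(\gamma(t_2),\mathbf{p}^{\#}_{\phi,H}(\gamma(t_2)))-H(\gamma(t_1),\mathbf{p}^{\#}_{\phi,H}(\gamma(t_1)))\leqslant \lambda_{\phi,H}(t_2-t_1),\qquad\forall t_1\leqslant t_2,
\end{equation*}
for the limit curve. These two conclusions together say that $\gamma$ belongs to the family in question, so the family is closed.

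The main obstacle is essentially nil here; the entire analytic content (the lower semicontinuity of $H(\cdot,\mathbf{p}^{\#}_{\phi,H}(\cdot))$ along limits, and the use of Theorem \ref{thm:right_derivative}(2) to recover pointwise information from an almost-everywhere inequality) has already been absorbed into Lemma \ref{lem:lambda}. The only minor care needed is to check that the constant-sequence hypotheses really fit the template of Theorem \ref{thm:stability} and Lemma \ref{lem:lambda}, which is immediate.
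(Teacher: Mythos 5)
Your proposal is correct and is exactly the argument the paper intends: the corollary follows by applying Theorem \ref{thm:stability} with the constant sequences $\phi_k\equiv\phi$, $H_k\equiv H$ to get that the limit is a strict singular characteristic, and Lemma \ref{lem:lambda} (stated precisely for this purpose) to pass the bound \eqref{eq:lambda_phi_H} to the limit. Nothing further is needed.
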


\subsection{New weak KAM aspects of maximal slope curves}

For any $\phi\in\text{SCL}\,(M)$ and Tonelli Hamiltonian $H:T^*M\to\R$ with associated Tonelli Lagrangian $L:TM\to\R$, inspired by the maximal slope curves for the pair $(\phi,H)$, we introduce a new Lagrangian
\begin{align*}
	L^{\#}_{\phi}(x,v):=L(x,v)+H(x,\mathbf{p}^{\#}_{\phi,H}(x)),\quad x\in M, v\in T_xM.
\end{align*}
Let $H^{\#}_{\phi}$ be the associated Hamiltonian of $L^{\#}_{\phi}$, more precisely,
\begin{align*}
	H^{\#}_{\phi}(x,p)=\sup_{v\in T_xM}\{\langle p,v\rangle-L^{\#}_{\phi}(x,v)\}=H(x,p)-H(x,\mathbf{p}^{\#}_{\phi,H}(x)),\quad x\in M, p\in T^*_xM.
\end{align*}
Observe that the Lagrangian $L^{\#}_{\phi}$ (resp. the Hamiltoninan $H^{\#}_{\phi}$) is strictly convex and superlinear with respect to the $v$-variable (resp. $p$-variable) but only lower-semicontinuous (resp. upper-semicontinuous) in $x$-variable. We define 
\begin{align*}
	A^{\#}_t(x,y)=\inf_{\xi\in\Gamma^t_{x,y}}\int^t_0L^{\#}_{\phi}(\xi(s),\dot{\xi}(s))\ ds,\qquad t>0,\ x,y\in M,
\end{align*}
the fundamental solution associated with $L^{\#}_{\phi}$. Due to the lower semicotinuity of $L^{\#}_{\phi}$, the above infimum is achieved and is Lipschitz by a classical result in \cite{Dal_Maso_Frankowska_2003}. 

We introduce the Lax-Oleinik operators with respect to this new Hamiltonian: for any continuous function $u:M\to\R$
\begin{align*}
	T^{\#}_t u(x)=\,\inf_{y\in M}\{u(y)+A^{\#}_t(y,x)\},\qquad \breve{T}^{\#}_t u(x)=\,\sup_{y\in M}\{u(y)-A^{\#}_t(x,y)\},\qquad t>0,x\in M.
\end{align*}

\begin{defn}
We call $u$ a weak KAM solution of negative (resp. positive) type of the Hamilton-Jacobi equation
\begin{equation}\label{eq:sharp-}
		H^\#_\phi(x,Du(x))=0,\qquad x\in M,
\end{equation}
if $T^{\#}_tu=u$ for any $t\geqslant0$ (resp. $\breve{T}^\#_tu=u$) for any $t\geqslant0$.
\end{defn}

\begin{The}\label{thm:new_WKM}
Suppose $\phi\in\text{\rm SCL}\,(M)$, $H:T^*M\to\R$ is a Tonelli Hamiltonian and $H^\#_\phi$ is defined as above.
\begin{enumerate}[\rm (1)]
	\item $\phi$ is weak KAM solution of both negative and positive type of \eqref{eq:sharp-}.
	\item $\phi$ is a viscosity solution\footnote{Here we use the standard definition of viscosity (sub)solutions even for discontinuous Hamiltonians.} of the Hamilton-Jacobi equation
	\begin{equation}\label{eq:sharp+}
		-H^\#_\phi(x,Du(x))=0,\qquad x\in M.
	\end{equation}
	\item If $\text{\rm Sing}\,(\phi)\not=\varnothing$, then $\phi$ is not a viscosity subsolution of \eqref{eq:sharp-}.
\end{enumerate}	
\end{The}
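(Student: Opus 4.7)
For part~(1), the aim is to show $T^\#_t\phi=\phi$ and $\breve T^\#_t\phi=\phi$ for every $t\geq 0$. The easy inequality $\phi\leq T^\#_t\phi$ is an immediate consequence of the EDI bound in Proposition~\ref{pro:msc} applied to the minimal-energy selection $\mathbf{p}=\mathbf{p}^\#_{\phi,H}$: for any $\xi\in\Gamma^t_{y,x}$ one has $\phi(x)-\phi(y)\leq\int_0^t L^\#_\phi(\xi,\dot\xi)\,ds$, and taking the infimum first over $\xi$ and then over $y$ yields $\phi(x)\leq T^\#_t\phi(x)$. The matching bound $T^\#_t\phi\leq\phi$ uses Theorem~\ref{thm:msc exs}: pick a strict singular characteristic $\gamma:\R\to M$ with $\gamma(0)=x$ for the pair $(\phi,H)$ and apply the equality case in Definition~\ref{defn:msc} on $[-t,0]$ to get
\begin{equation*}
\phi(x)-\phi(\gamma(-t))=\int_{-t}^{0}L^\#_\phi(\gamma,\dot\gamma)\,ds\geq A^\#_t(\gamma(-t),x),
\end{equation*}
which forces $T^\#_t\phi(x)\leq\phi(\gamma(-t))+A^\#_t(\gamma(-t),x)\leq\phi(x)$. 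The positive-type identity $\breve T^\#_t\phi=\phi$ is obtained symmetrically, using instead a strict singular characteristic issuing forward from $x$ on $[0,t]$.

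For part~(2), since $\phi\in\text{\rm SCL}\,(M)$, sub/super\-solution tests reduce to checking the Hamiltonian on $D^+\phi$ and $D^-\phi$ respectively. For the subsolution of $-H^\#_\phi=0$: given any $p\in D^+\phi(x)$,
\begin{equation*}
-H^\#_\phi(x,p)=H(x,\mathbf{p}^\#_{\phi,H}(x))-H(x,p)\leq 0
\end{equation*}
by the very definition of $\mathbf{p}^\#_{\phi,H}(x)$ as the minimizer of $H(x,\cdot)$ over $D^+\phi(x)$ (uniqueness coming from the strict convexity in (H3)). For the supersolution half, semiconcavity forces $D^-\phi(x)=\varnothing$ at every singular point, so the condition is vacuous there; at a differentiability point $x$ we have $D^-\phi(x)=D^+\phi(x)=\{D\phi(x)\}=\{\mathbf{p}^\#_{\phi,H}(x)\}$, hence $-H^\#_\phi(x,D\phi(x))=0$ and the inequality holds with equality.

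For part~(3), take $x_0\in\text{\rm Sing}\,(\phi)$, so that $D^+\phi(x_0)$ is not a singleton. Strict convexity of $H(x_0,\cdot)$ makes $\mathbf{p}^\#_{\phi,H}(x_0)$ its unique minimizer on the convex set $D^+\phi(x_0)$, so any $p\in D^+\phi(x_0)\setminus\{\mathbf{p}^\#_{\phi,H}(x_0)\}$ satisfies
\begin{equation*}
H^\#_\phi(x_0,p)=H(x_0,p)-H(x_0,\mathbf{p}^\#_{\phi,H}(x_0))>0,
\end{equation*}
at once violating the subsolution inequality for $H^\#_\phi=0$ at $x_0$.

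The only non-trivial step of the proof is confined to part~(1), where the upper bound for each semigroup identity rests on the existence of a two-sided strict singular characteristic through $x$ provided by Theorem~\ref{thm:msc exs}; once that is granted, everything reduces to the equality case of the EDI along a maximal slope curve. Parts~(2) and~(3) are purely algebraic consequences of $\mathbf{p}^\#_{\phi,H}$ being an arg-min on $D^+\phi$ together with the superdifferential structure of semiconcave functions.
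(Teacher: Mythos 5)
Your proposal is correct and follows essentially the same route as the paper: part (1) combines the EDI inequality of Proposition \ref{pro:msc} (giving $\breve T^\#_t\phi\leqslant\phi\leqslant T^\#_t\phi$) with the two-sided strict singular characteristic from Theorem \ref{thm:msc exs} to force equality, while parts (2) and (3) are the same algebraic consequences of $\mathbf{p}^\#_{\phi,H}(x)$ being the unique minimizer of $H(x,\cdot)$ on $D^+\phi(x)$. Your explicit remark that $D^-\phi(x)=\varnothing$ at singular points merely makes precise a step the paper leaves implicit.
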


\begin{proof}
The proof of (1) is direct from \eqref{eq:msc_H_phi2} and Theorem \ref{thm:msc exs}. Indeed, for any locally absolutely continuous curve $\gamma:\R\to M$, the following inequality holds by Proposition \ref{pro:msc} 
\begin{align*}
	\phi(\gamma(t_2))-\phi(\gamma(t_1))\leqslant\int_{t_1}^{t_2}L(\gamma(s),\dot{\gamma}(s))+H(\gamma(s),\mathbf{p}^\#_{\phi,H}(\gamma(s)))\ ds,\qquad\forall t_1,t_2\in\R,\ t_1<t_2.
\end{align*}
The inequality above can be read as
\begin{align*}
	\breve{T}^\#_t\phi(x)\leqslant\phi(x)\leqslant T^\#_t\phi(x),\qquad\forall t\geqslant0, x\in M,
\end{align*}
Theorem \ref{thm:msc exs} ensures the existence of maximal slope curves defined by \eqref{eq:msc_H_phi2}. This implies that the two inequalities above are equalities.

To prove (2), for any $x\in M$ and $p\in D^+\phi(x)$, the inquality
\begin{align*}
	-H^\#_\phi(x,p)=-H(x,p)+H(x,\mathbf{p}^{\#}_{\phi,H}(x))\leqslant0
\end{align*}
shows $\phi$ is a viscosity subsolution of \eqref{eq:sharp+}. On the other hand, for any differentiable point $x$ of $\phi$, we have
\begin{align*}
	-H^\#_\phi(x,D\phi(x))=-H(x,D\phi(x))+H(x,\mathbf{p}^{\#}_{\phi,H}(x))=0.
\end{align*}
Then, $\phi$ turns out to be a viscosity supersolution and so a viscosity solution of \eqref{eq:sharp+}. 

To prove (3), suppose $x\in \text{Sing}\,(\phi)$, then for any $p\in D^+u(x)\setminus\{\mathbf{p}^\#_{\phi,H}(x)\}$, by the strict convexity of $H(x,\cdot)$ and convexity of $D^+u(x)$ we obtain
\begin{align*}
	H^\#_\phi(x,p)=H(x,p)-H(x,\mathbf{p}^\#_{\phi,H}(x))>0.
\end{align*}
This implies $\phi$ is not a viscosity subsolution of \eqref{eq:sharp-}. 
\end{proof}

\begin{Cor}
There exists a discontinuous Hamiltonian $H:T^*M\to\R$ and $\phi\in\text{\rm SCL}\,(M)$ such that $\phi$ is a weak KAM solution but not a viscosity solution of the stationary Hamilton-Jacobi equation for $H$.
\end{Cor}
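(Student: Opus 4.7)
The plan is to read the Corollary as a direct consequence of Theorem \ref{thm:new_WKM}. I will choose any compact manifold $M$, any Tonelli Hamiltonian $H:T^*M\to\R$, and any $\phi\in\text{\rm SCL}\,(M)$ with $\text{\rm Sing}\,(\phi)\neq\varnothing$ (e.g.\ a negative squared distance function on a torus, or a weak KAM solution with nontrivial cut locus). The discontinuous Hamiltonian will be
\[
\widetilde H(x,p):=H^{\#}_\phi(x,p)=H(x,p)-H(x,\mathbf{p}^{\#}_{\phi,H}(x)).
\]
By Theorem \ref{thm:new_WKM}(1), $\phi$ is a weak KAM solution of $\widetilde H(x,Du)=0$ of both negative and positive type, and by Theorem \ref{thm:new_WKM}(3), since $\text{\rm Sing}\,(\phi)\neq\varnothing$, $\phi$ fails to be a viscosity subsolution, and therefore fails to be a viscosity solution, of the same equation. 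So the only point not immediately contained in Theorem \ref{thm:new_WKM} is that $\widetilde H$ is genuinely discontinuous.

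To check discontinuity, fix $x_0\in\text{\rm Sing}\,(\phi)$. Then $D^{+}\phi(x_0)$ is not a singleton, and since $D^{+}\phi(x_0)=\mbox{\rm co}\,D^{*}\phi(x_0)$ by Proposition \ref{pro:scsv}(3), the set $D^{*}\phi(x_0)$ contains at least two points. The minimal energy selection $\mathbf{p}^{\#}_{\phi,H}(x_0)$ is a single element of $D^{+}\phi(x_0)$, so we may pick $p\in D^{*}\phi(x_0)$ with $p\neq\mathbf{p}^{\#}_{\phi,H}(x_0)$, and by definition of $D^{*}\phi(x_0)$ a sequence of differentiability points $x_n\to x_0$ with $D\phi(x_n)\to p$. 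At each such $x_n$, $\mathbf{p}^{\#}_{\phi,H}(x_n)=D\phi(x_n)$, so for any fixed $p_0\in T^*_{x_0}M$ (extended locally in a chart),
\[
\widetilde H(x_n,p_0)=H(x_n,p_0)-H(x_n,D\phi(x_n))\longrightarrow H(x_0,p_0)-H(x_0,p),
\]
whereas $\widetilde H(x_0,p_0)=H(x_0,p_0)-H(x_0,\mathbf{p}^{\#}_{\phi,H}(x_0))$. Since $H(x_0,\cdot)$ is strictly convex and $\mathbf{p}^{\#}_{\phi,H}(x_0)$ is the unique minimizer of $H(x_0,\cdot)$ on $D^{+}\phi(x_0)$, we have $H(x_0,p)>H(x_0,\mathbf{p}^{\#}_{\phi,H}(x_0))$, and the limit is strictly less than the value at $x_0$. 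Hence $\widetilde H$ is discontinuous at $(x_0,p_0)$, and combining this with the two citations of Theorem \ref{thm:new_WKM} above completes the proof. The only place where any real argument is needed is the verification of the strict inequality $H(x_0,p)>H(x_0,\mathbf{p}^{\#}_{\phi,H}(x_0))$, which is immediate from strict convexity once one has secured the existence of $p\in D^{*}\phi(x_0)\setminus\{\mathbf{p}^{\#}_{\phi,H}(x_0)\}$; I do not anticipate a genuine obstacle here.
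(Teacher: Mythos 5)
Your proposal is correct and takes essentially the paper's route: the corollary is read off directly from Theorem \ref{thm:new_WKM}(1) and (3) applied to $H^{\#}_{\phi}$ for any $\phi\in\text{\rm SCL}\,(M)$ with $\text{\rm Sing}\,(\phi)\neq\varnothing$, and your explicit verification that $H^{\#}_{\phi}$ is genuinely discontinuous (failure of lower semicontinuity in $x$ at a singular point, via strict convexity and the uniqueness of the minimizer $\mathbf{p}^{\#}_{\phi,H}(x_0)$ on $D^{+}\phi(x_0)$) is sound and merely makes explicit what the paper only remarks, namely that $H^{\#}_{\phi}$ is only upper semicontinuous in the $x$-variable. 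One cosmetic slip: the \emph{negative} squared distance on the torus is semiconvex rather than semiconcave, so take $d^{2}(\cdot,x_0)$ itself (or any weak KAM solution with nonempty singular set) as the example of $\phi$.
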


\begin{Rem}
In the definition of new Lax-Oleinik operators $T^\#_t$ and $\breve{T}^\#_t$, the term $H(x,\mathbf{p}^\#_{\phi,H}(x))$ is an analogy to Ma\~n\'e'scritical value. Maximal slope curves for the pair $(\phi,H)$ and the selection $\mathbf{p}^\#_{\phi,H}$ are the counterpart of calibrated curves in the classical theory.
\end{Rem}

\section{Intrinsic construction of strict singular characteristics}

In this section, we always suppose that $H$ is a Tonelli Hamiltonian and $\phi\in\text{\rm SCL}\,(M)$. We will continue our analysis of the intrinsic singular characteristics developed in \cite{Cannarsa_Cheng3,Cannarsa_Cheng_Fathi2017,Cannarsa_Cheng_Fathi2021} and construct strict singular characteristics by this method.

\subsection{More on intrinsic singular characteristics}

Recall the construction of intrinsic singular characteristics (see \cite{Cannarsa_Cheng3} for more detail). For any $x\in M$ we define a curve $\mathbf{y}_x:[0,\tau(\phi)]\to M$ with $\tau(\phi)>0$ determined below
\begin{equation}\label{eq:curve_y_x}
	\mathbf{y}_x(t):=
	\begin{cases}
		x,& t=0,\\
		\arg\max\{\phi(y)-A_t(x,y): y\in M\},& t\in(0,\tau(\phi)].
	\end{cases}
\end{equation}
The construction of the curve $\mathbf{y}_x(t)$ is explained as follows:
\begin{enumerate}[--]
	\item The supremum in \eqref{eq:curve_y_x} can be achieved for any $t>0$. Indeed, there exists a constant $\lambda_0>0$ depending on $\text{Lip}\,(\phi)$ such that if $y$ is a maximizer of $\phi(\cdot)-A_t(x,\cdot)$ then $y\subset B(x,\lambda_0 t)$.
	\item Taking $\lambda=\lambda_0+1$ and applying some regularity result for $A_t(x,y)$, we conclude that, there exists $t_0>0$ such that for any $t\in(0,t_0)$ and $x\in M$, the functions $y\mapsto A_t(x,y)$, $y\in B(x,\lambda t)$ is of class $C^2$ and convex with constant $C_2/t$.
	\item Since $\phi$ is semiconcave with constant $C_1$, the function $\phi(\cdot)-A_t(x,\cdot)$ is strictly concave provided $C_1-C_2/t<0$. Take $0<\tau(\phi)\leqslant t_0$ such that $C_1-C_2/\tau(\phi)<0$.
	\item Since $T^+\phi$ is naturally semiconvex, we conclude that if $t\in(0,\tau(\phi))$, then $T^+_t\phi\in C^{1,1}(M)$ and the maps $x\mapsto A_t(x,y)$, $x\in B(y,\lambda t)$ and $y\mapsto A_t(x,y)$, $y\in B(x,\lambda t)$ are convex with constant $C_2/t$ by Proposition \ref{pro:regularity}.
\end{enumerate}
We call $\mathbf{y}_x$ an \emph{intrinsic singular characteristic} from $x$. Observe that the above construction can be extended to $[0,\infty)$. In the following, we take $\tau(\phi)\leqslant\tau_1(\phi)$ in Proposition \ref{pro:graph}. %This will leads to more result.

%Now, we need a reformulation of Proposition \ref{pro:graph} (2) first proved by Marie-Claude Arnaud for time $t\in(0,\tau(\phi))$% where $\tau(\phi)$ is introduced in the introductory section for $\phi\in\text{\rm SCL}\,(M)$ and a Tonelli Hamiltonian $H$. 

\begin{Pro}\label{pro:curve_y}
Suppose $H$ is a Tonelli Hamiltonian and $\phi\in\text{\rm SCL}\,(M)$. Then for every $x\in M$, the following holds true. 
\begin{enumerate}[\rm (1)]
	\item For any $t\in(0,\tau(\phi)]$, if $\xi_t\in\Gamma^t_{x,\mathbf{y}_x(t)}$ is the minimal curve for $A_t(x,\mathbf{y}_x(t))$, then $\xi_t$ satisfies the differential equation
	\begin{equation}\label{eq:vector_field1_4}
		\begin{cases}
			\dot{\xi}_t(s)=H_p(\xi_t(s),DT^+_{t-s}\phi(\xi_t(s))),& s\in[0,t),\\
			\xi_t(0)=x.
		\end{cases}
	\end{equation}
	\item We have that
	\begin{equation}
		\mathbf{y}_x(t)=\pi_x\Phi_H^t(x,DT^+_t\phi(x)),\qquad \forall t\in(0,\tau(\phi)],
	\end{equation}
	where $\pi_x:T^*M\to M$ is the canonical projection.
	\item For any $t\in(0,\tau(\phi)]$, let $\xi_t$ be the minimal curve for $A_t(x,\mathbf{y}_x(t))$. Then there exists $C=C_{(\phi,H)}>0$ such that
	\begin{align*}
		|\xi_t(s)-\mathbf{y}_x(s)|\leqslant Ct,\qquad \forall s\in[0,t].
	\end{align*}
\end{enumerate}
\end{Pro}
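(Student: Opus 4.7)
My plan is to reduce all three parts to a single identity obtained by combining the first-order optimality condition at $\mathbf{y}_x(t)$ with Arnaud's pullback theorem from Proposition~\ref{pro:graph}(2). To avoid boundary issues with the open interval $(0,\tau_1(\phi))$ appearing there, I would work under the (harmless) assumption $\tau(\phi)<\tau_1(\phi)$, so that $t-s\in(0,\tau_1(\phi))$ for every $t\in(0,\tau(\phi)]$ and $s\in[0,t)$.

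For (1), note that $\mathbf{y}_x(t)$ maximizes $y\mapsto\phi(y)-A_t(x,y)$ over $M$, and by the construction recalled before the proposition the maximizer lies in the interior of $B(x,\lambda_0 t)$, where $A_t(x,\cdot)$ is of class $C^2$. The first-order condition $0\in D^+(\phi(\cdot)-A_t(x,\cdot))(\mathbf{y}_x(t))$ therefore reads $D_yA_t(x,\mathbf{y}_x(t))\in D^+\phi(\mathbf{y}_x(t))$. By Proposition~\ref{pro:regularity}(3) we have $D_yA_t(x,\mathbf{y}_x(t))=L_v(\xi_t(t),\dot{\xi}_t(t))$, so the endpoint of the dual arc $p(s):=L_v(\xi_t(s),\dot{\xi}_t(s))$ belongs to $\text{graph}(D^+\phi)$. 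Since $\xi_t$ is action-minimizing, $(\xi_t,p)$ is a Hamiltonian orbit, and Arnaud's theorem gives
\[
(\xi_t(s),p(s))=\Phi_H^{-(t-s)}(\mathbf{y}_x(t),p(t))\in\Phi_H^{-(t-s)}(\text{graph}(D^+\phi))=\text{graph}(DT^+_{t-s}\phi).
\]
Hence $p(s)=DT^+_{t-s}\phi(\xi_t(s))$, and Hamilton's equation $\dot{\xi}_t=H_p(\xi_t,p)$ yields (1). Item (2) then follows by evaluating this identity at $s=0$, which produces $p(0)=DT^+_t\phi(x)$ (well-defined by Proposition~\ref{pro:graph}(1), which ensures $T^+_t\phi\in C^{1,1}(M)$); flowing forward gives $\Phi_H^t(x,DT^+_t\phi(x))=(\xi_t(t),p(t))=(\mathbf{y}_x(t),p(t))$, and projecting to $M$ is exactly (2).

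For (3), I would not invoke Arnaud's theorem again but rely only on uniform Lipschitz estimates. By the construction, $|\mathbf{y}_x(s)-x|\leqslant\lambda_0 s$. For $\xi_t$, the initial momentum $p(0)=DT^+_t\phi(x)$ is bounded by the common Lipschitz constant of the family $\{T^+_r\phi\}_{r\in(0,\tau(\phi)]}$, which in turn is controlled by $\text{Lip}(\phi)$; hence $\|\dot{\xi}_t\|_\infty\leqslant K$ for some constant $K=K_{\phi,H}$, and $|\xi_t(s)-x|\leqslant Ks$. The triangle inequality then gives $|\xi_t(s)-\mathbf{y}_x(s)|\leqslant(K+\lambda_0)s\leqslant(K+\lambda_0)t$.

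The main technical point is the correct interpretation of the first-order condition in the semiconcave setting together with the time-reversal of Arnaud's theorem; once $\tau(\phi)<\tau_1(\phi)$ is fixed, everything reduces to the single identity derived in the second paragraph, and parts (2) and (3) are clean corollaries requiring no further nontrivial input.
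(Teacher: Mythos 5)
Your argument for parts (1) and (2) is essentially the paper's: the paper derives both directly from Arnaud's theorem (Proposition \ref{pro:graph}(2)), and you simply make explicit the standard steps (first-order condition $D_yA_t(x,\mathbf{y}_x(t))\in D^+\phi(\mathbf{y}_x(t))$, the dual arc being a Hamiltonian orbit, and the pullback identity $p(s)=DT^+_{t-s}\phi(\xi_t(s))$); your precaution $\tau(\phi)<\tau_1(\phi)$ is indeed harmless and matches the paper's convention $\tau(\phi)\leqslant\tau_1(\phi)$. Where you genuinely diverge is part (3): the paper compares $\xi_t(s)$ with $\mathbf{y}_x(s)$ via the dynamic programming identity $T^+_t\phi(x)=T^+_{t-s}\phi(\xi_t(s))-A_s(x,\xi_t(s))$, the semiconcavity of $\phi$ and the convexity of $A_s(x,\cdot)$, obtaining the sharper estimate $|\xi_t(s)-\mathbf{y}_x(s)|\leqslant C(s(t-s))^{1/2}$, of which the stated bound $Ct$ is a corollary; you instead bound both curves' distance from the common base point $x$ ($|\mathbf{y}_x(s)-x|\leqslant\lambda_0 s$ from the construction, $|\xi_t(s)-x|\leqslant Ks$ from a uniform speed bound) and use the triangle inequality. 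This is correct and proves exactly the statement (and suffices for its later use in Theorem \ref{thm:in sc}), at the price of losing the finer $\sqrt{s(t-s)}$ information, which vanishes at both endpoints; the paper's comparison argument buys that refinement. One small point: bounding only the initial momentum $p(0)=DT^+_t\phi(x)$ does not by itself control $\|\dot{\xi}_t\|_\infty$; you should either invoke the identity $p(s)=DT^+_{t-s}\phi(\xi_t(s))$ from part (1) together with the equi-Lipschitz property of $\{T^+_r\phi\}_{r\in(0,\tau(\phi)]}$, or use conservation of energy (equivalently the standard Tonelli a priori compactness giving $|\dot{\xi}_t|\leqslant F(\lambda_0)$), both of which are available and make your constant $K=K_{\phi,H}$ legitimate.
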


\begin{proof}
Assertions (1) and (2) follow directly from Proposition \ref{pro:graph} (2) since we take $\tau(\phi)\leqslant\tau_1(\phi)$. Now we turn to prove (3) that working on Euclidean space. For any $s\in[0,t]$, by the dynamic programming principle we have that
\begin{align*}
	T^+_t\phi(x)=T^+_{t-s}\phi(\xi_t(s))-A_s(x,\xi_t(s)).
\end{align*}
Let $\mathbf{p}(s)=L_v(\xi_s(s),\dot{\xi}_s(s))$, $s\in[0,t]$. Then $\mathbf{p}(s)\in D^+\phi(\mathbf{y}(s))\cap D_yA_s(x,\mathbf{y}(s))$. By the semiconcavity of $\phi$ and the convexity of the fundamental solution $A_t(x,y)$ we deduce that
\begin{align*}
	0\leqslant&\,[T^+_{t-s}\phi(\xi_t(s))-A_s(x,\xi_t(s))]-[T^+_{t-s}\phi(\mathbf{y}(s))-A_s(x,\mathbf{y}(s))]\\
	=&\,[T^+_{t-s}\phi(\xi_t(s))-\phi(\xi_t(s))]+[\phi(\xi_t(s))-\phi(\mathbf{y}(s))]+[\phi(\mathbf{y}(s))-T^+_{t-s}\phi(\mathbf{y}(s))]\\
	&\,\quad-[A_s(x,\xi_t(s))-A_s(x,\mathbf{y}(s))]\\
	\leqslant&\,2\|T^+_{t-s}\phi-\phi\|_{\infty}+[\langle\mathbf{p}(s),\xi_t(s)-\mathbf{y}(s)\rangle+C_1|\xi_t(s)-\mathbf{y}(s)|^2]\\
	&\,\quad-\left[\langle\mathbf{p}(s),\xi_t(s)-\mathbf{y}(s)\rangle+\frac{C_2}s|\xi_t(s)-\mathbf{y}(s)|^2\right]\\
	\leqslant&\,2\|T^+_{t-s}\phi-\phi\|_{\infty}-\frac{C_2-C_1\tau(\phi)}{s}|\xi_t(s)-\mathbf{y}(s)|^2.
\end{align*}
Therefore,
\begin{align*}
	|\xi_t(s)-\mathbf{y}(s)|\leqslant C_3s^{\frac 12}\|T^+_{t-s}\phi-\phi\|^{\frac 12}_{\infty}\leqslant C_4(s(t-s))^{\frac 12}\leqslant C_4t.
\end{align*}
The proof is complete.
\end{proof}

\subsection{Intrinsic construction of strict singular characteristics}
In this section, we use the intrinsic method introduced in Section 4.1 to construct strict singular characteristics for Tonelli Hamiltonian $H$ and $\phi\in\text{SCL}\,(M)$. This also leads to an alternative proof of the existence of such characteristics with more geometric intuition.

Suppose $\phi\in\text{\rm SCL}\,(M)$ and $H:T^*M\to\R$ is a Tonelli Hamiltonian. Invoking Proposition \ref{pro:curve_y} (1), we observe that for given $t\in[0,\tau(\phi)]$,
\begin{equation}\label{eq:vf_W}
	W(s,x)=H_p(x,DT^+_{t-s}\phi(x)),\qquad (s,x)\in [0,t)\times M
\end{equation}
defines a time-dependent vector field on $[0,t)\times M$. Now, for any fixed $T>0$, we try to extend the vector field $W$ to $[-T,T]\times M$. Let
\begin{align*}
	\Delta: -T=\tau_0<\tau_1<\cdots<\tau_{N-1}<\tau_N=T
\end{align*}
be a partition of the interval $[-T,T]$ with $|\Delta|\leqslant\tau(\phi)$, where $|\Delta|=\max\{\tau_i-\tau_{i-1}: 1\leqslant i\leqslant N\}$ is the width of the partition. For any $t\in[-T,T)$, we let
\begin{align*}
\tau_{\Delta}^{+}(t)=\inf\{\tau_i|\,\tau_i>t\},\qquad \tau_{\Delta}^{-}(t)=\sup\{\tau_i|\,\tau_i\leqslant t\}.
\end{align*}
Then one can define a vector field $W_{\Delta}(t,x)$ on $[-T,T)\times M$ as
\begin{equation}\label{eq:vf_W_Delta}
	W_{\Delta}(t,x)=H_p(x,DT^+_{\tau_{\Delta}^{+}(t)-t}\phi(x)),\qquad t\in[-T,T), x\in M.
\end{equation}
Notice that the vector fields $W_{\Delta}$ is uniformly bounded for any partition $\Delta$. Moreover, $W_{\Delta}(\cdot,x)$ is piecewise continuous and $W_{\Delta}(t,\cdot)$ is Lipschitz continuous. Thus, for any $x\in M$, the differential equation
\begin{equation}\label{eq:ODE_W_Delta}
	\begin{cases}
		\dot{\gamma}(t)=W_{\Delta}(t,\gamma(t)),\qquad t\in[-T,T],\\
		\gamma(0)=x,
	\end{cases}
\end{equation}
admits a unique piecewise $C^1$ solution $\gamma:[-T,T]\to M$, which is of $C^1$ class on each partition interval, by Cauchy-Lipschitz theorem, and $|\dot{\gamma}|$ is uniformly bounded for any partition $\Delta$.

The following Theorem \ref{thm:v limit} and Theorem \ref{thm:solution limit} show that  $W_{\Delta}(t,x)$ converges to $H_p(x,\mathbf{p}^{\#}_{\phi,H}(x))$ as $|\Delta|\to 0$, and that the corresponding solution curves converges as well.

\begin{Pro}\label{pro:limit min energy}
	Suppose $H:T^*M\to\R$ is a Tonelli Hamiltonian and $\phi\in\text{SCL}\,(M)$, then
	\begin{align*}
		\lim_{t\to 0^+}DT^+_{t}\phi(x)=\mathbf{p}^{\#}_{\phi,H}(x),\qquad \forall x\in M.
	\end{align*}
\end{Pro}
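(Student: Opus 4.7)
The plan is to establish the claim via a rescaling argument: I track the maximizer $y_t = \mathbf{y}_x(t)$ of $y \mapsto \phi(y) - A_t(x,y)$ as $t \to 0^+$, rescale the variational problem to a fixed bounded region, and pass to the limit using uniform convergence of the rescaled functionals to a single strictly concave limit. Fix $x \in M$ and work in a local coordinate chart centered at $x$. Let $\xi_t \in \Gamma^t_{x, y_t}$ be the minimizer for $A_t(x, y_t)$ and set $q_t = DT^+_t\phi(x)$. By Proposition \ref{pro:regularity}(3), $q_t = L_v(x, \dot\xi_t(0))$. The apriori bound $|y_t - x| \leq \lambda_0 t$ from the construction of $\mathbf{y}_x$ shows that $v_t := (y_t - x)/t$ lies in a fixed ball $\overline{B}(0, R)$ for every $t$ small enough.

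Introduce the rescaled functional
\[
F_t(v) \;=\; \frac{\phi(x+tv) - \phi(x)}{t} \;-\; \frac{A_t(x,\,x+tv)}{t}, \qquad v \in \overline{B}(0, R),
\]
so that $v_t = \arg\max_{\overline{B}(0,R)} F_t$. I will show $F_t \to g$ uniformly on $\overline{B}(0, R)$, where $g(v) := \partial\phi(x, v) - L(x, v)$. For the first term, semiconcavity of $\phi$ yields $(\phi(x+tv) - \phi(x))/t \to \partial\phi(x, v) = \min_{p \in D^+\phi(x)}\langle p, v \rangle$ pointwise; combined with equi-Lipschitz continuity in $v$ (constant $\mathrm{Lip}(\phi)$), this upgrades to uniform convergence on the compact set $\overline{B}(0,R)$. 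For the second term, using the straight line from $x$ to $x+tv$ as a competitor gives $A_t(x, x+tv) \leq tL(x,v) + O(t^2)$ uniformly in $|v| \leq R$; the matching lower bound comes from Jensen's inequality applied to the convex function $L(x, \cdot)$, together with the apriori estimate $|\xi_t(s) - x| = O(t)$ for the Tonelli minimizer.

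Next, I identify the unique maximizer of $g$. Since $L(x, \cdot)$ is strictly convex and superlinear while $\partial\phi(x, \cdot)$ is concave, $g$ is strictly concave with superlinear decay, so it admits a unique maximizer $v^*$. Fenchel--Young duality together with Lemma \ref{lem:epf} yields
\[
\max_v g(v) \;=\; \max_v \min_{p \in D^+\phi(x)}\bigl\{\langle p, v\rangle - L(x, v)\bigr\} \;=\; \min_{p \in D^+\phi(x)} H(x, p) \;=\; H(x, \mathbf{p}^\#_{\phi,H}(x)),
\]
with saddle point $p^* = \mathbf{p}^\#_{\phi,H}(x)$ and $v^* = H_p(x, p^*)$; the saddle-point identity follows because $\langle p^*, v^*\rangle = \min_{p \in D^+\phi(x)}\langle p, v^*\rangle$ by Lemma \ref{lem:epf}. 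Uniform convergence $F_t \to g$ and uniqueness of $\arg\max g$ then force $v_t \to v^* = H_p(x, \mathbf{p}^\#_{\phi, H}(x))$.

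To conclude, I compare $\dot\xi_t(0)$ with $v_t$. Since $\xi_t$ solves the Euler--Lagrange equation with uniformly bounded velocity, it is $C^2$ with $|\ddot\xi_t|$ bounded independently of $t$; hence $\dot\xi_t(0) = (y_t-x)/t + O(t) = v_t + O(t) \to v^*$. By the Legendre duality $L_v(x, H_p(x, p)) = p$, this gives $q_t = L_v(x, \dot\xi_t(0)) \to L_v(x, v^*) = \mathbf{p}^\#_{\phi, H}(x)$, as desired. The most delicate step is establishing the uniform asymptotic $A_t(x, x+tv)/t = L(x,v) + O(t)$ on $|v| \leq R$: it hinges on a velocity bound for $\xi_t$ uniform as $v$ ranges over $\overline{B}(0,R)$, which is standard for Tonelli minimizers on short time intervals but demands careful bookkeeping.
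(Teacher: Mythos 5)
Your argument is correct, but it takes a genuinely different route from the paper. The paper's proof is a short compactness argument: take any subsequential limit $p_0$ of $DT^+_{t_k}\phi(x)$, push the point $(x,DT^+_{t_k}\phi(x))$ forward by the Hamiltonian flow and use Arnaud's theorem (Proposition \ref{pro:graph} (2)) to see that the image momenta lie in $D^+\phi$ at nearby points approaching $x$ along the direction $H_p(x,p_0)$; then Lemma \ref{lem:usc dir} gives $p_0\in D^+\phi(x)$ with $\langle p_0,H_p(x,p_0)\rangle=\min_{p\in D^+\phi(x)}\langle p,H_p(x,p_0)\rangle$, and Lemma \ref{lem:epf} identifies $p_0=\mathbf{p}^\#_{\phi,H}(x)$. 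You instead blow up the maximization problem defining $T^+_t\phi(x)$: you show the rescaled functionals $F_t$ converge uniformly to $g(v)=\partial\phi(x,v)-L(x,v)$, whose unique maximizer is $H_p(x,\mathbf{p}^\#_{\phi,H}(x))$ (this is exactly the content of Lemma \ref{lem:directional}, which you essentially reprove via Fenchel--Young and Lemma \ref{lem:epf}), and then transport the conclusion back through the Legendre transform. What your route buys is independence from Arnaud's theorem and from the superdifferential semicontinuity lemma, at the price of more quantitative work (the a priori bound $|\mathbf{y}_x(t)-x|\leqslant\lambda_0 t$, uniform velocity and acceleration bounds for Tonelli minimizers, the uniform expansion $A_t(x,x+tv)=tL(x,v)+O(t^2)$); the paper's route is softer and shorter but relies on the Lasry--Lions/Arnaud machinery of Proposition \ref{pro:graph}. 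Two small points to tighten: the identity $DT^+_t\phi(x)=L_v(x,\dot\xi_t(0))$ does not follow from Proposition \ref{pro:regularity} (3) alone — you need the envelope argument (the smooth function $x'\mapsto\phi(\mathbf{y}_x(t))-A_t(x',\mathbf{y}_x(t))$ touches $T^+_t\phi$ from below at $x$, where $T^+_t\phi$ is differentiable), or simply cite Proposition \ref{pro:curve_y} (1) at $s=0$; and the radius $R$ must be chosen large enough to contain both $\lambda_0$ and $|H_p(x,\mathbf{p}^\#_{\phi,H}(x))|$ so that the unique maximizer of $g$ lies in the ball on which you pass to the limit.
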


\begin{proof}
Because of the local nature we work on Euclidean space. For any subsequence $t_k\to 0^+$ such that $DT^+_{t_k}\phi(x)$ converges to some $p_0$ as $k\to\infty$, let $(x_k,p_k)=\Phi_H^{t_k}((x,DT^+_{t_k}\phi(x)))$. It follows that
	\begin{align*}
		\lim_{k\to\infty}x_k=x,\quad \lim_{k\to\infty}p_k=p_0,\quad \lim_{k\to\infty}\frac{x_k-x}{t_k}=H_p(x,p_0),
	\end{align*}
and Proposition \ref{pro:graph} (2) implies that $p_k\in D^+\phi(x_k)$. By Lemma \ref{lem:usc dir}, we also have that $p_0\in D^+\phi(x)$ and
\begin{align*}
	\langle p_0,H_p(x,p_0)\rangle=\min_{p\in D^+\phi(x)}\langle p,H_p(x,p_0)\rangle.
\end{align*}
Thus, Lemma \ref{lem:epf} yields $p_0=\mathbf{p}^{\#}_{\phi,H}(x)$. This completes our proof.
\end{proof}

\begin{The}\label{thm:v limit}
	For all partitions $\{\Delta\}$ of the interval $[-T,T]$, we have that
	\begin{align*}
		\lim_{|\Delta|\to 0} W_{\Delta}(t,x)=H_p(x,\mathbf{p}^{\#}_{\phi,H}(x)),\qquad \forall t\in[-T,T),\ x\in M.
	\end{align*}
\end{The}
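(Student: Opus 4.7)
The statement is essentially a direct corollary of Proposition \ref{pro:limit min energy} once one unpacks the definition of $W_\Delta$, so my plan is to reduce the claim to that proposition together with the continuity of $H_p$ in the $p$-variable.

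Fix $t\in[-T,T)$ and $x\in M$, and consider a partition $\Delta$ with $|\Delta|\leqslant\tau(\phi)$. By the definition of $\tau^+_\Delta(t)$ as the smallest partition node strictly greater than $t$, one has $0<\tau^+_\Delta(t)-t\leqslant|\Delta|$, so the positive quantity $s_\Delta:=\tau^+_\Delta(t)-t$ tends to $0^+$ as $|\Delta|\to 0$. Since we have taken $\tau(\phi)\leqslant\tau_1(\phi)$ from Proposition \ref{pro:graph}, the function $T^+_{s_\Delta}\phi$ belongs to $C^{1,1}(M)$ by Proposition \ref{pro:graph}(1), so $DT^+_{s_\Delta}\phi(x)$ is a well-defined classical gradient and
\[
W_\Delta(t,x)=H_p\bigl(x,DT^+_{s_\Delta}\phi(x)\bigr).
\]

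Now apply Proposition \ref{pro:limit min energy} to conclude $\lim_{s\to 0^+}DT^+_s\phi(x)=\mathbf{p}^{\#}_{\phi,H}(x)$. Combining this with the continuity of $p\mapsto H_p(x,p)$ furnished by condition (H2), we obtain
\[
\lim_{|\Delta|\to 0}W_\Delta(t,x)=\lim_{|\Delta|\to 0}H_p\bigl(x,DT^+_{s_\Delta}\phi(x)\bigr)=H_p\bigl(x,\mathbf{p}^{\#}_{\phi,H}(x)\bigr),
\]
as required. There is no substantive obstacle at this stage; the real content sits upstream in Proposition \ref{pro:limit min energy}, where Arnaud's theorem (Proposition \ref{pro:graph}(2)) identifies the limiting covectors as elements of $D^+\phi(x)$ and Lemma \ref{lem:usc dir} together with the characterization in Lemma \ref{lem:epf} pins the limit down to be the minimal-energy selection $\mathbf{p}^{\#}_{\phi,H}(x)$. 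Any subsequential argument needed to pass from $DT^+_{s_\Delta}\phi(x)$ to its limit is already built into that proposition, so the pointwise limit follows at once.
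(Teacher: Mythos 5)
Your proof is correct and follows essentially the same route as the paper: both reduce the claim to Proposition \ref{pro:limit min energy} via the observation that $0<\tau^+_\Delta(t)-t\leqslant|\Delta|\to 0$, and then conclude using the continuity of $H_p$. Your version merely spells out these two steps a bit more explicitly than the paper's one-line argument.
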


\begin{proof}
	For any $t\in[-T,T)$ and $x\in M$, Proposition \ref{pro:limit min energy} implies that
	\begin{align*}
		\lim_{|\Delta|\to 0}W_{\Delta}(t,x)=\lim_{|\Delta|\to 0}H_p(x,DT^+_{\tau_{\Delta}^{+}(t)-t}\phi(x))=\lim_{s\to 0^+}H_p(x,DT^+_{s}\phi(x))=H_p(x,\mathbf{p}^{\#}_{\phi,H}(x))
	\end{align*}
	thus completing the proof.
\end{proof}

\begin{Pro}\label{pro:piecewise msc}
	Let $\Delta$ be any partition of $[-T,T]$ with $|\Delta|\leqslant\tau(\phi)$, and $\gamma:[-T,T]\to M$ be a solution of \eqref{eq:ODE_W_Delta}. Then we have
	\begin{align*}
	\phi(\gamma(T))-\phi(\gamma(-T))=\,\int^T_{-T}\Big\{L(\gamma(s),\dot{\gamma}(s))+H(\gamma(\tau_{\Delta}^{-}(s)),DT^{+}_{s-\tau_{\Delta}^{-}(s)}(\gamma(\tau_{\Delta}^{-}(s))))\Big\}\ ds.
	\end{align*}
\end{Pro}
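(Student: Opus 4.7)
The plan is to reduce the identity to a sum over partition sub-intervals and, on each of them, recognize that $\gamma$ is exactly the kind of curve produced by the intrinsic construction of Section 4.1, so that Proposition \ref{pro:curve_y} together with the Hamilton-Jacobi equation for $u(t,x)=T^+_t\phi(x)$ from Proposition \ref{pro:graph}\,(3) yields the desired formula. Write $\Delta t_i=\tau_i-\tau_{i-1}$, $x_i=\gamma(\tau_{i-1})$, $y_i=\gamma(\tau_i)$. Since $\tau^+_{\Delta}(t)=\tau_i$ on $[\tau_{i-1},\tau_i)$, the curve $\xi_i(s):=\gamma(\tau_{i-1}+s)$ on $[0,\Delta t_i]$ satisfies $\dot{\xi}_i(s)=H_p(\xi_i(s),DT^+_{\Delta t_i-s}\phi(\xi_i(s)))$, which by Proposition \ref{pro:curve_y}\,(1)--(2) (applicable thanks to $|\Delta|\leqslant\tau(\phi)$) means that $\xi_i$ is the minimizer for $A_{\Delta t_i}(x_i,y_i)$ and that $y_i=\mathbf{y}_{x_i}(\Delta t_i)=\arg\max_{y\in M}\{\phi(y)-A_{\Delta t_i}(x_i,y)\}$. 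Consequently
\begin{equation*}
T^+_{\Delta t_i}\phi(x_i)=\phi(y_i)-A_{\Delta t_i}(x_i,y_i),\qquad A_{\Delta t_i}(x_i,y_i)=\int_{\tau_{i-1}}^{\tau_i}L(\gamma(s),\dot{\gamma}(s))\,ds.
\end{equation*}

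Next, rewrite $\phi(y_i)-\phi(x_i)=[T^+_{\Delta t_i}\phi(x_i)-\phi(x_i)]+A_{\Delta t_i}(x_i,y_i)$ and evaluate the first bracket by integrating the Hamilton-Jacobi equation. Since by Proposition \ref{pro:graph}\,(3) the map $u(r,x)=T^+_r\phi(x)$ is of class $C^{1,1}_{\rm loc}$ on $(0,\tau_1(\phi))\times M\supset(0,\Delta t_i)\times M$ and solves $u_r=H(x,\nabla u)$ there, for the fixed point $x_i$ we get $\partial_r T^+_r\phi(x_i)=H(x_i,DT^+_r\phi(x_i))$ for almost every $r\in(0,\Delta t_i)$. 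The map $r\mapsto T^+_r\phi(x_i)$ is continuous on $[0,\Delta t_i]$ with $T^+_0\phi(x_i)=\phi(x_i)$ (standard Lax-Oleinik property), and $H(x_i,DT^+_r\phi(x_i))$ is uniformly bounded in $r$ because $T^+_r\phi$ is uniformly Lipschitz. Integrating on $(0,\Delta t_i)$ and letting $r\to 0^+$ from below,
\begin{equation*}
T^+_{\Delta t_i}\phi(x_i)-\phi(x_i)=\int_{0}^{\Delta t_i}H(x_i,DT^+_r\phi(x_i))\,dr=\int_{\tau_{i-1}}^{\tau_i}H(\gamma(\tau^-_\Delta(s)),DT^+_{s-\tau^-_\Delta(s)}\phi(\gamma(\tau^-_\Delta(s))))\,ds,
\end{equation*}
where the change of variable $r=s-\tau_{i-1}$ and the identity $\tau^-_\Delta(s)=\tau_{i-1}$ for $s\in(\tau_{i-1},\tau_i]$ have been used.

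Finally, combine the three displays, to obtain
\begin{equation*}
\phi(y_i)-\phi(x_i)=\int_{\tau_{i-1}}^{\tau_i}\Big\{L(\gamma(s),\dot{\gamma}(s))+H(\gamma(\tau^-_\Delta(s)),DT^+_{s-\tau^-_\Delta(s)}\phi(\gamma(\tau^-_\Delta(s))))\Big\}\,ds,
\end{equation*}
and sum over $i=1,\ldots,N$ using the telescoping $\sum_i[\phi(y_i)-\phi(x_i)]=\phi(\gamma(T))-\phi(\gamma(-T))$.

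The main obstacle is not computational but conceptual, namely establishing in the first step that $\gamma|_{[\tau_{i-1},\tau_i]}$ really is a minimizer of $A_{\Delta t_i}(x_i,y_i)$ terminating at $\mathbf{y}_{x_i}(\Delta t_i)$; this is where the assumption $|\Delta|\leqslant\tau(\phi)$ is essential, since it guarantees $T^+_{\Delta t_i}\phi\in C^{1,1}(M)$ and unlocks Arnaud's theorem via Proposition \ref{pro:graph}\,(2) so that the ODE driving $\xi_i$ is indeed the characteristic ODE of the $C^{1,1}$ solution $u(r,x)=T^+_r\phi(x)$. Once this identification is made, everything else reduces to applying the Hamilton-Jacobi equation along a frozen spatial slice $x=x_i$.
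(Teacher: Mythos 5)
Your proof is correct and follows essentially the same route as the paper: on each partition sub-interval you identify the solution of \eqref{eq:ODE_W_Delta} with the minimizer realizing $T^+_{\tau_{i+1}-\tau_i}\phi(\gamma(\tau_i))$ via Proposition \ref{pro:curve_y}, evaluate $T^+_{\tau_{i+1}-\tau_i}\phi(\gamma(\tau_i))-\phi(\gamma(\tau_i))$ by integrating the Hamilton--Jacobi equation of Proposition \ref{pro:graph}\,(3) at the frozen point, and telescope. Your write-up is only slightly more explicit than the paper's about why the ODE solution coincides with the minimizing curve and about the limit $r\to0^+$ in the time integration, but the argument is the same.
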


\begin{proof}
	For every $i=0,\ldots,N-1$, Proposition \ref{pro:curve_y} (1) implies that
	\begin{equation}\label{eq:4-01}
		T^+_{\tau_{i+1}-\tau_{i}}\phi(\gamma(\tau_{i}))=\phi(\gamma(\tau_{i+1}))-\int^{\tau_{i+1}}_{\tau_{i}}L(\gamma,\dot{\gamma})\ ds.
	\end{equation}
    By Proposition \ref{pro:graph} (3), we also have that
	\begin{equation}\label{eq:4-02}
		T^+_{\tau_{i+1}-\tau_i}\phi(\gamma(\tau_{i}))-\phi(\gamma(\tau_{i}))=\,\int^{\tau_{i+1}}_{\tau_i}\frac d{ds}T^+_{s-\tau_i}\phi(\gamma(\tau_{i}))\ ds=\,\int^{\tau_{i+1}}_{\tau_i}H(\gamma(\tau_{i}),DT^+_{s-\tau_i}\phi(\gamma(\tau_{i})))\ ds
	\end{equation}
    Invoking \eqref{eq:4-01} and \eqref{eq:4-02}, it follows that
	\begin{equation}\label{eq:calibrate1}
		\begin{split}
			&\,\phi(\gamma(\tau_{i+1}))-\phi(\gamma(\tau_{i}))\\
			=&\,(\phi(\gamma(\tau_{i+1}))-T^+_{\tau_{i+1}-\tau_i}\phi(\gamma(\tau_{i})))+(T^+_{\tau_{i+1}-\tau_i}\phi(\gamma(\tau_{i}))-\phi(\gamma(\tau_{i})))\\
			=&\,\int^{\tau_{i+1}}_{\tau_i}L(\gamma(s),\dot{\gamma}(s))\ ds+\int^{\tau_{i+1}}_{\tau_i}H(\gamma(\tau_{i}),DT^+_{s-\tau_i}\phi(\gamma(\tau_{i})))\ ds.
		\end{split}
	\end{equation}
    Summing up \eqref{eq:calibrate1} for $i=0,\ldots,N-1$ we obtain
	\begin{align*}
			\phi(\gamma(T))-\phi(\gamma(-T))=&\,\int^T_{-T}L(\gamma(s),\dot{\gamma}(s))\ ds+\sum^{N-1}_{i=0}\int^{\tau_{i+1}}_{\tau_i}H(\gamma(\tau_{i}),DT^+_{s-\tau_i}\phi(\gamma(\tau_{i})))\ ds\\
			=&\,\int^T_{-T}\Big\{L(\gamma(s),\dot{\gamma}(s))+H(\gamma(\tau_{\Delta}^{-}(s)),DT^{+}_{s-\tau_{\Delta}^{-}(s)}(\gamma(\tau_{\Delta}^{-}(s))))\Big\}\ ds.
	\end{align*}
\end{proof}

\begin{The}\label{thm:solution limit}
Suppose $\{\Delta_k\}$ is a sequence of partitions of $[-T,T]$ with $|\Delta_k|\leqslant\tau(\phi)$ for all $k\in\N$, and each $\gamma_k:[-T,T]\to M$, $k\in\N$ is a solution of the equation
	\begin{equation}\label{eq:ODE}
		\dot{\gamma}_k(t)=W_{\Delta_k}(t,\gamma_k(t)),\qquad a.e.\ t\in[-T,T].
	\end{equation}
Further assume that:
\begin{enumerate}[\rm (i)]
	\item $\lim_{k\to\infty}|\Delta_k|=0$,
	\item $\gamma_k$ converges uniformly to $\gamma:[-T,T]\to M$.
\end{enumerate}
Then $\gamma$ is a strict singular characteristic for the pair $(\phi,H)$, that is,
\begin{equation}
	\dot{\gamma}(t)=H_p(\gamma(t),\mathbf{p}^{\#}_{\phi,H}(\gamma(t))),\qquad a.e.\ t\in[-T,T].
\end{equation}
\end{The}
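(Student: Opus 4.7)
The strategy is to pass to the limit in the calibration-type identity provided by Proposition \ref{pro:piecewise msc}. For each $k$, that proposition gives
\begin{equation*}
    \phi(\gamma_k(T))-\phi(\gamma_k(-T))=\int^T_{-T}\Big\{L(\gamma_k(s),\dot{\gamma}_k(s))+H(x_k(s),p_k(s))\Big\}\ ds,
\end{equation*}
where we have set $x_k(s):=\gamma_k(\tau_{\Delta_k}^{-}(s))$ and $p_k(s):=DT^{+}_{s-\tau_{\Delta_k}^{-}(s)}\phi(x_k(s))$. The left-hand side converges to $\phi(\gamma(T))-\phi(\gamma(-T))$ by uniform convergence of $\gamma_k$ and continuity of $\phi$. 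To handle the right-hand side, I will prove that each of the two integrands is lower semicontinuous along the approximating sequence. Combined with the upper bound from Proposition \ref{pro:msc} applied to the minimal energy selection $\mathbf{p}^{\#}_{\phi,H}$, this will force equality on $[-T,T]$, and then on every subinterval by subtraction of the integral inequality on complementary pieces.

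For the Lagrangian term, the vector fields $W_{\Delta_k}$ are uniformly bounded (by compactness of $M$ and uniform Lipschitz bounds on the semiconcave functions $T^+_t\phi$), so $\{\dot{\gamma}_k\}$ is equi-bounded. Up to a subsequence, $\dot{\gamma}_k\rightharpoonup\dot{\gamma}$ weakly in $L^1$, and the standard Tonelli lower semicontinuity for integral functionals with convex integrands in the velocity variable (as already used in the proof of Theorem \ref{thm:stability}) yields
\begin{equation*}
    \liminf_{k\to\infty}\int^T_{-T}L(\gamma_k(s),\dot{\gamma}_k(s))\ ds\geqslant\int^T_{-T}L(\gamma(s),\dot{\gamma}(s))\ ds.
\end{equation*}

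The core obstacle is the Hamiltonian term, for which I will establish the pointwise bound
\begin{equation*}
    \liminf_{k\to\infty}H(x_k(s),p_k(s))\geqslant H(\gamma(s),\mathbf{p}^{\#}_{\phi,H}(\gamma(s))),\qquad a.e.\ s\in[-T,T].
\end{equation*}
Fix such an $s$, not a partition point for large $k$, and set $t_k:=s-\tau_{\Delta_k}^{-}(s)\to 0^+$. By Proposition \ref{pro:graph} (2), $(x_k(s),p_k(s))=\Phi_H^{-t_k}(y_k,q_k)$ with $q_k\in D^+\phi(y_k)$; since the Hamiltonian flow is continuous and $t_k\to 0$, we have $y_k-x_k(s)\to 0$ and $q_k-p_k(s)\to 0$, while $x_k(s)\to\gamma(s)$ by uniform convergence. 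Thus $y_k\to\gamma(s)$, and the Lipschitz bounds ensure $\{q_k\}$ is bounded, so along any subsequence extracting a limit $q$ of $q_k$ (equivalently of $p_k(s)$), Lemma \ref{lem:usc2} yields $q\in D^+\phi(\gamma(s))$, hence $H(\gamma(s),q)\geqslant H(\gamma(s),\mathbf{p}^{\#}_{\phi,H}(\gamma(s)))$ by definition of the minimal energy selection. Taking inf over subsequential limits gives the pointwise $\liminf$ bound. Since $H(x_k(s),p_k(s))$ is uniformly bounded, Fatou's lemma applies.

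Combining the two lower semicontinuity estimates,
\begin{equation*}
    \phi(\gamma(T))-\phi(\gamma(-T))\geqslant\int^T_{-T}\Big\{L(\gamma(s),\dot{\gamma}(s))+H(\gamma(s),\mathbf{p}^{\#}_{\phi,H}(\gamma(s)))\Big\}\ ds,
\end{equation*}
while Proposition \ref{pro:msc} applied to the Borel selection $\mathbf{p}^{\#}_{\phi,H}$ (which is measurable by Lemma \ref{lem:Borel}) gives the opposite inequality. Equality on $[-T,T]$ then transfers to every subinterval $[t_1,t_2]\subset[-T,T]$ by adding the inequality on $[-T,t_1]\cup[t_2,T]$, and Proposition \ref{pro:msc} (equality case) converts this into $\dot{\gamma}(t)=H_p(\gamma(t),\mathbf{p}^{\#}_{\phi,H}(\gamma(t)))$ a.e., i.e., $\gamma$ is a strict singular characteristic. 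The main technical point is the flow representation in the second step above; without Proposition \ref{pro:graph} (2), one cannot identify the limit of $p_k(s)$ as an element of $D^+\phi(\gamma(s))$.
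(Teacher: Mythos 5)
Your proof is correct and follows essentially the same route as the paper: the calibration identity from Proposition \ref{pro:piecewise msc}, Tonelli lower semicontinuity for the Lagrangian term, Fatou's lemma combined with the flow representation of Proposition \ref{pro:graph} (2) for the Hamiltonian term, and the reverse inequality from Proposition \ref{pro:msc}. Your explicit identification of the limit of $p_k(s)$ as an element of $D^+\phi(\gamma(s))$ via Lemma \ref{lem:usc2}, and your transfer of the equality from $[-T,T]$ to arbitrary subintervals, merely spell out steps the paper compresses into its citations of Lemma \ref{lem:lsc1} and of \eqref{eq:msc_H_phi2}.
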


\begin{proof}
For each $k\in\N$, Proposition \ref{pro:piecewise msc} implies
\begin{equation}\label{eq:4p-1}
	\begin{split}
		&\,\phi(\gamma_k(T))-\phi(\gamma_k(-T))\\
		=&\,\int^T_{-T}\Big\{L(\gamma_k(s),\dot{\gamma}_k(s))+H(\gamma_k(\tau_{\Delta_k}^{-}(s)),DT^{+}_{s-\tau_{\Delta_k}^{-}(s)}(\gamma_k(\tau_{\Delta_k}^{-}(s))))\Big\}\ ds.
	\end{split}
\end{equation}
By a classical result from calculus of variation (see \cite[Theorem 3.6]{Buttazzo_Giaquinta_Hildebrandt_book} or \cite[Section 3.4]{Buttazzo_book}), 
	\begin{equation}\label{eq:4p-2}
		\liminf_{k\to\infty}\int^T_{-T}L(\gamma_k(s),\dot{\gamma}_k(s))\ ds\geqslant\int^T_{-T}L(\gamma(s),\dot{\gamma}(s))\ ds.
	\end{equation}
Invoking Fatou's lemma, Proposition \ref{pro:graph}, and Lemma \ref{lem:lsc1} we have that
	\begin{equation}\label{eq:4p-3}
		\begin{split}
		&\,\liminf_{k\to\infty}\int^T_{-T} H(\gamma_k(\tau_{\Delta_k}^{-}(s)),DT^{+}_{s-\tau_{\Delta_k}^{-}(s)}(\gamma_k(\tau_{\Delta_k}^{-}(s))))\ ds\\
		\geqslant&\,\int^T_{-T}\liminf_{k\to\infty}H(\gamma_k(\tau_{\Delta_k}^{-}(s)),DT^{+}_{s-\tau_{\Delta_k}^{-}(s)}(\gamma_k(\tau_{\Delta_k}^{-}(s))))\ ds\\
		\geqslant&\,\int^T_{-T}H(\gamma(s),\mathbf{p}^{\#}_{\phi,H}(\gamma(s)))\ ds.
		\end{split}
	\end{equation}
Now \eqref{eq:4p-1}, \eqref{eq:4p-2} and \eqref{eq:4p-3} implies that
\begin{align*}
\phi(\gamma(T))-\phi(\gamma(-T))=&\,\lim_{k\to\infty}\phi(\gamma_k(T))-\phi(\gamma_k(-T))\\
=&\,\lim_{k\to\infty}\int^T_{-T}\Big\{L(\gamma_k(s),\dot{\gamma}_k(s))+H(\gamma_k(\tau_{\Delta_k}^{-}(s)),DT^{+}_{s-\tau_{\Delta_k}^{-}(s)}(\gamma_k(\tau_{\Delta_k}^{-}(s))))\Big\}\ ds\\
\geqslant&\,\int^T_{-T}\Big\{L(\gamma(s),\dot{\gamma}(s))+H(\gamma(s),\mathbf{p}^{\#}_{\phi,H}(\gamma(s)))\Big\}\ ds	
\end{align*}
Thus, by \eqref{eq:msc_H_phi2}, $\gamma$ is a strict singular characteristic for the pair $(\phi,H)$.
%Finally, we turn to prove the last assertion. By \eqref{eq:4p-2}, \eqref{eq:4p-3} and \eqref{eq:4p-4}, we find
%\begin{equation}\label{eq:4p-5}
	%	\lim_{k\to\infty}\int^T_{-T} H(\gamma_k(\tau_{\Delta_k}^{+}(s)),DT^{+}_{s-\tau_{\Delta_k}^{+}(s)}(\gamma_k(\tau_{\Delta_k}^{+}(s))))\ ds=\,\int^T_{-T}H(\gamma(s),\mathbf{p}^{\#}_{\phi,H}(\gamma(s)))\ ds.
%\end{equation}
\end{proof}

\begin{The}\label{thm:pos neg}
	Let $\phi\in\text{\rm SCL}\,(M)$ and let $H:T^*M\to\R$ be a Tonelli Hamiltonian. Then, for any $x\in M$, there exists a strict singular characteristic $\gamma:\R\to M$ with $\gamma(0)=x$ for the pair $(\phi,H)$. In other words, the equation
	\begin{equation}
		\begin{cases}
			\dot{\gamma}(t)=H_p(\gamma(t),\mathbf{p}^{\#}_{\phi,H}(\gamma(t))),\qquad a.e.\ t\in\R, \\
			\gamma(0)=x
		\end{cases}
	\end{equation}
	admits a Lipschitz solution.
\end{The}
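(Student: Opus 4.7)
The plan is to construct $\gamma$ as a uniform limit of the piecewise $C^1$ solutions of the ODEs \eqref{eq:ODE_W_Delta} associated with a refining sequence of partitions, and then invoke Theorem \ref{thm:solution limit} together with a diagonal argument to pass from bounded intervals to all of $\R$.

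First I would fix $T>0$ and choose a sequence of partitions $\{\Delta_k\}$ of $[-T,T]$ with $|\Delta_k|\le \tau(\phi)$ and $|\Delta_k|\to 0$. For each $k$, Cauchy--Lipschitz applied on each sub-interval of $\Delta_k$ produces the unique piecewise $C^1$ solution $\gamma_k:[-T,T]\to M$ of \eqref{eq:ODE_W_Delta} with $\gamma_k(0)=x$. The crucial observation is that the vector fields $W_{\Delta_k}$ are uniformly bounded: indeed $\|DT^+_s\phi\|_\infty$ is controlled uniformly for $s\in[0,\tau(\phi)]$ by Proposition \ref{pro:graph} (recall $\phi\in \text{\rm SCL}\,(M)$ gives a Lipschitz bound that is preserved under $T^+_s$), so $\|W_{\Delta_k}\|_\infty \le \sup\{|H_p(x,p)|:(x,p)\in K\}$ for a compact set $K\subset T^*M$ independent of $k$. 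Hence $\{\gamma_k\}$ is equi-Lipschitz with a bound $\kappa$ independent of $k$ and of $T$.

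By the Arzel\`a--Ascoli theorem, a subsequence (still denoted $\gamma_k$) converges uniformly on $[-T,T]$ to some Lipschitz curve $\gamma^T:[-T,T]\to M$ with $\gamma^T(0)=x$. The hypotheses of Theorem \ref{thm:solution limit} are then satisfied, so $\gamma^T$ is a strict singular characteristic on $[-T,T]$ for the pair $(\phi,H)$.

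To extend to all of $\R$, I would perform the standard diagonal extraction: apply the construction on $[-n,n]$ for each $n\in\N$ and extract successive subsequences so that the resulting curves $\gamma^n$ converge uniformly on every compact subset of $\R$ to a $\kappa$-Lipschitz curve $\gamma:\R\to M$ with $\gamma(0)=x$. Since being a strict singular characteristic is characterized by the integral identity \eqref{eq:msc_H_phi2} on every bounded interval, and since this property is preserved under uniform convergence on compact sets by Corollary \ref{cor:stability} (stability of strict singular characteristics, applied with $\phi_k\equiv\phi$ and $H_k\equiv H$), the limit $\gamma$ is a strict singular characteristic for $(\phi,H)$ on all of $\R$.

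The only non-routine point is ensuring that the Lipschitz constant of $\gamma_k$ on $[-T,T]$ does not degenerate as $T\to\infty$; this is guaranteed by the uniform bound on $\|DT^+_s\phi\|_\infty$ for $s\in[0,\tau(\phi)]$, which depends only on $\text{Lip}\,(\phi)$ and on $H$, not on $T$. The rest of the argument is a routine combination of Arzel\`a--Ascoli, Theorem \ref{thm:solution limit}, and a diagonal extraction.
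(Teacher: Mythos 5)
Your proposal is correct and follows essentially the same route as the paper: piecewise $C^1$ solutions of \eqref{eq:ODE_W_Delta} for a refining sequence of partitions, uniform Lipschitz bounds and Arzel\`a--Ascoli, then Theorem \ref{thm:solution limit} on each $[-T,T]$. Your diagonal extraction plus Corollary \ref{cor:stability} simply makes explicit the step the paper compresses into ``the arbitrariness of $T$ leads to our conclusion.''
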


\begin{proof}
	Fix any $T>0$. Take a sequence of partitions $\{\Delta_k\}$ of $[-T,T]$ satisfying $\lim_{k\to\infty}|\Delta_k|=0$. For each $k\in\N$, let $\gamma_k:[-T,T]\to M$ be the solution of equation \eqref{eq:ODE_W_Delta} with $\Delta=\Delta_k$. Since $|\dot{\gamma}_k|$ is uniformly bounded, up to taking a subsequence we can suppose $\gamma_k$ converges uniformly to $\gamma:[-T,T]\to M$, $\gamma(0)=x$. Then Theorem \ref{thm:solution limit} implies that $\gamma$ is a strict singular characteristic for the pair $(\phi,H)$. The arbitrariness of $T$ leads to our conclusion.
\end{proof}

Invoking our previous intrinsic construction, for any partition $\Delta$ of $[0,T]$ with $|\Delta|\leqslant\tau(\phi)$ and $x\in M$, we define a curve $\mathbf{y}_{\Delta,x}:[0,T]\to M$ inductively by
\begin{align*}
	\mathbf{y}_{\Delta,x}(0)=x,\quad \mathbf{y}_{\Delta,x}(t)=\arg\max\{\phi(y)-A_{t-\tau_{\Delta}^{-}(t)}(\mathbf{y}_{\Delta,x}(\tau_{\Delta}^{-}(t)),y):y\in M\},\quad 
\end{align*}

\begin{The}\label{thm:in sc}
Suppose $\{\Delta_k\}$ is a sequence of partitions of $[0,T]$ with $\lim_{k\to\infty}|\Delta_k|=0$, and $\{x_k\}\subset M$. Then any limiting curve $\mathbf{y}$ of the intrinsic singular characteristics $\mathbf{y}_{\Delta_k,x_k}$ is a strict singular characteristic for the pair $(\phi,H)$.
\end{The}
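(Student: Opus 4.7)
The plan is to reduce the statement to the already-proved Theorem \ref{thm:solution limit} by constructing, for each $k$, an auxiliary curve $\gamma_k\colon[0,T]\to M$ which is close to $\mathbf{y}_{\Delta_k,x_k}$ and for which the previous stability-of-limits result directly applies. Concretely, I would let $\gamma_k$ be the unique piecewise $C^1$ solution of \eqref{eq:ODE_W_Delta} with partition $\Delta_k$ and initial datum $\gamma_k(0)=x_k$. Denoting the nodes of $\Delta_k$ by $0=\tau_0^k<\tau_1^k<\dots<\tau_{N_k}^k=T$, the key observation is that $\gamma_k$ and $\mathbf{y}_{\Delta_k,x_k}$ agree at every node, i.e.\ $\gamma_k(\tau_i^k)=\mathbf{y}_{\Delta_k,x_k}(\tau_i^k)$ for all $i$. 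I would prove this by induction on $i$: assuming the equality at $\tau_i^k$ and setting $z:=\gamma_k(\tau_i^k)$, the translated curve $\tilde\gamma(s):=\gamma_k(\tau_i^k+s)$ on $[0,\tau_{i+1}^k-\tau_i^k]$ satisfies exactly the characteristic equation \eqref{eq:vector_field1_4} of Proposition \ref{pro:curve_y} (1) with $x=z$ and $t=\tau_{i+1}^k-\tau_i^k\leqslant|\Delta_k|\leqslant\tau(\phi)$. Since $T^+_s\phi\in C^{1,1}$ for $s\in(0,\tau(\phi)]$ by Proposition \ref{pro:graph} (1), the vector field is Lipschitz in space on each subinterval and uniqueness forces $\tilde\gamma$ to coincide with the minimizing curve $\xi_{\tau_{i+1}^k-\tau_i^k}$ joining $z$ to $\mathbf{y}_z(\tau_{i+1}^k-\tau_i^k)$. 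Evaluating at the right endpoint gives $\gamma_k(\tau_{i+1}^k)=\mathbf{y}_z(\tau_{i+1}^k-\tau_i^k)=\mathbf{y}_{\Delta_k,x_k}(\tau_{i+1}^k)$, closing the induction.

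With the node-wise identification in hand, I would exploit Proposition \ref{pro:curve_y} (3) to compare the two curves between the nodes. On each subinterval $[\tau_i^k,\tau_{i+1}^k]$, after translation, the two curves are respectively the minimizing curve $\xi_{\tau_{i+1}^k-\tau_i^k}$ and the intrinsic singular characteristic $\mathbf{y}_z$ starting from the common point $z$. Proposition \ref{pro:curve_y} (3) therefore yields a constant $C=C_{(\phi,H)}$, independent of $k$ and $i$, such that
\[
|\gamma_k(t)-\mathbf{y}_{\Delta_k,x_k}(t)|\leqslant C(\tau_{i+1}^k-\tau_i^k)\leqslant C|\Delta_k|,\qquad t\in[\tau_i^k,\tau_{i+1}^k].
\]
Taking the sup over $t\in[0,T]$ gives $\|\gamma_k-\mathbf{y}_{\Delta_k,x_k}\|_{C([0,T];M)}\leqslant C|\Delta_k|\to 0$. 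Since by hypothesis $\mathbf{y}_{\Delta_k,x_k}\to\mathbf{y}$ uniformly (along a subsequence), the same subsequence of $\gamma_k$ converges uniformly to $\mathbf{y}$. Invoking Theorem \ref{thm:solution limit} (whose argument adapts verbatim to the interval $[0,T]$) I conclude that $\mathbf{y}$ is a strict singular characteristic for the pair $(\phi,H)$.

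The principal difficulty lies entirely in the first step, namely the identification of $\gamma_k\big|_{[\tau_i^k,\tau_{i+1}^k]}$ with the minimizing curve produced by Proposition \ref{pro:curve_y} (1). This requires a careful uniqueness argument for the ODE \eqref{eq:vector_field1_4}, which is available because $T^+_s\phi$ is of class $C^{1,1}$ uniformly for $s$ bounded away from $0$, but one must be mindful of the degeneration of the Lipschitz constant as $s\to 0^+$ and conclude uniqueness on $[0,t')$ for every $t'<t$ before passing to $s=t$ by continuity. Once this identification is secured, the $O(|\Delta_k|)$ bound from Proposition \ref{pro:curve_y} (3) and the convergence machinery of Theorem \ref{thm:solution limit} deliver the result.
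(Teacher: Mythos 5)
Your proposal is correct and follows essentially the same route as the paper: the paper's proof also introduces the auxiliary solutions $\gamma_k$ of \eqref{eq:ODE_W_Delta} with $\Delta=\Delta_k$, $x=x_k$, bounds $\|\gamma_k-\mathbf{y}_{\Delta_k,x_k}\|_\infty$ by $O(|\Delta_k|)$ via Proposition \ref{pro:curve_y}, and then invokes Theorem \ref{thm:solution limit}. The only difference is that you spell out the node-wise identification (via uniqueness for \eqref{eq:vector_field1_4} and the degenerating Lipschitz constant as $t-s\to0^+$) and the use of Proposition \ref{pro:curve_y} (3) between nodes, details the paper compresses into a single citation.
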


\begin{proof}
For every $k\in\N$, we let $\gamma_k:[0,T]\to M$ be the solution of \eqref{eq:ODE_W_Delta} with $\Delta=\Delta_k$, $x=x_k$. Then Proposition \ref{pro:curve_y} (2) implies
\begin{align*}
	\limsup_{k\to\infty}\|\gamma_k-\mathbf{y}\|_{\infty}\leqslant\,\limsup_{k\to\infty}\|\gamma_k-\mathbf{y}_{\Delta_k,x_k}\|_{\infty}+\limsup_{k\to\infty}\|\mathbf{y}_{\Delta_k,x_k}-\mathbf{y}\|_{\infty}\leqslant\,\limsup_{k\to\infty}|\Delta_k|+0=\,0.
\end{align*}
Thus, by Theorem \ref{thm:solution limit}, $\mathbf{y}$ is a strict singular characteristic for the pair $(\phi,H)$.
\end{proof}

%\section{Intrinsic aspects of strict singular characteristics and propagation of singularities}
%
%In this section, we always suppose that $\phi\in\text{\rm SCL}\,(M)$ and $H:T^*M\to\R$ is a Tonelli Hamiltonian on a compact and connected manifold without boundary.
%

\section{Propagation of singularities}

In this section, we study the problem of global propagation of singularities for a weak KAM solution $\phi$ of \eqref{eq:HJ_wk}. We first recall the following result.

\begin{The}[\cite{Albano2014_1} Theorem 1.2]\label{thm:global_prop}
Suppose $H:T^*M\to\R$ is a Tonelli Hamiltonian, $\phi$ is a weak KAM solution of \eqref{eq:HJ_wk}. If $\gamma:[0,+\infty)\to M$ is a Lipschitz solution of the differential inclusion
\begin{equation}\label{eq:gc51}
\dot{\gamma}(t)\in\text{\rm co}\,H_p(\gamma(t),D^+\phi(\gamma(t))),\qquad a.e.\ t\in[0,+\infty),\\
\end{equation}
and $\gamma(0)\in\overline{\text{\rm Sing}\,(\phi)}$, then we have $\gamma(t)\in\overline{\text{\rm Sing}\,(\phi)}$ for all $t\geqslant0$.
\end{The}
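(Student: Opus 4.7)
My plan is to argue by contradiction and exhibit a short forward calibration at the last time $\gamma$ sits in $\overline{\text{\rm Sing}\,(\phi)}$, thereby pushing that time outside $\text{\rm Cut}\,(\phi)$. Suppose that some $t_0 > 0$ satisfies $\gamma(t_0) \notin \overline{\text{\rm Sing}\,(\phi)}$. From the chain $\text{\rm Sing}\,(\phi) \subset \text{\rm Cut}\,(\phi) \subset \overline{\text{\rm Sing}\,(\phi)}$ combined with the openness of $M \setminus \text{\rm Cut}\,(\phi)$ provided by Theorem \ref{thm:C11}, we have $\overline{\text{\rm Sing}\,(\phi)} = \text{\rm Cut}\,(\phi)$, which is in particular closed. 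Setting $t_1 := \sup\{t \in [0, t_0] : \gamma(t) \in \overline{\text{\rm Sing}\,(\phi)}\}$, continuity of $\gamma$ and closedness of $\overline{\text{\rm Sing}\,(\phi)}$ give $t_1 < t_0$, $\gamma(t_1) \in \overline{\text{\rm Sing}\,(\phi)}$, and $\gamma(t) \in M \setminus \overline{\text{\rm Sing}\,(\phi)}$ for every $t \in (t_1, t_0]$.

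On this open complement $\phi$ is differentiable everywhere and hence of class $C^1$ by semiconcavity; consequently $D^+\phi(\gamma(t)) = \{D\phi(\gamma(t))\}$ for each $t \in (t_1, t_0]$, and the differential inclusion \eqref{eq:gc51} collapses to the continuous ODE
\begin{equation*}
\dot{\gamma}(t) = H_p\bigl(\gamma(t), D\phi(\gamma(t))\bigr), \qquad t \in (t_1, t_0].
\end{equation*}
Using $H(y, D\phi(y)) = 0$ at every differentiability point of the weak KAM solution $\phi$ together with the Fenchel equality $\langle D\phi(\gamma), \dot{\gamma}\rangle = L(\gamma, \dot{\gamma}) + H(\gamma, D\phi(\gamma))$, integration from $t_1 + \varepsilon$ to $t_0$ followed by $\varepsilon \to 0^+$ (which is legitimate since $\phi \circ \gamma$ is continuous and $\dot{\gamma}$ is essentially bounded) yields
\begin{equation*}
\phi(\gamma(t_0)) - \phi(\gamma(t_1)) = \int_{t_1}^{t_0} L(\gamma(u), \dot{\gamma}(u))\, du.
\end{equation*}
Thus $\gamma|_{[t_1, t_0]}$ is an action minimizer, hence a Tonelli extremal of class $C^1$, and the identity above exhibits a forward $(\phi, H)$-calibrated curve issuing from $\gamma(t_1)$.

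Combining this with the definition of the cut time yields $\tau_\phi(\gamma(t_1)) \geq t_0 - t_1 > 0$, i.e.\ $\gamma(t_1) \notin \text{\rm Cut}\,(\phi) = \overline{\text{\rm Sing}\,(\phi)}$, contradicting $\gamma(t_1) \in \overline{\text{\rm Sing}\,(\phi)}$. The main obstacle I anticipate is the reduction step, which rests on two structural facts: the equality $\overline{\text{\rm Sing}\,(\phi)} = \text{\rm Cut}\,(\phi)$ and the local $C^1$ regularity of $\phi$ on its complement; both are embedded in, or follow at once from, Theorem \ref{thm:C11} together with the semiconcavity of $\phi$. Once the inclusion has been reduced to a continuous ODE in the regular region, the rest of the argument is a clean bookkeeping exercise based on the Hamilton-Jacobi identity $H \circ D\phi = 0$ along the trajectory and the definition of $\tau_\phi$.
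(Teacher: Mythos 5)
This theorem is not proved in the paper at all: it is quoted verbatim from \cite{Albano2014_1}, so your proposal can only be judged on its own merits, and it contains a genuine gap at the decisive step. Everything up to the calibration identity is sound: $t_1=\max\{t\in[0,t_0]:\gamma(t)\in\overline{\text{\rm Sing}\,(\phi)}\}$ is well defined, on $(t_1,t_0]$ the curve lies in the open set $M\setminus\overline{\text{\rm Sing}\,(\phi)}$ where $\phi$ is $C^1$ and \eqref{eq:gc51} collapses to the ODE, and the identity $\phi(\gamma(t_0))-\phi(\gamma(t_1))=\int_{t_1}^{t_0}L(\gamma,\dot\gamma)\,ds$ indeed shows that $\gamma|_{[t_1,t_0]}$ is forward $(\phi,H)$-calibrated, hence $\tau_\phi(\gamma(t_1))\geqslant t_0-t_1>0$, i.e. $\gamma(t_1)\notin\text{\rm Cut}\,(\phi)$. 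But this does not contradict $\gamma(t_1)\in\overline{\text{\rm Sing}\,(\phi)}$ unless $\overline{\text{\rm Sing}\,(\phi)}=\text{\rm Cut}\,(\phi)$. You claim this equality follows from ``the openness of $M\setminus\text{\rm Cut}\,(\phi)$ provided by Theorem \ref{thm:C11}'', but Theorem \ref{thm:C11} asserts no such openness: for $x\in E(\phi,t)$ it only gives the bound $|p-D\phi(x)|\leqslant\frac{C}{t}|y-x|$ for $y\in B(x,\lambda t)$, $p\in D^+\phi(y)$, which controls the diameter of $D^+\phi(y)$ by a quantity vanishing as $y\to x$ but does not exclude singular or cut points arbitrarily close to $x$. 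Whether $\text{\rm Cut}\,(\phi)$ is closed, equivalently whether $\text{\rm Cut}\,(\phi)=\overline{\text{\rm Sing}\,(\phi)}$, is exactly what the authors pose as open problem (D) in the concluding remarks, even for Hamiltonians quadratic in $p$; so your reduction silently assumes an unresolved statement, and the contradiction never closes.

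It is worth observing that your mechanism is precisely the one the paper uses for the cut locus: in Theorem \ref{thm:propagation_cut} the same combination (local $C^{1,1}$-type estimate from Theorem \ref{thm:C11}, uniqueness of \eqref{eq:gc51} through non-cut points, and a calibration identity) does finish the proof, because ``forward calibrated from $\gamma(t_1)$'' directly contradicts $\gamma(t_1)\in\text{\rm Cut}\,(\phi)$. For $\overline{\text{\rm Sing}\,(\phi)}$ the analogous conclusion $\gamma(t_1)\notin\text{\rm Cut}\,(\phi)$ is compatible with $\gamma(t_1)\in\overline{\text{\rm Sing}\,(\phi)}$, which is presumably why the authors cite Albano's theorem rather than reprove it; a correct argument must replace the unproven identity $\overline{\text{\rm Sing}\,(\phi)}=\text{\rm Cut}\,(\phi)$ by a different idea (as in \cite{Albano2014_1}).
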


Notice that for every weak KAM solution $\phi$ of \eqref{eq:HJ_wk}, we have $\text{\rm Sing}\,(\phi)\subset\text{\rm Cut}\,(\phi)\subset\overline{\text{\rm Sing}\,(\phi)}$. Now, we will do further analysis by intrinsic approach. Our aim is to prove the result of global propagation of cut points $\text{\rm Cut}\,(\phi)$.

\subsection{Local $C^{1,1}$ regularity in the complement of cut locus}

In this section, we suppose $M$ is a compact manifold and $H:T^*M\to\R$ is a Tonelli Hamiltonian. 
% $C^2$ function satisfying the following conditions:
%\begin{enumerate}[(H1')]
%	\item $H_{pp}(x,p)>0$ for all $x\in M$ and $p\in T^*_xM$;
%	\item there exists a superlinear function $\theta_0:[0,+\infty)\to[0,+\infty)$ and $c_0>0$ such that
%	\begin{align*}
%		H(x,p)\geqslant\theta_0(|p|_x)-c_0,\qquad x\in\in M, p\in T^*_xM.
%	\end{align*}
%\end{enumerate}
Given $(x,p)\in T^*M$, we consider the following Cauchy problem for Hamiltonian ODEs in a local chart:
\begin{equation}\tag{H}\label{eq:Hamiltonian_ODE}
	\begin{cases}
		\dot{X}(t)=H_p(X(t),P(t)),\\
		\dot{P}(t)=-H_x(X(t),P(t)),
	\end{cases}
	\quad t\in\R, \qquad\text{with}\qquad
	\begin{cases}
		X(0)=x,\\
		P(0)=p.
	\end{cases}
\end{equation}
We denote by $(X(t;x,p),P(t;x,p))$ the solution of \eqref{eq:Hamiltonian_ODE} and set
\begin{align*}
	U(t;x,p):=
	\begin{cases}
		\int^t_0\langle P,\dot{X}\rangle-H(X,P)\ ds,&t\geqslant0;\\
		\int^0_t\langle P,\dot{X}\rangle-H(X,P)\ ds,&t<0.
	\end{cases}
\end{align*}
For any $x\in M$ and $\alpha>0$, set
\begin{align*}
	V(x,\alpha)=\{p\in T^*_xM: |H_p(x,p)|<\alpha\}.
\end{align*}

\begin{Pro}\label{pro:reg1}
For any $\alpha>0$ there exists $\tau=\tau_\alpha>0$ and $D_i=D_{\alpha,i}>0$, $i=1,\ldots,6$, such that for any $x\in M$ and $0<|t|\leqslant\tau$ the following statements hold.
\begin{enumerate}[\rm (1)]
	\item $p\mapsto X(t;x,p)$ is a $C^1$-diffeomorphism on $V(x,\alpha)$ such that
	\begin{align*}
		D_1|t|\cdot|p_1-p_2|\leqslant|X(t,x,p_1)-X(t,x,p_2)|\leqslant D_2|t|\cdot|p_1-p_2|,\qquad\forall p_1,p_2\in V(x,\alpha).
	\end{align*}
	\item $p\mapsto P(t;x,p)$ is a $C^1$-diffeomorphism on $V(x,\alpha)$ such that
	\begin{align*}
		D_3\cdot|p_1-p_2|\leqslant|P(t,x,p_1)-P(t,x,p_2)|\leqslant D_4\cdot|p_1-p_2|,\qquad\forall p_1,p_2\in V(x,\alpha).
	\end{align*}
	\item The map $X\mapsto U(t;x,X^{-1}(t;x,X))$ is a $C^2$-function on $X(t;x,V(x,\alpha))$ such that
	\begin{gather*}
		\frac{\partial U}{\partial X}(t,x,X^{-1}(t;x,X))=
		\begin{cases}
			P(t;x,X^{-1}(t;x,X)),&t>0,\\
			-P(t;x,X^{-1}(t;x,X)),&t<0,
		\end{cases}
		\qquad\forall X\in X(t;x,V(x,\alpha)),\\
		\frac{D_5}{|t|}\leqslant\frac{\partial^2U}{\partial X^2}(t;x,X^{-1}(t;x,X))\leqslant\frac{D_6}{|t|},\qquad \qquad\forall X\in X(t;x,V(x,\alpha)).
	\end{gather*}
\end{enumerate}
\end{Pro}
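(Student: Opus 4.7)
The plan is to argue by a uniform Taylor expansion of the Hamiltonian flow around $t=0$, exploiting the Tonelli condition to obtain constants $D_i$ independent of $x\in M$. Since everything is infinitesimal, we work throughout in local coordinates (choosing $\tau$ so small that trajectories remain in a single chart).

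First I would reduce to a compact subset of $T^*M$. By the uniform superlinearity of $H$ and the compactness of $M$, the Legendre transform $p\mapsto H_p(x,p)$ is a $C^1$-diffeomorphism of $T^*_xM$ onto $T_xM$ whose inverse grows at a rate independent of $x$; hence there exists $R_\alpha>0$ with $V(x,\alpha)\subset\{|p|_x\leqslant R_\alpha\}$ for every $x\in M$. Let $K_\alpha\subset T^*M$ be a cotangent ball of uniform radius $R_\alpha$ containing all the $V(x,\alpha)$; on $K_\alpha$ the Tonelli hypotheses give $\lambda_0 I\leqslant H_{pp}\leqslant \Lambda_0 I$ together with uniform bounds on $H_{xp}$ and $H_{xx}$. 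Shrinking $\tau_\alpha$, the flow $(X(t;x,p),P(t;x,p))$ with $(x,p)\in K_\alpha$, $|t|\leqslant\tau_\alpha$ remains in a slightly enlarged compact set where the same bounds hold.

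For (1) and (2), Taylor expansion in $t$ of the Hamiltonian ODE and its variational equation give
\begin{align*}
X(t;x,p)&=x+tH_p(x,p)+O(t^2), & P(t;x,p)&=p-tH_x(x,p)+O(t^2),\\
\partial_p X(t;x,p)&=tH_{pp}(x,p)+O(t^2), & \partial_p P(t;x,p)&=I+O(t),
\end{align*}
with remainders uniform on $K_\alpha$. After a further shrinkage of $\tau_\alpha$, $\partial_p X(t;x,p)/t$ lies in a small neighbourhood of $H_{pp}(x,p)$, hence is uniformly invertible, and $\partial_p P(t;x,p)$ is a small perturbation of $I$. For any $p_1,p_2\in V(x,\alpha)$ the segment $p_\sigma=p_2+\sigma(p_1-p_2)$ stays in the convex fibre $K_\alpha\cap T^*_xM$, so writing $X(t;x,p_1)-X(t;x,p_2)=\int_0^1\partial_p X(t;x,p_\sigma)\,d\sigma\cdot(p_1-p_2)$ and invoking the uniform spectrum of $H_{pp}$ yields the two-sided Lipschitz estimates in (1); the analogous argument for $P$, using $\partial_p P\approx I$, gives (2). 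Injectivity from these bounds combined with the local invertibility from $\partial_p X$ produces the global $C^1$-diffeomorphism statements.

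For (3), I would exploit the Hamilton--Jacobi structure of the action. Along the flow $\langle P,\dot X\rangle-H=L(X,\dot X)$, so $U$ is smooth in $(t,x,p)$. Differentiating $U(t;x,p)=\int_0^t(\langle P,\dot X\rangle-H)\,ds$ with respect to $p$ and integrating by parts using $\dot X=H_p$, $\dot P=-H_x$, one obtains $\partial_p U(t;x,p)=P(t;x,p)\cdot\partial_p X(t;x,p)$ for $t>0$, with the opposite sign for $t<0$ because the integral becomes $\int_t^0$. The chain rule applied to $u(t,x,X):=U(t;x,X^{-1}(t;x,X))$ then gives $\partial_X u=\pm P(t;x,X^{-1}(t;x,X))$. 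Differentiating once more,
$$
\partial_X^2 u=\pm\partial_p P(t;x,X^{-1})\cdot(\partial_p X(t;x,X^{-1}))^{-1}=\tfrac{1}{|t|}H_{pp}(x,X^{-1})^{-1}(I+O(t)),
$$
since $\pm=\mathrm{sign}(t)$; the uniform bounds on $H_{pp}$ yield $D_5/|t|\leqslant\partial_X^2 u\leqslant D_6/|t|$ after one last shrinkage of $\tau_\alpha$. The main technical point throughout is the uniformity of all $O$-terms in $(x,p)\in K_\alpha$, which is secured by the compactness of $M$ together with the Tonelli assumptions and the $x$-independent radius $R_\alpha$.
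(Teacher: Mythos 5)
Your proposal is correct and follows essentially the same route as the paper: both arguments rest on the uniform small-time asymptotics of the linearized flow ($\partial_p X\sim tH_{pp}$, $\partial_p P\sim Id$, obtained in the paper from the variational equation and in your write-up via Taylor expansion with uniform remainders), the two-sided Tonelli bounds on $H_{pp}$, and for part (3) the identity $\partial_p U=P\cdot\partial_p X$ combined with the chain rule to get $\partial^2_X U=\partial_pP\cdot(\partial_pX)^{-1}\approx\frac{1}{|t|}H_{pp}^{-1}$. The only caveat is that with $H$ merely $C^2$ the remainders are $o(t)$ rather than $O(t^2)$, but this does not affect the argument.
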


\begin{proof}
We work on Euclidean case. We only prove the statements for $t>0$ since one can argue similarly for $t<0$. Consider the variational equation of \eqref{eq:Hamiltonian_ODE}
\begin{equation}\label{eq:VE}\tag{VE}
	\begin{cases}
		\dot{X}_p(t;x,p)=H_{px}\cdot X_p+H_{pp}\cdot P_p,\\
		\dot{P}_p(t;x,p)=-H_{xx}\cdot X_p-H_{xp}\cdot P_p,
	\end{cases}
	\quad t\in\R,\qquad\text{with}\qquad
	\begin{cases}
		X_p(0;x,p)=0,\\
		P_p(0;x,p)=Id.
	\end{cases}
\end{equation}
Observe that the initial data of the linear equation \eqref{eq:VE} is constant and the coefficients matrix 
\begin{align*}
	\begin{bmatrix}
		H_{px}(X,P)&H_{pp}(X,P)\\
		-H_{xx}(X,P)&-H_{xp}(X,P)
	\end{bmatrix}
\end{align*}
is continuous with respect to $X=X(t;x,p)$ and $P=P(t;x,p)$. Thus, both $(X_p,P_p)$ and $(\dot{X}_P,\dot{P}_p)$ are uniformly continuous on the bounded set
\begin{align*}
	\{(t,x,p): t\in[0,1], x\in M, p\in V(x,\alpha)\}
\end{align*}
It follows that
\begin{equation}\label{eq:initial_condition1}
    \begin{split}
    	\lim_{t\to0^+}\frac 1tX_p(t;x,p)=&\,\lim_{t\to0^+}\frac{X_p(0,x,p)+\int^t_0\dot{X}_p(s;x,p)\ ds}t=\dot{X}_p(0;x,p)=H_{pp}(x,p),\\
    	\lim_{t\to0^+}P_p(t;x,p)=&\,P_p(0;x,p)=Id,
    \end{split}
\end{equation}
uniformly for $x\in M$ and $p\in V(x,\alpha)$.
%Solving \eqref{eq:VE} we have
%\begin{align*}
%	\begin{bmatrix}
%		X_p(t;x,p)\\
%		P_p(t;x,p)
%	\end{bmatrix}
%	=&\,\exp\Big(\int^t_0
%	\begin{bmatrix}
%		H_{px}(X,P)&H_{pp}(X,P)\\
%		-H_{xx}(X,P)&-H_{x,p}(X,P)
%	\end{bmatrix}
%	\ ds\Big)\cdot
%	\begin{bmatrix}
%		0\\
%		Id
%	\end{bmatrix}\\
%	=&\,
%	\begin{bmatrix}
%		\int^t_0H_{pp}(X,P)\ ds\\
%		Id-\int^t_0H_{xp}(X,P)\ ds
%	\end{bmatrix}
%	+\sum^\infty_{k=2}\frac{1}{k!}\Big(\int^t_0
%	\begin{bmatrix}
%		H_{px}(X,P)&H_{pp}(X,P)\\
%		-H_{xx}(X,P)&-H_{xp}(X,P)
%	\end{bmatrix}
%	\ ds\Big)^k.
%\end{align*}
%By compactness of $M$ the set $\{(x,p): x\in M, p\in V(x,\alpha)\}$ is bounded and so is the set $(X(t;x,p),P(t;x,p)): t\in\R, x\in M, p\in V(x,\alpha)\}$. Thus we have
%\begin{equation}\label{eq:initial_condition1}
%    \begin{split}
%    	\lim_{t\to0^+}\frac 1tX_p(t;x,p)=&\,H_{pp}(x,p),\\
%    	\lim_{t\to0^+}P_p(t;x,p)=&\,Id,
%    \end{split}
%	\qquad\text{uniformly for}\ x\in M, p\in V(x,\alpha).
%\end{equation}
Due to the Tonelli condition (H1') there exists $D'_1,D_2'\geqslant0$ such that
\begin{equation}\label{eq:5-01}
	D'_1Id\leqslant H_{pp}\leqslant D'_2Id,\qquad x\in M, p\in V(x,\alpha).
\end{equation}
Together with \eqref{eq:initial_condition1}, the implicit function theorem leads to (1) and (2).

To prove (3) we observe that for any $t\in\R$, $(x,p)\in T^*M$,
\begin{align*}
	U_p(t;x,p)=&\,\int^t_0P_p\cdot \dot{X}+P\cdot \dot{X}_p-H_x\cdot X_p-H_p\cdot P_p\ ds\\
	=&\,\int^t_0P\cdot \dot{X}_p+\dot{P}\cdot X_p=(P\cdot X_p)\big|^t_0\\
	=&\,P(t;x,p)\cdot X_p(t;x,p).
\end{align*}
Then, for $\tau\ll1$ and $x\in M$, $t\in(0,\tau]$ we have
\begin{align*}
	\frac{\partial U}{\partial X}(t;x,X^{-1}(t;x,X))=&\,\frac{\partial U}{\partial p}(t;x,X^{-1}(t;x,X))\cdot\frac{\partial X^{-1}}{\partial X}(t;x,X)\\
	=&\,P(t;x,X^{-1}(t;x,X))\cdot \frac{\partial X}{\partial p}(t;x,X^{-1}(t;x,X))\cdot\frac{\partial X^{-1}}{\partial X}(t;x,X)\\
	=&\,P(t;x,X^{-1}(t;x,X)),\qquad \forall X(t;x,V(x,\alpha)),
\end{align*}
and $X\mapsto P(t;x,X^{-1}(t;x,X))$ is a $C^1$-diffeomorphism on $X(t;x,V(x,\alpha))$ by (1) and (2). Thus, $X\mapsto U(t;x,X^{-1}(t;x,X))$ is a $C^2$-diffeomorphism on $X(t;x,V(x,\alpha))$. Moreover,
\begin{equation}\label{eq:5-02}
	\begin{split}
		\frac{\partial^2U}{\partial X^2}(t;x,X^{-1}(t;x,X))=&\,\frac{\partial P}{\partial X}(t;x,X^{-1}(t;x,X))\\
	=&\,\frac{\partial P}{\partial p}(t;x,X^{-1}(t;x,X))\cdot\frac{\partial X^{-1}}{\partial X}(t;x,X^{-1}(t;x,X)),
	\end{split}
\end{equation}
for all $X\in X(t;x,V(x,\alpha))$. Taking inverse in \eqref{eq:initial_condition1} and \eqref{eq:5-01} we conclude that for any $x\in M$ and $p\in V(x,\alpha)$
\begin{equation}\label{eq:5-03}
	\begin{split}
	\lim_{t\to0^+}t\cdot X^{-1}_p(t;x,p)=H^{-1}_{pp}(x,p)\\
	(D'_2)^{-1}Id\leqslant H^{-1}_{pp}\leqslant(D'_1)^{-1}Id
    \end{split}
\end{equation}
Then \eqref{eq:initial_condition1}, \eqref{eq:5-02} and \eqref{eq:5-03} leads to our conclusion.
\end{proof}

\begin{Pro}\label{pro:reg2}
For any $\lambda>0$ there exists $t_\lambda>0$ and $C_{\lambda,i}>0$, $i=1,2,3,4$, such that for any $x\in M$ and $t\in(0,t_\lambda]$
\begin{enumerate}[\rm (1)]
	\item For any $y\in B(x,\lambda t)$ there exists a unique minimal curve $\xi_{t,x,y}\in\Gamma^t_{x,y}$ for $A_t(x,y)$.
	\item Let $p_{t,x,y}(\cdot)=L_v(\xi_{t,x,y}(\cdot),\dot{\xi}_{t,x,y}(\cdot))$ be the dual arc of $\xi_{t,x,y}$. For any $s\in[0,t]$, $y\mapsto p_{t,x,y}(s)$ is a $C^1$-diffeomorphism on $B(x,\lambda t)$ and
	\begin{align*}
		\frac{C_{\lambda,1}}{t}|y_1-y_2|\leqslant|p_{t,x,y_1}(s)-p_{t,x,y_2}(s)|\leqslant\frac{C_{\lambda,2}}{t}|y_1-y_2|,\qquad\forall y_1,y_2\in B(x,\lambda t).
	\end{align*}
	\item $y\mapsto A_t(x,y)$ is a $C^2$ function on $B(x,\lambda t)$, and
	\begin{gather*}
		\frac{\partial}{\partial y}A_t(x,y)=p_{t,x,y}(t),\\
		\frac{C_{\alpha,3}}{t}Id\leqslant\frac{\partial^2}{\partial y^2}A_t(x,y)\leqslant\frac{C_{\alpha,4}}{t}Id,
	\end{gather*}
	for all $y\in B(x,\lambda t)$.
\end{enumerate}

Similar statements also hold for $A_t(\cdot,x)$ instead of $A_t(x,\cdot)$.
\end{Pro}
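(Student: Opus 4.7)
The plan is to deduce Proposition 5.2 from Proposition 5.1 by expressing the fundamental solution through the Hamiltonian flow: for $y$ close to $x$ relative to $t$, the unique minimizer from $x$ to $y$ is the Hamiltonian trajectory with initial momentum $p_0 = X^{-1}(t;x,y)$, so that $A_t(x,y) = U(t;x,X^{-1}(t;x,y))$ and the dual arc becomes $p_{t,x,y}(s) = P(s;x,X^{-1}(t;x,y))$. All three conclusions then reduce to the properties of the endpoint map $X(t;x,\cdot)$, the momentum map $P(t;x,\cdot)$, and the action $U(t;x,\cdot)$ already established in Proposition 5.1.

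The first step is to fix $\lambda > 0$ and choose $\alpha = \alpha(\lambda) > \lambda$ so that any minimizer of $A_t(x,y)$ with $y \in B(x,\lambda t)$ has its initial dual momentum in $V(x,\alpha)$. Concretely I would show that there is a constant $K = K(\lambda)$ such that, for $t$ small enough and every such minimizer $\xi$, one has $\|\dot\xi\|_\infty \leqslant K$. This is the classical Tonelli a priori estimate: compare $\xi$ with the geodesic competitor of constant speed $|y-x|/t \leqslant \lambda$ to bound $\int_0^t L(\xi,\dot\xi)\,ds$ by $C(\lambda)t$, then use the uniform superlinearity of $L$ together with energy conservation along the Euler--Lagrange flow to turn the integral bound into a pointwise Lipschitz bound on $\xi$. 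With $\alpha := K+1$, setting $p_0 := L_v(x,\dot\xi(0))$ gives $|H_p(x,p_0)| = |\dot\xi(0)| < \alpha$, i.e.\ $p_0 \in V(x,\alpha)$. Now apply Proposition 5.1 with this $\alpha$ to obtain $\tau_\alpha > 0$ and constants $D_i$; by Proposition 5.1(1), $p \mapsto X(t;x,p)$ is a $C^1$-diffeomorphism on $V(x,\alpha)$ for $0 < t \leqslant \tau_\alpha$, and upon further shrinking $t_\lambda$ so that $D_1 t \alpha > \lambda t$, the image $X(t;x,V(x,\alpha))$ contains $B(x,\lambda t)$. Together with the a priori estimate, this forces the minimizer to be unique with $p_0 = X^{-1}(t;x,y)$, yielding (1) and the explicit formulas $\xi_{t,x,y}(s) = X(s;x,X^{-1}(t;x,y))$ and $p_{t,x,y}(s) = P(s;x,X^{-1}(t;x,y))$.

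Conclusion (2) then follows by composing Proposition 5.1(2) (applied at time $s$) with the inverse of the bi-Lipschitz map of Proposition 5.1(1) (applied at time $t$); the factor $1/t$ is precisely the reciprocal of the $t$-linear scaling of $X(t;x,\cdot)$ near $t = 0$. Conclusion (3) is immediate from $A_t(x,y) = U(t;x,X^{-1}(t;x,y))$ together with Proposition 5.1(3), which yields $A_t(x,\cdot) \in C^2(B(x,\lambda t))$, the gradient formula $\partial_y A_t(x,y) = P(t;x,X^{-1}(t;x,y)) = p_{t,x,y}(t)$, and the two-sided Hessian bounds $(C_{\lambda,3}/t)\,Id \leqslant \partial^2_y A_t(x,y) \leqslant (C_{\lambda,4}/t)\,Id$. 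The analogous statements for $A_t(\cdot,x)$ follow by symmetry, either by running the same argument at the initial endpoint using $\partial_x A_t(x,y) = -p_{t,x,y}(0)$, or by a time-reversal with the reversed Lagrangian. The principal obstacle is ensuring that every choice of constant is uniform in $x \in M$; this reduces to the compactness of $M$ together with the continuity of the Tonelli data $H_{pp}$ and the uniform superlinearity of $L$, which guarantee that both the Tonelli a priori estimate and the implicit function theorem thresholds of Proposition 5.1 depend only on $\lambda$ and not on the base point.
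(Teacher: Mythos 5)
Your proposal is correct and follows essentially the same route as the paper: both reduce the statement to Proposition \ref{pro:reg1} by using the classical Tonelli a priori bound on the speed of minimizers (the paper's non-decreasing function $F(|y-x|/t)$, your $K(\lambda)$) to place the initial momentum in $V(x,\alpha)$, and then read off (1)--(3) from the diffeomorphism and Hessian estimates for $X$, $P$, and $U$, with the constants $C_{\lambda,i}$ obtained as the same ratios of the $D_i$'s. You merely spell out details the paper leaves implicit, such as the uniqueness argument via injectivity of the endpoint map and the uniformity in $x\in M$.
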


\begin{proof}
We only prove the statements for $A_t(x,\cdot)$ since one can argue similarly for $A_t(\cdot,x)$. By classical Tonelli theory and Hamiltonian dynamical systems, for any $x,y\in M$ and $t>0$, any minimizer $\xi_{t,x,y}$ for $A_t(x,y)$ determines the pair $(\xi_{t,x,y},p_{t,x,y})$ satisfying Hamiltonian system \eqref{eq:Hamiltonian_ODE}, and
\begin{align*}
	A_t(x,y)=&\,\int^t_0L(\xi_{t,x,y}(s),\dot{\xi}_{t,x,y}(s))\ ds=\int^t_0\langle p_{t,x,y}(s),\dot{\xi}_{t,x,y}(s)\rangle-H(\xi_{t,x,y}(s),p_{t,x,y}(s))\ ds\\
	=&\,U(t;x,p_{t,x,y}(0)).
\end{align*}
It is clear that there exists a non-decreasing function $F:[0,+\infty)\to[0,+\infty)$ such that $|\dot{\xi}_{t,x,y}(s)|\leqslant F(|y-x|/t)$ for $s\in[0,t]$. Let $t_\lambda=\tau_{F(\lambda)}$ in Proposition \ref{pro:reg1} and set
\begin{align*}
	C_{\lambda,1}=\frac{D_{F(\lambda),3}}{D_{F(\lambda),2}},\quad C_{\lambda,2}=\frac{D_{F(\lambda),4}}{D_{F(\lambda),1}},\quad C_{\lambda,3}=D_{F(\lambda),5},\quad C_{\lambda,4}=D_{F(\lambda),6},
\end{align*}
then all the statements are direct consequences of Proposition \ref{pro:reg1}.
\end{proof}

\begin{Lem}\label{lem:twist1}
For any $\lambda$ let $t_\lambda$ and $C_{\lambda,2}$ be as in Proposition \ref{pro:reg2}. If $t\in(0,t_\lambda]$ and $x_1,x_2,y_1,y_2\in M$ satisfy $|x_i-y_i|/t<\lambda$, $i,j=1,2$, then 
\begin{align*}
	|A_t(x_1,y_1)+A_t(x_2,y_2)-A_t(x_1,y_2)-A_t(x_2,y_1)|\leqslant\frac{C_{\lambda,2}}{t}|x_1-y_1|\cdot|y_1-y_2|.
\end{align*}
\end{Lem}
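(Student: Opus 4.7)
The plan is to interpret the cross difference as the increment of a single-variable auxiliary function and estimate it via the fundamental theorem of calculus, using the Lipschitz bounds for dual arcs furnished by Proposition \ref{pro:reg2}. First I pass to a local chart. The hypothesis $|x_i-y_j|/t<\lambda$ (reading the printed ``$|x_i-y_i|$'' as a typo for ``$|x_i-y_j|$'', consistent with the index range $i,j=1,2$) places both $x_1$ and $x_2$ in $B(y_1,\lambda t)\cap B(y_2,\lambda t)$, and by convexity of each Euclidean ball the straight segment joining $x_1$ to $x_2$ stays in this intersection. Consequently the $C^2$ regularity of $x\mapsto A_t(x,y)$ on $B(y,\lambda t)$, given by the companion version of Proposition \ref{pro:reg2}(3) for $A_t(\cdot,y)$, applies uniformly for $y\in\{y_1,y_2\}$.

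Next, set $G(x):=A_t(x,y_1)-A_t(x,y_2)$, so that the quantity we wish to estimate is precisely $|G(x_1)-G(x_2)|$. Using the symmetric form of Proposition \ref{pro:reg2}(3), namely $\partial_x A_t(x,y)=-p_{t,x,y}(0)$, one has
\begin{align*}
\nabla G(x)=p_{t,x,y_2}(0)-p_{t,x,y_1}(0).
\end{align*}
The Lipschitz estimate of Proposition \ref{pro:reg2}(2), evaluated at $s=0$ with $x$ fixed, yields $|\nabla G(x)|\leqslant (C_{\lambda,2}/t)|y_1-y_2|$. Integrating along the segment from $x_2$ to $x_1$ gives the standard twist inequality
\begin{align*}
|G(x_1)-G(x_2)|\leqslant \frac{C_{\lambda,2}}{t}\,|x_1-x_2|\cdot|y_1-y_2|.
\end{align*}

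The main obstacle is reconciling this output with the right-hand side as literally printed, whose first factor is $|x_1-y_1|$. The argument above naturally produces $|x_1-x_2|$; moreover, the literal statement cannot be correct, since in the degenerate case $x_1=y_1$ (with $x_2\neq y_2$) the stated right-hand side vanishes while the left-hand side is generically nonzero. I therefore read $|x_1-y_1|$ as a typographical slip for $|x_1-x_2|$, the usual twist bound, and under this reading the outline above is essentially the whole proof. The only technical point requiring care is uniformity in $x$ of the $C^1$ Lipschitz constant for $y\mapsto p_{t,x,y}(0)$: this follows because the parameter $\alpha$ in Proposition \ref{pro:reg1} depends on $\lambda$ alone through the a priori velocity bound $F(\lambda)$ used to prove Proposition \ref{pro:reg2}, so the constant $C_{\lambda,2}$ is independent of the base point $x$ traversed along $[x_2,x_1]$.
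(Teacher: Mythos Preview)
Your proof is correct and follows essentially the same approach as the paper: rewrite the cross difference via the fundamental theorem of calculus in one variable and apply the Lipschitz estimate from Proposition~\ref{pro:reg2}(2) in the other. The only cosmetic difference is that the paper integrates in $y$ (writing $A_t(x_i,y_1)-A_t(x_i,y_2)=\langle y_1-y_2,\int_0^1 p_{t,x_i,y_2+r(y_1-y_2)}(t)\,dr\rangle$) and then uses the symmetric Lipschitz bound in $x$, whereas you integrate in $x$ and use the Lipschitz bound in $y$; both routes yield the same twist inequality $\frac{C_{\lambda,2}}{t}|x_1-x_2|\cdot|y_1-y_2|$, and indeed the paper's own computation ends with $|x_1-x_2|$, confirming your diagnosis of the typo in the statement.
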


\begin{proof}
Invoking Proposition \ref{pro:reg2} we have that
\begin{align*}
	&\,|A_t(x_1,y_1)+A_t(x_2,y_2)-A_t(x_1,y_2)-A_t(x_2,y_1)|\\
	=&\,|[A_t(x_1,y_1)-A_t(x_1,y_2)]-[A_t(x_2,y_1)-A_t(x_2,y_2)]|\\
	=&\,\Big|\Big\langle y_1-y_2,\int^1_0p_{t,x_1,y_2+r(y_1-y_2)}(t)\ dr\Big\rangle-\Big\langle y_1-y_2,\int^1_0p_{t,x_2,y_2+r(y_1-y_2)}(t)\ dr\Big\rangle\Big|\\
	\leqslant&\,|y_1-y_2|\cdot\int^1_0|p_{t,x_1,y_2+r(y_1-y_2)}(t)-p_{t,x_2,y_2+r(y_1-y_2)}(t)|\ dr\\
	\leqslant&\,\frac{C_{\lambda,2}}{t}|x_1-x_2|\cdot|y_1-y_2|.
\end{align*}
This completes the proof.
\end{proof}

\begin{Lem}\label{lem:convex1}
For any $\lambda$ there exists $t'_\lambda\in(0,t_\lambda]$	and $C_{\lambda,5}>0$ such that, if $t\in(0,t'_\lambda]$ and $x,y,z\in M$ satisfy $|y-z|/t<\lambda$ and $|z-x|/t<\lambda$, then
\begin{align*}
	A_t(x,z)+A_t(z,y)-A_{2t}(x,y)\geqslant C_{\lambda,5}t\cdot|p_{t,x,z}(t)-p_{t,z,y}(0)|^2.
\end{align*}
\end{Lem}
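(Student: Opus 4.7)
The plan is to recognize the left-hand side as the deviation of a strictly convex auxiliary function from its minimum, and then convert this deviation into a gradient estimate via the two-sided Hessian bounds supplied by Proposition \ref{pro:reg2}. Set
\[
G(w):=A_t(x,w)+A_t(w,y),
\]
which, by the dynamic programming principle, satisfies $\inf_{w\in M}G(w)=A_{2t}(x,y)$, with any minimizer being the midpoint $w_0$ at time $t$ of an $A_{2t}$-minimizer from $x$ to $y$. The identity $F(z)-F(w_0)=A_t(x,z)+A_t(z,y)-A_{2t}(x,y)$ then reduces the lemma to a quadratic lower bound on the excess of $G$ at $z$.

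First I would secure a common domain on which $G$ is $C^{2}$ with controlled Hessian. By a standard bound on Tonelli minimizers, there exists a non-decreasing function bounding $|\dot\eta|$ by $\Lambda(\rho)$ whenever $\eta\in\Gamma^t_{a,b}$ is minimal with $|b-a|/t<\rho$. Choosing $\lambda':=\max\{\lambda,\Lambda(\lambda)\}+1$ and setting $t'_\lambda:=t_{\lambda'}$ from Proposition \ref{pro:reg2}, I would work in a chart in which $B(x,\lambda' t)\cap B(y,\lambda' t)$ is defined, and observe that both $z$ and $w_0$ lie inside this set. Proposition \ref{pro:reg2}(3) then gives that $G$ is $C^{2}$ there with
\[
\frac{2C_{\lambda',3}}{t}\,Id\;\leqslant\;D^{2}G(w)\;\leqslant\;\frac{2C_{\lambda',4}}{t}\,Id,
\]
while the formulas for $D_yA_t(x,\cdot)$ and $D_xA_t(\cdot,y)$ identify the gradient at $z$ as
\[
DG(z)=p_{t,x,z}(t)-p_{t,z,y}(0).
\]

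Next I would combine two standard consequences of the Hessian bounds. Since $DG(w_0)=0$, integrating twice along the segment $[w_0,z]$ (which lies in the $C^{1,1}$ domain) yields on the one hand
\[
G(z)-G(w_0)\;\geqslant\;\frac{C_{\lambda',3}}{t}\,|z-w_0|^{2},
\]
and on the other hand, integrating $D^{2}G$ once along the same segment,
\[
|DG(z)|\;=\;|DG(z)-DG(w_0)|\;\leqslant\;\frac{2C_{\lambda',4}}{t}\,|z-w_0|.
\]
Eliminating $|z-w_0|$ and substituting the explicit form of $DG(z)$ gives
\[
A_t(x,z)+A_t(z,y)-A_{2t}(x,y)\;\geqslant\;\frac{C_{\lambda',3}}{4 C_{\lambda',4}^{2}}\,t\,|p_{t,x,z}(t)-p_{t,z,y}(0)|^{2},
\]
which is the desired inequality with $C_{\lambda,5}:=C_{\lambda',3}/(4C_{\lambda',4}^{2})$.

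The main obstacle is geometric rather than analytic: one must be sure that the midpoint $w_0$ produced by the dynamic programming principle and the given point $z$ simultaneously lie inside the ball where the two-sided Hessian bounds of Proposition \ref{pro:reg2} are available, so that the Taylor comparison between $G(z)$, $G(w_0)$ and $DG(z)$ is legitimate. This is precisely what forces the enlargement from $\lambda$ to $\lambda'$ and the shrinking from $t_\lambda$ to $t'_\lambda\leqslant t_{\lambda'}$; once this choice is made, all the estimates depend only on $\lambda$, as required.
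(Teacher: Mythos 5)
Your argument is correct and follows essentially the same route as the paper: the same midpoint $w_0=\xi_{2t,x,y}(t)$ (after enlarging $\lambda$ via the speed bound on minimizers and shrinking $t$), the same quadratic lower bound on the excess $A_t(x,z)+A_t(z,y)-A_{2t}(x,y)\geqslant \tfrac{c}{t}|z-w_0|^2$ coming from the $C^2$-convexity in Proposition \ref{pro:reg2}(3), and the same elimination of $|z-w_0|$ in favour of $|p_{t,x,z}(t)-p_{t,z,y}(0)|$ (the paper uses the bi-Lipschitz momentum bound of Proposition \ref{pro:reg2}(2), you use the Hessian upper bound of $G$ — these are equivalent, since the Hessian of $A_t$ is the derivative of the momentum map). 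The only cosmetic difference is that you package the cancellation of linear terms as $DG(w_0)=0$ at the interior minimizer instead of writing it out; just take $t'_\lambda=\min\{t_\lambda,t_{\lambda'}\}$ so that $t'_\lambda\leqslant t_\lambda$ as the statement requires.
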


\begin{proof}
By the triangle inequality
\begin{align*}
	\frac{|y-x|}{2t}\leqslant\frac{|y-z|}{2t}+\frac{|z-y|}{2t}<\lambda
\end{align*}
with $x,y,z\in M$ such that $|y-z|/t<\lambda$ and $|z-x|/t<\lambda$, and $t\in(0,t_\lambda]$. Thus we have $|\dot{\xi}_{2t,x,y}(s)|<F(\lambda)$ for all $s\in[0,2t]$ where $\dot{\xi}_{2t,x,y}$ is determined in Proposition \ref{pro:reg2} (1) and $F:[0,+\infty)\to[0,+\infty)$ is the function used in the proof of Proposition \ref{pro:reg2}. Set $z'=\xi_{2t,x,y}(t)$ and let $t'_\lambda=t_{F(\lambda)}\leqslant t_\lambda$. By Proposition \ref{pro:reg2}
\begin{equation}\label{eq:low_bound1}
	\begin{split}
		&\,A_t(x,z)+A_t(z,y)-A_{2t}(x,y)\\
	=&\,[A_t(x,z)-A_t(x,z')]+[A_t(z,y)-A_t(z',y)]\\
	=&\,\Big\langle z-z',p_{2t,x,y}(t)+\int^1_0s(z-z')\cdot\int^1_0\frac{\partial^2}{\partial  z^2}A_t(x,z'+rs(z-z'))\ drds\Big\rangle\\
	&\,\ +\Big\langle z-z',-p_{2t,x,y}(t)+\int^1_0s(z-z')\cdot\int^1_0\frac{\partial^2}{\partial  z^2}A_t(z'+rs(z-z'),y)\ drds\Big\rangle\\
	\geqslant&\,2\int^1_0s\cdot\frac{C_{F(\lambda),3}}{t}|z-z'|^2\ ds=\frac{C_{F(\lambda),3}}{t}|z-z'|^2.
	\end{split}
\end{equation}
On the other hand, applying Proposition \ref{pro:reg2} again we obtain
\begin{equation}\label{eq:low_bound2}
	\begin{split}
		|z-z'|^2\geqslant&\,\frac{t^2}{2C^2_{F(\lambda),2}}(|p_{2t,x,y}(t)-p_{t,x,z}(t)|^2+|p_{2t,x,y}(t)-p_{t,z,y}(0)|^2)\\
	\geqslant&\,\frac{t^2}{4C^2_{F(\lambda),2}}|p_{t,x,z}(t)-p_{t,z,y}(0)|^2.
	\end{split}
\end{equation}
Combining \eqref{eq:low_bound1} and \eqref{eq:low_bound2} yields
\begin{align*}
	&\,A_t(x,z)+A_t(z,y)-A_{2t}(x,y)\\
	\geqslant&\,\frac{C_{F(\lambda),3}}{t}\cdot\frac{t^2}{4C^2_{F(\lambda),2}}|p_{t,x,z}(t)-p_{t,z,y}(0)|^2=C_{\lambda,5}t\cdot|p_{t,x,z}(t)-p_{t,z,y}(0)|^2,
\end{align*}
with $C_{\lambda,5}:=C_{F(\lambda),3}/(4C^2_{F(\lambda),2})$.
\end{proof} 

Now, for any weak KAM solution $\phi$ of \eqref{eq:HJ_wk} and $t>0$, let
\begin{align*}
	E(\phi,t):=\{x\in M: \tau_\phi(x)\geqslant t\},
\end{align*}
where $\tau_\phi$ is the cut time function associated with $\phi$. Notice that $\phi$ is differentiable at each point in $E(\phi,t)$. The following theorem ensure that $\phi$ has certain $C^{1,1}$-regularity on $E(\phi,t)\subset M\setminus\text{Cut}\,(\phi)$.

\begin{The}\label{thm:C11}
There exists positive constants $\lambda=
\lambda_H,T=T_H,C=C_H$ such that for any weak KAM solution $\phi$ of \eqref{eq:HJ_wk}, $t\in(0,T]$ and $x\in E(\phi,t)$
\begin{align*}
	|p-D\phi(x)|\leqslant\frac Ct|y-x|,\qquad\forall y\in B(x,\lambda t), p\in D^+\phi(y).
\end{align*}
\end{The}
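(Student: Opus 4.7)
The plan is to sandwich $\phi$, in a neighborhood of $x$, between two $C^{2}$ functions built from $A_{t}$ along forward and backward calibrated arcs through $x$, derive from the sandwich a pointwise quadratic estimate for $\phi-\phi(x)-\langle D\phi(x),\cdot-x\rangle$, and then upgrade this to the gradient bound via the linear-modulus semiconcavity of $\phi$ at $y$. Since $x\in E(\phi,t)$, a forward calibrated curve $\gamma\colon[0,t]\to M$ with $\gamma(0)=x$ exists (when $\tau_{\phi}(x)=t$ exactly, extract it as an Arzel\`a--Ascoli limit of calibrations on $[0,t-1/n]$); put $x_{+}:=\gamma(t)$. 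As a weak KAM solution, $\phi$ also admits a backward calibrated curve $\eta\colon[-t,0]\to M$ with $\eta(0)=x$; put $x_{-}:=\eta(-t)$. The two dual arcs live on the energy level $\{H=0\}$, hence $|x_{\pm}-x|\leqslant Kt$ for some $K=K_{H}$ depending only on $H$. The weak KAM identities yield $\phi(w)\leqslant\phi(x_{-})+A_{t}(x_{-},w)$ and $\phi(w)\geqslant\phi(x_{+})-A_{t}(w,x_{+})$, both saturated at $w=x$ by the calibrations, so with
\[
F(w):=A_{t}(x_{-},w)-A_{t}(x_{-},x),\qquad G(w):=A_{t}(x,x_{+})-A_{t}(w,x_{+}),
\]
one obtains $G(w)\leqslant\phi(w)-\phi(x)\leqslant F(w)$ and $F(x)=G(x)=0$.

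Next, choose $\lambda_{H}>0$ and $T_{H}>0$ small enough that, for every $t\in(0,T_{H}]$, Proposition \ref{pro:reg2} applies to both $A_{t}(x_{-},\cdot)$ and $A_{t}(\cdot,x_{+})$ on $B(x,\lambda_{H}t)$ (possible because $|x_{\pm}-x|\leqslant Kt$, so $B(x,\lambda_{H}t)\subset B(x_{\pm},(\lambda_{H}+K)t)$, with $\lambda_{H}+K$ in the admissible range). Then $F,G\in C^{2}(B(x,\lambda_{H}t))$ with Hessian norms controlled by $C'_{H}/t$. The differentiability of $\phi$ at $x\in M\setminus\text{\rm Cut}\,(\phi)$, applied to the saturated upper barrier $w\mapsto\phi(x_{-})+A_{t}(x_{-},w)$ (which $C^{2}$-touches $\phi$ from above at $x$) and to the saturated lower barrier $w\mapsto\phi(x_{+})-A_{t}(w,x_{+})$ (which $C^{2}$-touches $\phi$ from below at $x$), forces $DF(x)=D_{y}A_{t}(x_{-},x)=D\phi(x)=-D_{x}A_{t}(x,x_{+})=DG(x)$. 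Taylor's formula then delivers
\[
\bigl|\phi(w)-\phi(x)-\langle D\phi(x),w-x\rangle\bigr|\leqslant \frac{C''_{H}}{t}|w-x|^{2},\qquad \forall w\in B(x,\lambda_{H}t).
\]
(As an aside, the identity $F-G=[A_{t}(x_{-},\cdot)+A_{t}(\cdot,x_{+})]-A_{2t}(x_{-},x_{+})$ together with Lemma \ref{lem:convex1} yields the matching lower bound $F-G\geqslant C_{\lambda,5}t^{-1}|w-x|^{2}$, which will be useful elsewhere but is not needed here.)

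Finally, fix $y\in B(x,\lambda_{H}t/4)$ and $p\in D^{+}\phi(y)$. The linear-modulus semiconcavity of $\phi$ (uniform over weak KAM solutions) gives $\phi(w)\leqslant\phi(y)+\langle p,w-y\rangle+C_{\phi}|w-y|^{2}$ for every $w$. Applied at $w=y\pm\varepsilon v$ for an arbitrary unit vector $v$ and $\varepsilon=|y-x|$ (note $w\in B(x,\lambda_{H}t)$) and combined with the two-sided estimate of the preceding step at both $w$ and $y$, this yields, after dividing by $\varepsilon$, the two-sided bound
\[
|\langle p-D\phi(x),v\rangle|\leqslant \frac{C_{H}}{t}|y-x|.
\]
Taking the supremum over $v$ produces $|p-D\phi(x)|\leqslant C_{H}t^{-1}|y-x|$, and relabelling $\lambda_{H}/4\to\lambda_{H}$ gives the theorem.

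The main obstacle is that every constant ($K$, $\lambda_{H}$, $T_{H}$, $C'_{H}$, $C''_{H}$, $C_{\phi}$, $C_{H}$) must depend only on $(M,H)$ and not on the particular weak KAM solution $\phi$. This uniformity rests on two facts intrinsic to the family of weak KAM solutions: all forward and backward calibrated arcs are Hamiltonian trajectories confined to the single level set $\{H=0\}$, so their speed is controlled by $H$; and the Lasry--Lions time $\tau_{1,H}$ together with the semiconcavity constant of $\phi=T^{-}_{\tau_{1,H}}\phi$ can be chosen uniformly thanks to the uniform regularity properties of $A_{\tau_{1,H}}$.
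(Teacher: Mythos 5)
Your argument is correct, but it follows a genuinely different route from the paper's. The paper compares dual arcs of three extremal curves: the calibrated curve $\xi_1$ through $x$ on $[0,2t]$ (forward calibration coming from $x\in E(\phi,t)$), a backward calibrated curve $\xi_2$ ending at $y$ with terminal momentum $p^*\in D^*\phi(y)$, and the minimizer $\xi_3$ for $A_t(\xi_2(0),x)$; it bounds a suitable excess from above by the mixed-difference (twist) estimate of Lemma \ref{lem:twist1}, from below by the convexity-gap estimate of Lemma \ref{lem:convex1} in terms of $t\,|p_3(t)-p_1(t)|^2$, converts distances of base points into distances of momenta via the bi-Lipschitz statement of Proposition \ref{pro:reg2}\,(2), and finally passes from $D^*\phi(y)$ to $D^+\phi(y)$ through $D^+\phi(y)=\mbox{\rm co}\,D^*\phi(y)$ (Proposition \ref{pro:scsv}). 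You instead use only the two-sided $C^2$ bounds of Proposition \ref{pro:reg2}\,(3) to build an upper barrier $F$ from the backward calibration and a lower barrier $G$ from the forward calibration, both touching $\phi$ at $x$, deduce the quadratic pinching $|\phi(w)-\phi(x)-\langle D\phi(x),w-x\rangle|\leqslant C t^{-1}|w-x|^2$ on $B(x,\lambda t)$, and then transfer this to every $p\in D^+\phi(y)$ by the semiconcavity inequality at $y$. This buys a shorter proof that bypasses Lemmas \ref{lem:twist1} and \ref{lem:convex1} and works with $D^+\phi(y)$ directly, at the price of one extra uniformity input: a semiconcavity constant for weak KAM solutions depending only on $H$. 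That fact is true and standard, but your justification via a ``uniform Lasry--Lions time $\tau_{1,H}$'' is slightly circular (the time $\tau_1(\phi)$ of Proposition \ref{pro:graph} is itself calibrated to the semiconcavity constant of $\phi$); the clean argument is simply that $\phi=T^-_{t_0}\phi$ for a fixed $t_0>0$, so $\phi$ is an infimum of the uniformly semiconcave family $\phi(y)+A_{t_0}(y,\cdot)$, which gives the uniform constant — with that repair, and the uniform Lipschitz bound you already use to control $|x_\pm-x|$ and to fix the admissible $\lambda$ in Proposition \ref{pro:reg2}, your proof is complete.
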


\begin{proof}
Notice that $M$ is compact and any weak KAM solution $\phi$ of \eqref{eq:HJ_wk} is uniformly Lipschitz. Hence there exists $\lambda=\lambda_H>0$ such that for all $(\phi,H)$ calibrated curve $\xi:[0,t]\to M$, we have that $|\dot{\xi}(s)|<\lambda$ for all $s\in[0,t]$. Set $T=T_H=t'_{2\lambda_H}$ as in Proposition \ref{pro:reg2}, Lemma \ref{lem:twist1}, and Lemma \ref{lem:convex1}. Let $\phi$ be any weak KAM solution of \eqref{eq:HJ_wk}, $t\in(0,T]$ and $x\in E(\phi,t)$. We consider three extremals:
\begin{enumerate}[(i)]
	\item Let $\xi_1:[0,2t]\to M$ be the unique $(\phi,H)$ calibrated curve satisfying $\xi_1(t)=x$;
	\item For any $y\in B(x,\lambda t)$ and $p^*\in D^*\phi(y)$ there exists a unique $(\phi,H)$ calibrated curve $\xi_2:[0,t]\to M$ satisfying $\xi_2(t)=y$, $L_v(y,\dot{\xi}_2(t))=p^*$;
	\item Let $\xi_3:[0,t]\to M$ be the unique minimal curve for $A_t(\xi_2(0),x)$,
\end{enumerate}
and let $p_i(\cdot)=L_v(\xi_i(\cdot),\dot{\xi_i}(\cdot))$ be the associated dual arc of $\xi_i(\cdot)$, $i=1,2,3$, respectively. Since
\begin{gather*}
	\frac{|x-\xi_2(0)|}{t}\leqslant\frac{|x-y|}{t}+\frac{|y-\xi_2(0)|}{t}<\lambda+\lambda=2\lambda,\\
	\frac{|y-\xi_2(0)|}{t}\leqslant\frac{|y-x|}{t}+\frac{|x-\xi_2(0)|}{t}<\lambda+\lambda=2\lambda,
\end{gather*}
by Proposition \ref{pro:reg2} and Lemma \ref{lem:twist1} we conclude that
\begin{equation}\label{eq:5-31}
	\begin{split}
		&\,[\phi(\xi_2(0))+A_t(\xi_2(0),x)]-[\phi(\xi_1(0))+A_t(\xi_1(0),x)]\\
	=&\,\big([\phi(\xi_2(0))+A_t(\xi_2(0),y)]-[\phi(\xi_1(0))+A_t(\xi_1(0),y)]\big)\\
	&\,\ +\big(A_t(\xi_1(0),y)+A_t(\xi_2(0),x)-A_t(\xi_1(0),x)-A_t(\xi_2(0),y)\big)\\
	\leqslant&\,\frac{C_{2\lambda_H,2}}{t}|y-x|\cdot|\xi_2(0)-\xi_1(0)|\leqslant \frac{C_{2\lambda_H,2}}{C_{2\lambda_H,1}}|y-x|\cdot|p_3(t)-p_1(t)|.
	\end{split}
\end{equation}
On the other hand, by Lemma \ref{lem:convex1} we get
\begin{equation}\label{eq:5-32}
	\begin{split}
		&\,[\phi(\xi_2(0))+A_t(\xi_2(0),x)]-[\phi(\xi_1(0))+A_t(\xi_1(0),x)]\\
		=&\,[\phi(\xi_2(0))+A_{2t}(\xi_2(0),\xi_1(2t))]-[\phi(\xi_1(0))+A_{2t}(\xi_1(0),\xi_1(2t))]\\
		&\,\ +A_{2t}(\xi_1(0),\xi_1(2t))-A_{t}(\xi_1(0),x)+A_{t}(\xi_2(0),x)-A_{2t}(\xi_2(0),\xi_1(2t))\\
		\geqslant&\,A_{t}(x,\xi_1(2t))+A_{t}(\xi_2(0),x)-A_{2t}(\xi_2(0),\xi_1(2t))\\
		\geqslant&\,C_{2\lambda_H,5} t|p_3(t)-p_1(t)|^2.
	\end{split}
\end{equation}
Combining the two inequalities \eqref{eq:5-31} and \eqref{eq:5-32} above we have that
\begin{align*}
	|p_3(t)-p_1(t)|\leqslant\frac{C_{2\lambda_H,2}}{C_{2\lambda_H,1}\cdot C_{2\lambda_H,5}}\cdot\frac 1t|y-x|.
\end{align*}
Applying Proposition \ref{pro:reg2} again we obtain
\begin{align*}
	|p_2(t)-p_3(t)|\leqslant\frac{C_{2\lambda_H,2}}{t}|y-x|.
\end{align*}
It follows that
\begin{align*}
	|p^*-D\phi(x)|\leqslant&\,|p_2(t)-p_1(t)|\leqslant|p_2(t)-p_3(t)|+|p_3(t)-p_1(t)|\\
	\leqslant&\,\frac 1t\cdot\Big(C_{2\lambda_H,2}+\frac{C_{2\lambda_H,2}}{C_{2\lambda_H,1}\cdot C_{2\lambda_H,5}}\Big)\cdot|y-x|=\frac{C_H}{t}|y-x|.
\end{align*}
This completes the proof by recalling that $D^{+}\phi(y)=\mbox{\rm co}\,D^{*}\phi(y)$ (see Proposition \ref{pro:scsv} (3)).
\end{proof}

\subsection{Global propagation of cut points along generalized characteristics}

Now, we will show that any generalized characteristic propagates cut points globally. 

\begin{The}\label{thm:propagation_cut}
Suppose $H:T^*M\to\R$ is a Tonelli Hamiltonian, and $\phi$ is a weak KAM solution of \eqref{eq:HJ_wk}. Then the following hold
\begin{enumerate}[\rm (1)]
	\item For any $x\in M\setminus\text{\rm Cut}\,(\phi)$, let $\gamma_x:(-\infty,\tau_\phi(x)]\to M$ be the unique $(\phi,H)$-calibrated curve with $\gamma_x(0)=x$. Then $\gamma_x$ is the unique solution of the differential inclusion
	\begin{equation}\label{eq:GCI}\tag{GC}
		\begin{cases}
			\dot{\gamma}(t)\in\text{\rm co}\,H_p(\gamma(t),D^+\phi(\gamma(t))),\qquad a.e.\  t\in\R,\\
			\gamma(0)=x
		\end{cases}
	\end{equation}
	on $(-\infty,\tau_\phi(x)]$.
	\item If $\gamma:[0,+\infty)\to M$ satisfies \eqref{eq:GCI} and $\gamma(0)\in \text{\rm Cut}\,(\phi)$, then $\gamma(t)\in\text{\rm Cut}\,(\phi)$ for all $t\geqslant0$.
\end{enumerate}
\end{The}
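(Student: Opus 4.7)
The plan is to prove the two statements in order: Theorem \ref{thm:C11} is the main analytic engine for (1), and (1) is then leveraged to settle (2) by a contradiction argument based on the supremum of times at which $\gamma$ has already left the cut locus.

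\textbf{Part (1).} Existence is immediate: for every $t<\tau_\phi(x)$ the point $\gamma_x(t)$ lies in $M\setminus\text{\rm Cut}\,(\phi)$, so $\phi$ is differentiable at $\gamma_x(t)$, the superdifferential collapses to $\{D\phi(\gamma_x(t))\}$, and the calibration identity gives $\dot{\gamma}_x(t)=H_p(\gamma_x(t),D\phi(\gamma_x(t)))$, which lies in $\text{\rm co}\,H_p(\gamma_x(t),D^+\phi(\gamma_x(t)))$. For uniqueness, suppose $\gamma:(-\infty,\tau_\phi(x)]\to M$ is another solution of \eqref{eq:GCI} with $\gamma(0)=x$. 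Fix a compact subinterval $[a,b]\subset(-\infty,\tau_\phi(x))$; by continuity of $t\mapsto\tau_\phi(\gamma_x(t))$ on $[a,b]$ one can choose $\tau_0>0$ with $\tau_\phi(\gamma_x(t))\geqslant\tau_0$ throughout. Theorem \ref{thm:C11} then provides a constant $C>0$ such that, whenever $y$ is sufficiently close to $\gamma_x(t)$,
\begin{equation*}
D^+\phi(y)\subset B\Big(D\phi(\gamma_x(t)),\,\tfrac{C}{\tau_0}|y-\gamma_x(t)|\Big).
\end{equation*}
Combined with the local Lipschitz continuity of $H_p$, this yields
\begin{equation*}
|\dot{\gamma}(t)-\dot{\gamma}_x(t)|\leqslant K|\gamma(t)-\gamma_x(t)|
\end{equation*}
for a uniform constant $K>0$, valid as long as $\gamma(t)$ stays in the tube around $\gamma_x$. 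A standard open-closed bootstrap together with Gronwall's inequality (applied both forward and backward in time from $t=0$) and the initial equality $\gamma(0)=\gamma_x(0)$ then forces $\gamma\equiv\gamma_x$ on $[a,b]$, and the arbitrariness of $[a,b]$ concludes the proof on all of $(-\infty,\tau_\phi(x)]$.

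\textbf{Part (2).} Argue by contradiction: assume $\gamma(t_0)\notin\text{\rm Cut}\,(\phi)$ for some $t_0>0$ and set
\begin{equation*}
t_1:=\sup\{t\in[0,t_0]:\gamma(t)\in\text{\rm Cut}\,(\phi)\}\in[0,t_0).
\end{equation*}
Since $M\setminus\text{\rm Cut}\,(\phi)$ is open, the continuity of $\gamma$ together with the definition of $t_1$ forces $\gamma(t_1)\in\text{\rm Cut}\,(\phi)$ (otherwise a neighborhood of $t_1$ would avoid $\text{\rm Cut}\,(\phi)$ altogether, contradicting the supremum). On $(t_1,t_0]$ the trajectory $\gamma$ stays in $M\setminus\text{\rm Cut}\,(\phi)$, where $D^+\phi$ is single-valued, so \eqref{eq:GCI} reduces there to the ODE $\dot{\gamma}(t)=H_p(\gamma(t),D\phi(\gamma(t)))$ whose right-hand side is locally Lipschitz thanks to Theorem \ref{thm:C11}. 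Now let $\eta(t):=\gamma_{\gamma(t_0)}(t-t_0)$ be the backward calibrated curve from $\gamma(t_0)$; the standard weak KAM estimate $\tau_\phi(\eta(t))\geqslant t_0-t$ for $t<t_0$ ensures that $\eta$ also stays in $M\setminus\text{\rm Cut}\,(\phi)$ and satisfies the same ODE. Uniqueness for the ODE (backward as well as forward) gives $\gamma\equiv\eta$ on $(t_1,t_0]$, and passing to the limit $t\to t_1^+$ yields $\gamma(t_1)=\eta(t_1)\notin\text{\rm Cut}\,(\phi)$, contradicting $\gamma(t_1)\in\text{\rm Cut}\,(\phi)$.

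\textbf{Main obstacle.} The delicate step is the uniqueness in part (1): the right-hand side of \eqref{eq:GCI} is a convex hull which is genuinely set-valued in singular regions, so no off-the-shelf uniqueness theorem for differential inclusions applies. The decisive input is Theorem \ref{thm:C11}, which quantifies how much $D^+\phi$ can spread in a tube of positive radius around any non-cut point; this quantitative bound is precisely what makes the Gronwall estimate go through. A minor technical point is to verify that the comparison trajectory stays in the tube long enough for Gronwall to take effect, handled by the open-closed bootstrap exploiting both the closedness of $\{t:\gamma(t)=\gamma_x(t)\}$ and its local openness inherited from the estimate above.
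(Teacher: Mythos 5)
Your part (1) is essentially the paper's own argument: Theorem \ref{thm:C11} applied along the calibrated curve (whose cut time stays bounded below, since $\tau_\phi(\gamma_x(t))\geqslant\tau_\phi(x)-t$) gives $|\dot\gamma(t)-\dot\gamma_x(t)|\leqslant K|\gamma(t)-\gamma_x(t)|$ for any solution of \eqref{eq:GCI}, and Gronwall plus a maximal-interval argument yields uniqueness forward and backward; the paper does exactly this with $\tau_\pm$ and a probability-measure representation of the convex hull. One small repair: do not invoke ``continuity of $t\mapsto\tau_\phi(\gamma_x(t))$'' ($\tau_\phi$ is only upper semicontinuous in general); the needed lower bound $\tau_\phi(\gamma_x(t))\geqslant\tau_\phi(x)-t$ follows directly from the fact that $\gamma_x\vert_{[t,\tau_\phi(x)]}$ is calibrated.

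Part (2), however, has a genuine gap. Your argument hinges on $M\setminus\text{\rm Cut}\,(\phi)$ being open: you use it both to conclude $t_1<t_0$ and to conclude $\gamma(t_1)\in\text{\rm Cut}\,(\phi)$ at the supremum. But closedness of $\text{\rm Cut}\,(\phi)$ is not available in this setting: $\tau_\phi$ is only upper semicontinuous, so $\text{\rm Cut}\,(\phi)=\{\tau_\phi=0\}$ is a priori only a $G_\delta$, and since $\text{\rm Sing}\,(\phi)\subset\text{\rm Cut}\,(\phi)\subset\overline{\text{\rm Sing}\,(\phi)}$, closedness of the cut locus is equivalent to $\text{\rm Cut}\,(\phi)=\overline{\text{\rm Sing}\,(\phi)}$ --- precisely the open problem (D) raised in Section 5.4 of the paper. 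If cut times of $\gamma$ accumulate at $t_0$ from the left, your $t_1$ equals $t_0$ and the interval $(t_1,t_0]$ on which you run the ODE comparison disappears, so the contradiction never materializes. The fix is the paper's route, which needs no topological information on $\text{\rm Cut}\,(\phi)$ at all: shift time so that the non-cut point $\gamma(t_0)$ sits at time $0$, extend the shifted curve to $(-\infty,0]$ by any backward solution of \eqref{eq:GCI} (e.g.\ a backward calibrated curve), and apply the backward uniqueness of part (1) at $\gamma(t_0)$. This forces $\gamma\vert_{[0,t_0]}$ to coincide with the calibrated curve through $\gamma(t_0)$, hence $\tau_\phi(\gamma(0))\geqslant t_0>0$, contradicting $\gamma(0)\in\text{\rm Cut}\,(\phi)$; your intermediate stopping time $t_1$ and the ODE-uniqueness step on $(t_1,t_0]$ then become unnecessary.
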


\begin{proof}
Recall that $\gamma_x$ is a backward classical characteristic and so it satisfies \eqref{eq:GCI} on $(-\infty,\tau_\phi(x)]$. Now, we turn to prove the uniqueness. Indeed, suppose $\gamma:(-\infty,\tau_\phi(x)]\to M$ is a solution of \eqref{eq:GCI} with $\gamma(0)=x$. Set
\begin{gather*}
	\tau_+=\sup\{\tau\geqslant0: \gamma(t)=\gamma_x(t),\ \forall t\in[0,\tau]\},\\
	\tau_-=\inf\{\tau\leqslant0: \gamma(t)=\gamma_x(t),\ \forall t\in[0,\tau]\}.
\end{gather*}
It is obvious that $0\leqslant\tau_+\leqslant\tau_\phi(x)$ and $-\infty\leqslant\tau_-\leqslant0$.

If $\tau_+<\tau_\phi(x)$, set $\tau_{\Delta+}=\min\{\tau_\phi(x)-\tau_+,T_H\}>0$ where $T_H$ is given in Theorem \ref{thm:C11}. Notice that $\gamma(\tau_+)=\gamma_x(\tau_+)$ and it follows that
\begin{align*}
	\gamma(t),\gamma_x(t)\in B(\gamma_x(\tau_+),\frac 14\lambda_H\tau_{\Delta+}),\qquad\forall t\in[\tau_+,\tau_++\frac 14\tau_{\Delta+}),
\end{align*}
with $\lambda_H$ also determined in Theorem \ref{thm:C11}. Together with Theorem \ref{thm:C11} we obtain that
\begin{align*}
	|p-D\phi(\gamma_x(t))|\leqslant\frac{C_H}{\tau_{\Delta+}/2}|\gamma(t)-\gamma_x(t)|,\qquad\forall t\in[\tau_+,\tau_++\frac 14\tau_{\Delta+}),\ p\in D^+\phi(\gamma(t)).
\end{align*}
Since $\gamma$ satisfies \eqref{eq:GCI} we have that
\begin{align*}
	\frac{d}{dt}|\gamma(t)-\gamma_x(t)|\leqslant&\,|\dot{\gamma}(t)-\dot{\gamma}_x(t)|=\Big|\int_{D^+\phi(\gamma(t))}H_p(\gamma(t),p)\ d\nu_t-H_p(\gamma_x(t),D\phi(\gamma_x(t)))\Big|\\
	\leqslant&\,C'_H|\gamma(t)-\gamma_x(t)|+C'_H\int_{D^+\phi(\gamma(t))}|p-D\phi(\gamma_x(t))|\ d\nu_t\\
	\leqslant&\,C'_H\Big(1+\frac{2C_H}{\tau_{\Delta+}}\Big)|\gamma(t)-\gamma_x(t)|
\end{align*}
for almost all $t\in[\tau_+,\tau_++\frac 14\tau_{\Delta+})$, where $\{\nu_t\}$ is a family of probability measure supported on $D^+\phi(\gamma(t))$ and
\begin{align*}
	C'_H=\sup\{|DH(x,p)|: x\in M, p\in D^+\phi(x), \phi\ \text{is a weak KAM solution of \eqref{eq:HJ_wk}}\}<\infty.
\end{align*}
Gronwall's inequality implies
\begin{align*}
	|\gamma(t)-\gamma_x(t)|\leqslant\exp\Big(C'_H\Big[1+\frac{2C_H}{\tau_{\Delta+}}\Big](t-\tau_+)\Big)|\gamma(\tau_+)-\gamma_x(\tau_+)|=0
\end{align*}
for all $t\in[\tau_+,\tau_++\frac 14\tau_{\Delta+})$. This contradicts the definition of $\tau_+$. A similar argument shows that $\tau_-=-\infty$. This completes the proof of (1).

Now, we prove (2) by contradiction. Suppose that $\gamma:[0,+\infty)\to M$ satisfies \eqref{eq:GCI} and $\gamma(0)\in \text{\rm Cut}\,(\phi)$, and there exists $t>0$ such that $\gamma(t)\in M\setminus\text{Cut}\,(\phi)$. Then
\begin{align*}
	\gamma_x(s)=
	\begin{cases}
		\gamma_1(s),& s\in(-\infty,0],\\
		\gamma(s),& s\in(0,\tau_{\phi}(x)],
	\end{cases}
\end{align*}
define a curve $\gamma_x:(-\infty,\tau_{\phi}(x)]\to M$, where $\gamma_1:(-\infty,0]\to M$ is any solution of \eqref{eq:GCI} on $(-\infty,0]$. From (2) we conclude that $\gamma_x$ is the unique solution of \eqref{eq:GCI} on $(-\infty,\tau_{\phi}(x)]$, i.e., $\gamma_x$ is exactly a $(\phi,H)$-calibrated curve on $(-\infty,\tau_{\phi}(x)]$. It follows that $\gamma\vert_{[0,t]}$ is a $(\phi,H)$-calibrated curve. This contradicts to the assumption that $\gamma(0)\in\text{Cut}\,(\phi)$. Thus, $\gamma(t)\in\text{\rm Cut}\,(\phi)$ for all $t\geqslant0$.
\end{proof}

%\begin{Cor}\label{cor:propagation_cut}
%Suppose $H:T^*M\to\R$ is a Tonelli Hamiltonian, and $\phi$ is a weak KAM solution of \eqref{eq:HJ_wk}. If $\gamma:[0,+\infty)\to M$ is a strict singular characteristic for the pair $(\phi,H)$ and $\gamma(0)\in\text{\rm Cut}\,(\phi)$, then $\gamma(t)\in\text{\rm Cut}\,(\phi)$ for all $t\geqslant0$.
%\end{Cor}

\subsection{Propagation of singular points}

In this section, we will do some further analysis to study the propagation of singular points $\text{\rm Sing}\,(\phi)$. We begin with the local propagation problem for a pair $(\phi,H)$ with $\phi\in\text{\rm SCL}\,(M)$ and $H$ a Tonelli Hamiltonian. %Recall that $\overline{\text{Sing}\,(\phi)}$ is the $C^{1,1}$-singular support of $\phi$.

\begin{The}\label{thm:local_prop}
	Suppose $H$ is a Tonelli Hamiltonian and $\phi\in\text{\rm SCL}\,(M)$. If $x\in\text{\rm Sing}\,(\phi)$ satisfies
	\begin{equation}\label{eq:condition _sing}
		\mathbf{p}^\#_{\phi,H}(x)\not\in D^*\phi(x)
	\end{equation}
	then there exists a strict singular characteristic $\gamma:[0,+\infty)\to M$ with $\gamma(0)=x$ for the pair $(\phi,H)$ such that for some $\delta>0$ we have
	\begin{align*}
		\gamma(t)\in\text{\rm Sing}\,(\phi),\qquad \forall t\in[0,\delta].
	\end{align*}
\end{The}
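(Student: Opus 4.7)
The plan is to produce a strict singular characteristic $\gamma$ from $x$ whose minimal-energy selection $\mathbf{p}^\#_{\phi,H}(\gamma(\cdot))$ is right-continuous at $t=0$, and then argue by contradiction using the hypothesis $\mathbf{p}^\#_{\phi,H}(x) \notin D^*\phi(x)$. Since $\phi \in \text{\rm SCL}\,(M)$ and $H$ is a Tonelli Hamiltonian, Theorem \ref{thm:right_continuous2} supplies exactly such a curve: there exists a strict singular characteristic $\gamma:\R\to M$ with $\gamma(0)=x$ and satisfying the uniform energy bound \eqref{eq:lambda_phi_H}, together with \eqref{eq:right_cont2}, namely $\lim_{s\to t^+}\mathbf{p}^\#_{\phi,H}(\gamma(s)) = \mathbf{p}^\#_{\phi,H}(\gamma(t))$ for every $t\in\R$. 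This is the only nontrivial input; the rest of the proof is essentially a tautology.

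Next, I would suppose for contradiction that no $\delta>0$ as claimed exists. Then one can extract a sequence $t_k\searrow 0^+$ with $\gamma(t_k)\notin\text{\rm Sing}\,(\phi)$. At each such point $\phi$ is differentiable, so $D^+\phi(\gamma(t_k))=\{D\phi(\gamma(t_k))\}$, and in particular
\begin{equation*}
\mathbf{p}^\#_{\phi,H}(\gamma(t_k)) \;=\; D\phi(\gamma(t_k)), \qquad \forall\, k\in\N.
\end{equation*}
Applying the right-continuity property \eqref{eq:right_cont2} at $t=0$ yields
\begin{equation*}
\lim_{k\to\infty} D\phi(\gamma(t_k)) \;=\; \lim_{k\to\infty}\mathbf{p}^\#_{\phi,H}(\gamma(t_k)) \;=\; \mathbf{p}^\#_{\phi,H}(x).
\end{equation*}
Since $\gamma(t_k)\to x$ and $\gamma(t_k)\notin \text{\rm Sing}\,(\phi)$ (in particular $\gamma(t_k)\neq x$ because $x\in\text{\rm Sing}\,(\phi)$), the very definition of reachable gradients gives $\mathbf{p}^\#_{\phi,H}(x)\in D^*\phi(x)$, contradicting \eqref{eq:condition _sing}. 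Hence there is $\delta>0$ with $\gamma(t)\in\text{\rm Sing}\,(\phi)$ for all $t\in[0,\delta]$ (after replacing $\delta$ by a slightly smaller positive constant if needed, using also $\gamma(0)=x\in\text{\rm Sing}\,(\phi)$).

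The main conceptual content is not in this short contradiction, but rather in having \eqref{eq:right_cont2} available; the obstacle, which has already been settled in Theorem \ref{thm:right_continuous2} via the Cannarsa--Yu smoothing $\{\phi_k\}$, is obtaining a bona fide strict singular characteristic $\gamma$ (not merely any generalized characteristic) for which the selection $\mathbf{p}^\#_{\phi,H}(\gamma(\cdot))$ is right-continuous: this demands the uniform bound on $\frac{d}{dt}H(\gamma_k,D\phi_k(\gamma_k))\leq \lambda_{\phi,H}$ along the approximants, passage to the limit via the stability Theorem \ref{thm:stability}, and the transfer of this bound to $\gamma$ through Lemma \ref{lem:lambda}, followed by Lemma \ref{lem:P_k_P}. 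Once this preparatory machinery is in place, the propagation statement follows immediately from the contradiction above. A secondary, purely notational point is to ensure the conclusion holds on a closed interval $[0,\delta]$ rather than only $[0,\delta)$, which is handled by shrinking $\delta$.
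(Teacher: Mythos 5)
Your proposal is correct and takes essentially the same route as the paper: one invokes the strict singular characteristic supplied by Theorem \ref{thm:right_continuous2} together with the right-continuity \eqref{eq:right_cont2} of $\mathbf{p}^\#_{\phi,H}(\gamma(\cdot))$, and argues by contradiction that a sequence $t_k\searrow 0^+$ of differentiability points of $\phi$ along $\gamma$ would force $\mathbf{p}^\#_{\phi,H}(x)=\lim_{k\to\infty}D\phi(\gamma(t_k))\in D^*\phi(x)$, contradicting \eqref{eq:condition _sing}. Your additional remarks on the closed interval $[0,\delta]$ and on $\gamma(t_k)\neq x$ are harmless refinements of the same argument.
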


\begin{Rem}
	If $\phi$ is a weak KAM solution of \eqref{eq:HJ_wk}, condition \eqref{eq:condition _sing} holds true automatically because $H(x,\mathbf{p}^\#_{\phi,H}(x))<0$ for all $x\in\text{Sing}\,(\phi)$ and $H(x,p)=0$ for all $p\in D^*\phi(x)$ (See, Theorem 6.4.9 in \cite{Cannarsa_Sinestrari_book}). It is interesting to compare this theorem with Lemma 4.1 and Theorem 4.2 in \cite{Cannarsa_Yu2009}.
\end{Rem}

\begin{proof}
	We just use the strict singular characteristic $\gamma$ obtained in Theorem \ref{thm:right_continuous2}. We argue by contradiction. Suppose there exists a sequence of positive reals $\{t_k\}$, $t_k\searrow 0^+$ as $k\to\infty$,  such that $\phi$ is differentiable at every $\gamma(t_k)$. Then, by Theorem \ref{thm:right_continuous2} we have $\mathbf{p}^\#_{\phi,H}(x)=\lim_{k\to\infty}D\phi(\gamma(t_k))\in D^*\phi(x)$. This leads to a contradiction to condition \eqref{eq:condition _sing}.
\end{proof}

\begin{The}\label{thm:global pro}
	Suppose $H:T^*M\to\R$ is a Tonelli Hamiltonian, and $\phi$ is a weak KAM solution of \eqref{eq:HJ_wk}. If $\gamma:[0,+\infty)\to M$ is a strict singular characteristic for the pair $(\phi,H)$ and $\gamma(0)\in\text{\rm Cut}\,(\phi)$, then the following hold
	\begin{enumerate}[\rm (1)]
		\item $\gamma(t)\in\text{\rm Cut}\,(\phi)$ for all $t\geqslant0$.
		\item $\text{\rm supp}\,(\chi_{\text{\rm Sing}\,(\phi)}(\gamma)\mathscr{L}^1)=[0,+\infty)$, where $\chi_{\text{\rm Sing}\,(\phi)}$ is the indicator of the set $\text{\rm Sing}\,(\phi)$, and $\mathscr{L}^1$ stands for the Lebesgue measure on $[0,+\infty)$.
		\item If $\gamma$ satisfies \eqref{eq:lambda_phi_H} as in Theorem \ref{thm:right_continuous2}, then $\text{\rm int}\,(\{t\in[0,+\infty):\gamma(t)\in\text{\rm Sing}\,(\phi)\})$ is dense in $[0,+\infty)$, where $\text{\rm int}\,(A)$ stands for the interior of $A\subset\R$.
	\end{enumerate}
\end{The}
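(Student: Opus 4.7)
My plan is to dispatch the three assertions of Theorem \ref{thm:global pro} in order, each reducing cleanly to an earlier result.

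For (1) I would simply observe that the strict singular characteristic $\gamma$ automatically solves the generalized-characteristic inclusion \eqref{eq:GCI}: since $\mathbf{p}^{\#}_{\phi,H}(\gamma(t))\in D^+\phi(\gamma(t))$, the equation $\dot\gamma(t)=H_p(\gamma(t),\mathbf{p}^{\#}_{\phi,H}(\gamma(t)))$ trivially gives $\dot\gamma(t)\in\text{\rm co}\,H_p(\gamma(t),D^+\phi(\gamma(t)))$. Since $\gamma(0)\in\text{\rm Cut}\,(\phi)$, Theorem \ref{thm:propagation_cut} (2) then yields $\gamma(t)\in\text{\rm Cut}\,(\phi)$ for every $t\geqslant 0$.

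For (2) I plan to argue by contradiction. If the support claim failed there would be an open interval $(a,b)\subset[0,+\infty)$ on which $\phi$ is differentiable at $\gamma(t)$ for a.e.\ $t$. At each such differentiability point $\mathbf{p}^{\#}_{\phi,H}(\gamma(t))=D\phi(\gamma(t))$ and $H(\gamma(t),D\phi(\gamma(t)))=0$ by \eqref{eq:HJ_wk}, so the EDI equality \eqref{eq:msc_H_phi2} collapses on every $[t_1,t_2]\subset(a,b)$ to
\begin{equation*}
\phi(\gamma(t_2))-\phi(\gamma(t_1))=\int_{t_1}^{t_2}L(\gamma(s),\dot\gamma(s))\,ds.
\end{equation*}
Combined with the weak KAM identity $\phi=T^-_{t_2-t_1}\phi$, this forces $\int_{t_1}^{t_2}L(\gamma,\dot\gamma)\,ds=A_{t_2-t_1}(\gamma(t_1),\gamma(t_2))$, so $\gamma|_{(a,b)}$ is a $(\phi,H)$-calibrated curve, hence $C^2$ there by Tonelli regularity. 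Fixing any $t'\in(a,b)$, the forward piece $\gamma|_{[t',b)}$ provides a calibration of positive length out of $\gamma(t')$, whence $\tau_\phi(\gamma(t'))>0$ and $\gamma(t')\notin\text{\rm Cut}\,(\phi)$, contradicting (1).

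For (3) I would set $f(t):=H(\gamma(t),\mathbf{p}^{\#}_{\phi,H}(\gamma(t)))$. Since $\phi$ is a weak KAM solution, $H$ vanishes on $D^*\phi(x)$ for every $x\in M$; combined with the strict convexity of $H(x,\cdot)$ and $D^+\phi(x)=\text{\rm co}\,D^*\phi(x)$, this forces $f(t)<0$ exactly when $D^+\phi(\gamma(t))$ is not a singleton, i.e., exactly when $\gamma(t)\in\text{\rm Sing}\,(\phi)$, while $f(t)=0$ otherwise. Hypothesis \eqref{eq:lambda_phi_H} reads $f(t_2)-f(t_1)\leqslant\lambda(t_2-t_1)$, and \eqref{eq:right_cont2} of Theorem \ref{thm:right_continuous2} (together with continuity of $H$) makes $f$ right-continuous on $[0,+\infty)$. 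Given any open interval $(a,b)\subset[0,+\infty)$, part (2) supplies some $t_0\in(a,b)$ with $f(t_0)<0$; right-continuity then produces $\delta>0$ with $(t_0,t_0+\delta)\subset\{f<0\}\cap(a,b)$, which is the required open subset of $\text{\rm int}\,\{t:\gamma(t)\in\text{\rm Sing}\,(\phi)\}$, and density in $[0,+\infty)$ follows.

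The main obstacle I anticipate is the passage in (2) from a.e.\ differentiability along $\gamma$ to a genuine pointwise calibration of $\phi$ by $\gamma$ on the whole subinterval $(a,b)$; this calibration is precisely what produces the tension with the cut-locus conclusion of (1) and drives the contradiction. Once (2) is in place, (1) is essentially a citation of Theorem \ref{thm:propagation_cut} (2), and (3) is a routine combination of the uniform energy bound with the right-continuity developed in Section 3.
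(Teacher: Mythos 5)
Your proposal is correct and follows essentially the same route as the paper: (1) is the same citation of Theorem \ref{thm:propagation_cut}, (2) is the same calibration contradiction with (1), and (3) rests on the same mechanism (negativity of $H(\gamma(t),\mathbf{p}^{\#}_{\phi,H}(\gamma(t)))$ at singular points together with its right-continuity along $\gamma$) that the paper packages as Theorem \ref{thm:local_prop}. One small repair: the right-continuity you invoke in (3) should be derived directly from \eqref{eq:lambda_phi_H} together with Lemma \ref{lem:lsc1} (and Lemma \ref{lem:P_k_P} if you want \eqref{eq:right_cont2}), since Theorem \ref{thm:right_continuous2} asserts \eqref{eq:right_cont2} only for the particular characteristic constructed there, not for an arbitrary $\gamma$ satisfying \eqref{eq:lambda_phi_H}.
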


\begin{proof}
Assertion (1) is a direct consequence of Theorem \ref{thm:propagation_cut}. For (2), let $(t_1,t_2)\subset[0,+\infty)$ be any open interval. Suppose that $\chi_{\text{\rm Sing}\,(\phi)}(\gamma)\mathscr{L}^1((t_1,t_2))=0$, that is, $\phi$ is differentiable at $\gamma(s)$ for almost all $s\in(t_1,t_2)$. Then, since $\gamma:[0,+\infty)\to M$ is a strict singular characteristic and $\phi$ is a weak KAM solution of \eqref{eq:HJ_wk}, we have that
\begin{align*}
	\phi(\gamma(t_2))-\phi(\gamma(t_1))=\int_{t_1}^{t_2}\Big\{L(\gamma(s),\dot{\gamma}(s))+H(\gamma(s),D\phi(\gamma(s)))\Big\}\ ds=\int_{t_1}^{t_2}L(\gamma(s),\dot{\gamma}(s))\ ds.
\end{align*}
This implies $\gamma\vert_{[t_1,t_2]}$ is a calibrated curve for $(\phi,H)$ and leads to a contradiction to assertion (1). Thus, we have $\chi_{\text{\rm Sing}\,(\phi)}(\gamma)\mathscr{L}^1((t_1,t_2))>0$. This completes the proof of (2).

Now we turn to prove (3). For any open interval $(t_1,t_2)\subset[0,+\infty)$, by assertion (2), there exists $t_3\in(t_1,t_2)$ such that $\gamma(t_3)\in\text{\rm Sing}\,(\phi)$. Then Theorem \ref{thm:local_prop} implies there exists $t_3<t_4<t_2$ such that $\gamma(t)\in\text{\rm Sing}\,(\phi)$ for all $t\in[t_3,t_4]$. Thus, the set $\text{\rm int}\,(\{t\in[0,+\infty):\gamma(t)\in\text{\rm Sing}\,(\phi)\})$ is dense in $[0,+\infty)$.
\end{proof}

\subsection{Concluding remarks}

The uniqueness of strict singular characteristic is still open in general. %From the variational construction of strict singular characteristic, one cannot have \eqref{eq:right_cont}. 
We raise the following problems:
\begin{enumerate}[(A)]
	\item Does the uniqueness of strict singular characteristic $\gamma:[0,+\infty)\to M$ from a given initial point $\gamma(0)=x$ holds in general? or
	\item can one have an example where the two methods used in the proofs of Theorem \ref{thm:msc exs} and Theorem \ref{thm:pos neg}, respectively, lead to two distinct strict singular characteristics?
\end{enumerate}

We remark that for mechanical Hamiltonians, the associated strict singular characteristic is exactly the solution of the generalized gradient system. In Euclidean case, Albano proved in \cite{Albano2016_1} a global propagation result in the following sense: if $x\in\text{Sing}\,(\phi)$, then the unique solution $\gamma:[0,\infty)\to\R^n$ of the generalized gradient system propagates singularities of the weak KAM solution $\phi$, i.e., $\gamma(t)\in\text{Sing}\,(\phi)$ for all $t\geqslant0$. Thus, it is natural to raise the following problem:
\begin{enumerate}[(C)]
	\item Does global propagation of $\text{Sing}\,(\phi)$ hold in general? 
\end{enumerate}
\begin{enumerate}[(D)]
	\item At least for the Hamiltonian $H(x,p)$ with quadratic dependence on $p$, is it true that $\text{Cut}\,(\phi)=\overline{\text{Sing}\,(\phi)}$?
\end{enumerate}

We finally remark that the energy dissipation term in the discrete scheme comes from the non-commutativity of the positive and negative Lax-Oleinik semigroups. Let us recall the explanation of cut time function in terms of the commutators of $T^\pm_t$ in \cite{Cannarsa_Cheng_Hong2023}. For any weak KAM solution $\phi$ of \eqref{eq:HJ_wk}, the cut time function is given by
\begin{align*}
	\tau_{\phi}(x)=\sup\{t\geqslant0: (T^-_t\circ T^+_t-T^+_t\circ T^-_t)\phi(x)=0\},
\end{align*}
and the cut locus by $\text{Cut}\,(\phi)=\{x\in M:\tau_{\phi}(x)=0\}$. On the other hand, for any $0<t<\tau(\phi)$ and $x\in M$ we have that
\begin{align*}
	(T^-_t\circ T^+_t-T^+_t\circ T^-_t)\phi(x)=&\,\phi(x)-T^+_t\phi(x)=-\int^t_0\frac{d}{ds}T^+_s\phi(x)\ ds\\
	=&\,-\int^t_0H(x,DT^+_s\phi(x))\ ds.
\end{align*}
So, the term $H(x,\mathbf{p}^\#_{\phi,H}(x))$ embodies the non-commutativity of Lax-Oleinik operators $T^\pm$, and the creation and evolution of singularities.

\section{Mass transport aspect of strict singular characteristic}

To study the mass transport along strict singular characteristics, one cannot apply the standard DiPerna-Lions theory to deduce the continuity equation for which the solution is a curve of probability measures determined by the measures driven by the flow of the vector field, since the collision and focus of the classical characteristics. However, one can use a lifting method to consider a dynamical system on the set of strict singular characteristics even without uniqueness. 

%The continuity equation above is not obtained as usual. 

%\subsection{Mass transport along strict singular characteristics}

Let $\phi:M\to\R$ be a semiconcave function and $H$ be a Hamiltonian satisfying \text{\rm (H1)-(H3)}. Fix $T>0$, let $\mathscr{S}_T$ be the family of strict singular characteristics $\gamma:[0,T]\to M$ for the pair $(\phi,H)$. Consider the set-valued map $\Gamma:M\rightrightarrows\mathscr{S}_T$ defined by
\begin{align*}
	x\mapsto\Gamma(x)=\{\gamma\in\mathscr{S}_T: \gamma(0)=x\}.
\end{align*}

\begin{Pro}\label{pro:measuable}
Suppose $\phi$ is a semiconcave function on $M$ and $H$ is a Hamiltonian satisfying \text{\rm (H1)-(H3)}. Then the following holds:
\begin{enumerate}[\rm (1)]
	\item $\mathscr{S}_T$ is a separable and compact metric subspace of $C^0([0,T],M)$ under the $C^0$-norm.
	\item The set-valued map $\Gamma$ is nonempty, compact-valued and measurable.
	\item The set-valued map $\Gamma$ admits a measurable selection $\gamma:M\to\mathscr{S}_T$, $x\mapsto\gamma(x,\cdot)$	such that $\gamma(x,0)=x$ for all $x\in M$.
\end{enumerate}
\end{Pro}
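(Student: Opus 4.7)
\textbf{Proof proposal for Proposition \ref{pro:measuable}.}

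The plan is to deduce the three assertions successively from (a) equi-Lipschitz estimates combined with the stability theorem, (b) a closed-graph argument for the set-valued map $\Gamma$, and (c) the Kuratowski--Ryll-Nardzewski selection result (Proposition \ref{pro:selection}).

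\emph{Part (1).} First I would note that, since $\phi$ is semiconcave on the compact manifold $M$, the set $\bigcup_{x\in M} D^+\phi(x)$ is a bounded subset of $T^*M$; hence, using continuity of $H_p$ from (H2), the vector-field values $H_p(x,\mathbf{p}^\#_{\phi,H}(x))$ are uniformly bounded. Consequently every $\gamma\in\mathscr{S}_T$ is $K$-Lipschitz for a common constant $K$, so $\mathscr{S}_T$ is equi-Lipschitz and takes values in the compact set $M$. Arzel\`a--Ascoli then shows that $\mathscr{S}_T$ is relatively compact in $C^0([0,T],M)$. Closedness of $\mathscr{S}_T$ follows from Theorem \ref{thm:stability} (restricted to the interval $[0,T]$ with constant sequences $H_k\equiv H$, $\phi_k\equiv\phi$): any uniform limit of strict singular characteristics is again a strict singular characteristic. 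Hence $\mathscr{S}_T$ is compact, and being compact metric it is automatically separable.

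\emph{Part (2).} Nonemptiness of $\Gamma(x)$ for every $x\in M$ is precisely Theorem \ref{thm:msc exs}. The evaluation map $\mathrm{ev}_0\colon\mathscr{S}_T\to M$, $\gamma\mapsto\gamma(0)$, is continuous, and $\Gamma(x)=\mathrm{ev}_0^{-1}(\{x\})$, so each $\Gamma(x)$ is a closed subset of the compact space $\mathscr{S}_T$, hence compact. For measurability, the key observation is that the graph
\[
\mathrm{gph}\,\Gamma=\{(x,\gamma)\in M\times\mathscr{S}_T:\gamma(0)=x\}
\]
is the preimage of the diagonal of $M\times M$ under the continuous map $(x,\gamma)\mapsto(x,\gamma(0))$, so it is closed. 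Combined with the compactness of $\mathscr{S}_T$, this forces $\Gamma$ to be upper hemicontinuous: indeed, if $x_n\to x$ and $\gamma_n\in\Gamma(x_n)$ are arbitrary, a subsequence converges uniformly (by compactness of $\mathscr{S}_T$) to some $\gamma$ which belongs to $\Gamma(x)$ by the closed-graph property. Therefore, for every closed $C\subset\mathscr{S}_T$, the preimage
\[
\{x\in M:\Gamma(x)\cap C\neq\varnothing\}=\mathrm{ev}_0(C)
\]
is the continuous image of a compact set, hence closed, in particular Borel. Thus $\Gamma$ is measurable in the sense of the paper's definition.

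\emph{Part (3).} Since $\mathscr{S}_T$ is a compact metric space, it is Polish. The set-valued map $\Gamma:M\rightrightarrows\mathscr{S}_T$ is non-empty, closed-valued, and measurable by Part (2), so Proposition \ref{pro:selection} directly provides a measurable selection $s:M\to\mathscr{S}_T$. Setting $\gamma(x,t):=s(x)(t)$ yields the required measurable map with $\gamma(x,0)=x$ for every $x\in M$, since $s(x)\in\Gamma(x)$ by construction. The only step requiring genuine care is the verification that $\mathscr{S}_T$ is closed in $C^0([0,T],M)$, which is exactly where the stability theorem (Theorem \ref{thm:stability}) plays a decisive role; every other step is a straightforward compactness or continuity argument.
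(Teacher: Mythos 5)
Your proposal is correct and follows essentially the same route as the paper: equi-Lipschitz bound plus Arzel\`a--Ascoli plus the stability theorem for (1), the identity $\{x:\Gamma(x)\cap C\neq\varnothing\}=\mathrm{ev}_0(C)$ for (2), and Proposition \ref{pro:selection} for (3). The paper's version is simply terser, leaving the equi-Lipschitz/compactness step and the upper-hemicontinuity discussion implicit.
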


\begin{proof}
The assertion of (1) is immediate from Corollary \ref{cor:stability}. Now we turn to the proof of (2). For any $x\in M$, by the existence of strict singular characteristic from $x$ (Theorem \ref{thm:msc exs}) and the stability property (Theorem \ref{thm:stability}), $\Gamma(x)\not=\varnothing$ and $\Gamma(x)$ is compact. Let $K\subset\mathscr{S}_T$ be any closed subset, $K$ is compact. The pre-image
\begin{align*}
	\{x\in M: \Gamma(x)\cup K\not=\varnothing\}=\{\gamma(0): \gamma\in K\}
\end{align*}
is also compact, since the continuity of the map $\gamma\mapsto\gamma(0)$. This implies $x\mapsto\Gamma(x)$ is also measurable. Finally, (3) is a direct consequence of Proposition \ref{pro:selection}.
\end{proof}

Now for any measurable map $\gamma$ in Proposition \ref{pro:measuable} we introduce a map
\begin{align*}
	\Phi_\gamma^t(x)=\gamma(x,t),\qquad t\in[0,T],\ x\in M.
\end{align*}
For any $t\in[0,T]$, $\Phi_{\gamma}^t:M\to M$ is measurable, and $\Phi^0_\gamma=\text{id}$.

\begin{The}\label{thm:CE}
Suppose $\phi$ is a semiconcave function on $M$, $H:T^*M\to\R$ is a Hamiltonian satisfying \text{\rm (H1)-(H3)}, and $\gamma:M\to\mathscr{S}_T$ satisfies the condition in Proposition \ref{pro:measuable} (3). Then for any $\bar{\mu}\in\mathscr{P}(M)$, the set of Borel probability measures on $M$, the curve $\mu_t=(\Phi^t_\gamma)_{\#}\bar{\mu}$, $t\in[0,T]$ in $\mathscr{P}(M)$ satisfies the continuity equation
\begin{equation}\label{eq:CE}\tag{CE}
	\begin{cases}
		\frac d{dt}\mu+\text{\rm div}(H_p(x,\mathbf{p}^{\#}_{\phi,H}(x))\cdot\mu)=0,\\
		\mu_0=\bar{\mu}.
	\end{cases}
\end{equation}
and we have the following properties:
\begin{enumerate}[\rm (1)]
	\item $\int_M\phi\mu_T-\int_M\phi\mu_0=\int^T_0\int_M\big\{L(x,H_p(x,\mathbf{p}^\#_{\phi,H}(x)))+H(x,\mathbf{p}^\#_{\phi,H}(x))\big\}\ d\mu_sds$.
	\item For any $t\in[0,T)$, $g\in C^\infty(M)$,
	\begin{align*}
		\frac{d^+}{dt}\int_Mg\ d\mu_t=\int_M\nabla g(x)\cdot H_p(x,\mathbf{p}^\#_{\phi,H}(x))\ d\mu_t.
	\end{align*}
	\item For any $t\in[0,T)$ and $f\in C_c(T^*M,\R)$
	\begin{align*}
		\int_Mf(x,\mathbf{p}^\#_{\phi,H}(x))\ d\mu_t=\lim_{\tau\to t^+}\frac 1{\tau-t}\int^\tau_t\int_Mf(x,\mathbf{p}^\#_{\phi,H}(x))\ d\mu_sds.
	\end{align*}
\end{enumerate}
\end{The}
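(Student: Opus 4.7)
The plan is to reduce all four assertions to pointwise identities along each trajectory $\gamma(x,\cdot)$, then swap the order of integration via Fubini and pass to limits by dominated convergence. I would rely on: (a) the identity $\dot\gamma(x,t)=H_p(\gamma(x,t),\mathbf{p}^{\#}_{\phi,H}(\gamma(x,t)))$ for a.e. $t\in[0,T]$, which is just Definition \ref{defn:msc} applied to the strict singular characteristic $\gamma(x,\cdot)$; (b) the right-continuity results of Theorem \ref{thm:right_derivative} and their measure-theoretic counterparts Theorem \ref{thm:measure_H} and Theorem \ref{thm:measure_L}; and (c) joint Borel measurability of $(x,s)\mapsto\gamma(x,s)$, which will follow from Proposition \ref{pro:measuable} together with the continuity of the evaluation map $C^0([0,T],M)\times[0,T]\to M$. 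Since $M$ is compact and $\phi$ is Lipschitz, the image $\{(x,\mathbf{p}^{\#}_{\phi,H}(x)):x\in M\}$ lies in a fixed compact subset of $T^*M$, so $H_p(\cdot,\mathbf{p}^{\#}_{\phi,H}(\cdot))$, $L(\cdot,H_p(\cdot,\mathbf{p}^{\#}_{\phi,H}(\cdot)))$ and $H(\cdot,\mathbf{p}^{\#}_{\phi,H}(\cdot))$ are all bounded, supplying the uniform majorants needed throughout.

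For the continuity equation and property (1), I would first fix $g\in C^\infty(M)$ and use (a) to write the pointwise identity
\[
g(\gamma(x,t_2))-g(\gamma(x,t_1))=\int_{t_1}^{t_2}\langle\nabla g(\gamma(x,s)),H_p(\gamma(x,s),\mathbf{p}^{\#}_{\phi,H}(\gamma(x,s)))\rangle\,ds.
\]
Integrating against $\bar\mu$ and applying Fubini, then using $\mu_s=(\Phi^s_\gamma)_{\#}\bar\mu$, produces
\[
\int_M g\,d\mu_{t_2}-\int_M g\,d\mu_{t_1}=\int_{t_1}^{t_2}\int_M\langle\nabla g,H_p(x,\mathbf{p}^{\#}_{\phi,H}(x))\rangle\,d\mu_s\,ds,
\]
which is the weak form of \eqref{eq:CE}. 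Property (1) is obtained by the same procedure with $g$ replaced by the semiconcave function $\phi$ and the energy-dissipation equality \eqref{eq:msc_H_phi2} used in place of the chain rule, together with $\dot\gamma(x,s)=H_p(\gamma(x,s),\mathbf{p}^{\#}_{\phi,H}(\gamma(x,s)))$.

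Properties (2) and (3) follow from the fiberwise right-continuity. For (3), setting $F(x,s):=f(\gamma(x,s),\mathbf{p}^{\#}_{\phi,H}(\gamma(x,s)))$, Theorem \ref{thm:measure_H} applied to the curve $\gamma(x,\cdot)$ gives, for each fixed $x\in M$ and $t\in[0,T)$,
\[
\lim_{\tau\to t^+}\frac{1}{\tau-t}\int_t^\tau F(x,s)\,ds=f(\gamma(x,t),\mathbf{p}^{\#}_{\phi,H}(\gamma(x,t))).
\]
The compact support of $f$ together with the uniform bound on $|F|$ allows dominated convergence after Fubini, yielding (3). Property (2) is obtained by specialising this argument to $f(x,p)=\langle\nabla g(x),H_p(x,p)\rangle$ combined with the integral form of (CE) derived above.

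The main obstacle I anticipate is the joint measurability/Fubini step. Since strict singular characteristics through a given point are not unique, $\gamma$ is a measurable selection rather than a flow, so a priori $\mathbf{p}^{\#}_{\phi,H}\circ\gamma$ is only Borel on $M\times[0,T]$ and $\dot\gamma(x,\cdot)$ is only a.e.\ defined in $s$. I would resolve this by observing that $\gamma:M\times[0,T]\to M$ is Carathéodory---measurable in $x$ by Proposition \ref{pro:measuable} and continuous in $s$---hence jointly Borel; combined with the Borel measurability of $\mathbf{p}^{\#}_{\phi,H}$ from Lemma \ref{lem:Borel} and the continuity of $H_p$, every integrand above becomes Borel in $(x,s)$, and the uniform bounds provided by compactness legitimise all the Fubini exchanges and dominated convergence steps.
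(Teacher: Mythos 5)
Your proposal is correct and follows essentially the same route as the paper's proof: the chain rule along each trajectory plus Fubini for \eqref{eq:CE} and for (1) (with the energy identity \eqref{eq:msc_H_phi2} replacing the chain rule in (1)), and Theorem \ref{thm:right_derivative}/Theorem \ref{thm:measure_H} combined with dominated convergence for (2)--(3), with the Carath\'eodory-type joint measurability handling the Fubini exchanges. The only deviations are minor: the paper verifies \eqref{eq:CE} against space--time test functions $\alpha(t)g(x)$ via a Stone--Weierstrass density argument rather than stopping at the integral identity, and it proves (2) directly from Theorem \ref{thm:right_derivative} with dominated convergence instead of specialising (3) to $f(x,p)=\langle\nabla g(x),H_p(x,p)\rangle$; if you keep your route for (2), note that this $f$ is not compactly supported on $T^*M$, so you should insert a cutoff equal to $1$ on a neighborhood of the compact set containing $\{(x,\mathbf{p}^{\#}_{\phi,H}(x)):x\in M\}$, which the boundedness remark in your first paragraph already makes available.
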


\begin{proof}
	Since $\mu_0=(\Phi^0_{\gamma})_{\#}\mu_0=(\text{id})_{\#}\mu_0=\bar{\mu}$, to prove \eqref{eq:CE} it suffices to check the continuity equation. For any $g\in C^{\infty}(M)$ and $0\leqslant r<t\leqslant T$ we have
	\begin{align*}
		&\,\int_Mg\ d\mu_t-\int_Mg\ d\mu_r=\int_Mg\circ\Phi_\gamma^t\ d\bar{\mu}-\int_Mg\circ\Phi_\gamma^r\ d\bar{\mu}\\
		=&\,\int_Mg(\gamma(x,t))-g(\gamma(x,r))\ d\bar{\mu}=\int_M\int^t_r\frac d{ds}g(\gamma(x,s))\ dsd\bar{\mu}\\
		=&\,\int_M\int^t_r\nabla g(\gamma(x,s))\cdot\dot{\gamma}(x,s)\ dsd\bar{\mu}=\int_M\int^t_r\nabla g(\gamma(x,s))\cdot H_p(\gamma(x,s),\mathbf{p}^{\#}_{\phi,H}(\gamma(x,s)))\ dsd\bar{\mu}\\
		=&\,\int^t_r\int_M\nabla g(\gamma(x,s))\cdot H_p(\gamma(x,s),\mathbf{p}^{\#}_{\phi,H}(\gamma(x,s)))\ d\bar{\mu}ds\\
		=&\,\int^t_r\int_M\nabla g(x)\cdot H_p(x,\mathbf{p}^{\#}_{\phi,H}(x))\ d\mu_sds.
	\end{align*}
	It follows that $t\mapsto\int_Mg\ d\mu_t$ is absolutely continuous on $[0,T]$ and
	\begin{align*}
		\frac d{dt}\int_Mg\ d\mu_t=\int_M\nabla g(x)\cdot H_p(x,\mathbf{p}^{\#}_{\phi,H}(x))\ d\mu_t,\qquad a.e.\ t\in[0,T].
	\end{align*}
	Now, suppose $\alpha\in C_c^{\infty}((0,T))$. We find
	\begin{align*}
		&\,\int_{[0,T]\times M}\Big\{\frac d{dt}[\alpha(t)g(x)]+H_p(x,\mathbf{p}^{\#}_{\phi,H}(x))\cdot\nabla[\alpha(t)g(x)]\Big\}\ d\mu\\
		=&\,\int^T_0\int_M\big\{\alpha'(t)g(x)+\alpha(t) H_p(x,\mathbf{p}^{\#}_{\phi,H}(x))\cdot\nabla g(x)\big\}\ d\mu_tdt\\
		=&\,\int^T_0\Big\{\alpha'(t)\int_Mg(x)\ d\mu_t+\alpha(t)\int_M\nabla g(x)\cdot H_p(x,\mathbf{p}^{\#}_{\phi,H}(x))\ d\mu_t\Big\}\ dt\\
		=&\,\Big[\alpha(t)\int_Mg(x)\ d\mu_t\Big]\Big|^T_0=0.
	\end{align*}
	Finally, we note that 
	\begin{align*}
		\Big\{\sum^N_{i=1}\alpha_i(t)g_i(x): \alpha_i\in C^{\infty}_c((0,T)), g_i\in C^{\infty}(M), i=1,\ldots,N\Big\}
	\end{align*}
	is a dense subset of $C^{\infty}_c((0,T)\times M)$ in the topology of uniform convergence on compact subsets by the Stone-Weierstrass theorem. It follows that
	\begin{align*}
		\int_{[0,T]\times M}\Big\{\frac d{dt}f(t,x)+H_p(x,\mathbf{p}^{\#}_{\phi,H}(x))\cdot\nabla f(t,x)\Big\}\ d\mu=0,\qquad \forall f\in C^{\infty}_c((0,T)\times M).
	\end{align*}

    Now we turn to prove (1). Recall that $\gamma(x,\cdot)$ is a strict singular characteristic for every $x\in M$. Then, by Fubini's theorem we have that
    \begin{align*}
    	&\,\int_M\phi\ d\mu_T-\int_M\phi\ d\mu_0\\
    	=&\,\int_M\big\{\phi(\gamma(x,T))-\phi(x)\big\}\ d\bar{\mu}\\
    	=&\,\int_M\int^T_0\Big\{L(\gamma(x,s),H_p(\gamma(x,s),\mathbf{p}^\#_{\phi,H}(\gamma(x,s))))+H(\gamma(x,s),\mathbf{p}^\#_{\phi,H}(\gamma(x,s)))\Big\}\ dsd\bar{\mu}\\
    	=&\,\int^T_0\int_M\Big\{L(\gamma(x,s),H_p(\gamma(x,s),\mathbf{p}^\#_{\phi,H}(\gamma(x,s))))+H(\gamma(x,s),\mathbf{p}^\#_{\phi,H}(\gamma(x,s)))\Big\}\ d\bar{\mu}ds\\
    	=&\,\int^T_0\int_M\Big\{L(x,H_p(x,\mathbf{p}^\#_{\phi,H}(x)))+H(x,\mathbf{p}^\#_{\phi,H}(x))\Big\}\ d\mu_sds.
    \end{align*}
    The proof of (2) is a consequence of the property of broken characteristics. Indeed,
    \begin{align*}
    	\frac{d^+}{dt}\int_Mg\ d\mu_t=&\,\lim_{\tau\to t^+}\frac 1{\tau-t}\Big(\int_Mg\ d\mu_\tau-\int_Mg\ d\mu_t\Big)\\
    	=&\,\lim_{\tau\to t^+}\int_M\frac 1{\tau-t}(g(\gamma(x,\tau))-g(\gamma(x,t)))\ d\bar{\mu}\\
    	=&\,\int_M\lim_{\tau\to t^+}\frac 1{\tau-t}(g(\gamma(x,\tau))-g(\gamma(x,t)))\ d\bar{\mu}\qquad(\text{Dominated convergence theorem}) \\ 
    	=&\,\int_M\nabla g(\gamma(x,t)))\cdot H_p(\gamma(x,t)),\mathbf{p}^\#_{\phi,H}(\gamma(x,t))))\ d\bar{\mu}\qquad(\text{Theorem \ref{thm:right_derivative}}) \\
    	=&\,\int_M\nabla g(x)\cdot H_p(x,\mathbf{p}^\#_{\phi,H}(x))\ d\mu_t.
    \end{align*}
    Finally, for (3),
    \begin{align*}
    	&\,\lim_{\tau\to t^+}\frac 1{\tau-t}\int^\tau_t\int_Mf(x,\mathbf{p}^\#_{\phi,H}(x))\ d\mu_sds\\
    	=&\,\lim_{\tau\to t^+}\frac 1{\tau-t}\int^\tau_t\int_Mf(\gamma(x,s),\mathbf{p}^\#_{\phi,H}(\gamma(x,s)))\ d\bar{\mu}ds\\
    	=&\,\lim_{\tau\to t^+}\int_M\frac 1{\tau-t}\int^\tau_tf(\gamma(x,s),\mathbf{p}^\#_{\phi,H}(\gamma(x,s)))\ dsd\bar{\mu}\qquad(\text{Fubini theorem})\\
    	=&\,\int_M\lim_{\tau\to t^+}\frac 1{\tau-t}\int^\tau_tf(\gamma(x,s),\mathbf{p}^\#_{\phi,H}(\gamma(x,s)))\ dsd\bar{\mu}\qquad(\text{Dominated convergence theorem})\\
    	=&\,\int_Mf(\gamma(x,t),\mathbf{p}^\#_{\phi,H}(\gamma(x,t)))\ d\bar{\mu}\qquad(\text{Theorem \ref{thm:measure_H}})\\
    	=&\,\int_Mf(x,\mathbf{p}^\#_{\phi,H}(x))\ d\mu_t
    \end{align*}
    This completes the proof.
\end{proof}

\begin{The}\label{thm:mass sing}
Under the assumption of Theorem \ref{thm:CE}, if $H$ is a Tonelli Hamiltonian and $\phi$ is a weak KAM solution of \eqref{eq:HJ_wk}, we have that
\begin{align*}
	\mu_{t_1}(\text{\rm Cut}\,(\phi))\leqslant\mu_{t_2}(\text{\rm Cut}\,(\phi)),\qquad \mu_{t_1}(\overline{\text{\rm Sing}\,(\phi)})\leqslant\mu_{t_2}(\overline{\text{\rm Sing}\,(\phi)}),\qquad\forall 0\leqslant t_1\leqslant t_2\leqslant T.
\end{align*}
\end{The}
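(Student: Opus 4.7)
The plan is to combine the pushforward formula $\mu_t = (\Phi_\gamma^t)_\# \bar{\mu}$ with the forward invariance of $\text{Cut}(\phi)$ and $\overline{\text{Sing}(\phi)}$ under generalized characteristics, as established in Theorem \ref{thm:propagation_cut}(2) and Theorem \ref{thm:global_prop}. The key bridge is the observation that every strict singular characteristic is automatically a generalized characteristic: since $\mathbf{p}^\#_{\phi,H}(y)\in D^+\phi(y)$ for every $y\in M$, one has
\[
H_p(y,\mathbf{p}^\#_{\phi,H}(y))\in H_p(y,D^+\phi(y))\subset \mathrm{co}\,H_p(y,D^+\phi(y)),
\]
so any solution of \eqref{eq:SGC1} also solves \eqref{eq:GC}.

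First I would fix $0\leqslant t_1\leqslant t_2\leqslant T$ and argue that the sets $(\Phi_\gamma^t)^{-1}(\text{Cut}(\phi))$ and $(\Phi_\gamma^t)^{-1}(\overline{\text{Sing}(\phi)})$ are monotone nondecreasing in $t$. Given $x\in M$ with $\Phi_\gamma^{t_1}(x)=\gamma(x,t_1)\in\text{Cut}(\phi)$, consider the time-shifted curve $\tilde{\gamma}(s):=\gamma(x,t_1+s)$, $s\in[0,T-t_1]$. Since $\gamma(x,\cdot)\in\mathscr{S}_T$, $\tilde{\gamma}$ is again a strict singular characteristic (the defining equation \eqref{eq:SGC1} is autonomous), hence a generalized characteristic, and $\tilde{\gamma}(0)\in\text{Cut}(\phi)$. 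Theorem \ref{thm:propagation_cut}(2) then yields $\tilde{\gamma}(s)\in\text{Cut}(\phi)$ for all $s\in[0,T-t_1]$, and specialising to $s=t_2-t_1$ gives $\Phi_\gamma^{t_2}(x)\in\text{Cut}(\phi)$. The identical argument, with Theorem \ref{thm:global_prop} in place of Theorem \ref{thm:propagation_cut}(2), handles $\overline{\text{Sing}(\phi)}$.

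The conclusion then follows directly from the pushforward formula: writing $E_t:=(\Phi_\gamma^t)^{-1}(\text{Cut}(\phi))$, the preceding step gives $E_{t_1}\subset E_{t_2}$, so
\[
\mu_{t_1}(\text{Cut}(\phi))=\bar{\mu}(E_{t_1})\leqslant\bar{\mu}(E_{t_2})=\mu_{t_2}(\text{Cut}(\phi)),
\]
and symmetrically for $\overline{\text{Sing}(\phi)}$. Borel measurability of the two sets inside the pushforward has to be noted for this to make sense: $\overline{\text{Sing}(\phi)}$ is closed by definition, while $\text{Cut}(\phi)$ is Borel as the zero set of the (upper semicontinuous) cut time function $\tau_\phi$, so that $\mu_t(\text{Cut}(\phi))$ and $\mu_t(\overline{\text{Sing}(\phi)})$ are well defined.

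There is essentially no deep obstacle: once the two propagation results from Section 5 are invoked, the proof is a one-line time-shift argument followed by monotonicity of the $\bar{\mu}$-measure on nested sets. The only mildly delicate point is making sure that shifting a strict singular characteristic in time produces another admissible curve in $\mathscr{S}_{T-t_1}$, and thus in particular a generalized characteristic to which the Section 5 propagation theorems apply; this is immediate from the autonomous form of \eqref{eq:SGC1} and Proposition \ref{pro:msc is sc}.
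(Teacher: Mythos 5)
Your proposal is correct and follows essentially the same route as the paper: the paper's proof is exactly the combination of Theorem \ref{thm:propagation_cut}(2) (resp.\ Theorem \ref{thm:global_prop}) with the nesting of the preimages $\{x:\gamma(x,t_1)\in\text{Cut}\,(\phi)\}\subset\{x:\gamma(x,t_2)\in\text{Cut}\,(\phi)\}$ and the pushforward identity $\mu_t=(\Phi^t_\gamma)_{\#}\bar\mu$. The only difference is that you spell out details the paper leaves implicit (the time-shift of the autonomous equation, the fact that a strict singular characteristic is a generalized characteristic via $\mathbf{p}^\#_{\phi,H}(y)\in D^+\phi(y)$, and Borel measurability of the two sets), all of which are fine.
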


\begin{proof}
	For any $0\leqslant t_1\leqslant t_2\leqslant T$, Theorem \ref{thm:propagation_cut} implies
	\begin{align*}
		\{x\in M: \gamma(x,t_1)\in\text{Cut}\,(\phi)\}\subset\{x\in M: \gamma(x,t_2)\in\text{Cut}\,(\phi)\}.
	\end{align*}
	It follows that
	\begin{align*}
		\mu_{t_1}(\text{\rm Cut}\,(\phi))=&\,\bar{\mu}((\Phi^{t_1}_\gamma)^{-1}(\text{Cut}\,(\phi)))=\bar{\mu}(\{x\in M: \gamma(x,t_1)\in\text{Cut}\,(\phi)\})\\
		\leqslant&\,\bar{\mu}(\{x\in M: \gamma(x,t_2)\in\text{Cut}\,(\phi)\})=\mu_{t_2}(\text{\rm Cut}\,(\phi))
	\end{align*}
Similarly, by Theorem \ref{thm:global_prop}, we have $\mu_{t_1}(\overline{\text{\rm Sing}\,(\phi)})\leqslant\mu_{t_2}(\overline{\text{\rm Sing}\,(\phi)})$ for any $0\leqslant t_1\leqslant t_2\leqslant T$.
\end{proof}

As a direct consequence of Corollary \ref{cor:stability_H}, we obtain

\begin{Cor}\label{cor:mass H}
Suppose $\phi\in\text{\rm SCL}\,(M)$ and $H:T^*M\to\R$ is a Tonelli Hamiltonian. For any $T>0$, let $\tilde{\mathscr{S}}_T$ be the subset of $\gamma\in\mathscr{S}_T$ satisfying \eqref{eq:lambda_phi_H} for all $0\leqslant t_1\leqslant t_2\leqslant T$. Then $\tilde{\mathscr{S}}_T$ is a nonempty compact subset of $C^0([0,T],M)$ under the $C^0$-norm.
\end{Cor}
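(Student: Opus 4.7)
The proof decomposes into three steps: nonemptiness, equicontinuity, and closedness; the conclusion then follows by Arzel\`a--Ascoli.

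\textbf{Nonemptiness.} I would invoke Theorem \ref{thm:right_continuous2} directly. For any fixed $x \in M$, that theorem produces a strict singular characteristic $\gamma : \mathbb{R} \to M$ for $(\phi,H)$ with $\gamma(0) = x$ satisfying the energy growth bound \eqref{eq:lambda_phi_H} for some universal constant $\lambda = \lambda_{\phi,H}$ depending only on $\phi$ and $H$. Restricting $\gamma$ to $[0,T]$ gives an element of $\tilde{\mathscr{S}}_T$, so $\tilde{\mathscr{S}}_T \neq \varnothing$.

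\textbf{Equicontinuity of $\mathscr{S}_T$.} Since $\phi \in \text{\rm SCL}\,(M)$, by Proposition \ref{pro:scsv}(1) the set $\mathrm{graph}(D^+\phi) \subset T^*M$ is contained in a compact subset of the cotangent bundle (for instance, the unit ball of radius $\mathrm{Lip}(\phi)$). Since $H_p$ is continuous on $T^*M$, the vector field $x \mapsto H_p(x, \mathbf{p}^\#_{\phi,H}(x))$ is uniformly bounded, say by a constant $K = K_{\phi,H}$. By Definition \ref{defn:msc}, every $\gamma \in \mathscr{S}_T$ satisfies $|\dot{\gamma}(t)| \leqslant K$ for a.e.\ $t$, so the family $\mathscr{S}_T$ is equi-Lipschitz with constant $K$. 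Combined with the compactness of $M$, Arzel\`a--Ascoli then ensures that $\mathscr{S}_T$ is relatively compact in $C^0([0,T],M)$ under uniform convergence.

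\textbf{Closedness of $\tilde{\mathscr{S}}_T$ in $C^0([0,T],M)$.} Suppose $\gamma_k \in \tilde{\mathscr{S}}_T$ converges uniformly on $[0,T]$ to some $\gamma \in C^0([0,T],M)$. Applying Theorem \ref{thm:stability} with the constant sequences $\phi_k \equiv \phi$ and $H_k \equiv H$ (extending each $\gamma_k$, e.g., constantly or by any strict singular characteristic extension, only the behaviour on $[0,T]$ matters since the argument in Theorem \ref{thm:stability} is local in time), one concludes that $\gamma$ itself is a strict singular characteristic on $[0,T]$. The energy estimate \eqref{eq:lambda_phi_H} is then preserved in the limit by Lemma \ref{lem:lambda}: using the lower semicontinuity from Lemma \ref{lem:lsc1} together with Theorem \ref{thm:right_derivative}(2) as in the proof of Lemma \ref{lem:lambda}, one passes from the uniform bound on $H_k(\gamma_k(t_2),\mathbf{p}^\#) - H_k(\gamma_k(t_1),\mathbf{p}^\#) \leqslant \lambda(t_2 - t_1)$ to the same bound for $\gamma$. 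Hence $\gamma \in \tilde{\mathscr{S}}_T$.

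\textbf{Conclusion.} $\tilde{\mathscr{S}}_T$ is a closed subset of the relatively compact set $\mathscr{S}_T$, hence compact. The nonemptiness comes from Step 1, completing the proof. The only subtle point is the third step, where one must adapt the proof of Lemma \ref{lem:lambda}, originally written for curves on $\mathbb{R}$, to the finite interval $[0,T]$; this is routine since both Theorem \ref{thm:stability} and Theorem \ref{thm:right_derivative}(2) hold locally in time, and the argument in Lemma \ref{lem:lambda} only uses information at and immediately to the right of each time instant.
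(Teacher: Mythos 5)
Your proof is correct and follows essentially the same route the paper intends: the paper derives the corollary as a direct consequence of Corollary \ref{cor:stability_H} (closedness under the energy bound, itself obtained from Theorem \ref{thm:stability} and Lemma \ref{lem:lambda}), nonemptiness from Theorem \ref{thm:right_continuous2}, and compactness from the equi-Lipschitz bound plus Arzel\`a--Ascoli already used for $\mathscr{S}_T$ in Proposition \ref{pro:measuable}. Your remark that the stability and energy-monotonicity arguments localize to $[0,T]$ is exactly the (routine) adaptation the paper leaves implicit.
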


From Corollary \ref{cor:mass H}, we can conclude that all the statements for $\mathscr{S}_T$ in Proposition \ref{pro:measuable} also holds true for $\tilde{\mathscr{S}}_T$. Therefore, we can have some refinement of Theorem \ref{thm:CE}.

\begin{Cor}
Under the assumption of Theorem \ref{thm:CE}, if $\phi\in\text{\rm SCL}\,(M)$, $H$ is a Tonelli Hamiltonian, and we take $\tilde{\mathscr{S}}_T$ instead of $\mathscr{S}_T$, then the following holds:
\begin{enumerate}[\rm (3')]
	\item For any $t\in[0,T)$ and $f\in C_c(T^*M,\R)$
	\begin{align*}
		\int_Mf(x,\mathbf{p}^\#_{\phi,H}(x))\ d\mu_t=\lim_{\tau\to t^+}\int_Mf(x,\mathbf{p}^\#_{\phi,H}(x))\ d\mu_\tau.
	\end{align*}
\end{enumerate}
\begin{enumerate}[\rm (4)]
	\item $\int_MH(x,\mathbf{p}^\#_{\phi,H}(x))\ d\mu_{t_2}-\int_MH(x,\mathbf{p}^\#_{\phi,H}(x))\ d\mu_{t_1}\leqslant\lambda_{\phi,H}(t_2-t_1)$ for all $0\leqslant t_1\leqslant t_2\leqslant T$.
\end{enumerate}
\end{Cor}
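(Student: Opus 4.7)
My proof plan has three stages. First, I would invoke Corollary \ref{cor:mass H} to assert that $\tilde{\mathscr{S}}_T$ is a nonempty compact metric subspace of $C^0([0,T],M)$ and then repeat verbatim the argument of Proposition \ref{pro:measuable} with $\mathscr{S}_T$ replaced by $\tilde{\mathscr{S}}_T$. The map $\Gamma:M\rightrightarrows \tilde{\mathscr{S}}_T$ sending $x\mapsto\{\gamma\in\tilde{\mathscr{S}}_T:\gamma(0)=x\}$ is still nonempty (by Theorem \ref{thm:right_continuous2}), compact-valued (closed inside compact $\tilde{\mathscr{S}}_T$), and measurable (because the evaluation map $\gamma\mapsto\gamma(0)$ is continuous). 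Proposition \ref{pro:selection} then produces a measurable selection $x\mapsto\gamma(x,\cdot)\in\tilde{\mathscr{S}}_T$ with $\gamma(x,0)=x$. This legitimizes $\Phi^t_\gamma$ in the refined setting, and the original Theorem \ref{thm:CE} still applies.

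For (3$'$), I would pass the pushforward back to $\bar\mu$ and then invoke the pointwise right-continuity property \eqref{eq:right_cont2} from Theorem \ref{thm:right_continuous2}, which now holds for every single trajectory $\gamma(x,\cdot)\in\tilde{\mathscr{S}}_T$. Concretely, for any $f\in C_c(T^*M,\R)$ and $t\in[0,T)$,
\begin{align*}
\lim_{\tau\to t^+}\int_M f(x,\mathbf{p}^\#_{\phi,H}(x))\,d\mu_\tau
&=\lim_{\tau\to t^+}\int_M f(\gamma(x,\tau),\mathbf{p}^\#_{\phi,H}(\gamma(x,\tau)))\,d\bar{\mu}\\
&=\int_M f(\gamma(x,t),\mathbf{p}^\#_{\phi,H}(\gamma(x,t)))\,d\bar{\mu}
=\int_M f(x,\mathbf{p}^\#_{\phi,H}(x))\,d\mu_t,
\end{align*}
where the interchange of limit and integral is by dominated convergence: since $f$ is compactly supported and $(\gamma(x,\tau),\mathbf{p}^\#_{\phi,H}(\gamma(x,\tau)))\to(\gamma(x,t),\mathbf{p}^\#_{\phi,H}(\gamma(x,t)))$ for each $x\in M$ by Theorem \ref{thm:right_continuous2}. (One subtle check: the selection $\mathbf{p}^\#_{\phi,H}$ is bounded on $M$ because $\phi$ is Lipschitz; hence all arguments stay in a fixed compact of $T^*M$, and $\|f\|_\infty$ provides the dominating function.) A completely analogous argument, using $C_c$-functions $f$ on $TM$ built from $\dot\gamma^+$, would likewise upgrade the $TM$-version of (3).

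For (4), the estimate is inherited fibrewise from \eqref{eq:lambda_phi_H} and then integrated out. For $0\leqslant t_1\leqslant t_2\leqslant T$,
\begin{align*}
&\int_M H(x,\mathbf{p}^\#_{\phi,H}(x))\,d\mu_{t_2}-\int_M H(x,\mathbf{p}^\#_{\phi,H}(x))\,d\mu_{t_1}\\
&=\int_M\bigl[H(\gamma(x,t_2),\mathbf{p}^\#_{\phi,H}(\gamma(x,t_2)))-H(\gamma(x,t_1),\mathbf{p}^\#_{\phi,H}(\gamma(x,t_1)))\bigr]\,d\bar\mu
\leqslant \lambda_{\phi,H}(t_2-t_1),
\end{align*}
since each $\gamma(x,\cdot)$ belongs to $\tilde{\mathscr{S}}_T$ and hence satisfies \eqref{eq:lambda_phi_H}. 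Here one uses that $H(x,\mathbf{p}^\#_{\phi,H}(x))$ is bounded on $M$ (so that $\int_M\bar\mu$-integrals make sense as finite numbers), which follows from boundedness of $\mathbf{p}^\#_{\phi,H}$ and continuity of $H$.

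I do not expect any substantial obstacle here: the only nontrivial ingredient is the measurable selection into the smaller class $\tilde{\mathscr{S}}_T$, and that reduces to Corollary \ref{cor:mass H} together with Proposition \ref{pro:selection}. The real content is already packed into Theorem \ref{thm:right_continuous2} (pointwise right-continuity of $s\mapsto\mathbf{p}^\#_{\phi,H}(\gamma(s))$) and the fibrewise energy bound \eqref{eq:lambda_phi_H}; this corollary is the straightforward integral consequence of those two facts. The only minor care needed is to ensure that the dominated convergence hypothesis is valid, which is why the statement of (3$'$) restricts to $f\in C_c(T^*M,\R)$.
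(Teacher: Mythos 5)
Your proposal is correct and follows essentially the same route as the paper: you push the measures back to $\bar{\mu}$, invoke the right-continuity \eqref{eq:right_cont2} (valid for every curve in $\tilde{\mathscr{S}}_T$ since it follows from \eqref{eq:lambda_phi_H} via Lemmas \ref{lem:lsc1} and \ref{lem:P_k_P}) together with dominated convergence for (3'), and integrate the fibrewise bound \eqref{eq:lambda_phi_H} for (4). Your preliminary discussion of the measurable selection into $\tilde{\mathscr{S}}_T$ matches the paper's remark that Corollary \ref{cor:mass H} lets Proposition \ref{pro:measuable} carry over verbatim.
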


\begin{proof}
For (3') we note that by Theorem \ref{thm:right_continuous2} and the dominated convergence theorem
\begin{align*}
	\lim_{\tau\to t^+}\int_Mf(x,\mathbf{p}^\#_{\phi,H}(x))\ d\mu_\tau=&\,\lim_{\tau\to t^+}\int_Mf(\gamma(x,\tau),\mathbf{p}^\#_{\phi,H}(\gamma(x,\tau)))\ d\bar{\mu}\\
	=&\,\int_Mf(\gamma(x,t),\mathbf{p}^\#_{\phi,H}(\gamma(x,t)))\ d\bar{\mu}\\
	=&\,\int_Mf(x,\mathbf{p}^\#_{\phi,H}(x))\ d\mu_t.
\end{align*}

For (4), by Theorem \ref{thm:right_continuous2} again, we have that
\begin{align*}
	&\,\int_MH(x,\mathbf{p}^\#_{\phi,H}(x))\ d\mu_{t_2}-\int_MH(x,\mathbf{p}^\#_{\phi,H}(x))\ d\mu_{t_1}\\
	=&\,\int_M\Big\{H(\gamma(x,t_2),\mathbf{p}^\#_{\phi,H}(\gamma(x,t_2)))-H(\gamma(x,t_1),\mathbf{p}^\#_{\phi,H}(\gamma(x,t_1)))\Big\}\ d\bar{\mu}\\
	\leqslant&\,\int_M\lambda_{\phi,H}(t_2-t_1)\ d\bar{\mu}=\lambda_{\phi,H}(t_2-t_1).
\end{align*}
This completes the proof.
\end{proof}

\appendix

\section{Maximal slope curves in the time-dependent case}

In this appendix, we will extend some results in Section 3 to the time-dependent case. We suppose that $H:T^*M\to\R$ satisfies conditions (H1)-(H3) and $\phi:\R\times M\to R$ is a locally Lipschitz function. For any $(t,x)\in\R\times M$, the set $\arg\min\{q+H(x,p): (q,p)\in D^+\phi(t,x)\}$ is a singleton since $D^+\phi(t,x)$ is a compact convex set and $H(x,\cdot)$ is strictly convex. We set
\begin{align*}
	(\mathbf{q}^\#_{\phi,H}(t,x),\mathbf{p}^\#_{\phi,H}(t,x)):=\arg\min\{q+H(x,p): (q,p)\in D^+\phi(t,x)\},\qquad (t,x)\in\R\times M.
\end{align*}

\begin{Lem}\label{lem:Borel2}
Suppose $H$ satisfies conditions (H1)-(H3) and $\phi$ is a locally semiconcave function on $\R\times M$. Then the map $(t,x)\mapsto(\mathbf{q}^\#_{\phi,H}(t,x),\mathbf{p}^\#_{\phi,H}(t,x))$ is Borel measurable.
\end{Lem}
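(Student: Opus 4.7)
The plan is to adapt the proof of Lemma \ref{lem:Borel} almost verbatim, treating $(t,x)$ as a single variable in $\R \times M$ and replacing the superdifferential of $\phi$ on $M$ by the full superdifferential of $\phi$ on $\R \times M$. Since Borel measurability is a local property, I would first reduce to a local coordinate chart so that $\phi$ becomes a locally semiconcave function on an open set $U \subset \R^{n+1}$ and $D^+\phi(t,x) \subset \R \times \R^n$ is nonempty, convex, and compact for every $(t,x) \in U$, by Proposition \ref{pro:scsv}.

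The first substantive step is to show that the set-valued map $(t,x) \rightrightarrows D^+\phi(t,x)$ is measurable. Since it is compact-valued (Proposition \ref{pro:scsv}(2) applied to $\phi$ viewed as a function on $\R \times M$), by the second part of \cite[Lemma 18.2]{Aliprantis_Border_book2006} it suffices to check weak measurability, which in turn follows from upper semicontinuity of the graph map. The latter is the exact analogue of Lemma \ref{lem:usc2}: if $(t_k,x_k) \to (t,x)$ and $(q_k,p_k) \in D^+\phi(t_k,x_k)$ converge to $(q,p)$, then the defining inequality for the superdifferential of a locally semiconcave function passes to the limit, so $(q,p) \in D^+\phi(t,x)$. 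The argument is identical to the one given in Lemma \ref{lem:usc2}, only that the spatial variable is now $(t,x)$ rather than $x$.

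Next, I would invoke Proposition \ref{pro:max_meas} with $S = U$, $X = \R \times \R^n$, $F(t,x) = D^+\phi(t,x)$, and the objective
\begin{equation*}
	f(t,x,q,p) = -\bigl(q + H(x,p)\bigr),
\end{equation*}
which is Carath\'eodory (indeed jointly continuous under (H1)--(H3)). Proposition \ref{pro:max_meas} then yields that the argmax set-valued map is measurable and, by strict convexity of $H(x,\cdot)$ combined with convexity of $D^+\phi(t,x)$, it is single-valued, equal to $(\mathbf{q}^\#_{\phi,H}(t,x),\mathbf{p}^\#_{\phi,H}(t,x))$. A measurable single-valued map is Borel measurable, yielding the conclusion.

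The main (mild) obstacle is the verification of upper semicontinuity of $(t,x) \mapsto D^+\phi(t,x)$ in the time-dependent framework; once one accepts that the proof of Lemma \ref{lem:usc2} is coordinate-free and uses only the defining inequality for local semiconcavity in the joint variable $(t,x)$, the rest of the argument is a direct transcription of the proof of Lemma \ref{lem:Borel}. No genuinely new idea is required.
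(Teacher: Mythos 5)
Your proposal is correct and follows exactly the route the paper intends: the paper's proof of this lemma is literally a one-line reference to the proof of Lemma \ref{lem:Borel}, i.e.\ work in local coordinates, verify that $(t,x)\rightrightarrows D^+\phi(t,x)$ is a nonempty, compact-valued, measurable multifunction via the upper semicontinuity argument of Lemma \ref{lem:usc2}, and apply Proposition \ref{pro:max_meas} with the Carath\'eodory objective $-(q+H(x,p))$, the argmin being a singleton by convexity of $D^+\phi(t,x)$ and strict convexity of $H(x,\cdot)$. Your transcription of that argument to the joint variable $(t,x)$ is exactly what the authors have in mind, so there is nothing to add.
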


\begin{proof}
The proof is similar to that of Lemma \ref{lem:Borel}.
\end{proof}

%\begin{Lem}\label{lem:lsc2}
%Let $\{H_k\}$ be a sequence of Hamiltonians satisfying \text{\rm (H1)-(H3)}, $\{\phi_k\}$ be a sequence of $\omega$-semiconcave functions on $\R\times M$, and $\{(t_k,x_k)\}\subset\R\times M$. If
%\begin{enumerate}[\rm (1)]
%	\item $\phi_k$ converges to $\phi$ uniformly on compact subsets of $\R\times M$,
%	\item $H_k$ converges to a Hamiltonian $H$ satisfying \text{\rm (H1)-(H3)} uniformly on compact subsets,
%	\item $\lim_{k\to\infty}(t_k,x_k)=(t,x)$,
%\end{enumerate}
%then
%\begin{align*}
%	\liminf_{k\to\infty}\mathbf{q}^{\#}_{\phi_k,H_k}(t_k,x_k)+H_k(x_k,\mathbf{p}^{\#}_{\phi_k,H_k}(t_k,x_k))\geqslant \mathbf{q}^\#_{\phi,H}(t,x)+H(x,\mathbf{p}^\#_{\phi,H}(t,x)).
%\end{align*}
%\end{Lem}

%\begin{proof}
%The proof is similar to that of Lemma \ref{lem:lsc1}.
%\end{proof}

%Lemma \ref{lem:lsc2} implies the function $(t,x)\mapsto \mathbf{q}^\#_{\phi,H}(t,x)+H(x,\mathbf{p}^\#_{\phi,H}(t,x))$ is lower semicontinuous.

\begin{Pro}
Suppose $H$ satisfies conditions (H1)-(H3), $\phi$ is a locally semiconcave function on $\R\times M$ and $(\mathbf{q}(t,x),\mathbf{p}(t,x))$ is a Borel measurable selection of the superdifferential $D^+\phi(t,x)$. Then, for any absolutely continuous curve $\gamma:[0,t]\to M$ we have that
\begin{align*}
	\phi(t,\gamma(t))-\phi(0,\gamma(0))\leqslant\int^{t}_{0}\Big\{L(\gamma(s),\dot{\gamma}(s))+\mathbf{q}(s,\gamma(s))+H(\gamma(s),\mathbf{p}(s,\gamma(s)))\Big\}\ ds.
\end{align*}
The equality above holds if and only if
\begin{align*}
	\dot{\gamma}(s)=H_p(\gamma(s),\mathbf{p}(s,\gamma(s))),\qquad a.e.\ s\in[0,t].
\end{align*}
\end{Pro}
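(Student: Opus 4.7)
The plan is to mimic the proof of Proposition \ref{pro:msc} (the stationary case), upgraded from $M$ to the space-time product $\R\times M$. The velocity of the curve $s\mapsto(s,\gamma(s))$ is $(1,\dot{\gamma}(s))$, and the candidate selection is a Borel selection of $D^+\phi$ of the form $(\mathbf{q},\mathbf{p})$. Along $(s,\gamma(s))$, the pairing that takes the place of $\langle\mathbf{p}(\gamma),\dot{\gamma}\rangle$ in the stationary proof is $\mathbf{q}(s,\gamma(s))+\langle\mathbf{p}(s,\gamma(s)),\dot{\gamma}(s)\rangle$, and the inequality will follow by applying Young's inequality only to the spatial part.

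First I would show that $F(s):=\phi(s,\gamma(s))$ is absolutely continuous on $[0,t]$: this is immediate because $\phi$ is locally Lipschitz on $\R\times M$ (semiconcave implies Lipschitz on compact sets, cf.\ Proposition \ref{pro:scsv}(1)) and the graph of $(s,\gamma(s))$ is contained in a compact set. Hence $F$ is differentiable at a.e.\ $s\in[0,t]$. Next, at each such differentiability point, the one-sided difference quotients in the direction $(1,\dot{\gamma}(s))$ and $(-1,-\dot{\gamma}(s))$ coincide with $F'(s)$. Using Proposition \ref{pro:scsv}(4) applied to $\phi$ on $\R\times M$, one gets
\begin{equation*}
\min_{(q,p)\in D^+\phi(s,\gamma(s))}\{q+\langle p,\dot{\gamma}(s)\rangle\}=F'(s)=\max_{(q,p)\in D^+\phi(s,\gamma(s))}\{q+\langle p,\dot{\gamma}(s)\rangle\},
\end{equation*}
so the linear functional $(q,p)\mapsto q+\langle p,\dot{\gamma}(s)\rangle$ is \emph{constant} on $D^+\phi(s,\gamma(s))$. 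Consequently, for \emph{any} Borel selection $(\mathbf{q},\mathbf{p})$ of $D^+\phi$, one has $F'(s)=\mathbf{q}(s,\gamma(s))+\langle\mathbf{p}(s,\gamma(s)),\dot{\gamma}(s)\rangle$ at almost every $s$.

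Integrating over $[0,t]$ and applying the Fenchel--Young inequality to the spatial pairing,
\begin{equation*}
\langle\mathbf{p}(s,\gamma(s)),\dot{\gamma}(s)\rangle\leqslant L(\gamma(s),\dot{\gamma}(s))+H(\gamma(s),\mathbf{p}(s,\gamma(s))),
\end{equation*}
with equality if and only if $\dot{\gamma}(s)=H_p(\gamma(s),\mathbf{p}(s,\gamma(s)))$ (by strict convexity of $H(x,\cdot)$ from (H3)), immediately yields the desired inequality
\begin{equation*}
\phi(t,\gamma(t))-\phi(0,\gamma(0))=\int_0^t F'(s)\,ds\leqslant\int_0^t\Big\{L(\gamma,\dot{\gamma})+\mathbf{q}(s,\gamma)+H(\gamma,\mathbf{p}(s,\gamma))\Big\}\,ds,
\end{equation*}
as well as the claimed characterization of equality.

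I do not expect any serious obstacle: the only point that needs attention is the verification that the pairing $(q,p)\mapsto q+\langle p,\dot{\gamma}(s)\rangle$ is constant on $D^+\phi(s,\gamma(s))$ at differentiability points of $F$, but this follows from exactly the same min/max identity used in the stationary case, simply applied in the tangent direction $(1,\dot{\gamma}(s))$ rather than $\dot{\gamma}(s)$. The measurable selection theorem (Proposition \ref{pro:selection}) combined with Lemma \ref{lem:Borel2} ensures all integrands are Borel measurable, so Lebesgue integration is legitimate throughout.
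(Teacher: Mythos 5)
Your proposal is correct and follows essentially the same route as the paper's proof: the a.e.\ differentiability of $s\mapsto\phi(s,\gamma(s))$ combined with the min/max identity for the superdifferential in the direction $(1,\dot{\gamma}(s))$ shows the pairing $q+\langle p,\dot{\gamma}(s)\rangle$ is constant on $D^+\phi(s,\gamma(s))$, after which Fenchel--Young applied to the spatial part gives the inequality and the equality characterization. The extra remarks on absolute continuity and measurability are fine but do not change the argument.
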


\begin{proof}
By Proposition \ref{pro:scsv} (3), for almost all $s\in[0,t]$ we have that
\begin{align*}
	&\,\min_{(q,p)\in D^+\phi(s,\gamma(s))}\{q+\langle p,\dot{\gamma}(s)\rangle\}=\frac{d^+}{ds}\phi(s,\gamma(s))=\frac{d^-}{ds}\phi(s,\gamma(s))\\
	=&\,-\min_{(q,p)\in D^+\phi(s,\gamma(s))}\{-q+\langle p,-\dot{\gamma}(s)\rangle\}=\max_{(q,p)\in D^+\phi(s,\gamma(s))}\{q+\langle p,\dot{\gamma}(s)\rangle\}.
\end{align*}
Invoking Young's inequality
\begin{align*}
	\phi(t,\gamma(t))-\phi(0,\gamma(0))=&\,\int^{t}_{0}\frac{d}{ds}\phi(s,\gamma(s))\ ds=\int^{t}_{0}\mathbf{q}(s,\gamma(s))+\langle \mathbf{p}(s,\gamma(s)),\dot{\gamma}(s)\rangle\\
	\leqslant&\,\int^{t}_{0}\Big\{L(\gamma(s),\dot{\gamma}(s))+\mathbf{q}(s,\gamma(s))+H(\gamma(s),\mathbf{p}(s,\gamma(s)))\Big\}\ ds
\end{align*}
and the equality holds if and only if $\dot{\gamma}(s)=H_p(\gamma(s),\mathbf{p}(s,\gamma(s)))$ for almost all $s\in[0,t]$.
\end{proof}

\begin{defn}\label{defn:msc2}
Let $\phi$ be a semiconcave function on $\R\times M$, $H:T^*M\to\R$ be a Hamiltonian satisfying \text{\rm (H1)-(H3)} and $(\mathbf{q}(t,x),\mathbf{p}(t,x))$ be a Borel measurable selection of the superdifferential $D^+\phi(t,x)$.
\begin{enumerate}[(1)]
	\item We call a locally absolutely continuous curve $\gamma:I\to M$ a \emph{maximal slope curve for the pair $(\phi,H)$ and the selection $\mathbf{p}(t,x)$}, where $I$ is any interval which can be the whole real line, if for any $t_1,t_2\in I$, $t_1<t_2$, $\gamma$ satisfies
	\begin{equation}\label{eq:msc_H_phi_t}\tag{VIt}
		\phi(t_2,\gamma(t_2))-\phi(t_1,\gamma(t_1))=\int_{t_1}^{t_2}L(\gamma(s),\dot{\gamma}(s))+\mathbf{q}(s,\gamma(s))+H(\gamma(s),\mathbf{p}(s,\gamma(s)))\ ds,
	\end{equation}
	or, equivalently,
	\begin{align*}
		\dot{\gamma}(t)=H_{p}(\gamma(t),\mathbf{p}(t,\gamma(t))),\quad a.e.\ t\in I.
	\end{align*}
	\item For the ``minimal energy'' selection $(\mathbf{q}^{\#}_{\phi,H},\mathbf{p}^{\#}_{\phi,H})$, we call any associated maximal slope curve $\gamma:I\to M$ for $(\phi,H)$ a \emph{strict singular characteristic} for the pair $(\phi,H)$. 
\end{enumerate}
\end{defn}

Notice that we use $q+H(x,p)$ as a new energy term for the time-dependent case (see, for instance, \cite{Cheng_Hong2022a}). We emphasize all the results in this paper for the time-dependent case can be proved in a similar way as those for the time-independent case.

\bibliographystyle{plain}
\bibliography{mybib}

\end{document}